\author{Arielle MARC-ZWECKER}
\title{Horn's problem in PU(n,1).}
\begin{document}
	\maketitle
	
	\theoremstyle{plain}
	\newtheorem{Theo}{Theorem}[section]
	\newtheorem*{Theo*}{Theorem}
	\newtheorem{Prop}[Theo]{Proposition}
	\newtheorem*{Prop*}{Proposition}
	\newtheorem{Prob}[Theo]{Problem}
	\newtheorem*{Prob*}{Problem}
	\newtheorem{Lemm}[Theo]{Lemma}  
	\newtheorem{Coro}[Theo]{Corollary}
	\newtheorem{Propr}[Theo]{Propri\'et\'e}
	\newtheorem{Conj}[Theo]{Conjecture}
	\newtheorem*{Conj*}{Conjecture}
	\newtheorem{Aff}[Theo]{Affirmation}
	\newtheorem{thm}{Theorem}[section]
	\renewcommand{\thethm}{\empty{}} 
	\newtheorem{Def}{Definition}[section]
	\renewcommand{\theDef}{\empty{}}
	\newtheorem{conj}{Conjecture}[section]
	\renewcommand{\theconj}{\empty{}}
	
	\theoremstyle{definition}
	\newtheorem{Defi}[Theo]{Definition}
	\newtheorem*{Defi*}{Definition}
	\newtheorem{Exem}[Theo]{Example}
	\newtheorem{Nota}[Theo]{Notation}
	
	\theoremstyle{remark}
	\newtheorem{Rema}[Theo]{Remark}
	\newtheorem{NB}[Theo]{N.B.}
	\newtheorem{Comm}[Theo]{Commentaire}
	\newtheorem{question}[Theo]{Question}
	\newtheorem{exer}[Theo]{Exercice}
	
	\newcommand{\pu}{\operatorname{PU}}
	\newcommand{\su}{\operatorname{SU}}
	\newcommand{\U}{\operatorname{U}}

	\def\emptyset{\varnothing}
	
	\def\NN{{\mathbb N}}    
	\def\ZZ{{\mathbb Z}}     
	\def\RR{{\mathbb R}}    
	\def\QQ{{\mathbb Q}}    
	\def\CC{{\mathbb C}}    
	\def\HH{{\mathbb H}}    
	\def\AA{{\mathbb P}}     
	\def\KK{{\mathbb K}}     
	\def\SU{{\operatorname{SU}(2,1)}}
	
	\def\cA{{\mathcal A}}  \def\cG{{\mathcal G}} \def\cM{{\mathcal M}} \def\cS{{\mathcal S}} \def\cB{{\mathcal B}}  \def\cH{{\mathcal H}} \def\cN{{\mathcal N}} \def\cT{{\mathcal T}} \def\cC{{\mathcal C}}  \def\cI{{\mathcal I}} \def\cO{{\mathcal O}} \def\cU{{\mathcal U}} \def\cD{{\mathcal D}}  \def\cJ{{\mathcal J}} \def\cP{{\mathcal P}} \def\cV{{\mathcal V}} \def\cE{{\mathcal E}}  \def\cK{{\mathcal K}} \def\cQ{{\mathcal Q}} \def\cW{{\mathcal W}} \def\cF{{\mathcal F}}  \def\cL{{\mathcal L}} \def\cR{{\mathcal R}} \def\cX{{\mathcal X}} \def\cY{{\mathcal Y}}  \def\cZ{{\mathcal Z}}
	
	
	\def\mfA{{\mathfrak A}} \def\mfA{{\mathfrak P}} \def\mfS{{\mathfrak S}}\def\mfZ{{\mathfrak Z}} \def\mfM{{\mathfrak M}} \def\mfQ{{\mathfrak Q}} \def\mfE{{\mathfrak E}} \def\mfL{{\mathfrak L}} \def\mfW{{\mathfrak W}} \def\mfR{{\mathfrak R}} \def\mfK{{\mathfrak K}} \def\mfX{{\mathfrak X}} \def\mfT{{\mathfrak T}} \def\mfJ{{\mathfrak J}} \def\mfC{{\mathfrak C}} \def\mfY{{\mathfrak Y}} \def\mfH{{\mathfrak H}} \def\mfV{{\mathfrak V}}\def\mfU{{\mathfrak U}}\def\mfG{{\mathfrak G}} \def\mfB{{\mathfrak B}} \def\mfI{{\mathfrak I}} \def\mfF{{\mathfrak F}} \def\mfN{{\mathfrak N}} \def\mfO{{\mathfrak O}} \def\mfD{{\mathfrak D}} 
	
	\def\mfa{{\mathfrak a}} \def\mfp{{\mathfrak p}} \def\mfs{{\mathfrak s}}  \def\mfz{{\mathfrak z}} \def\mfm{{\mathfrak m}} \def\mfq{{\mathfrak q}}  \def\mfe{{\mathfrak e}} \def\mfl{{\mathfrak l}} \def\mfw{{\mathfrak w}} \def\mfr{{\mathfrak r}} \def\mfk{{\mathfrak k}} \def\mfx{{\mathfrak x}} \def\mft{{\mathfrak t}} \def\mfj{{\mathfrak j}} \def\mfc{{\mathfrak c}} \def\mfy{{\mathfrak y}} \def\mfh{{\mathfrak h}} \def\mfv{{\mathfrak v}} \def\mfu{{\mathfrak u}} \def\mfg{{\mathfrak g}} \def\mfb{{\mathfrak b}} \def\mfi{{\mathfrak i}} \def\mff{{\mathfrak f}} \def\mfn{{\mathfrak n}} \def\mfo{{\mathfrak o}} \def\mfd{{\mathfrak d}} 
	
\begin{abstract}
	The multiplicative Horn problem is the following question: given three conjugacy classes $\cC_1,\cC_2,\cC_3$ in a Lie group $G$, do there exist elements $(A,B,C)\in\cC_1\times\cC_2\times\cC_3$ such that $ABC=\operatorname{Id}$? 
	In this paper, we study the multiplicative Horn problem restricted to the elliptic classes of the group $G=\pu(n,1)$ for $n\geq 1$, which is the isometry group of the $n$-dimensional complex hyperbolic space. We show that the solution set of Horn's problem in $\pu(n,1)$ is a finite union of convex polytopes in the space of elliptic conjugacy classes. We give a complete description of these polytopes when $n=2$. 
\end{abstract}
	
\section{Introduction}

In this work, we are interested with the multiplicative Horn problem, also known as the Deligne-Simpson problem. Historically, this question first appeared under its additive form.

\begin{Prob*}
	\emph{(Horn, additive version)} Let $\mfg$ be a Lie algebra. For which triples of conjugacy classes $(\cC_{1},\cC_2,\cC_3)$ in $\mfg$ can we find three elements $(A,B,C)\in\cC_1\times\cC_2\times\cC_3$ such that $A+B+C=0$ ?
\end{Prob*}

We will say that a triple of classes $(\cC_1,\cC_2,\cC_3)$ satisfying Horn's condition is a \textit{solution} to Horn's problem. For reasons related to physics, Horn focused on Hermitian matrices (the vector space of size $n$ Hermitian matrices $\operatorname{Herm}(n)$ does not form a Lie algebra, but $\operatorname{Herm}(n)=i\mfs\mfu(n)$). He obtained in \cite{Hor} (1962) necessary conditions that should be satisfied by the solutions in $\operatorname{Herm}(n)$; these conditions materialize as a system of inequalities. He also conjectured in this paper that the set of solutions was a convex in a hyperplane of the space of triples of classes, for which he gave an explicit formula. This conjecture was later solved in various ways. We can cite the works of Klyachko \cite{Kly}, Knutson and Tao \cite{KnuTao} published at the end of the 1990's; other methods have been used a little more recently by Belkale \cite{Bel}, Ressayre \cite{Res}. This list of papers is far from being exhaustive, and we advise the interested reader to look at the surveys by Fulton \cite{Ful} or Brion \cite{Bri} which give a precise overview of this subject.

Let us now state the problem under its multiplicative form, which is the main interest of this paper.

\begin{Prob*}
	\emph{(Horn, multiplicative version, or Deligne-Simpson)} Let $G$ be a Lie group. For which triples of conjugacy classes $(\cC_{1},\cC_2,\cC_3)$ in $G$ can we find three elements $(A,B,C)\in\cC_1\times\cC_2\times\cC_3$ such that $ABC=\operatorname{Id}$ ?
\end{Prob*}

Deligne first asked this question in the case where $G=\operatorname{GL}_n(\CC)$ or $\operatorname{SL}_n(\CC)$ in the 1980's, and Simpson was the first to bring elements of answer in \cite{Sim}. In particular, he showed for $\operatorname{SL}_3(\CC)$ that all triples of semi-simple with distinct eigenvalues conjugacy classes were solutions. See Kostov's survey \cite{Kos} for a description of these results. The case of the unitary groups $G=\U(n)$ or $\su(n)$, and more generally that of compact and connected Lie groups, has been solved: see Biswas \cite{Bis}, Belkale \cite{Bel2}, Belkale-Kumar \cite{BelKum}, Ressayre \cite{Res2}. These authors obtained a minimal list of inequalities that should be satisfied by the solutions. By opposition to $\operatorname{SL}_3(\CC)$, in the unitary case there exist triples of classes that are not solutions.

In this paper, we are interested in the group $G=\pu(n,1)$, which is non-compact of rank one. Its key feature, which will enable us to tackle the question from a geometrical point of view, is that it is the isometry group of the $n$-dimensional complex hyperbolic space, denoted $\mathbb{H}_{\CC}^n$. This space, which is the natural generalization to the complex setting of the real hyperbolic space, was already present in Poincaré's work \cite{Poi} from 1907. It was also studied by Cartan \cite{Car} in the 1930's. The complex hyperbolic space is endowed with a very rich geometry, in particular because of its non-constant negative curvature. The main reference for this field of geometry is Goldman's book \cite{Gol}. 

Let us come back to Horn's problem in $\pu(n,1)$. Note that the existence of a solution $(\cC_1,\cC_2,\cC_3)$ is equivalent to the existence of a representation $\rho$ of the group $$\Gamma=\langle \texttt{a},\texttt{b},\texttt{c} \ | \ \texttt{a}\texttt{b}\texttt{c}=1\rangle$$ into $\pu(n,1)$, with $(\rho(\texttt{a}),\rho(\texttt{b}),\rho(\texttt{c}))\in\cC_1\times\cC_2\times\cC_3$ (see Section \ref{cadre} for more detail). This "representation theory" point of view reveals to be an interesting one to tackle the question.

Restricting to the $n=2$ case, Falbel and Wentworth proved in \cite{FalWen} that all triples of loxodromic classes were solutions (a loxodromic element is semi-simple with distinct eigenvalues, and geometrically, fixes two points in $\partial\mathbb{H}_{\CC}^n$ and no point in $\mathbb{H}_{\CC}^n$). In \cite{ParWil2}, Parker and Will gave elements of response again for $n=2$, but considering this time triples of parabolic classes (a parabolic element is not semi-simple, and acts geometrically by fixing a unique point in $\partial\mathbb{H}_{\CC}^n$ and no point in $\mathbb{H}_{\CC}^n$). 

In this manuscript, we will rather restrict to triples of elliptic conjugacy classes, i.e.\ classes whose elements fix a point in $\mathbb{H}_{\CC}^n$ (their lifts to $\U(n,1)$ are diagonalizable with modulus one eigenvalues). We may identify the space of such classes with a polyhedral subspace $\cT(G)\subset\RR^n$ (see Section \ref{poly}). We show the following result (Theorem \ref{generalsolution}):

\begin{Theo*}
	The solution set of the elliptic multiplicative Horn problem in $G=\pu(n,1)$ is a finite union of polytopes in $\cT(G)^3\subset(\RR^n)^3$.
\end{Theo*}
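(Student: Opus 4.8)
The plan is to realize the solution set as the image of a real-algebraic or semi-algebraic "moment-type" map and then exploit the geometry of $\pu(n,1)$ acting on $\mathbb{H}_\CC^n$ to show this image is piecewise convex polyhedral. First I would reformulate the problem: fixing a base point $o\in\mathbb{H}_\CC^n$, each elliptic class $\cC$ is determined by an $n$-tuple of angles $\theta=(\theta_1,\dots,\theta_n)\in\cT(G)$, the arguments of the eigenvalues of a lift to $\U(n,1)$ normalized so that the eigenvalue on the negative line is $1$ (or whatever normalization Section~\ref{poly} fixes). A solution with classes $(\cC_1,\cC_2,\cC_3)$ is a triple $(A,B,C)$ with $ABC=\mathrm{Id}$; equivalently, setting $C^{-1}=AB$, it is a pair $(A,B)$ with $A\in\cC_1$, $B\in\cC_2$ and $AB\in\cC_3^{-1}$. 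So the solution set is the image of the map that sends $(A,B)$ in $\cC_1\times\cC_2$ (or rather a variety fibered over pairs of points/flags) to the triple $(\theta(\cC_1),\theta(\cC_2),\theta((AB)^{-1}))$, and more globally the solution set $\mathcal{H}\subset\cT(G)^3$ is the image of
\[
\Phi\colon \pu(n,1)\times\pu(n,1)\longrightarrow \cT(G)^3,\qquad (A,B)\longmapsto\bigl(\mathrm{cl}(A),\mathrm{cl}(B),\mathrm{cl}(B^{-1}A^{-1})\bigr),
\]
where $\mathrm{cl}$ denotes the (angle coordinates of the) elliptic class of an element, defined only on the elliptic locus. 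The domain is a real algebraic group, $\Phi$ is real-analytic (indeed semi-algebraic) on the open set where all three arguments are elliptic, so $\mathcal{H}$ is a semi-algebraic subset of $\cT(G)^3$, hence already a finite union of connected semi-algebraic pieces. The real content is to upgrade "semi-algebraic" to "finite union of convex polytopes".

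The key step is a \emph{localization / linearization} argument, which is where rank one is decisive. A triple $(A,B,C)$ with $ABC=\mathrm{Id}$ and all factors elliptic either has a common fixed point in $\mathbb{H}_\CC^n$ — in which case, after conjugation, $A,B,C\in\U(n)$ (the stabilizer $\U(n)$ of $o$, or a $\pu$-version thereof) and we are exactly in the compact unitary Horn problem, whose solution set is a known finite union of polytopes by Biswas/Belkale/Ressayre cited in the introduction — or it has no common fixed point in $\overline{\mathbb{H}_\CC^n}$, and then by the classical "center of mass / circumcenter" argument for groups acting on CAT($-1$) (or CAT($0$)) spaces, the subgroup $\langle A,B,C\rangle$ fixes a point, contradiction; so in fact a genuinely non-compact behaviour can only occur through a fixed point on the boundary, which is impossible for a group generated by elliptics with $ABC=\mathrm{Id}$ unless... — here one must be careful: a product of elliptics need not be elliptic, but we are \emph{restricting} $C$ to be elliptic, so the relevant case analysis is (i) $\langle A,B\rangle$ has a fixed point in $\mathbb{H}_\CC^n$, reducing to $\U(n)$; (ii) $\langle A,B\rangle$ has no such fixed point, in which case I would analyze the possible "Levi types" of the pair using the rank-one structure — the fixed-point sets of $A$ and of $B$ are totally geodesic complex subspaces $\mathbb{H}_\CC^{k}\subset\mathbb{H}_\CC^n$, and their relative position is governed by a single angle (the complex hyperbolic analogue of the principal angle), giving a one- or two-parameter reduction. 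Accumulating these reductions over a finite stratification of "incidence types" of the fixed-subspace configuration writes $\mathcal{H}$ as a finite union of pieces, each of which is a continuous image of a product of a compact Horn polytope with an interval-parametrized family; the final polyhedrality of each piece follows because the additional angle enters the class coordinates of $AB$ through the eigenvalue equations of a rank-one family, which one checks are piecewise linear in the angles (this is the $n=2$ computation the paper promises to carry out explicitly, done here only qualitatively).

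Concretely the steps are: \textbf{(1)} Set up the angle coordinates $\cT(G)$ and the class map $\mathrm{cl}$, and reduce $ABC=\mathrm{Id}$ to studying pairs $(A,B)$ with prescribed classes and $AB$ elliptic. \textbf{(2)} Show $\mathcal{H}$ is semi-algebraic, hence a finite union of connected semi-algebraic sets, and reduce to showing each is a polytope. \textbf{(3)} Stratify the space of pairs $(A,B)$ (or of triples) by the incidence type of the configuration of fixed totally-geodesic subspaces of $A$, $B$, $C$ in $\overline{\mathbb{H}_\CC^n}$; show this stratification is finite. \textbf{(4)} On the stratum with a common interior fixed point, invoke the solved compact case: the contribution is a finite union of polytopes. \textbf{(5)} On the remaining strata, use the rank-one geometry to produce a finite-dimensional (few-parameter) parametrization of solutions, with the class coordinates of $C=(AB)^{-1}$ expressed by piecewise-linear functions of the angle data of $A$, $B$ and the incidence parameters; conclude each stratum contributes a finite union of polytopes. \textbf{(6)} Take the union.

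The main obstacle is step (5): controlling, on the strata \emph{without} a common interior fixed point, how the eigenvalue data of the product $AB$ depends on the eigenvalue data of $A$ and $B$ and on the finitely many "angle-between-fixed-subspaces" parameters, and verifying that this dependence is piecewise linear (equivalently, that the relevant constraint is defined by finitely many linear inequalities in the angle coordinates). This is exactly where the signature-$(n,1)$ structure and the rank-one hypothesis must be used in an essential way — it is what makes the problem tractable compared with higher rank — and it is also what makes the explicit $n=2$ description (the second theorem of the paper) both possible and nontrivial.
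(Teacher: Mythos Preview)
Your approach has a genuine gap at step (5), and the preceding reduction does not set it up correctly. The dichotomy you sketch in the ``localization/linearization'' paragraph is misleading: the generic and most interesting situation is precisely case (ii), where $\langle A,B\rangle$ has \emph{no} common fixed point in $\mathbb{H}_\CC^n$. These are the irreducible solutions, and they form a full-dimensional subset of $\cT(G)^3$, not a lower-dimensional stratum to be handled by a residual analysis. The center-of-mass argument does not apply here (elliptic generators do not force a bounded orbit for the group they generate), so nothing forces a common fixed point, and the reduction to the compact $\U(n)$ Horn problem covers only the \emph{reducible} solutions. Your step (5) is therefore not a technicality but the entire content of the theorem, and the assertion that the class of $(AB)^{-1}$ depends piecewise-linearly on the angle data and the ``incidence parameters'' is unsupported; there is no evident finite-parameter parametrization of irreducible pairs $(A,B)$ in a non-compact group, and the eigenvalues of $AB$ are governed by transcendental, not linear, equations in general.

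The paper's proof proceeds along entirely different lines and avoids any direct analysis of the irreducible locus. The argument is: (a) the \emph{reducible} solutions lie in finitely many affine hyperplanes of $\cT(G)^3$ (Proposition~\ref{Pred}, proved by a determinant computation on block-diagonal lifts); (b) the set of irreducible solutions is \emph{open} in $\cT(G)^3$ (classical, Proposition~\ref{open}); (c) the full solution set $\cP(G)_u$ is \emph{closed} in $\mathring{\cT(G)}^3$ (Proposition~\ref{closed}), and this is where the non-compactness of $\pu(n,1)$ is overcome, using the $\delta$-hyperbolicity of $\mathbb{H}_\CC^n$ via the Bestvina--Paulin compactness result (Theorem~\ref{falbwent}, Corollary~\ref{bestpau}). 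Combining (b) and (c), on each connected component $C$ of the complement of the reducible hyperplanes (a \emph{cell}, which is an intersection of half-spaces by (a)), the solution set $\cP(G)_u\cap C$ is both open and closed, hence equals $C$ or $\varnothing$ (Theorem~\ref{emptyfulltheorem}). Thus $\cP(G)$ is a finite union of cell closures, each a polytope. Your semi-algebraic observation is correct but plays no role; what your proposal is missing is this open-plus-closed wall-crossing mechanism, and in particular the use of Gromov-hyperbolicity to secure properness.
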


The faces of this polytope are pieces of hyperplanes composed of \textit{reducible} solutions $(\cC_{1},\cC_2,\cC_3)$, i.e.\ such that there exists a reducible representation $\rho$ of $\Gamma$ (this means that lifts of $\rho(\texttt{a}),\rho(\texttt{b}),\rho(\texttt{c})$ to $\U(n,1)$ have a common invariant subspace) such that $(\rho(\texttt{a}),\rho(\texttt{b}),\rho(\texttt{c}))$ belongs to $\cC_1\times\cC_2\times\cC_3$. We call these faces the \textit{reducible walls}. The following subtle fact will be of interest for us: there may exist an irreducible representation $\rho'$ such that the conjugacy classes of $\rho'(\texttt{a}),\rho'(\texttt{b}),\rho'(\texttt{c})$ lie on a reducible wall.

\begin{Defi*}
	We call \textit{cell} a connected component of the complement of the set of reducible walls.
\end{Defi*}

Let $C$ be a cell. The proof of the Theorem relies on the following dichotomy (Theorem \ref{emptyfulltheorem}), which is identified by Paupert in \cite{Pau} (see also Paupert-Will \cite{PauWil}):
\begin{itemize}
	\item either $C$ is contained in the solution set (in which case we say that $C$ is "full");
	\item or $C$ is contained in the complement of the solution set (in which case we say that $C$ is "empty").
\end{itemize}
This dichotomy comes from the combination of the following results (Propositions \ref{open} and \ref{closed}):
\begin{Prop*}
	The solution set of the Horn problem satisfies the following two properties:
	\begin{enumerate}[i)]
		\item The subset of irreducible solutions is open in the solution set;
		\item The solution set is closed in the interior of the space of triples of conjugacy classes $\mathring{\mathcal{T}(G)}^3$.
	\end{enumerate}
\end{Prop*}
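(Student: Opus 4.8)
## Proof proposal

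The plan is to prove the two assertions separately, since they are of somewhat different nature. Throughout, fix a triple of elliptic conjugacy classes $(\cC_1,\cC_2,\cC_3)$ and recall that a solution corresponds to a representation $\rho\colon\Gamma\to G$ with $(\rho(\mathtt a),\rho(\mathtt b),\rho(\mathtt c))\in\cC_1\times\cC_2\times\cC_3$.

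\smallskip

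\textbf{Openness of the irreducible solutions.} The idea is a standard submersion/implicit-function-theorem argument. Consider the map
\[
\mu\colon \cC_1\times\cC_2\times\cC_3\longrightarrow G,\qquad (A,B,C)\longmapsto ABC,
\]
so that the fibre $\mu^{-1}(\mathrm{Id})$ is exactly the set of representations realizing the given triple. The first step is to compute the differential of $\mu$ at a point $(A,B,C)$ in terms of the adjoint action, and to identify its image (after translating to the Lie algebra $\mfg$) with a sum of subspaces of the form $\mathrm{Ad}(g)\big((1-\mathrm{Ad}(A_i))\mfg\big)$, i.e.\ with the span of the tangent spaces to the three conjugacy classes. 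The second step is to recognize that the cokernel of $d\mu$ is naturally dual to the space of $\Gamma$-invariant vectors in $\mfg$ under the coadjoint action of the triple, which vanishes precisely when $\rho$ is irreducible (this is where one uses that the centralizer of an irreducible triple in $\U(n,1)$ is reduced to scalars, so the only invariant vectors in $\mfg$ come from the centre and do not obstruct hitting $\mathrm{Id}$ inside the relevant codimension-$0$ slice). Hence at an irreducible solution $\mu$ is a submersion onto a neighbourhood of $\mathrm{Id}$; since the conjugacy classes vary smoothly with $(\cC_1,\cC_2,\cC_3)\in\cT(G)^3$, a transversality/persistence-of-submersion argument shows that nearby triples also admit (irreducible) solutions. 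The technical care needed is to set this up equivariantly so that "nearby triple" is handled uniformly, e.g.\ by working in a single smooth family $\coprod \cC_1(t)\times\cC_2(t)\times\cC_3(t)$ over $\cT(G)^3$ and applying the parametric submersion theorem; this bookkeeping is the main nuisance, though not a deep obstacle.

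\smallskip

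\textbf{Closedness of the solution set in $\mathring{\cT(G)}^3$.} Here the plan is a compactness argument, and the crux is to control what happens to a convergent sequence of solutions. Suppose $(\cC_1^{(k)},\cC_2^{(k)},\cC_3^{(k)})\to(\cC_1,\cC_2,\cC_3)$ with each term a solution and the limit lying in the interior of $\cT(G)^3$ — so the limit classes are \emph{genuinely elliptic} (all eigenvalues strictly on the unit circle, away from the walls of the Weyl chamber that would degenerate them). Pick solutions $(A_k,B_k,C_k)$. Since each $\cC_i^{(k)}$ is elliptic, each of $A_k,B_k,C_k$ is conjugate into the maximal compact subgroup $K=\pu(n)\subset\pu(n,1)$ by an element $g_{i,k}\in G$; the subtle point is that $G$ is non-compact, so $(A_k,B_k,C_k)$ need not lie in a compact subset of $G^3$ a priori. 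The key step is therefore to show that one can choose the conjugating elements to stay in a compact set: after conjugating the whole triple we may assume $A_k$ lies in a fixed maximal torus of $K$ (it converges, since $\cC_1^{(k)}\to\cC_1$), and then one argues that $B_k$ cannot escape to infinity — geometrically, $A_k$ and $B_k$ are elliptic isometries of $\HH^n_\CC$ with fixed eigenvalue data bounded away from $1$, and a product of two isometries whose fixed point sets drift apart cannot have bounded (elliptic, hence compact-type) product $C_k^{-1}$. Making this precise is the heart of the argument: one shows that $d(\mathrm{Fix}(A_k),\mathrm{Fix}(B_k))$ stays bounded, which confines $B_k$ to a compact set, and then $C_k=(A_kB_k)^{-1}$ is automatically confined. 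Once $(A_k,B_k,C_k)$ lies in a compact subset of $G^3$, extract a convergent subsequence $(A_k,B_k,C_k)\to(A,B,C)$; continuity of multiplication gives $ABC=\mathrm{Id}$, continuity of the eigenvalue/conjugacy-class map gives $A\in\cC_1$, $B\in\cC_2$, $C\in\cC_3$ (using that the limit classes are in the interior, so the conjugacy class is a closed condition there), and the limit triple is a solution.

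\smallskip

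\textbf{Main obstacle.} I expect the genuine difficulty to be the properness step in part (ii): ruling out escape of the solutions $(A_k,B_k,C_k)$ to infinity in the non-compact group $G$. In a compact group this is automatic; here it must be extracted from the hyperbolic geometry of $\HH^n_\CC$, using that all three factors are elliptic with conjugacy data bounded away from the boundary of the elliptic stratum (which is exactly the hypothesis "interior of $\cT(G)^3$"). One natural way to organize it: if $B_k\to\infty$, then up to subsequence $B_k$ converges to a point of $\partial\HH^n_\CC$ in the sense of the visual compactification, forcing $A_kB_k$ to have translation-like behaviour incompatible with being elliptic with bounded rotation data — contradiction. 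Turning this heuristic into a clean proof, with the complex-hyperbolic distances and the classification of isometries from Goldman's book, is where the real work lies; the openness statement in part (i), by contrast, is a routine application of the parametric submersion theorem once the differential of $\mu$ and its cokernel are identified.
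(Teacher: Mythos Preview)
Your plan is correct and for part~(i) it coincides with what the paper calls ``classical'' (the paper does not write out the submersion argument but simply cites Falbel--Wentworth and Paupert for exactly the differential-of-$\mu$/centralizer computation you sketch).

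For part~(ii) your strategy is also correct, but the paper packages the key compactness step differently. You propose to argue directly that if the fixed points of $A_k$ and $B_k$ drift apart in $\HH^n_\CC$ then $A_kB_k$ cannot remain elliptic, and you flag this geometric estimate as ``where the real work lies.'' The paper instead invokes a general result of Bestvina and Paulin valid for any $\delta$-hyperbolic space: if $(g_j)$ and $(h_j)$ are sequences of semi-simple isometries with bounded translation lengths, then either some subsequence can be simultaneously conjugated to converge, or the translation lengths $|g_jh_j|$ are unbounded. Since elliptic elements have translation length zero, and $A_kB_k=C_k^{-1}$ is elliptic by hypothesis, the second alternative is ruled out and the compactness follows in one line. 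Your hands-on argument would amount to reproving a special case of Bestvina--Paulin; it can be made to work, but the black-box route is shorter and explains why the result holds for $\pu(n,1)$ uniformly in $n$ (and indeed for any rank-one group).

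One small correction to your framing: you say the hypothesis ``interior of $\cT(G)^3$'' is what keeps the rotation data bounded away from degeneration and hence drives the properness. In fact the properness step uses nothing but $|A_k|=|B_k|=|A_kB_k|=0$, which holds for \emph{any} elliptic triple, interior or not. The interior hypothesis enters only at the very end, to guarantee that the limits $A,B,C$ land in the correct (regular elliptic) conjugacy classes rather than degenerating to a wall of the Weyl chamber or to a non-separated parabolic class --- i.e.\ it is a continuity-of-$i$ issue, not a properness issue.
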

The openness property is relatively classical for a Lie group with trivial center. However, the closedness property comes specifically from the $\delta$-hyperbolicity in Gromov's sense of $\mathbb{H}_{\CC}^n$, which enables us to use a result obtained separately by Bestvina (Theorem 3.9 in \cite{Bes}) and Paulin (Theorem 6.6 in \cite{Paulin}) in order to overcome the non-compactness of the group $\pu(n,1)$ (see Falbel-Wentworth \cite{FalWen}).

This leads to the following strategy to obtain an explicit solution to Horn's problem:
\begin{enumerate}[1.]
	\item Describe the set of reducible walls by giving equations;
	\item Describe explicitly all the cells by a set of inequalities;
	\item Determine for each cell if it is "full" or "empty".
\end{enumerate}

A triple $(A,B,C)\in\pu(n,1)^3$ is reducible if there exist $d_1,d_2\in\NN$ with $d_1+d_2=n$ such that $A,B,C$ belong to a same subgroup of $\pu(n,1)$ isomorphic to $\mathbb{P}(\U(d_1)\times\U(d_2,1))\cong\U(d_1)\times\pu(d_2,1)$. Point 1.\ therefore requires to know the explicit solutions to Horn's problem for all the groups $\U(d_1)$ with $d_1\leq n$, and $\pu(d_2,1)$ with $d_2<n$. 

Point 2.\ requires to understand the combinatorial arrangements of the reducible walls, whose complexity grows exponentially fast with the value of $n$: we will see for instance that we obtain two reducible walls and four cells for $\pu(1,1)$, against twenty-seven reducible walls and twenty-eight cells for $\pu(2,1)$. These first two points are therefore hard to obtain for technical and computational reasons. We will see in Section \ref{liftingsection} how a lifting property of representations to $\su(n,1)$ enables us to diminish the complexity of these combinatorics. It relies on the following observation: if $ABC=\operatorname{Id}$ in $\pu(n,1)$, then lifts $\tilde{A},\tilde{B},\tilde{C}\in\su(n,1)$ satisfy $\tilde{A}\tilde{B}\tilde{C}=u\operatorname{Id}$, where $u$ is an $(n+1)$-root of unity. Fixing a choice of lift for $A,B,C$ (the so-called \textit{standard lift}, see Definition \ref{standardlift}) yields a partition of the solution set according to the value of $u$.

For point 3., it suffices to find in each cell a triple of classes for which we can determine if it is a solution or not. According to the dichotomy stated above, this enables us to conclude that the entire cell is either "full" or "empty". However, this point remains the most difficult to obtain, because there exists no general method to construct examples of solutions. We will use the results of \cite{Mar}, where we have explained how to construct families of irreducible representations of $\Gamma$ such that the images of the generators belong to the same triples of classes as reducible representations of $\Gamma$. These constructions will be particularly useful to carry out point 3., for the presence of an irreducible solution on a reducible wall enables us to "fill" the cells adjacent to this wall.

In Sections \ref{horn} and \ref{proof}, we apply the resolution strategy described above to the case $n=2$, and we show the following result (Theorem \ref{solution}):

\begin{Theo*}
	The solution set to the elliptic multiplicative Horn problem in $\pu(2,1)$ is the reunion of five explicit convex polytopes in $(\RR^2)^3$.
\end{Theo*}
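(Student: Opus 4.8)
The plan is to follow the three-step resolution strategy announced in the introduction, specialised to $n=2$. First I would carry out Step 1: describe all reducible walls. A reducible triple in $\pu(2,1)$ lies in a subgroup conjugate to either $\U(1)\times\pu(1,1)$ or (after the relevant identification) a copy built from $\U(2)$, so the reducible walls are read off from the known solutions to Horn's problem in $\U(1)$, $\U(2)$ and $\pu(1,1)$. The $\U(1)$ case is trivial (a single linear equation on the angles), the $\U(2)$ case is the classical Weyl-chamber triangle inequalities, and $\pu(1,1)$ was recalled above to give two walls and four cells. Assembling these, together with the book-keeping of which eigenvalue of each $\cC_i$ is assigned to the $\U(1)$-factor versus the $\pu(1,1)$-factor, produces the full list of hyperplanes; the lifting trick to $\su(2,1)$ of Section \ref{liftingsection} is used here to cut down the combinatorics and to organise the walls according to the cube root of unity $u$. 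The claim to be justified is that this yields exactly twenty-seven reducible walls.

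Next, Step 2: describe the cells. Having the explicit affine equations of the twenty-seven walls inside $\mathring{\cT(G)}^3\subset(\RR^2)^3$, I would determine the chambers of this hyperplane arrangement that actually meet the (bounded) parameter region $\cT(G)^3$, and check that there are twenty-eight of them, giving for each a defining system of inequalities. This is essentially a finite (if tedious) convex-geometry computation: intersecting half-spaces and discarding empty or lower-dimensional pieces. The output is a combinatorial list of the candidate polytopes.

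Then Step 3: decide fullness or emptiness of each cell. By the dichotomy of Theorem \ref{emptyfulltheorem} it suffices to exhibit, in each cell, a single triple of classes and decide whether it is a solution. For this I would use two complementary tools: on one hand, explicit constructions of reducible representations of $\Gamma$ — available precisely when the wall-type inequalities coming from $\U(2)$ and $\pu(1,1)$ are met — which immediately fill the adjacent cells; on the other hand, the constructions of \cite{Mar} producing irreducible representations whose generator-classes sit on a reducible wall, which let us "fill across" a wall from a cell already known to be full. Running this propagation, seeded by a few hand-built examples (e.g.\ near the identity, where small elliptic elements always multiply to the identity, and near the boundary of $\cT(G)^3$), should show that exactly five of the twenty-eight cells — together with their closures — are full, and that their union is convex. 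The final step is then to verify that the closure of these five full cells is indeed a disjoint union of five convex polytopes and to write down their vertices.

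The main obstacle I expect is Step 3, and within it the emptiness half: proving that a given cell contains \emph{no} solution cannot be done by producing an example, so it requires either a direct obstruction (an inequality violated by every triple in the cell, obtained from the reducible-wall equations of the smaller groups via a momentum-map / eigenvalue-inequality argument) or a careful use of the closedness statement in Proposition \ref{closed} to push a hypothetical solution to the boundary and derive a contradiction there. Verifying convexity of the final union — a priori Theorem \ref{generalsolution} only gives a \emph{finite union} of polytopes — is the other delicate point, and I would expect it to follow from checking that the five full cells share faces appropriately, i.e.\ that no reducible wall separating two full cells is itself "empty", which is exactly the subtle phenomenon (an irreducible solution lying on a reducible wall) flagged after Theorem \ref{generalsolution}.
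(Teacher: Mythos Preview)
Your overall three-step strategy matches the paper's, and the wall count of twenty-seven and cell count of twenty-eight are correct. However, there is a genuine misunderstanding in Step~3 that would derail the argument.

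You write that ``exactly five of the twenty-eight cells are full'' and that ``their union is convex''. Both statements are false. In the paper's organisation, the twenty-eight cells split as $11$ $\omega$-cells, $6$ $1$-cells, and $11$ $\omega^2$-cells (one family per cube root of unity). Of these, \emph{twenty-three} are full and five are empty. The twenty-three full cells assemble into the three sets $P_{4\pi},P_{6\pi},P_{8\pi}$; it is the \emph{connected components} of these sets that give the five polytopes ($P_{4\pi}$ and $P_{8\pi}$ have two components each, $P_{6\pi}$ one). Each component is a convex polytope because its boundary consists of pieces of the reducible hyperplanes, but the union $\overline{P_{4\pi}}\cup\overline{P_{6\pi}}\cup\overline{P_{8\pi}}$ is neither convex nor disjoint (the paper explicitly states ``non-disjoint''). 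So your final paragraph about ``verifying convexity of the final union'' and ``disjoint union'' is aiming at the wrong target.

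The emptiness half of Step~3, which you correctly flag as the main obstacle, is not handled by either of the mechanisms you suggest (eigenvalue-inequality obstructions or pushing to the boundary via closedness). The paper's actual argument is more geometric: one considers the three-dimensional diagonal $\cD=\{\alpha_1=\alpha_2,\ \beta_1=\beta_2,\ \gamma_1=\gamma_2\}\subset\partial\cT(G)^3$, corresponding to triples of complex reflections in points. Any such triple is automatically reducible (two projective lines in $\CC\mathbb{P}^2$ meet), so $\cD\cap\cP(G)\subset\cP(G)^{\mathrm{red}}$. One then shows, using the separability of reflection-in-point classes from ellipto-parabolic classes, that $\cP(G)$ is closed up to $\mathring{\cD}$; hence any cell whose boundary contains an open piece of $\cD$ must be empty. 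A direct check shows that each of the five candidate empty cells meets $\cD$ in an open set. Without this diagonal argument (or something equivalent), your proposal has no concrete mechanism for establishing emptiness.
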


\begin{figure}
	\centering
	\scalebox{0.48}{\includegraphics{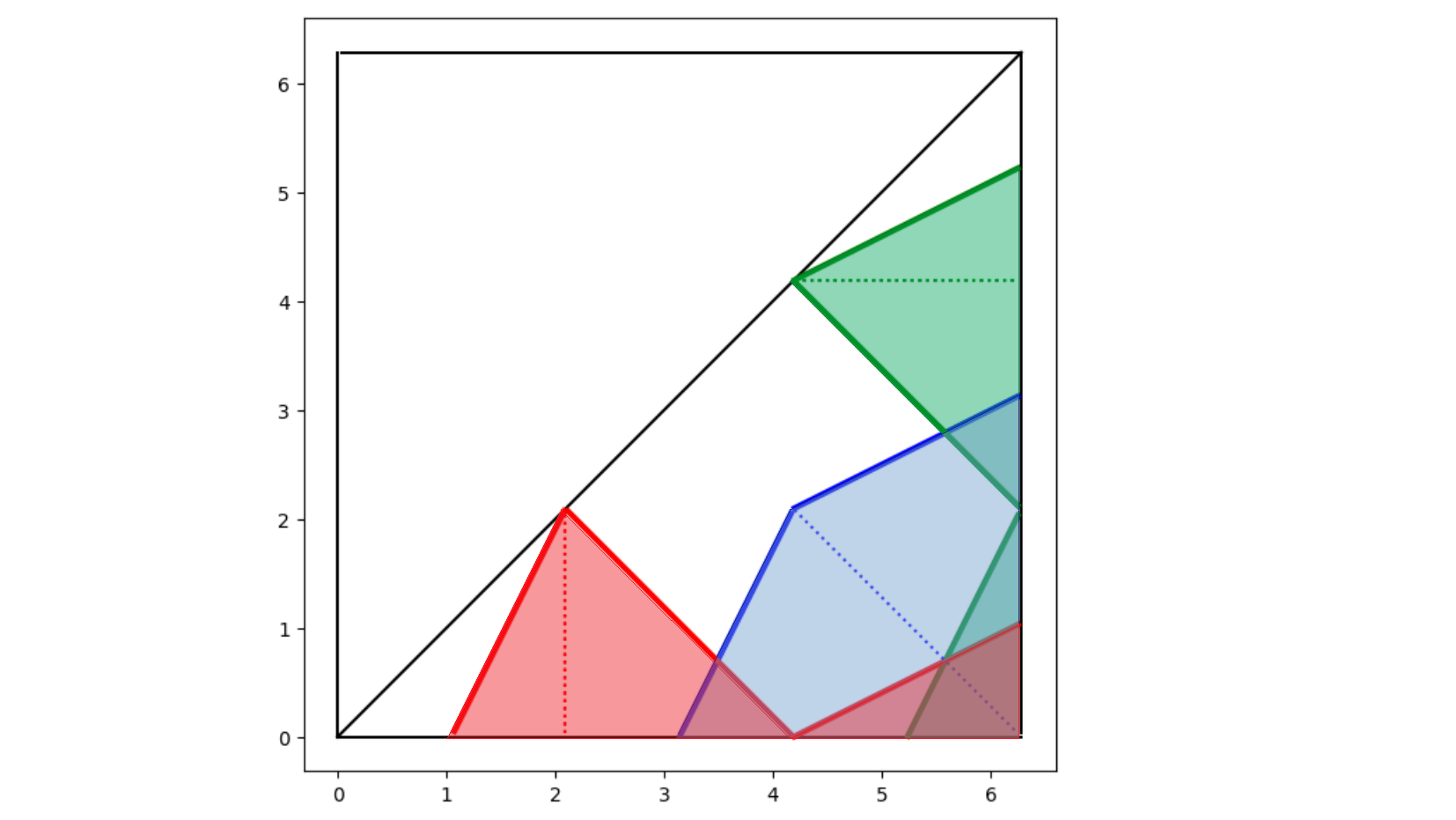}}
	\caption{Case of $\pu(2,1)$: the slice $\cC_1=\cC_2=\cC_3$. \label{tranche_sym_intro}}
\end{figure}

Let us give an illustration of this result. In the case where $G=\pu(2,1)$, the space of regular elliptic conjugacy classes is identified with the triangle $$\cT(G)=\{(\alpha_1,\alpha_2)\in[0,2\pi[^2 \ | \ \alpha_1\geq \alpha_2\}$$ (see Section \ref{poly}). Taking $\cC_1=\cC_2=\cC_3$ yields a two-dimensional slice of $\cT(G)^3$ identified with $\cT(G)$. We have depicted on Figure \ref{tranche_sym_intro} the intersection of this slice with the polytopes describing the solution set. The five polytopes that were announced in the above Theorem appear here (there are two red ones, two green ones, and one blue one). The three colors correspond to the three roots of unity. More precisely, the blue polytope corresponds to the solutions where the standard lifts (Definition \ref{standardlift}) satisfy $\tilde{A}\tilde{B}\tilde{C}=\operatorname{Id}$, the red ones correspond to $\tilde{A}\tilde{B}\tilde{C}=e^{2i\pi/3}\operatorname{Id}$, and the green ones to $\tilde{A}\tilde{B}\tilde{C}=e^{-2i\pi/3}\operatorname{Id}$. See Section \ref{figures}, where we give a precise description of this Figure.

Let us conclude this Introduction by a quick comparison of our results with Paupert's work in \cite{Pau}. In this paper, he gives fundamental arguments toward a resolution of the elliptic multiplicative Horn problem in $\pu(2,1)$, but rather by a case-by-case analysis. More precisely, he fixes two elliptic conjugacy classes $\cC_1,\cC_2$ in $\pu(2,1)$, and studies the image of the momentum map
$$\begin{array}{ccccc}
\mu_{\cC_1,\cC_2} & : & \cC_1\times \cC_2 & \to & \operatorname{Conj}(G) \\
& & (A,B) & \mapsto & [AB] \\
\end{array}$$
where $[AB]$ designates the conjugacy class of the element $AB$. Comparing with the point of view adopted here, the action of fixing two of the classes amounts to considering a two-dimensional slice of $\cT(G)^3\subset(\RR^2)^3$ (see Section \ref{figures}). The notions of reducible walls and of "full" or "empty" cells are introduced in his work. However, the action of taking slices breaks the symmetry, and makes more difficult the task of describing the solution set by a system of inequalities. One of his main results (Theorem 1.2 in \cite{Pau}) is a necessary and sufficient condition on the classes $\cC_1,\cC_2$ to obtain the surjectivity of $\mu_{\cC_1,\cC_2}$. We will be able to deduce this condition as a consequence of our description of the solution set (Remark \ref{paupert}).

The paper is organized as follows. Section \ref{preliminaries} is dedicated to preliminaries in the field of complex hyperbolic geometry: we introduce there all the notions that will be used later on. Section \ref{horn_qual} is dedicated to the elliptic multiplicative Horn problem in $\pu(n,1)$, where $n$ is any integer. We build there the setting in which we prove our results, which is that of representation theory. We give a detailed description of the identification between the space of elliptic conjugacy classes and the polyhedral subspace $\cT(G)\subset\RR^n$, which enables us to see the solution set as a reunion of polytopes. We define the notion of "full" or "empty" cell. In Section \ref{horn_u}, we give the solutions to Horn's problem for the groups $\U(2)$ and $\pu(1,1)$. Sections \ref{horn} and \ref{proof} are dedicated to the proof of the explicit solution to the elliptic multiplicative Horn problem in $\pu(2,1)$. We describe the reducible walls together with the cells, and we give arguments to "fill" or "empty" every cell. In Section \ref{examples}, we give examples of two-dimensional slices of our polytopes, like Figure \ref{tranche_sym_intro} or Paupert's slices, in order to enable the reader to visualize the solution set.

\textbf{Acknowledgments.} The results of this paper are part of a PhD thesis under the supervision of Antonin Guilloux and Pierre Will. I am most grateful to my advisors for the numerous discussions we had together, as well as for their useful advice and comments on this piece of writing. I also thank Michel Brion, Elisha Falbel, Arnaud Maret, John Parker, Julien Paupert, Nicolas Ressayre and Maxime Wolff for the useful discussions. This work is supported by the French National Research Agency in the framework of the « France 2030 » program  (ANR-15-IDEX-0002) and by the LabEx PERSYVAL-Lab (ANR-11-LABX-0025-01).

\section{The complex hyperbolic space}\label{preliminaries}

\subsection{Description of $\mathbb{H}_{\CC}^n$}\label{gene}

We start by reviewing classical notions about the complex hyperbolic space $\mathbb{H}_\CC^n$, highlighting the objects we will be studying next. General references are Goldman's book \cite{Gol} or Parker's notes \cite{Par}. 

\subsubsection{Main definitions}

We consider a Hermitian form $\langle\cdot,\cdot\rangle$ of signature $(n,1)$ over $\CC^{n+1}$. The negative, null and positive cones of $\langle\cdot,\cdot\rangle$ are
$$V^-=\{z\in\CC^{n+1} \ | \ \langle z,z\rangle<0\}, \ V^0=\{z\in\CC^{n+1} \ | \ \langle z,z\rangle=0\},$$ $$V^+=\{z\in\CC^{n+1} \ | \ \langle z,z\rangle>0\}.$$

\begin{Defi}\label{type}
	We say that a vector of $V^-$ (respectively $V^+$, respectively $V^0$) is of \textit{negative} (respectively \textit{positive}, respectively \textit{null}) \textit{type}.
\end{Defi}

\begin{Defi}
	The complex hyperbolic space $\mathbb{H}_\CC^n$ is the projectivization of the negative cone $V^-$ to $\CC\mathbb{P}^n$:
	$$\mathbb{H}_\CC^n=\mathbb{P}(V^-)\subset\CC\mathbb{P}^n,$$
	endowed with the metric $d$ defined as follows:
	\begin{equation*}
	\forall z,w \in \mathbb{H}^n_\CC, \  \cosh^2\left(\frac{d(z,w)}{2}\right)=\frac{\langle\tilde{z},\tilde{w}\rangle\langle\tilde{w},\tilde{z}\rangle}{\langle\tilde{z},\tilde{z}\rangle\langle\tilde{w},\tilde{w}\rangle},
	\end{equation*}
	where $\tilde{z},\tilde{w}\in\CC^{n+1}$ are any lifts of $z,w$ (see Section 3.1.6 in \cite{Gol}). Its boundary $\partial\mathbb{H}_\CC^n$ is the projectivization of the null cone $V^0$:
	$$\partial\mathbb{H}_\CC^n=\mathbb{P}(V^0)\subset\CC\mathbb{P}^n.$$
	We denote by $\overline{\mathbb{H}_\CC^n}$ the union $\mathbb{H}_\CC^n\cup\partial\mathbb{H}_\CC^n$.
\end{Defi}

Let us consider the Hermitian form of signature $(n,1)$ defined for any $z,w$ in $\CC^{n+1}$ by
$$\langle z,w\rangle=z_1\overline{w_1}+\ldots+z_n\overline{w_n}-z_{n+1}\overline{w_{n+1}}.$$ 
Its matrix in the canonical basis is
\begin{equation}\label{J}
J=\begin{pmatrix}
I_n & 0 \\
0 & -1 
\end{pmatrix}.
\end{equation}
The space obtained with the choice of this Hermitian form is called the \textit{ball model} for $\mathbb{H}_\CC^n$. Indeed, the affine chart $\{z_{n+1}=1\}$ identifies $\mathbb{H}_\CC^n$ with the unit ball of $\CC^n$, and $\partial\mathbb{H}_\CC^n$ with the sphere $\mathbb{S}^{2n-1}$. 

\begin{Nota}
	For $z=(z_1,\ldots,z_n)$ in $\mathbb{H}_\CC^n$, we shall denote by $\tilde{z}=(z_1,\ldots,z_n,1)\in\CC^{n+1}$ its lift to this affine chart.
\end{Nota}

Let us recall that $\operatorname{U}(n,1)$ is the subgroup of $\operatorname{GL}_{n+1}(\CC)$ that preserves the Hermitian form $\langle\cdot,\cdot\rangle$. We denote by $\operatorname{PU}(n,1)$ the quotient of this group by its center (which is the subset of homotheties), and by $\mathbb{P}:\U(n,1)\to\pu(n,1)$ the projection. One can easily check that the action of $\pu(n,1)$ on $\CC\mathbb{P}^n$ restricts to $\mathbb{H}_\CC^n$. The following classical theorem describes the main properties of this action.

\begin{Theo} \label{isomclassif}
	\emph{\cite{Par}}  The group of holomorphic isometries of $\mathbb{H}_\CC^n$ is $\operatorname{PU}(n,1)$. Its action on $\mathbb{H}_\CC^n$ is transitive, and stabilizers of points are isomorphic to the subgroup $\mathbb{P}(\operatorname{U}(n)\times\operatorname{U}(1))$.
\end{Theo}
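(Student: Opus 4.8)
The plan is to break the statement into three parts: (a) $\pu(n,1)$ acts faithfully by holomorphic isometries of $\mathbb{H}_\CC^n$, (b) this action is transitive with point stabilizer $\mathbb{P}(\U(n)\times\U(1))$, and (c) there are no further holomorphic isometries. Part (a) is essentially unwinding the definitions: an element of $\U(n,1)$ induces a projective linear — hence holomorphic — automorphism of $\CC\mathbb{P}^n$, it preserves $V^-$ and therefore $\mathbb{H}_\CC^n$, and since it preserves $\langle\cdot,\cdot\rangle$ it preserves the right-hand side of the distance formula, so it acts by isometries. Homotheties act trivially, so the action descends to $\pu(n,1)$; and since $\mathbb{P}(V^-)$ spans $\CC\mathbb{P}^n$, an element of $\U(n,1)$ acting trivially on $\mathbb{H}_\CC^n$ is a homothety, so the action of $\pu(n,1)$ is faithful.

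For transitivity I would show $\U(n,1)$ acts transitively on negative lines. Given $z\in\mathbb{H}_\CC^n$, pick a lift $v\in V^-$ with $\langle v,v\rangle=-1$; the orthogonal complement $v^\perp$ is then positive definite of complex dimension $n$, so by Gram–Schmidt it carries a $\langle\cdot,\cdot\rangle$-orthonormal basis $u_1,\dots,u_n$, and the matrix sending the canonical basis to $(u_1,\dots,u_n,v)$ lies in $\U(n,1)$ and carries $o:=[e_{n+1}]$ to $z$ (a concrete form of the Hermitian Witt extension theorem). For the stabilizer, an element $g\in\U(n,1)$ fixing $o$ preserves the line $\CC e_{n+1}$, hence (being an isometry of $\langle\cdot,\cdot\rangle$) also its orthogonal complement $\CC^n\times\{0\}$; since $\langle\cdot,\cdot\rangle$ restricts to the standard positive form there and to a negative form on $\CC e_{n+1}$, $g$ has block form $\operatorname{diag}(A,\lambda)$ with $A\in\U(n)$ and $\lambda\in\U(1)$, and conversely every such matrix fixes $o$. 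Passing to $\pu(n,1)$ and observing that the center of $\U(n,1)$ (scalar matrices of modulus $1$) sits diagonally inside $\U(n)\times\U(1)$, we obtain $\operatorname{Stab}_{\pu(n,1)}(o)\cong\mathbb{P}(\U(n)\times\U(1))$.

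It remains to prove part (c). Since $\pu(n,1)$ already acts transitively, it suffices to show that a holomorphic isometry $g$ fixing $o$ lies in $\pu(n,1)$. The space $\mathbb{H}_\CC^n$ is connected and, being homogeneous under $\pu(n,1)$, complete; by the standard rigidity of Riemannian isometries $g$ is therefore determined by the pair $(g(o),d_og)=(o,d_og)$. Now $d_og$ is an $\RR$-linear isometry of $T_o\mathbb{H}_\CC^n$ commuting with the complex structure (as $g$ is holomorphic), so under the identification $T_o\mathbb{H}_\CC^n\cong\CC^n$ given by the ball-model affine chart it is an element of $\U(n)$. On the other hand, in the ball model an element $[\operatorname{diag}(A,\lambda)]\in\operatorname{Stab}_{\pu(n,1)}(o)$ acts by $z\mapsto\lambda^{-1}Az$, whose differential at $o=0$ is $\lambda^{-1}A$ and thus ranges over all of $\U(n)$. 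Hence there is $h\in\operatorname{Stab}_{\pu(n,1)}(o)$ with $d_oh=d_og$, and by rigidity $g=h\in\pu(n,1)$. The main obstacle is precisely this classification step, the only one that is not linear algebra over the Hermitian form: it rests on the completeness of $\mathbb{H}_\CC^n$ (so that the $1$-jet rigidity of isometries applies) and on the use of holomorphy to force $d_og$ to be complex linear, which cuts the group of admissible differentials down from the orthogonal group to $\U(n)$, exactly matching what $\mathbb{P}(\U(n)\times\U(1))$ realizes; dropping holomorphy would instead yield $\pu(n,1)\rtimes\ZZ/2$, the extra factor coming from an anti-holomorphic involution such as complex conjugation.
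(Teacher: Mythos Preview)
Your proof is correct and follows the standard route: establish that $\pu(n,1)$ acts faithfully by holomorphic isometries via linear algebra over the Hermitian form, prove transitivity by Witt/Gram--Schmidt, compute the stabilizer of $o=[e_{n+1}]$ as block-diagonal matrices, and then conclude by the $1$-jet rigidity of isometries on a connected complete Riemannian manifold together with the fact that holomorphy forces the differential at $o$ to lie in $\U(n)$, which the stabilizer already realizes.

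There is nothing to compare against here: the paper does not prove this theorem but simply quotes it from Parker's notes \cite{Par}. Your argument is exactly the kind of proof one finds in such a reference, and the closing remark that dropping holomorphy yields the $\ZZ/2$-extension by complex conjugation is a nice sanity check.
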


\subsubsection{Complex lines in the complex hyperbolic plane}

In this section, we fix $n=2$. Let us develop some notions and facts about complex lines of $\mathbb{H}_\CC^2$ that we will use later on. For more details, see Section 5.2 in \cite{Par}.

\begin{Defi}\label{comprefl}
	Let $W$ be a complex plane in $\CC^3$ that intersects $V^-$. We call \textit{complex line} the intersection $\mathbb{P}(W)\cap\mathbb{H}_\CC^2$. 
	
	The  one-dimensional subspace $W^\perp\subset\CC^3$ is called the polar line to $W$, consisting of \textit{polar vectors}. These vectors have positive type (see Definition \ref{type}). We say that such a polar vector $c$ is \textit{normalized} if $\langle c,c\rangle=1$. 
\end{Defi}

\begin{Rema}
	\begin{itemize}
		\item A projective line in $\CC\mathbb{P}^2$ may be disjoint from $\overline{\mathbb{H}_\CC^2}$ (respectively, tangent to $\mathbb{H}_\CC^2$). It also has a line of polar vectors, which have negative (respectively null) type. In the negative case, we say that a polar vector $c$ is normalized if $\langle c,c\rangle=-1$.
		\item Given two points $p_1,p_2\in\mathbb{H}_\CC^2$, they are contained in a unique complex line, which we denote by $(p_1p_2)$.
	\end{itemize}
\end{Rema}

\begin{Prop}\label{CP1stab}
	\emph{\cite{Par}} The group $\operatorname{PU}(2,1)$ acts transitively on the set of complex lines. The stabilizer of a complex line under the action of $\operatorname{PU}(2,1)$ is conjugate to $\mathbb{P}(\operatorname{U}(1,1)\times\operatorname{U}(1))$ (compare with Theorem \ref{isomclassif}).
\end{Prop}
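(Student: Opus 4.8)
The plan is to reduce both assertions to linear algebra on the Hermitian space $(\CC^3,\langle\cdot,\cdot\rangle)$. First I would set up a $\pu(2,1)$-equivariant dictionary between complex lines and the complex planes $W\subset\CC^3$ whose restricted form has signature $(1,1)$: to such a $W$ one attaches the complex line $\mathbb{P}(W)\cap\mathbb{H}_\CC^2$, and conversely a complex line $L$ recovers $W$ as the $\CC$-span of (arbitrary lifts of) its points, i.e.\ $W$ is the unique complex plane with $\mathbb{P}(W)\supset L$. One checks that a plane meeting $V^-$ has signature $(1,1)$ precisely when its orthogonal line has positive type, so this is exactly the correspondence in Definition \ref{comprefl}; in particular a $g\in\pu(2,1)$ stabilizes $L$ as a subset of $\mathbb{H}_\CC^2$ if and only if a (equivalently any) lift $\tilde g\in\U(2,1)$ stabilizes $W$ as a subspace. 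Since $\U(2,1)$ preserves the form, it carries $(1,1)$-planes to $(1,1)$-planes, and both statements of the Proposition become statements about the action of $\U(2,1)$ on the set of such planes.

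For transitivity, given two $(1,1)$-planes $W_1,W_2$, I would run Gram–Schmidt inside each $W_i$ to get an orthonormal basis $(u_i,v_i)$ with $\langle u_i,u_i\rangle=1$ and $\langle v_i,v_i\rangle=-1$, and complete it with a normalized polar vector $c_i\in W_i^\perp$ (necessarily of positive type, $\langle c_i,c_i\rangle=1$). Then $(u_i,c_i,v_i)$ is a basis of $\CC^3$ in which the Hermitian form has matrix $J$, so the linear map $g$ sending $u_1\mapsto u_2$, $c_1\mapsto c_2$, $v_1\mapsto v_2$ satisfies $g^{*}Jg=J$, i.e.\ $g\in\U(2,1)$, and $gW_1=W_2$. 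Hence $\U(2,1)$, and a fortiori $\pu(2,1)$, acts transitively on complex lines.

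For the stabilizer, I would fix the model plane $W_0=\langle e_1,e_3\rangle$ in the canonical basis (so $\langle e_1,e_1\rangle=1$, $\langle e_3,e_3\rangle=-1$), whose polar line is $W_0^{\perp}=\langle e_2\rangle$. If $g\in\U(2,1)$ stabilizes $W_0$, then it stabilizes $W_0^{\perp}$ as well because it preserves $\langle\cdot,\cdot\rangle$; therefore $g$ respects the orthogonal splitting $\CC^3=W_0\oplus W_0^{\perp}$ and is block-diagonal. Since the form has signature $(1,1)$ on $W_0$ and $(1,0)$ on $W_0^{\perp}$, the two blocks lie in $\U(1,1)$ and $\U(1)$ respectively, and conversely every such block-diagonal matrix lies in $\U(2,1)$ and fixes $W_0$; moreover no permutation of blocks can occur because $\dim W_0\neq\dim W_0^{\perp}$. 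Thus $\mathrm{Stab}_{\U(2,1)}(W_0)=\U(1,1)\times\U(1)$, and quotienting by the center $\{\lambda I_3:|\lambda|=1\}$ of $\U(2,1)$ gives $\mathrm{Stab}_{\pu(2,1)}(L_0)=\mathbb{P}(\U(1,1)\times\U(1))$ for the corresponding complex line $L_0$. By transitivity, the stabilizer of an arbitrary complex line is a conjugate of this subgroup.

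I expect the only genuinely delicate part to be the first, bookkeeping step: verifying that "stabilizing the complex line as a subset of $\mathbb{H}_\CC^2$" coincides with "stabilizing $W$ as a subspace", and that the dictionary is the restriction of the classifying data of Definition \ref{comprefl} (in particular ruling out the degenerate plane by the positivity of the polar vector). Once that is in place, transitivity is a routine application of Gram–Schmidt (equivalently Witt's theorem), and the stabilizer computation is the standard fact that a form-preserving map respects orthogonal complements, with the dimension count ensuring there is no extra component beyond $\mathbb{P}(\U(1,1)\times\U(1))$.
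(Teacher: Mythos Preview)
Your proof is correct and is the standard argument for this fact. Note that the paper itself does not prove this proposition: it is stated with a citation to Parker's notes \cite{Par} and left without proof, so there is no ``paper's own proof'' to compare against. Your linear-algebra reduction (complex lines $\leftrightarrow$ signature-$(1,1)$ planes, Gram--Schmidt/Witt for transitivity, block-diagonal decomposition for the stabilizer) is exactly the expected justification and would serve well as the omitted proof.
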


A complex line is supported by a projective line in the projective plane. We therefore have three possibilities for the relative position of two distinct complex lines:
	\begin{itemize}
		\item If they intersect in a point in $\mathbb{H}_\CC^2$, we say that they are \textit{intersecting};
		\item If their supporting projective lines intersect in a point outside $\overline{\mathbb{H}_\CC^2}$, and therefore are disjoint in $\overline{\mathbb{H}_\CC^2}$, we call them \textit{ultra-parallel};
		\item If their closures intersect in a point in $\partial\mathbb{H}_\CC^2$, we call them \textit{asymptotic}.
	\end{itemize}

\subsubsection{$\delta$-hyperbolicity}\label{deltahyp}

In the 1980's, Gromov generalized in \cite{Gro} the hyperbolic space $\mathbb{H}_\RR^n$ in the following way. Let $\delta\geq0$ and $(X,d)$ be a metric space. We say that $X$ is \textit{$\delta$-hyperbolic} if all its geodesic triangles are $\delta$-slim (i.e.\ if $T$ is a triangle, then each side of $T$ is contained in a $\delta$-neighborhood of the union of the two other sides). A general reference for the theory of $\delta$-hyperbolic spaces is Bridson and Haefliger's book \cite{BriHae}. In particular, any metric space with negative curvature is $\delta$-hyperbolic for some $\delta$ (see for instance Chapter III, Proposition 1.2 in \cite{BriHae}). This ensures that the complex hyperbolic space $\mathbb{H}_\CC^n$ is $\delta$-hyperbolic. 

This fact reveals to be very useful to overcome the non-compactness of $\pu(n,1)$ (see the Introduction). In particular, we can use the following result, due to Bestvina (Theorem 3.9 in \cite{Bes}) and Paulin (Theorem 6.6 in \cite{Paulin}). We formulate it here in the same way as Falbel and Wentworth (see Proposition 2 in \cite{FalWen}). If $g\in\operatorname{Isom}(X)$, we denote by
$$|g|_X=\inf_{x\in X}d(x,gx)$$
the \textit{translation length} of $g$.

\begin{Theo}\label{falbwent}
	Let $X$ be a $\delta$-hyperbolic metric space, and let $(g_j)_j,(h_j)_j$ be a pair of sequences of semi-simple isometries of $X$. Assume that there exists $C>0$ such that $|g_j|_X<C$ and $|h_j|_X<C$ for all $j$. Then one of the following must hold:
	\begin{enumerate}[i)]
		\item There exists a sub-sequence $(j_k)_k$ and elements $f_k\in G$ such that the two sequences $(f_kg_{j_k}f_k^{-1})_k$ and $(f_kh_{j_k}f_k^{-1})_k$ converge in $G$.
		\item The sequence of translation lengths $(|g_jh_j|_X)_j$ is unbounded.
	\end{enumerate}
\end{Theo}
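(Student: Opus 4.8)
The plan is to argue by contraposition: assuming the translation lengths $(|g_jh_j|_X)_j$ are bounded, I will produce the subsequence and the conjugators required by conclusion (i). The quantity that governs the dichotomy is the \emph{joint minimal displacement}
\[
\lambda_j \;=\; \inf_{x\in X}\ \max\bigl(d(x,g_jx),\,d(x,h_jx)\bigr),
\]
and the whole argument splits according to whether $(\lambda_j)_j$ has a bounded subsequence or $\lambda_j\to\infty$. If, after extraction, $\lambda_j\le D$ for all $j$, I would pick $x_j\in X$ with $\max(d(x_j,g_jx_j),d(x_j,h_jx_j))\le D+1$ and choose $f_j\in G$ with $d(x_0,f_jx_j)$ bounded for a fixed basepoint $x_0$ (possible since $G=\pu(n,1)$ acts transitively on $\mathbb{H}^n_\CC$, Theorem~\ref{isomclassif}). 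Then $f_jg_jf_j^{-1}$ and $f_jh_jf_j^{-1}$ displace $x_0$ by a bounded amount, hence lie in a fixed subset of $G$ that is compact because the $G$-action on $X$ is proper; a diagonal extraction then gives convergent conjugated subsequences, which is (i).

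The heart of the proof is the case $\lambda_j\to\infty$, which I would treat by the Bestvina--Paulin rescaling construction. Rescale the metric of $X$ by $1/\lambda_j$, fix basepoints $x_j$ with $\max(d(x_j,g_jx_j),d(x_j,h_jx_j))\le\lambda_j+1$, and pass to an ultralimit $(X_\omega,x_\omega)$ of the pointed rescaled spaces along a non-principal ultrafilter $\omega$. Since rescaling by $1/\lambda_j$ turns a $\delta$-hyperbolic space into a $(\delta/\lambda_j)$-hyperbolic one and $\delta/\lambda_j\to 0$, the limit $X_\omega$ is a complete $\RR$-tree, and $g_j,h_j$ (and $g_jh_j$) induce isometries $g_\omega,h_\omega$ (and $g_\omega h_\omega$) of $X_\omega$ because their displacement stays bounded on bounded neighbourhoods of the $x_j$. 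Two facts about this limiting action must then be established. First, $\langle g_\omega,h_\omega\rangle$ has no global fixed point: by definition of $\lambda_j$, every point $z$ of the rescaled space satisfies $\max(d(z,g_jz),d(z,h_jz))\ge 1$, and this inequality passes to the limit. Second, $g_\omega$ and $h_\omega$ are elliptic; this is where the semi-simplicity hypothesis enters. Letting $p_j$ be a point of $X$ nearest to $x_j$ in the minimal displacement set $\operatorname{Min}(g_j)$, the standard $\delta$-hyperbolic estimate $d(x_j,g_jx_j)\ge 2\,d(x_j,\operatorname{Min}(g_j))+|g_j|_X-K\delta$ bounds $d(x_j,p_j)$ by roughly $\lambda_j/2$, so after rescaling $p_j$ stays at bounded distance from $x_j$ and has a limit $p_\omega\in X_\omega$; since $d(p_j,g_jp_j)=|g_j|_X<C$ while the metric is blown up, $d(p_\omega,g_\omega p_\omega)=0$, i.e.\ $g_\omega$ fixes $p_\omega$. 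The same reasoning gives a fixed point for $h_\omega$.

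To conclude, assume "not (ii)", i.e.\ $(|g_jh_j|_X)_j$ is bounded. Applying the previous paragraph to the semi-simple isometries $g_jh_j$ shows that $g_\omega h_\omega$ is elliptic as well. But for elliptic isometries $a,b$ of an $\RR$-tree the structure theory of isometries of trees gives: if $\operatorname{Fix}(a)\cap\operatorname{Fix}(b)=\emptyset$, then $ab$ is hyperbolic with translation length $2\,d(\operatorname{Fix}(a),\operatorname{Fix}(b))>0$. Since $g_\omega h_\omega$ is elliptic, $\operatorname{Fix}(g_\omega)\cap\operatorname{Fix}(h_\omega)\ne\emptyset$, contradicting the absence of a common fixed point for $g_\omega$ and $h_\omega$. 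Hence $\lambda_j\to\infty$ is impossible, $(\lambda_j)_j$ has a bounded subsequence, and the first paragraph delivers (i).

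\textbf{Main obstacle.} The delicate step is the construction and control of the limiting $\RR$-tree action: checking that $X_\omega$ is genuinely a complete $\RR$-tree, that $g_\omega,h_\omega$ and $g_\omega h_\omega$ are well defined on it, and above all that near-minimal points for $g_j$, $h_j$ and $g_jh_j$ remain at bounded rescaled distance from the basepoints so that they survive in the ultralimit as honest fixed points. This is precisely where the semi-simplicity of the isometries and the $\delta$-hyperbolicity of $X$ are both essential; by contrast, the properness of the $G$-action, needed only in the bounded case, is routine for $\pu(n,1)$.
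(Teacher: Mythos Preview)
The paper does not give its own proof of this statement: Theorem~\ref{falbwent} is quoted from Bestvina \cite{Bes} and Paulin \cite{Paulin}, in the formulation of Falbel--Wentworth \cite{FalWen}, and is used only as a black box to obtain Corollary~\ref{bestpau}. So there is nothing to compare against; your sketch is a faithful outline of the Bestvina--Paulin rescaling/ultralimit argument that underlies the cited references, and it is essentially correct.

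One small slip worth flagging: in the last paragraph you write ``applying the previous paragraph to the semi-simple isometries $g_jh_j$'', but the hypotheses do not assert that $g_jh_j$ is semi-simple. This does not actually harm the argument. To conclude that $g_\omega h_\omega$ is elliptic on the limiting $\RR$-tree you only need near-minimal points $q_j$ with $d(q_j,g_jh_jq_j)\le |g_jh_j|_X+1$ lying at rescaled distance $O(1)$ from $x_j$; the same $\delta$-hyperbolic displacement estimate you invoke (applied to the nearest point of the $\varepsilon$-minimal set rather than of $\operatorname{Min}(g_jh_j)$) yields $d(x_j,q_j)\le \tfrac{1}{2}d(x_j,g_jh_jx_j)+O(\delta)\le \lambda_j+O(1)$, so $q_j$ survives in the ultralimit. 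Since an $\RR$-tree admits no parabolic isometries, translation length zero then forces $g_\omega h_\omega$ to be elliptic, and your contradiction via Serre's lemma on trees goes through unchanged. You already identify exactly this point as the main obstacle, so the correction is cosmetic.
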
 

In Section \ref{horn_qual}, this result will be a key argument. We will use it through the following Corollary:

\begin{Coro}\label{bestpau}
	Let $(A_j)_j,(B_j)_j$ be a pair of sequences of elliptic elements in $\pu(n,1)$, such that $A_jB_j$ is elliptic for every $j$ (see Section \ref{conjell} below). Then up to extracting a sub-sequence and global conjugation, there exist $A,B\in\pu(n,1)$ such that $(A_j,B_j)_j$ converges to $(A,B)$.
\end{Coro}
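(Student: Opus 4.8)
The plan is to obtain the Corollary as a direct application of Theorem~\ref{falbwent}, taking $X=\mathbb{H}_\CC^n$ (which is $\delta$-hyperbolic by Section~\ref{deltahyp}) and $G=\operatorname{Isom}(\mathbb{H}_\CC^n)=\pu(n,1)$ (by Theorem~\ref{isomclassif}), with $(g_j)_j=(A_j)_j$ and $(h_j)_j=(B_j)_j$. For this I need to verify that the $A_j$ and $B_j$ are semi-simple isometries with uniformly bounded translation lengths, and then show that the second alternative of Theorem~\ref{falbwent} is impossible, so that the first one holds and gives precisely the desired conclusion.

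First I would recall that an elliptic element $g\in\pu(n,1)$ fixes some point $p\in\mathbb{H}_\CC^n$, hence $d(p,gp)=0$; thus $|g|_X=\inf_{x\in X}d(x,gx)=0$ and this infimum is attained, so $g$ is a semi-simple isometry in the sense of Theorem~\ref{falbwent}. In particular, choosing any $C>0$, say $C=1$, we get $|A_j|_X=|B_j|_X=0<C$ for every $j$, so the boundedness hypothesis is satisfied. Next, since $A_jB_j$ is elliptic for every $j$, the same argument yields $|A_jB_j|_X=0$ for all $j$; hence the sequence of translation lengths $(|A_jB_j|_X)_j$ is the zero sequence, which is bounded. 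This rules out alternative~ii) of Theorem~\ref{falbwent}.

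Consequently alternative~i) must hold: there exist a subsequence $(j_k)_k$ and elements $f_k\in\pu(n,1)$ such that $(f_kA_{j_k}f_k^{-1})_k$ and $(f_kB_{j_k}f_k^{-1})_k$ both converge in $\pu(n,1)$. Denoting the respective limits by $A$ and $B$, we obtain that, up to extracting a subsequence and global conjugation, $(A_j,B_j)_j$ converges to $(A,B)$, which is exactly the claim. I do not expect any real obstacle here: the Corollary is essentially a translation of Theorem~\ref{falbwent} into the complex-hyperbolic setting. The only points needing care are the identification of $\mathbb{H}_\CC^n$ as a $\delta$-hyperbolic space (already done in Section~\ref{deltahyp}) and the remark that an elliptic isometry is semi-simple with attained infimum $0$, which is immediate from the existence of a fixed point.
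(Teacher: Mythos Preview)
Your proof is correct and follows exactly the same approach as the paper: both argue that elliptic elements have translation length zero, so the hypotheses of Theorem~\ref{falbwent} are satisfied and alternative~ii) is excluded, forcing alternative~i) to hold. Your version is simply more detailed in checking the semi-simplicity and the bound on $|A_j|_X,|B_j|_X$, whereas the paper states these points more tersely.
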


\begin{proof}
	Since $A_jB_j$ is elliptic for every $j$, the sequence of corresponding translation lengths is constant equal to zero; in particular, it is bounded. Therefore, Item i) of Theorem \ref{falbwent} holds, which yields the result.
\end{proof}

\subsection{Elliptic isometries of $\mathbb{H}_\CC^n$}

In this section, we give a detailed description of elliptic elements and elliptic conjugacy classes. This is the setting in which we will solve the elliptic Horn problem.

\subsubsection{Conjugacy classes of elliptic elements}\label{conjell}

Just like in real hyperbolic geometry, any isometry of $\mathbb{H}_\CC^n$ belongs to one of the three (mutually exclusive) following classes: it is \textit{loxodromic} if it fixes exactly two points in $\partial\mathbb{H}_\CC^n$ and none in $\mathbb{H}_\CC^n$, \textit{parabolic} if it fixes exactly one point in $\partial\mathbb{H}_\CC^n$ and none in $\mathbb{H}_\CC^n$, and \textit{elliptic} if it fixes at least one point in $\mathbb{H}_\CC^n$.

One can refine the elliptic case by looking at the possibility of a repeated eigenvalue.

\begin{Defi}\label{regspe}
	Whenever an isometry $g\in\pu(n,1)$ has a diagonalizable lift with pairwise distinct eigenvalues, we call it \textit{regular}; otherwise we call it \textit{special} (see Section 6.2.1 in \cite{Gol}).
\end{Defi}

\begin{Rema}\label{ellip}
	This notion is well defined because it does not depend on the lift of $g$. Any elliptic element has all its lifts diagonalizable, and admits a lift in $\operatorname{U}(n,1)$ conjugate to a unique matrix of the form 
	$$E(\alpha_1,\ldots,\alpha_n)=\begin{pmatrix}
	e^{i\alpha_1} & 0 & 0 & 0\\
	0 & \ddots & 0 & 0 \\
	0 & 0 & e^{i\alpha_n} & 0 \\
	0 & 0 & 0 & 1 
	\end{pmatrix}, \ \mbox{with} \ 0\leq\alpha_n\leq\ldots\leq\alpha_1<2\pi.$$
	Note that the $n$ positive type eigenvalues $e^{i\alpha_1},\ldots, e^{i\alpha_n}$ (see Definition \ref{type}) may be exchanged by a conjugation in $\operatorname{PU}(n,1)$, which is why we impose an order between the angles to obtain uniqueness. The element $E(\alpha_1,\ldots,\alpha_n)$ is regular elliptic when $0<\alpha_n<\ldots<\alpha_1<2\pi$, and special elliptic otherwise. 
\end{Rema}

\begin{Defi}\label{anglepair}
	When $n=2$, the pair $(\alpha_1,\alpha_2)$ is called the \textit{angle pair} of any element conjugate to $E(\alpha_1,\alpha_2)$.
\end{Defi}

Let $n=2$. By Remark \ref{ellip}, the conjugacy class of an elliptic element in $\operatorname{PU}(2,1)$ is uniquely determined by its angle pair. We now describe the geometric action of an elliptic element on $\mathbb{H}_\CC^2$. First, notice that a regular elliptic element $A$ has three fixed points in $\CC\mathbb{P}^2$: one inside $\mathbb{H}_\CC^2$, and two outside. These last two are projections of vectors polar to the two orthogonal complex lines that are globally preserved by $A$, and on which it acts by rotations (see Remark \ref{ellip}). These two complex lines intersect in the fixed point that is inside $\mathbb{H}_\CC^2$.

\subsubsection{Reducibility for pairs of elliptic elements}

Reducible pairs or triples of elliptic elements will play a very important role in the resolution of Horn's problem.

\begin{Defi}\label{reducibledef}
	Let $A,B\in\pu(n,1)$ be elliptic elements. We say that $(A,B)$ is \textit{irreducible} when any pair of lifts of $(A,B)$ has no non-trivial invariant subspace (invariant subspaces do not depend on choices of lifts); otherwise, $(A,B)$ is \textit{reducible}. 
\end{Defi}

\begin{Rema}\label{center}
	\begin{itemize}
		\item In $\pu(2,1)$, $(A,B)$ is irreducible when the lifts of $A,B$ have pairwise distinct eigenspaces. However, this is too restrictive for $\pu(n,1)$ with $n\geq3$.
		\item This definition is valid for any Lie group $G$. Denoting by $Z(G)$ the center of the group $G$, we have that if the pair $(A,B)$ is irreducible, then $Z(A)\cap Z(B)=Z(G)$.
	\end{itemize}
\end{Rema}

We now specify the three possibilities for an elliptic pair to be reducible when $n=2$, borrowing the terminology from Paupert \cite{Pau}:

\begin{Defi}\label{reducibility}
	A reducible pair $(A,B)\in\pu(2,1)^2$ is called
	\begin{itemize}
		\item \textit{totally reducible} when lifts of $A,B$ are diagonalizable in the same basis (i.e.\ they commute). This means that they have the same fixed point and the same stable complex lines.
		\item \textit{spherical reducible} when lifts of $A,B$ have one common eigenvector of negative type. This means that they have the same fixed point in $\mathbb{H}_\CC^2$.
		\item \textit{hyperbolic reducible} when lifts of $A,B$ have one common eigenvector of positive type. This means that they have one stable complex line in common.
	\end{itemize}
\end{Defi}

\subsubsection{Special elliptics}\label{compref}

In this section, we fix $n=2$. We describe the geometric action of special elliptic elements on $\mathbb{H}_\CC^2$, which will be called complex reflections.

\begin{Defi}\label{reflectionformula}
	(See Pratoussevitch \cite{Pra}) Let $c\in\CC\mathbb{P}^2$ be of non-null type and $\eta\in \mathbf{S}^1$. The \textit{complex reflection} $R$ of \textit{rotation factor} $\eta$ in the projective line polar to $c$ is the projectivization of the map $\tilde{R}$ defined over $\CC^3$ by:
	\begin{equation*} \tilde{R}(\tilde{z})=\tilde{z}+(\eta-1)\frac{\langle \tilde{z},\tilde{c}\rangle}{\langle \tilde{c},\tilde{c}\rangle}\tilde{c}.
	\end{equation*}
\end{Defi}

Such an $R$ is an isometry that fixes the point $c$ and fixes point-wise the projective line $C$ polar to $c$. We divide complex reflections into two geometrical categories:
\begin{itemize}
	\item When $C$ intersects $\mathbb{H}_\CC^2$, we say that $R$ is a \textit{complex reflection in the line} $C$;
	\item When $C$ is disjoint from $\mathbb{H}_\CC^2$, we say that $R$ is a \textit{complex reflection in the point} $c$.
\end{itemize}
In both cases, the line $C$ is called the \textit{mirror} of $R$.

Complex reflections in a line and a point have lifts that are respectively conjugate in $\operatorname{U}(2,1)$ to the matrices
\begin{equation}\label{RS}
\tilde{R}(\eta)=\begin{pmatrix}
\eta & 0 & 0 \\
0 & 1 & 0 \\
0 & 0 & 1 
\end{pmatrix}, \ \tilde{S}(\eta)=\begin{pmatrix}
\eta^{-1} & 0 & 0 \\
0 & \eta^{-1} & 0 \\
0 & 0 & 1
\end{pmatrix}
\end{equation}
where $\eta=e^{i\theta}\neq1$ is the rotation factor of the reflection. We have that:
\begin{itemize}
	\item the action of $R(\eta)$ on $\mathbb{H}_\CC^2$ is given by $(z_1,z_2)\mapsto(\eta z_1,z_2)$;
	\item the action of $S(\eta)$ on $\mathbb{H}_\CC^2$ is given by $(z_1,z_2)\mapsto(\eta^{-1} z_1,\eta^{-1} z_2)$: it fixes the origin and acts by a rotation of angle $-\theta$ on each complex line through this point.
\end{itemize}

\subsubsection{Non-separability: special elliptic and parabolic classes}\label{separabilitysection}

A parabolic element $P\in\pu(2,1)$ has no diagonalizable lift. Two cases may occur (see \cite{Willsurvey} for more detail):
\begin{itemize}
	\item there exists a lift $\tilde{P}\in\U(2,1)$ which is unipotent, in which case we say that $P$ is \textit{unipotent};
	\item when this is not the case, any lift $\tilde{P}\in\U(2,1)$ has exactly two eigenvalues; one of them has null type and has multiplicity $2$, i.e.\ it corresponds to a $2$-dimensional invariant subspace of $\tilde{P}$. The fixed point of $P$ in $\partial\mathbb{H}_\CC^2$ is the projection of the null-type eigenvector. In this case, $P$ is called \textit{ellipto-parabolic} or \textit{screw-parabolic}. 
\end{itemize}

This name comes from the fact that ellipto-parabolic conjugacy classes are non-separate from classes of complex reflections in lines. Indeed, the $2$-dimensional eigenspace of $\tilde{R}(\eta)$ associated to the eigenvalue $1$ (see \eqref{RS}) may be deformed into a $2$-dimensional stable subspace by degenerating the two opposite-type eigenvectors into the same null-type eigenvector. Now, this cannot happen with complex reflections in points, as we show in the following Lemma:

\begin{Lemm}\label{separability}
	Conjugacy classes of complex reflections in points are separate from ellipto-parabolic classes.
\end{Lemm}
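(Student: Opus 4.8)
The plan is to show that the two types of classes cannot be approached by a common sequence, i.e.\ that they lie in disjoint closed subsets of the space of conjugacy classes. Concretely, I would argue by contradiction: suppose there is a sequence $(R_j)_j$ of complex reflections in points whose conjugacy classes converge to an ellipto-parabolic class $[P]$. The key is to track the eigenvalue data of the lifts. A complex reflection in a point has a lift $\tilde S(\eta_j)$ conjugate in $\U(2,1)$ to the matrix in \eqref{RS}: its eigenvalues are $\eta_j^{-1}$ (with multiplicity $2$) and $1$, and crucially the \emph{double} eigenvalue $\eta_j^{-1}$ corresponds to a $2$-dimensional subspace on which the Hermitian form has signature $(2,0)$ (it is the polar plane to a negative-type vector $c$, hence positive definite), while the simple eigenvalue $1$ has eigenvector $c$ of negative type.

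Now I would pass to the limit. Since the group $\U(2,1)$ is not compact, a converging sequence of conjugacy classes need not be realized by a converging sequence of elements; but one can still extract information: the eigenvalues of the lifts vary continuously with the class (the characteristic polynomial does), so up to subsequence $\eta_j^{-1}\to\lambda$ for some $\lambda\in\mathbf S^1$, and the limit class $[P]$, being ellipto-parabolic, has a lift with exactly two eigenvalues, the double one being of null type (multiplicity $2$, corresponding to a degenerate $2$-dimensional invariant subspace). Comparing: the limit must have a double eigenvalue equal to $\lambda$ sitting on a $2$-dimensional subspace. Along the sequence this subspace is positive definite (signature $(2,0)$); in the limit it must be degenerate (it contains the null-type fixed point of $P$). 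The contradiction I would extract is that the restriction of the Hermitian form to the relevant eigenspace cannot jump from definite to degenerate: more precisely, I would use that the simple eigenvalue of $\tilde S(\eta_j)$ is always $1$ and is carried by the unique negative-type line, so $1$ is always an eigenvalue of the lift along the sequence, whereas an ellipto-parabolic element's two eigenvalues are $\mu$ (null type, double) and $\mu'$ with $\mu\neq\mu'$, and one checks $1$ need not be among them — or, if one normalizes lifts so that $1$ is forced, then the double eigenvalue along the sequence is $\eta_j^{-1}\neq 1$ carried by a positive-definite plane, which cannot converge to a null-type eigenvector since the negative-type direction (eigenvalue $1$) stays uniformly transverse to it.

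The cleanest way to make this rigorous is probably to work with the normalized trace or with the function $g\mapsto$ (signature of the Hermitian form restricted to the generalized eigenspace of the repeated eigenvalue), and observe it is locally constant, or to use Corollary \ref{bestpau}-type compactness after noting that complex reflections in points are elliptic (they fix $c$, and $C$ is disjoint from $\mathbb H_\CC^2$, so actually they fix an interior point as well — indeed $\tilde S(\eta)$ fixes the origin). Since a complex reflection in a point is elliptic, a converging sequence of such classes with elliptic (or parabolic) limit: if the limit were ellipto-parabolic it is not elliptic, so by Corollary \ref{bestpau} applied with $B_j = \mathrm{Id}$ (or directly: elliptic conjugacy classes are closed among bounded-translation-length classes), one gets that the limit class is elliptic, hence not ellipto-parabolic — contradiction. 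I expect the main obstacle to be the non-compactness issue: one must be careful that ``conjugacy classes converge'' does not immediately give converging representatives, so the argument should be phrased purely in terms of invariants (eigenvalues together with the type/signature of eigenspaces) that \emph{do} vary continuously, or else invoke the Bestvina--Paulin machinery (Theorem \ref{falbwent}, Corollary \ref{bestpau}) to recover compactness from the uniform bound $|R_j|_{\mathbb H_\CC^2}=0$ on translation lengths. Once compactness is in hand, the contradiction is immediate because the closure of the elliptic locus stays within the elliptic locus, while ellipto-parabolic elements are not elliptic.
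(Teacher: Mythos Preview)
Your first line of attack — tracking the signature of the Hermitian form on the double eigenspace — is exactly the paper's approach, and is the correct one. The paper makes it rigorous as follows: assuming a sequence $S_n(\eta)$ of reflections in points converges to an ellipto-parabolic $P$, it follows the three sequences of unit eigenvectors $u_n$ (negative type, eigenvalue $1$) and $v_n,w_n$ (positive type, eigenvalue $\eta^{-1}$). The double-eigenvalue eigenvectors $v_n,w_n$ must both converge to the unique null eigenvector $v_0$ of $\tilde P$; but then by orthogonality $u_n\to v_0^\perp$, and since $u_n$ has negative type while $v_0^\perp$ contains only positive- or null-type vectors, the limit of $u_n$ is forced to be $v_0$ as well. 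Thus $v_0$ would be an eigenvector for both eigenvalues $1$ and $\eta^{-1}\neq 1$, a contradiction. This is the ``signature cannot jump'' idea you sketch, carried out concretely at the level of eigenvectors.

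Your second proposed route, via Corollary~\ref{bestpau}, contains a genuine gap. You conclude with ``the closure of the elliptic locus stays within the elliptic locus, while ellipto-parabolic elements are not elliptic'' --- but this is precisely \emph{false}, and is the whole point of the discussion preceding the Lemma: complex reflections in \emph{lines} (which are elliptic) \emph{do} degenerate to ellipto-parabolics. Corollary~\ref{bestpau} only gives you, after conjugation and extraction, a limit element $A\in\pu(n,1)$; it says nothing about whether $A$ is elliptic or parabolic. So Bestvina--Paulin buys you converging representatives (which is indeed useful to justify the eigenvector-tracking argument above), but it cannot by itself distinguish reflections in points from reflections in lines. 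The distinguishing work must be done by the signature of the double eigenspace, as in your first approach and the paper's proof.
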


\begin{proof}
	Let $\eta\in\mathbf{S}^1,\eta\neq1$ and $(S_n(\eta))_n$ be a sequence of complex reflections in points (see \eqref{RS}). By means of a contradiction, assume that $(S_n(\eta))_n$ converges to an ellipto-parabolic element $P$. Let $(u_n),(v_n),(w_n)$ be the three sequences of unitary eigenvectors of $\tilde{S}_n(\eta)$, such that:
	\begin{itemize}
		\item $(u_n)$ is the sequence of unitary negative-type eigenvectors associated to the eigenvalue $1$ of $\tilde{S}_n(\eta)$;
		\item $(v_n)$ and $(w_n)$ are the two sequences of unitary positive-type eigenvectors associated to the eigenvalue $\eta^{-1}$ of $\tilde{S}_n(\eta)$.
	\end{itemize}
	Note that:
	\begin{itemize}
		\item for every $n$, $\operatorname{Vect}(u_n)=\operatorname{Vect}(v_n,w_n)^{\perp}$;
		\item up to extracting, the three sequences converge to eigenvectors of a lift $\tilde{P}$.
	\end{itemize}
	As stated above, $\tilde{P}$ is not diagonalizable, and has an eigenvalue of multiplicity $2$ for a unique unitary eigenvector $v_0$ of null type. Therefore, since $(v_n)$ and $(w_n)$ are eigenvectors for the eigenvalue of multiplicity $2$ of $\tilde{S}_n(\eta)$, both $(v_n)$ and $(w_n)$ converge to $v_0$. Now, by orthogonality, $(u_n)$ converges to a unitary vector in $\operatorname{Vect}(v_0)^{\perp}$, which has negative or null type: since $\operatorname{Vect}(v_0)^{\perp}$ contains only vectors of positive or null type, this imposes that $(u_n)$ converges to $v_0$ as well. Therefore, $v_0$ is an eigenvector of $\tilde{P}$ for the two eigenvalues $\eta^{-1}$ and $1$: this is impossible. So $(S_n(\eta))_n$ cannot converge to a parabolic element.
\end{proof}

\section{The elliptic multiplicative Horn problem in PU(n,1): qualitative aspects}\label{horn_qual}

\subsection{Introduction}

\subsubsection{Presentation of Horn's problem}\label{cadre}

Let $G$ be a Lie group. Recall that the multiplicative Horn problem in $G$ is the following question: 

\begin{Prob}
	\emph{(Horn)} For which conjugacy classes $\cC_1,\cC_2,\cC_3$ in $G$ can we find three elements $A\in\cC_1,B\in\cC_2,C\in\cC_3$ such that $ABC=\operatorname{Id}$?
\end{Prob}

The goal of this section is to give a qualitative answer to this question for the specific case where $G=\operatorname{PU}(n,1)$, and $\cC_1,\cC_2,\cC_3$ are elliptic conjugacy classes. In Sections \ref{horn}, \ref{examples} and \ref{proof}, we give an explicit description of the solution set for $n=2$.

Horn's problem may be rephrased in terms of the character variety of the group $$\Gamma=\langle \texttt{a},\texttt{b},\texttt{c} \ | \ \texttt{a}\texttt{b}\texttt{c}=1\rangle,$$
which is the fundamental group of the three-punctured sphere.

Consider the set $\operatorname{Hom}(\Gamma,G)$ of representations of $\Gamma$ in $G$. The associated character variety is the algebraic quotient
$$\chi(\Gamma,G)=\operatorname{Hom}(\Gamma,G)//G,$$
where the quotient is with respect to the action by conjugation of $G$ on $\operatorname{Hom}(\Gamma,G)$ (see \cite{LubMag}). Let $\pi:\operatorname{Hom}(\Gamma,G)\to\chi(\Gamma,G)$ be the canonical projection. 

\begin{Defi}
	We define:
	\begin{itemize}
		\item $\operatorname{Hom}_\mathrm{ell}(\Gamma,G)=\{\rho\in\operatorname{Hom}(\Gamma,G) \ | \ \rho(\texttt{a}),\rho(\texttt{b}),\rho(\texttt{c}) \ \mbox{are elliptic} \}$ as the subset of boundary elliptic representations. This set is invariant by conjugation;
		\item $\chi_\mathrm{ell}(\Gamma,G)=\pi(\operatorname{Hom}_\mathrm{ell}(\Gamma,G))\subset \chi(\Gamma,G)$;
		\item $\operatorname{Conj}_\mathrm{ell}(G)$ as the set of elliptic conjugacy classes in $G$.
	\end{itemize}
\end{Defi}

We have the following commutative diagram:
$$  \xymatrix {
	\operatorname{Hom}_\mathrm{ell}(\Gamma,G) \ar[rr]^c \ar[rd]_\pi  && \operatorname{Conj}_\mathrm{ell}(G)^3  \\
	& \chi_\mathrm{ell}(\Gamma,G) \ar[ru]_{c_{\chi}}  }$$
where:
\begin{itemize}
	\item the map $c$ sends $\rho\in\operatorname{Hom}_\mathrm{ell}(\Gamma,G)$ to the triple of conjugacy classes of $(\rho(\texttt{a}),\rho(\texttt{b}),\rho(\texttt{c}))$;
	\item $\pi$ is the canonical projection;
	\item $c_{\chi}$ is the unique map from $\chi_\mathrm{ell}(\Gamma,G)$ to $\operatorname{Conj}_\mathrm{ell}(G)^3$ such that $c_{\chi}\circ\pi=c$.
\end{itemize}

Now, Horn's problem may be expressed in terms of representations of $\Gamma$ in $G$:

\begin{Prob}
	\emph{(Horn)} What is the image of $c$ in $\operatorname{Conj}_\mathrm{ell}(G)^3$ ?
\end{Prob}

\begin{Defi}
	If $(\cC_1,\cC_2,\cC_3)\in\operatorname{Conj}_\mathrm{ell}(G)^3$, we define the associated \textit{relative character variety} $\chi_{\cC_1,\cC_2,\cC_3}(\Gamma,G)$ as the fiber of $c_{\chi}$ above $(\cC_1,\cC_2,\cC_3)$: $$\chi_{\cC_1,\cC_2,\cC_3}(\Gamma,G)=c_{\chi}^{-1}(\cC_1,\cC_2,\cC_3).$$
\end{Defi} 

We may finally express Horn's problem in terms of relative character varieties:

\begin{Prob}
	\emph{(Horn)} For which triple $(\cC_1,\cC_2,\cC_3)\in\operatorname{Conj}_\mathrm{ell}(G)^3$ is the associated relative character variety $\chi_{\cC_1,\cC_2,\cC_3}(\Gamma,G)$ non-empty?
\end{Prob}

\subsubsection{Polyhedral structure of $\operatorname{Conj}_\mathrm{ell}(G)^3$}\label{poly}

In the following sections, we will resolve Horn's problem for some particular groups $G$, all related to $\pu(n,1)$ because they enable us to describe the reducible solutions (see the Introduction). For these examples, we give now an identification between $\operatorname{Conj}_\mathrm{ell}(G)$ and a polyhedral subspace $\cT(G)\subset[0,2\pi]^k$ for some integer $k$.

\begin{Exem}\label{groupexamples}
	\begin{itemize}
		\item If $G=\operatorname{U}(2)$, $\cT(G)=\{(\alpha_1,\alpha_2)\in[0,2\pi[^2 \ | \ \alpha_1\geq \alpha_2\}$: for any $\cC\in\operatorname{Conj}_\mathrm{ell}(G)$, there exists a unique angle pair $(\alpha_1,\alpha_2)\in \cT(G)$ such that any $g\in\cC$ is conjugate to the diagonal matrix
		$$\begin{pmatrix}
		e^{i\alpha_1} & 0 \\
		0 & e^{i\alpha_2}
		\end{pmatrix}.$$
		\item If $G=\pu(1,1)$, $\cT(G)=[0,2\pi[$: for any $\cC\in\operatorname{Conj}_\mathrm{ell}(G)$, there exists a unique $\alpha\in[0,2\pi[$ such that any $g\in\cC$ has a representative conjugate to the diagonal matrix
		$$\begin{pmatrix}
		e^{i\alpha} & 0 \\
		0 & 1
		\end{pmatrix}.$$
		\item More generally, if $G=\pu(n,1)$, $$\cT(G)=\{(\alpha_1,\ldots,\alpha_n)\in[0,2\pi[^n \ | \ \alpha_1\geq\ldots\geq \alpha_n\}.$$ For any $\cC\in\operatorname{Conj}_\mathrm{ell}(G)$, there exists a unique element $(\alpha_1,\ldots,\alpha_n)\in \cT(G)$ such that any $g\in\cC$ has a representative conjugate to the diagonal matrix
		$$E(\alpha_1,\ldots,\alpha_n)=\begin{pmatrix}
		e^{i\alpha_1} & 0 & 0 & 0\\
		0 & \ddots & 0 & 0 \\
		0 & 0 & e^{i\alpha_n} & 0 \\
		0 & 0 & 0 & 1 
		\end{pmatrix}$$
		(see Remark \ref{ellip} in Section \ref{preliminaries}).
	\end{itemize}
\end{Exem}

Denote by $\cH=\operatorname{Im}(c)$ the set of solutions of Horn's problem, and let $i$ be the identification between $\operatorname{Conj}_\mathrm{ell}(G)$ and $\cT(G)$ (when $G=\pu(n,1)$, $i(\cC)=(\alpha_1,\ldots,\alpha_n)$). We have:
\begin{equation}\label{cbar}
\begin{aligned}
\operatorname{Hom}_\mathrm{ell}(\Gamma,G) &\stackrel{c}{\longrightarrow} & \operatorname{Conj}_\mathrm{ell}(G)^3 & \stackrel{i^3}{\longrightarrow}  \cT(G)^3  \\
& & \cH &\longmapsto  \cP(G)
\end{aligned}
\end{equation}
where $\cP(G)=i^3(\cH)$. We denote $\bar{c}=i^3\circ c$. 

\begin{Rema}
	Take $G=\pu(2,1)$. Note that $i$ is well defined, but not continuous. Indeed, for all $\alpha\in[0,2\pi]$, we have $i^{-1}(\alpha,0)=i^{-1}(2\pi,\alpha)$. So $i$ is continuous if it takes its values in the quotient
	$${^{\displaystyle {\overline{\cT(G)}}}}\Big/{_{\displaystyle {(\alpha,0)\sim(2\pi,\alpha)}}}$$
	which is homeomorphic to a Möbius band. Alternatively, $i$ becomes continuous when restricted to the set of regular elliptic conjugacy classes, which is equal to $i^{-1}(\mathring{\cT(G)})$ (where $\mathring{\cT(G)}$ denotes the interior of $\cT(G)$). We denote this set by $\operatorname{Conj}_\mathrm{ell}^\mathrm{reg}(\Gamma,G)$.
\end{Rema}

More generally, we have:

\begin{Lemm}
	Let $G=\pu(n,1)$. The interior $\mathring{\cT(G)}$ is homeomorphic to an open ball, and the restriction
	$$\operatorname{Conj}_\mathrm{ell}^\mathrm{reg}(G) \stackrel{i}{\longrightarrow} \mathring{\cT(G)}$$
	is a homeomorphism.
\end{Lemm}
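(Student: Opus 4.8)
The plan is to unpack the two claims separately. First, for the homeomorphism $\mathring{\cT(G)}\cong$ open ball: the set
$$\mathring{\cT(G)}=\{(\alpha_1,\dots,\alpha_n)\in(0,2\pi)^n \ | \ \alpha_1>\dots>\alpha_n\}$$
is a nonempty open convex subset of $\RR^n$ (it is the intersection of the open cube with the open half-spaces $\alpha_i>\alpha_{i+1}$), hence homeomorphic to $\RR^n$ by the standard fact that any nonempty open convex subset of $\RR^n$ is homeomorphic to $\RR^n$. I would just cite this.

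\textbf{Proof of the homeomorphism.} For the second claim, I would first check that $i$ restricts to a well-defined \emph{bijection} from $\operatorname{Conj}_\mathrm{ell}^\mathrm{reg}(G)$ onto $\mathring{\cT(G)}$. By Remark \ref{ellip}, every elliptic $g\in\pu(n,1)$ has a lift conjugate to a unique $E(\alpha_1,\dots,\alpha_n)$ with $2\pi>\alpha_1\geq\dots\geq\alpha_n\geq 0$, and $g$ is regular precisely when these inequalities are strict, i.e.\ when $(\alpha_1,\dots,\alpha_n)\in\mathring{\cT(G)}$; this gives both well-definedness and injectivity of $i$ on regular classes, and surjectivity onto $\mathring{\cT(G)}$ is immediate since every such $E(\alpha_1,\dots,\alpha_n)$ is regular elliptic. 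It then remains to see that $i$ and $i^{-1}$ are continuous; here I must be careful about which topology $\operatorname{Conj}_\mathrm{ell}^\mathrm{reg}(G)$ carries, namely the quotient topology from $\pu(n,1)$. Continuity of $i^{-1}$ is the easy direction: the map $\mathring{\cT(G)}\to\operatorname{Conj}_\mathrm{ell}^\mathrm{reg}(G)$ sending $(\alpha_1,\dots,\alpha_n)$ to the class of $\mathbb{P}(E(\alpha_1,\dots,\alpha_n))$ is continuous as a composite of the continuous maps $(\alpha_i)\mapsto \mathbb{P}(E(\alpha_i))\in\pu(n,1)$ and the quotient projection onto conjugacy classes. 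For continuity of $i$ itself, I would argue that the eigenvalue data depends continuously on $g$: the characteristic polynomial of a (suitably normalized) lift has coefficients that are continuous functions on $\pu(n,1)$, and on the open set of regular elliptic elements the roots — together with their types, which let us single out the $n$ positive-type eigenvalues and order their arguments in $(0,2\pi)$ — vary continuously, so $g\mapsto(\alpha_1,\dots,\alpha_n)$ is continuous and factors through the quotient to give a continuous $i$.

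\textbf{Main obstacle.} The technical heart is the continuity of $i$, and more specifically the continuity of the \emph{ordered} arguments: one must ensure that the labelling $\alpha_1>\dots>\alpha_n$ is stable under small perturbations, which is exactly where regularity (distinctness of eigenvalues) is essential — near a special elliptic class two arguments can collide and the ordered tuple loses continuity, which is the content of the preceding Remark about $i$ not being continuous on all of $\operatorname{Conj}_\mathrm{ell}(G)$. I would handle this by noting that on the regular locus the roots of the characteristic polynomial stay simple, so by continuous dependence of simple roots on coefficients each eigenvalue branch, its type, and its argument are locally given by a single continuous function; the ordering is then locally constant and the tuple $(\alpha_1,\dots,\alpha_n)$ is continuous. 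A secondary point to dispatch cleanly is that continuity downstairs on $\operatorname{Conj}_\mathrm{ell}^\mathrm{reg}(G)$ (quotient topology) is equivalent to continuity of the conjugation-invariant map $g\mapsto(\alpha_i(g))$ on the open subset of regular elliptic elements of $\pu(n,1)$, which is immediate from the universal property of the quotient topology.
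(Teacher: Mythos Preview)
The paper states this Lemma without proof, so there is nothing to compare against. Your argument is correct: convexity of $\mathring{\cT(G)}$ gives the first claim, bijectivity follows from Remark~\ref{ellip}, continuity of $i^{-1}$ is immediate, and continuity of $i$ reduces to continuous dependence of simple eigenvalues on the matrix, which is exactly where regularity is used.

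One small point worth making explicit in the step ``characteristic polynomial of a suitably normalized lift has coefficients that are continuous functions on $\pu(n,1)$'': the quotient $\U(n,1)\to\pu(n,1)$ is a covering, so locally you can pick a continuous section; the normalization (dividing through by the unique negative-type eigenvalue) is then section-independent and continuous, and patches to a global continuous map. You clearly have this in mind, but spelling it out closes the only place a reader might pause.
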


\subsection{Qualitative result in $\pu(n,1)$}

\begin{Defi}
	A subset of $\cT(G)^3$ is called a \textit{polytope} if it is an intersection of affine half-spaces in $\cT(G)^3\subset(\RR^{n})^3$.
\end{Defi}

In particular, a polytope is a convex subset of $\cT(G)^3$. The main result of this section is the following, which describes the structure of the solution set $\cP(G)\subset\cT(G)^3$ (see \eqref{cbar}) of Horn's problem:

\begin{Theo}\label{generalsolution}
	Assume $G=\pu(n,1)$. Then, $\cP(G)$ is a finite union of polytopes in $\cT(G)^3$.
\end{Theo}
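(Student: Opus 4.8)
The plan is to leverage the two structural results already announced in the Introduction: the dichotomy of Theorem \ref{emptyfulltheorem} (every cell is either full or empty) together with the openness and closedness statements of Propositions \ref{open} and \ref{closed}. Concretely, I would argue in three stages. \textbf{First}, I would make precise what the reducible walls are and why there are finitely many of them. A triple $(A,B,C)$ is reducible precisely when $A,B,C$ lie in a common conjugate of some $\mathbb{P}(\U(d_1)\times\U(d_2,1))$ with $d_1+d_2=n$; on the level of conjugacy classes this forces $i^3(\cC_1,\cC_2,\cC_3)$ to satisfy a finite list of affine equations, obtained by decomposing each angle tuple into a $d_1$-part and a $d_2$-part and imposing that the $\U(d_1)$-data solve Horn's problem in $\U(d_1)$ (a finite union of polytopes by the classical compact case) while the $\pu(d_2,1)$-data solve the lower-dimensional problem (a finite union of polytopes by induction on $n$, the base case $n=1$ being $\pu(1,1)$, treated in Section \ref{horn_u}). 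Since there are finitely many partitions $n=d_1+d_2$, finitely many ways to distribute eigenvalues, and finitely many facet hyperplanes coming from the lower-rank polytopes, the union $\mathcal{W}$ of all reducible walls is a finite union of polyhedral pieces, each contained in a hyperplane of $(\RR^n)^3$.

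\textbf{Second}, removing $\mathcal{W}$ from $\mathring{\mathcal{T}(G)}^3$ leaves finitely many open connected components, the cells, each of which is a convex open polytope (an intersection of finitely many open affine half-spaces), because it is cut out by the finitely many hyperplanes carrying the walls. By the dichotomy of Theorem \ref{emptyfulltheorem}, each cell $C$ is either entirely contained in $\cP(G)$ or entirely disjoint from it. Hence $\cP(G)\cap\mathring{\mathcal{T}(G)}^3$ is a union of (closures of, inside the interior) finitely many cells together with possibly part of $\mathcal{W}$. \textbf{Third}, I would handle the boundary. By Proposition \ref{closed} ii), $\cP(G)$ is closed in $\mathring{\mathcal{T}(G)}^3$, so $\cP(G)\cap\mathring{\mathcal{T}(G)}^3$ equals the union of the closures (taken in $\mathring{\mathcal{T}(G)}^3$) of the full cells together with whatever reducible-wall pieces are forced to be solutions; each such closure is a polytope intersected with $\mathring{\mathcal{T}(G)}^3$. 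Finally the special (non-regular) locus $\cT(G)^3\setminus\mathring{\mathcal{T}(G)}^3$ lies in finitely many coordinate hyperplanes $\alpha_i=\alpha_{i+1}$ or $\alpha_n=0$; restricting Horn's problem to each such face is again Horn's problem for a group of the same type but with a partially special spectrum, which by a dimension induction (or a direct limiting argument using closedness) contributes only finitely many more polytopes. Assembling all the pieces shows $\cP(G)$ is a finite union of polytopes in $\cT(G)^3$.

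The main obstacle I expect is the inductive description of the reducible walls: one must verify that the set of reducible solutions really is a finite union of \emph{affine} polytopes and not merely a semialgebraic set. This needs the classical solution of Horn's problem for the compact groups $\U(d_1)$ (giving affine-linear inequalities in the eigenangles) together with the inductive hypothesis that $\cP(\pu(d_2,1))$ is a finite union of polytopes for $d_2<n$; one also has to be careful that a fixed abstract conjugacy class in $\pu(n,1)$ may admit several decompositions as a pair of classes in $\U(d_1)\times\pu(d_2,1)$ (the positive-type eigenvalues can be partitioned in several ways), so the reducible-wall set is a \emph{union} over these combinatorial choices — still finite, hence harmless, but it must be spelled out. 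A secondary subtlety, already flagged in the Introduction, is that full cells may share reducible walls with empty cells while still an irreducible representation sits on that wall; this does not affect the finiteness of the polytope decomposition but it does mean one cannot simply take naive closures, and the correct bookkeeping is supplied exactly by the closedness statement of Proposition \ref{closed}. Everything else — convexity of cells, finiteness of the hyperplane arrangement, behaviour on the special boundary — is routine once these two points are in place.
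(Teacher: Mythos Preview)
Your proposal is correct and follows essentially the same route as the paper: Proposition \ref{Pred} gives the finitely many reducible-wall hyperplanes, Remark \ref{halfspace} identifies the cells as intersections of half-spaces, and Theorem \ref{emptyfulltheorem} (via Propositions \ref{open} and \ref{closed}) supplies the full/empty dichotomy, so that $\cP(G)$ is a finite union of cells and hence of polytopes. Your inductive justification that the reducible walls themselves form polytopes (using the compact Horn solution for $\U(d_1)$ and the lower-rank case for $\pu(d_2,1)$) is in fact more explicit than the paper's treatment, which in the proof of Proposition \ref{Pred} only extracts the hyperplane equation \eqref{hypeq} and a dimension count; your boundary discussion is likewise more thorough than what the paper spells out.
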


In Sections \ref{horn} to \ref{proof}, we will show:

\begin{Theo}
	Assume $G=\pu(2,1)$. Then, $\cP(G)$ is a non-disjoint union of five explicit polytopes in $\cT(G)^3$.
\end{Theo}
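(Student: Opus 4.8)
The plan is to carry out, for $n=2$, the three-step strategy announced in the Introduction and then to verify that the full cells, once sorted by the value of the root of unity $u$, assemble into exactly five convex polytopes. Much of the scaffolding is already in place: Theorem~\ref{generalsolution} gives that $\cP(G)$ is a finite union of polytopes, Propositions~\ref{open} and~\ref{closed} yield the full/empty dichotomy (Theorem~\ref{emptyfulltheorem}), and the lifting discussion of Section~\ref{liftingsection} produces a decomposition $\cP(G)=\cP_1\cup\cP_\omega\cup\cP_{\bar\omega}$, where $\cP_u$ consists of the triples of classes admitting a solution whose standard lifts satisfy $\tilde A\tilde B\tilde C=u\operatorname{Id}$, with $u\in\{1,\omega,\bar\omega\}$ and $\omega=e^{2i\pi/3}$ (for $n=2$ the scalar $u$ is a cube root of unity). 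Since here $\cT(G)=\{(\alpha_1,\alpha_2)\in[0,2\pi[^2:\alpha_1\geq\alpha_2\}$, each $\cP_u$ sits in the six-dimensional space $\cT(G)^3\subset(\RR^2)^3$, and the target statement amounts to proving that $\cP_1$ is a single polytope while $\cP_\omega$ and $\cP_{\bar\omega}$ are each a union of two polytopes; the word "non-disjoint" records the fact that several of these five polytopes share boundary faces lying on reducible walls.

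\textbf{Step 1 (reducible walls).} By Remark~\ref{center}, a triple $(A,B,C)\in\pu(2,1)^3$ is reducible exactly when lifts of $A,B,C$ share a common eigenvector, and Definition~\ref{reducibility} splits this into the spherical case (common eigenvector of negative type) and the hyperbolic or totally reducible case (common eigenvector of positive type). In the spherical case the three elements lie in a common point stabilizer $\mathbb{P}(\U(2)\times\U(1))\cong\U(2)$ (Theorem~\ref{isomclassif}), so the admissible angle pairs are governed by Horn's condition in $\U(2)$, computed in Section~\ref{horn_u}. In the hyperbolic case the three elements lie in a common stabilizer of a complex line $\mathbb{P}(\U(1,1)\times\U(1))$ (Proposition~\ref{CP1stab}), which is $\cong\U(1)\times\pu(1,1)$, so the constraint couples Horn's condition in $\pu(1,1)$ (again Section~\ref{horn_u}), read on the restriction to the common complex line, with the linear relation on the rotation factors coming from the $\U(1)$ factor. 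Translating each such configuration into the coordinates $(\alpha^{(k)}_1,\alpha^{(k)}_2)_{k=1,2,3}$, and bookkeeping over the two possible choices of which positive-type eigenvalue of each normal form $E(\alpha^{(k)}_1,\alpha^{(k)}_2)$ plays which role, as well as over the value of $u$ realized by the associated standard lifts, produces a finite list of affine hyperplanes; carrying out this count yields the twenty-seven reducible walls, and for each one I would record which $u$-sector(s) it borders.

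\textbf{Steps 2 and 3 (cells, full/empty, and assembly).} Intersecting the twenty-seven walls with $\mathring{\cT(G)}^3$ and taking connected components of the complement yields the twenty-eight cells; writing down, for each of them, the finite system of strict linear inequalities defining it together with the adjacency graph recording which cells share a wall is a purely combinatorial (if voluminous) task. For the full/empty decision I would argue cell by cell. A cell is shown empty by exhibiting a single class-triple in it that violates a necessary condition for solvability — a trace or eigenvalue inequality of the kind that already appears in the $\U(2)$ and $\pu(1,1)$ answers, or an obstruction read off from the momentum map as in \cite{Pau} — after which the dichotomy propagates emptiness to the whole cell. A cell is shown full by invoking the constructions of \cite{Mar}: these place an irreducible representation of $\Gamma$ on a reducible wall, so that by Proposition~\ref{open} every cell adjacent to that wall meets the solution set, and the dichotomy then forces such a cell to be full; propagating "full" along the adjacency graph, starting from the walls that carry these irreducible points, should settle every cell. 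Finally, letting $F_u$ be the union of the open full cells in the $u$-sector, Proposition~\ref{closed} gives $\cP(G)\cap\mathring{\cT(G)}^3=F_1\sqcup F_\omega\sqcup F_{\bar\omega}$, and $\cP(G)$ is recovered by taking closures in $\cT(G)^3$; it then remains to read off from the explicit inequalities produced above that $\overline{F_1}$ is a single polytope whereas $\overline{F_\omega}$ and $\overline{F_{\bar\omega}}$ each break up into two polytopes — this is where the number five is pinned down, as a concrete check of which unions of cell-closures are convex. The symmetry $(\alpha^{(k)}_1,\alpha^{(k)}_2)\mapsto(2\pi-\alpha^{(k)}_2,2\pi-\alpha^{(k)}_1)$, induced by inverting the three generators, exchanges the $\omega$- and $\bar\omega$-sectors, so only the computations for $u=1$ and $u=\omega$ are genuinely independent.

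\textbf{Main obstacle.} The hard point is Step~3: there is no general recipe for producing solutions of Horn's problem, so each full cell has to be reached by hand, essentially through the "irreducible representation sitting on a reducible wall" constructions of \cite{Mar}, and one must check that these, together with the empty-cell obstructions, cover all twenty-eight cells with no gap left undecided. A secondary nuisance is the behaviour on $\partial\cT(G)^3$: there the identification $i$ fails to be continuous and special-elliptic classes become non-separate from parabolic ones (Lemma~\ref{separability}), so the passage from the open full cells $F_u$ to the closed polytopes $\overline{F_u}$ must be handled with some care, using the explicit $\U(2)$ and $\pu(1,1)$ descriptions to decide which boundary class-triples actually lie in $\cP(G)$.
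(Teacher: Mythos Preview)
Your overall architecture matches the paper's: split by the cube root $u$, enumerate the reducible walls via the $\U(2)$ and $\pu(1,1)$ solutions, list the cells, decide full/empty, and then verify that the full cells for $u=\omega,1,\bar\omega$ assemble into $2+1+2=5$ convex polytopes, halving the work via the involution $\psi$. That much is right.

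There are, however, two genuine gaps.

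\textbf{The empty-cell argument is not there.} You say a cell is shown empty by ``exhibiting a single class-triple that violates a necessary condition for solvability --- a trace or eigenvalue inequality''. No such inequality is produced, and in practice none of the obvious ones suffices. The paper's mechanism is different and quite specific: it considers the diagonal $\cD=\{\alpha_1=\alpha_2,\ \beta_1=\beta_2,\ \gamma_1=\gamma_2\}\subset\partial\cT(G)^3$, on which each generator is a complex reflection in a point; any representation landing on $\cD$ is automatically reducible (two mirrors in $\CC\mathbb P^2$ always meet), and, using Lemma~\ref{separability} to rule out parabolic degenerations, $\cP(G)$ is closed up to $\mathring\cD$. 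Hence a cell whose closure contains an open piece of $\cD$ must be empty. All five complement cells are then dispatched by checking they touch $\cD$ in an open set. Your proposal does not locate this idea, and without it Step~3 does not close.

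\textbf{The meaning of ``non-disjoint'' is mis-stated, and your $\sqcup$ is wrong.} You write $\cP(G)\cap\mathring{\cT(G)}^3=F_1\sqcup F_\omega\sqcup F_{\bar\omega}$ and explain non-disjointness as the polytopes ``sharing boundary faces lying on reducible walls''. In fact the three $\cP(G)_u$ can overlap on \emph{open} regions: a single triple of classes may admit solutions for two different values of $u$ (the paper's Figure~\ref{tranche_paupert_4} shows $P_{6\pi}$ and $P_{8\pi}$ overlapping in their interiors). The union is $\cup$, not $\sqcup$, and the ``five polytopes are non-disjoint'' refers to this interior overlap, not merely to shared faces.

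A smaller point: for the full cells you rely entirely on the \cite{Mar} constructions plus ``propagating along the adjacency graph''. Propagation does not chain freely --- you need either an irreducible on each new wall or knowledge that the cell on the other side is empty. The paper in fact uses three distinct tools: the explicit irreducible of Proposition~\ref{decompfamily2} (filling the eight central $\omega$-cells simultaneously because it sits on the common intersection of three interior walls), the ``adjacent to an empty cell'' half of Proposition~\ref{emptyfullinterface} (filling $C_{4\pi}^-$), and Paupert's local convexity at totally reducible facets of type~4 (Corollary~\ref{totallyreduciblefilling}, filling all of $P_{6\pi}$). Your sketch gestures at the first of these but not the other two.
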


\subsection{Structure of the space of solutions}

In this section, we prove Theorem \ref{generalsolution}. We identify the hyperplanes that define the polytopes composing $\cP(G)$ as coming from reducible representations, and we uncover fundamental properties of the complement set of these hyperplanes (see the Introduction).

\subsubsection{Reducible subspaces and lifting properties}\label{liftingsection}

We start by defining the subsets of $\cT(G)^3$ composed of reducible and irreducible representations.

\begin{Defi}
	We define the following subsets:
	\begin{itemize}
		\item $\operatorname{Hom}_\mathrm{ell}^\mathrm{red}(\Gamma,G)$ is the set of reducible boundary elliptic representations of $\Gamma$ in $G$;
		\item $\operatorname{Hom}_\mathrm{ell}^\mathrm{irred}(\Gamma,G)$ is the set of irreducible boundary elliptic representations of $\Gamma$ in $G$;
		\item $\cP(G)^\mathrm{red}=\bar{c}(\operatorname{Hom}_\mathrm{ell}^\mathrm{red}(\Gamma,G))\subset\cT(G)^3$;
		\item $\cP(G)^\mathrm{irred}=\bar{c}(\operatorname{Hom}_\mathrm{ell}^\mathrm{irred}(\Gamma,G))\subset\cT(G)^3.$
	\end{itemize}
\end{Defi}

\begin{Rema}
	For $G=\pu(n,1)$ with $n\geq2$, $\cP(G)^\mathrm{red}\cap\cP(G)^\mathrm{irred}\neq\varnothing$, i.e.\ we may have $\rho\in\operatorname{Hom}_\mathrm{ell}^\mathrm{irred}(\Gamma,G)$ and $\bar{c}(\rho)\in\cP(G)^\mathrm{red}$; in other words, an irreducible representation and a reducible representation may have the same boundary elliptic classes. We have produced a family of such examples for $\pu(2,1)$ in \cite{Mar} (the so-called decomposable representations).
\end{Rema}

We now discuss choices of lifts of elements in $\pu(n,1)$ to $\su(n,1)$. This will greatly simplify the study of the combinatorics of the hyperplanes defining the polytopes in $\cP(G)$ (especially in Section \ref{horn}, where we will make everything explicit for $n=2$). When $G=\pu(n,1)$, there is no canonical choice of lift for an element $g\in G$ to $\su(n,1)$. We now arbitrarily fix such a choice, to which we will stick for the whole paper.

\begin{Defi}\label{standardlift}
	Let $G=\pu(n,1)$. Consider $\cC\in\operatorname{Conj}_\mathrm{ell}(G)$ and its image $(\alpha_1,\ldots,\alpha_n)=i(\cC)\in\cT(G)$. We define the \textit{standard lift of $\cC$ to $\su(n,1)$} as the conjugacy class $\widetilde{\cC}$ of the matrix
	$$\begin{pmatrix}
	e^{i\frac{n\alpha_1-(\alpha_2+\ldots+\alpha_n)}{n+1}} & 0 & 0 & 0 \\
	0 & \ddots & 0 & 0 \\
	0 & 0 & e^{i\frac{n\alpha_n-(\alpha_1+\ldots+\alpha_{n-1})}{n+1}} & 0 \\
	0 & 0 & 0 & e^{-i\frac{\alpha_1+\ldots+\alpha_n}{n+1}}
	\end{pmatrix}.$$
	Analogously, if $\rho\in\operatorname{Hom}_\mathrm{ell}(\Gamma,G)$, then for $g\in \Gamma$ such that $\rho(g)\in\cC$, we denote by $\widetilde{\rho(g)}$ the unique lift of $\rho(g)$ to $\su(n,1)$ that satisfies $\widetilde{\rho(g)}\in\widetilde{\cC}$.
\end{Defi}

\begin{Prop}
	Let $G=\pu(n,1)$. There exists a continuous map $\ell$ such that the following diagram commutes:
	$$\xymatrix{
		& \operatorname{Conj}_\mathrm{ell}^\mathrm{reg}(\Gamma,\su(n,1))^3 \ar[d]^{p} \\
		\chi_\mathrm{ell}^\mathrm{reg}(\Gamma,G) \ar@{.>}[ru]^{\ell} \ar[r]_{c_{\chi}} & \operatorname{Conj}_\mathrm{ell}^\mathrm{reg}(\Gamma,G)^3 }$$
\end{Prop}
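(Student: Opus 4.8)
The plan is to construct the lift $\ell$ explicitly using the standard lift of Definition \ref{standardlift}, and to check continuity on the regular locus. Recall that a point of $\chi_\mathrm{ell}^\mathrm{reg}(\Gamma,G)$ is the conjugacy class $[\rho]$ of a representation $\rho\in\operatorname{Hom}_\mathrm{ell}(\Gamma,G)$ with $\rho(\texttt{a}),\rho(\texttt{b}),\rho(\texttt{c})$ all regular elliptic. For such a $[\rho]$, I set
$$\ell([\rho])=\left([\widetilde{\rho(\texttt{a})}],[\widetilde{\rho(\texttt{b})}],[\widetilde{\rho(\texttt{c})}]\right)\in\operatorname{Conj}_\mathrm{ell}^\mathrm{reg}(\Gamma,\su(n,1))^3,$$
where $[\widetilde{\rho(g)}]$ denotes the $\su(n,1)$-conjugacy class of the standard lift of $\rho(g)$, as in Definition \ref{standardlift}. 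First I would check that this is well defined, i.e.\ independent of the representative $\rho$ in its $G$-conjugacy class: conjugating $\rho$ by $f\in G$ replaces each $\rho(g)$ by a $G$-conjugate, hence leaves each $\operatorname{PU}(n,1)$-conjugacy class $\cC_g$ unchanged, hence — since the standard lift only depends on $\cC_g$ through $i(\cC_g)\in\cT(G)$ — leaves each standard-lift class $\widetilde{\cC_g}$ unchanged. Commutativity $p\circ\ell=c_\chi$ is then immediate: $p$ forgets the lift, returning the $\pu(n,1)$-class, which is exactly $c_\chi([\rho])$.

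The substance of the proof is continuity of $\ell$. Since $c_\chi$ is continuous and injective on the regular locus in a suitable sense — more precisely, using the homeomorphism $\operatorname{Conj}_\mathrm{ell}^\mathrm{reg}(G)\xrightarrow{i}\mathring{\cT(G)}$ of the preceding Lemma — I would factor the problem through $\cT(G)$. The map sending $(\alpha_1,\ldots,\alpha_n)\in\mathring{\cT(G)}$ to the diagonal entries of the standard-lift matrix is given by the affine formulas $\beta_k=\frac{n\alpha_k-\sum_{j\neq k}\alpha_j}{n+1}$ for $k\leq n$ and $\beta_{n+1}=-\frac{\alpha_1+\cdots+\alpha_n}{n+1}$, which are visibly continuous (indeed linear). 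So $\ell$ is the composition of: the continuous identification of $\chi_\mathrm{ell}^\mathrm{reg}(\Gamma,G)$ with an open subset of $\mathring{\cT(G)}^3$ afforded by $c_\chi$ and $i^3$; the linear maps above, componentwise; and the continuous identification back into $\operatorname{Conj}_\mathrm{ell}^\mathrm{reg}(\Gamma,\su(n,1))^3$. Each of these is continuous, so $\ell$ is continuous.

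The main obstacle, and the point requiring care, is that the stated homeomorphism $i$ is between \emph{conjugacy classes} and $\cT(G)$, but the diagram involves the \emph{character variety} $\chi_\mathrm{ell}^\mathrm{reg}(\Gamma,G)$, which maps to $\operatorname{Conj}_\mathrm{ell}^\mathrm{reg}(\Gamma,G)^3$ via $c_\chi$ but need not do so homeomorphically onto its image. What one actually needs is only that $\ell$ is continuous as a map out of $\chi_\mathrm{ell}^\mathrm{reg}(\Gamma,G)$, and for this it suffices that $\ell$ factors (set-theoretically, as shown above) through the continuous map $c_\chi$ followed by a continuous map on $\operatorname{Conj}_\mathrm{ell}^\mathrm{reg}(\Gamma,G)^3$ — namely $i^3$, then the linear map, then $i^{-3}$ for $\su(n,1)$. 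Thus continuity of $\ell$ reduces to continuity of $c_\chi$ (given earlier) together with the elementary fact that the diagonal-entry formulas are continuous; no injectivity or properness of $c_\chi$ is needed. One should also note that the standard lift of a regular elliptic class is again regular elliptic — the $\beta_k$ are pairwise distinct whenever the $\alpha_k$ are, since their pairwise differences equal those of the $\alpha_k$ — so $\ell$ indeed lands in $\operatorname{Conj}_\mathrm{ell}^\mathrm{reg}(\Gamma,\su(n,1))^3$, which closes the argument.
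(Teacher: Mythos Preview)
Your proof is correct, but it takes a genuinely different route from the paper's. The paper argues abstractly: the projection $p$ is a covering map (with fiber $\mathbb{U}_{n+1}^3$), and the base $\operatorname{Conj}_\mathrm{ell}^\mathrm{reg}(\Gamma,G)^3$ is homeomorphic to a product of three open balls, hence simply connected; the lifting theorem from covering space theory then produces $\ell$ in one stroke. Your approach instead constructs a continuous global section $s$ of $p$ explicitly --- namely the standard lift, written in $\cT(G)$-coordinates as a linear map --- and sets $\ell=s\circ c_\chi$. This is more concrete and in fact more directly useful for the subsequent Corollary, which specifically concerns the standard lift; the paper's argument gives \emph{some} lift and must then (implicitly) pin it down to be the standard one by choice of basepoint. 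Conversely, the covering-space argument is shorter and makes no computation.

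One small point you should tighten: when you verify that the standard lift of a regular elliptic class is again regular, you check only that $\beta_1,\ldots,\beta_n$ are pairwise distinct. You also need $e^{i\beta_{n+1}}\neq e^{i\beta_k}$ for $k\leq n$. This follows because $\beta_k-\beta_{n+1}=\alpha_k\in(0,2\pi)$ on the interior of $\cT(G)$, hence $e^{i\alpha_k}\neq 1$; it is worth saying so.
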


\begin{proof}
	This comes from the fact that $\operatorname{Conj}_\mathrm{ell}^\mathrm{reg}(\Gamma,\su(n,1))^3$ is a covering of $\operatorname{Conj}_\mathrm{ell}^\mathrm{reg}(\Gamma,G)^3$, and that $\operatorname{Conj}_\mathrm{ell}^\mathrm{reg}(\Gamma,G)^3$ is homeomorphic to the product of three open balls so is simply connected. The result is a direct application of the lifting of continuous maps theorem (from the theory of covering spaces).
\end{proof}

\begin{Coro}\label{lift}
	The map $\operatorname{Hom}_\mathrm{ell}^\mathrm{reg}(\Gamma,G)\to\operatorname{Conj}_\mathrm{ell}^\mathrm{reg}(\Gamma,\su(n,1))^3$ that sends a boundary regular elliptic representation $\rho$ (see Definition \ref{regspe}) to the triple of conjugacy classes of the standard lifts $(\widetilde{\rho(\mathtt{a})},\widetilde{\rho(\mathtt{b})},\widetilde{\rho(\mathtt{c})})$ of Definition \ref{standardlift} is continuous.
\end{Coro}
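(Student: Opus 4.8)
The plan is to factor the map through the character variety and then invoke the previous Proposition. First I would observe that the target $\operatorname{Conj}_\mathrm{ell}^\mathrm{reg}(\Gamma,\su(n,1))^3$ is, by the previous Proposition, reached by a continuous map $\ell$ from $\chi_\mathrm{ell}^\mathrm{reg}(\Gamma,G)$. So the composite $\ell\circ\pi:\operatorname{Hom}_\mathrm{ell}^\mathrm{reg}(\Gamma,G)\to\operatorname{Conj}_\mathrm{ell}^\mathrm{reg}(\Gamma,\su(n,1))^3$ is continuous, being a composition of the continuous projection $\pi$ with the continuous $\ell$. It then remains to identify this composite with the map described in the statement, i.e.\ to check that $\ell\circ\pi$ sends $\rho$ to the triple of conjugacy classes of the standard lifts $(\widetilde{\rho(\mathtt{a})},\widetilde{\rho(\mathtt{b})},\widetilde{\rho(\mathtt{c})})$.

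For this identification step, the key point is that both maps are lifts of the same map $c=c_\chi\circ\pi$ through the covering $p$, so they must agree on each connected component once they agree at a single point of that component. Concretely, the standard-lift assignment of Definition \ref{standardlift} is defined fiberwise over $\cT(G)$ by an explicit formula in the angles $(\alpha_1,\dots,\alpha_n)$, and one checks directly that the conjugacy class of this matrix in $\su(n,1)$ is one of the $(n+1)$ sheets of $p$ above $E(\alpha_1,\dots,\alpha_n)\in\operatorname{Conj}_\mathrm{ell}^\mathrm{reg}(\Gamma,G)$ — indeed the displayed matrix has determinant $1$ and lies in $\U(n,1)$, hence in $\su(n,1)$, and it projects to the prescribed elliptic class. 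Since $\operatorname{Conj}_\mathrm{ell}^\mathrm{reg}(\Gamma,G)^3$ is connected (a product of three open balls) and $\ell$ is the \emph{unique} continuous lift of $c_\chi$ along $p$, any continuous section of $p\circ(\,\cdot\,)$ over $\operatorname{Hom}_\mathrm{ell}^\mathrm{reg}$ that is $G$-invariant must equal $\ell\circ\pi$; and the standard-lift map is such a section because the standard lift of a conjugacy class depends only on the class, so it is constant on $G$-orbits in $\operatorname{Hom}_\mathrm{ell}^\mathrm{reg}$.

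The main obstacle, modest as it is, is the verification that the standard-lift map is genuinely continuous as a map into $\operatorname{Conj}_\mathrm{ell}^\mathrm{reg}(\Gamma,\su(n,1))^3$ — equivalently, that choosing the standard lift does not jump between sheets of the covering $p$ as $\rho$ varies. This follows because the angles $(\alpha_1,\dots,\alpha_n)$ depend continuously on $\rho$ when $\rho$ is regular elliptic (by the homeomorphism $\operatorname{Conj}_\mathrm{ell}^\mathrm{reg}(G)\xrightarrow{i}\mathring{\cT(G)}$), and the entries $e^{i\frac{n\alpha_k-\sum_{j\neq k}\alpha_j}{n+1}}$, $e^{-i\frac{\alpha_1+\dots+\alpha_n}{n+1}}$ of the standard-lift matrix are continuous functions of these angles; passing to conjugacy classes is continuous, so the composite is continuous. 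Combining this with the factorization through $\ell\circ\pi$ established above yields the claimed continuity.
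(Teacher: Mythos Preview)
Your proposal is correct and follows the same approach as the paper: factor the map as $\ell\circ\pi$ (with $\pi$ the projection to the character variety) and use continuity of both factors. The paper's proof is literally this one sentence, while you supply additional justification for why $\ell$ can be taken to be the standard-lift section.

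One minor remark: your claim that ``$\ell$ is the \emph{unique} continuous lift of $c_\chi$ along $p$'' is not quite right---there are $(n+1)^3$ such lifts, one for each choice of sheet over a basepoint---and the Proposition only asserts existence. What is really going on (and what the paper leaves implicit) is that the standard-lift formula itself defines a continuous section of $p$, so one may simply \emph{choose} $\ell$ to be this section; your final paragraph establishes exactly this continuity directly from the formulas, and that argument is self-sufficient. The earlier identification step via uniqueness of lifts is then redundant rather than wrong.
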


\begin{proof}
	This map is the composition $\ell\circ\pi_\mathrm{ell}^\mathrm{reg}$, where $\pi_\mathrm{ell}^\mathrm{reg}:\operatorname{Hom}_\mathrm{ell}^\mathrm{reg}(\Gamma,G)\to\chi_\mathrm{ell}^\mathrm{reg}(\Gamma,G)$ is the canonical projection and is continuous.
\end{proof}

\begin{Rema}
	Take $G=\pu(n,1)$ and $\rho\in\operatorname{Hom}_\mathrm{ell}(\Gamma,G)$. Let $\widetilde{\rho(\texttt{a})},\widetilde{\rho(\texttt{b})},\widetilde{\rho(\texttt{c})}$ be the standard lifts of $\rho(\texttt{a}),\rho(\texttt{b}),\rho(\texttt{c})$ to $\su(n,1)$. Then, $$\widetilde{\rho(\texttt{a})}\widetilde{\rho(\texttt{b})}\widetilde{\rho(\texttt{c})}=u\operatorname{Id},$$
	where $u\in\mathbb{U}_{n+1}$ is an $(n+1)$-root of unity.
\end{Rema}

\begin{Defi}\label{P(G)u}
	For $G=\pu(n,1)$ and $u\in\mathbb{U}_{n+1}$, we define:
	\begin{itemize}
		\item $\operatorname{Hom}_\mathrm{ell}(\Gamma,G)_u=\{\rho\in\operatorname{Hom}_\mathrm{ell}(\Gamma,G) \ | \ \widetilde{\rho(\texttt{a})}\widetilde{\rho(\texttt{b})}\widetilde{\rho(\texttt{c})}=u\operatorname{Id} \}$;
		\item $\operatorname{Hom}_\mathrm{ell}^\mathrm{red}(\Gamma,G)_u=\{\rho\in\operatorname{Hom}_\mathrm{ell}^\mathrm{red}(\Gamma,G) \ | \ \widetilde{\rho(\texttt{a})}\widetilde{\rho(\texttt{b})}\widetilde{\rho(\texttt{c})}=u\operatorname{Id} \}$;
		\item $\cP(G)_u=\overline{c}(\operatorname{Hom}_\mathrm{ell}(\Gamma,G)_u)$;
		\item $\cP(G)^\mathrm{red}_u=\overline{c}(\operatorname{Hom}_\mathrm{ell}^\mathrm{red}(\Gamma,G)_u).$
	\end{itemize}
\end{Defi}

We have:
$$\operatorname{Hom}_\mathrm{ell}(\Gamma,G)=\bigsqcup_{u\in\mathbb{U}_{n+1}}\operatorname{Hom}_\mathrm{ell}(\Gamma,G)_u \ \mbox{and} \ \cP(G)=\bigcup_{u\in\mathbb{U}_{n+1}}\cP(G)_u,$$
where the first union is disjoint and the second union may be non-disjoint (as we will see in Section \ref{horn}).

\subsubsection{Definition of a cell and fundamental properties}

We have the following important structural result, which will be proven in Section \ref{Predproof}.

\begin{Prop}\label{Pred}
		Let $G=\pu(n,1)$. For $u\in\mathbb{U}_{n+1}$, $\cP(G)^\mathrm{red}_u$ is a family of polytopes inside hyperplanes in $\cT(G)^3$.
\end{Prop}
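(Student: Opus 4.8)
The plan is to analyze a reducible boundary elliptic representation $\rho \in \operatorname{Hom}_\mathrm{ell}^\mathrm{red}(\Gamma,G)_u$ by decomposing the ambient space and reducing to the Horn problem for smaller groups. By definition of reducibility (Definition \ref{reducibledef}), any lifts of $\rho(\mathtt{a}),\rho(\mathtt{b}),\rho(\mathtt{c})$ to $\U(n,1)$ share a common invariant subspace $W \subset \CC^{n+1}$. Since the elements are elliptic, hence diagonalizable, one may choose $W$ to be nondegenerate for the Hermitian form; its signature is either $(d_1,0)$ for some $1 \le d_1 \le n$, or $(d_2,1)$ for some $0 \le d_2 < n$. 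The complementary subspace $W^\perp$ carries the complementary signature, so $\rho$ factors through a subgroup conjugate to $\mathbb{P}(\U(d_1) \times \U(d_2,1))$ with $d_1 + d_2 = n$. Decomposing $\rho = (\rho_1,\rho_2)$ accordingly, the condition $\rho(\mathtt{a})\rho(\mathtt{b})\rho(\mathtt{c}) = \operatorname{Id}$ becomes a pair of Horn-type equations: $\rho_1$ is a solution of the (compact) Horn problem in $\U(d_1)$ up to a scalar, and $\rho_2$ a solution in $\U(d_2,1)$ up to a scalar. The eigenvalue data of $\rho(\mathtt{a})$ etc.\ is obtained by interleaving the eigenvalue data of the two blocks, with the $\U(d_2,1)$ block contributing the single negative-type eigenvalue.

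Next I would translate this into the language of $\cT(G)^3$. The key point is that the interleaving map is \emph{affine} on the level of angle coordinates: given a solution of the $\U(d_1)$ Horn problem with angle data $(\vec\beta_1,\vec\beta_2,\vec\beta_3)$ and a solution of the $\U(d_2,1)$ Horn problem with angle data $(\vec\gamma_1,\vec\gamma_2,\vec\gamma_3)$, the resulting angle triple in $\cT(G)^3$ is obtained by concatenating each $\vec\beta_j$ with $\vec\gamma_j$ and re-sorting in decreasing order. On each chamber where the sorting pattern is fixed, this is the restriction of a linear map. Since the solution sets for $\U(d_1)$ (the classical Horn polytope, by the Klyachko/Knutson--Tao results cited in the introduction) and for $\U(d_2,1)$ (here one argues by induction on $n$ — for $d_2 = 0$ the group is trivial and the condition is that all three classes are trivial, and for $d_2 \ge 1$ one invokes Proposition \ref{Pred} itself at a lower rank, or the base cases $\pu(1,1)$ and $\U(2)$ treated in Section \ref{horn_u}) are each finite unions of polytopes, the image under this piecewise-affine map is again a finite union of polytopes. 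One must track the scalar ambiguity: the two blocks are only constrained so that the product of the two scalars (raised to appropriate powers coming from $\det$) equals $u$; fixing $u$ fixes a finite set of admissible pairs of scalars, each contributing its own polytope.

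The crucial dimension count — that these polytopes are \emph{codimension one}, i.e.\ lie inside hyperplanes — comes from a trace/determinant obstruction. Because $\rho$ preserves the subspace $W$, the sub-block representation $\rho_1$ into $\U(d_1)$ satisfies $\det(\rho_1(\mathtt{a}))\det(\rho_1(\mathtt{b}))\det(\rho_1(\mathtt{c})) = $ a fixed root of unity determined by the standard lift normalization and by $u$. This is a single nontrivial $\RR$-linear equation on the angle coordinates $(\alpha_1^{(j)},\dots,\alpha_n^{(j)})_{j=1,2,3}$: it is a $\ZZ$-linear combination of the $\alpha$'s congruent to a constant mod $2\pi$, so on $\cT(G)^3$ it cuts out a union of parallel hyperplanes. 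Hence each polytope in $\cP(G)_u^\mathrm{red}$ lies inside one of these hyperplanes. I expect the main obstacle to be bookkeeping rather than conceptual: one must handle all partitions $d_1 + d_2 = n$ and all choices of compatible scalars simultaneously, verify that the finiteness is preserved under the sorting/interleaving (controlling the finitely many combinatorial types), and confirm that the determinant equation is genuinely nontrivial (never $0 = 0$) so that one really lands in a proper hyperplane — the latter being where the $\su(n,1)$ standard-lift normalization of Definition \ref{standardlift} is doing essential work. The induction on $n$ closes the loop, with the compact-group input from the classical Horn theorem and the rank-one base cases from Section \ref{horn_u}.
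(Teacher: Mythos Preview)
Your approach is correct and essentially the same as the paper's: both decompose via the common invariant subspace, reduce to Horn problems in $\U(d_1)$ and $\pu(d_2-1,1)$, and extract the codimension-one constraint from a determinant/trace equation on the blocks together with the dimension count $\dim\cP(\U(d_1))+\dim\cP(\pu(d_2-1,1))=3n-1$. The paper is more explicit about the exact linear equation (writing $d_2\arg\det(A_1B_1C_1)-d_1\arg\det(A_2B_2C_2)\equiv 0\pmod{2\pi}$ directly as an index-subset sum in the angle coordinates), while you are more explicit about the inductive structure and the piecewise-affine interleaving, but the content is the same.
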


We may now define our notion of cell.

\begin{Defi}\label{cell}
	Let $G=\pu(n,1)$. For any $u\in\mathbb{U}_{n+1}$, we call \textit{$u$-cell} (or more generally \textit{cell}) a connected component of the complement $(\cP(G)^\mathrm{red}_u)^c$ in $\cT(G)^3$.
\end{Defi}

\begin{Rema}\label{halfspace}
	Let $G=\pu(n,1)$ and $u\in\mathbb{U}_{n+1}$. Let $C$ be a $u$-cell. Proposition \ref{Pred} implies that $C$ is an intersection of affine half-spaces in $\cT(G)^3$ (in particular, $C$ is convex).
\end{Rema}

We will describe precisely the cells for our examples of groups $G$. The following result reveals to be crucial to solve Horn's problem in our cases.

\begin{Theo}\label{emptyfulltheorem}
	Let $G=\pu(n,1)$. Take $u\in\mathbb{U}_{n+1}$ and $C$ a $u$-cell. Then, we have either $C\subset \cP(G)_u$ or $C\subset (\cP(G)_u)^c$.
\end{Theo}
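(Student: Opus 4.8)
The plan is to deduce Theorem~\ref{emptyfulltheorem} from the two properties collected in the Proposition of the Introduction (Propositions~\ref{open} and~\ref{closed}): openness of the irreducible solutions in the solution set, and closedness of the solution set inside $\mathring{\cT(G)}^3$. Fix $u\in\mathbb{U}_{n+1}$ and a $u$-cell $C$. By Remark~\ref{halfspace}, $C$ is a convex (hence connected) open subset of $\cT(G)^3$, and by construction $C$ is disjoint from $\cP(G)^\mathrm{red}_u$. Moreover $C$ is contained in $\mathring{\cT(G)}^3$: the reducible walls associated to $u$ include the boundary-type degenerations, so a cell cannot meet $\partial\cT(G)^3$; alternatively one restricts attention to the regular part, on which the identification $i$ is a homeomorphism. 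The strategy is then to show that the set $C\cap\cP(G)_u$ is both open and closed in $C$, so that by connectedness it is either empty or all of $C$.

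First I would establish \textbf{closedness}. Since $C\subset\mathring{\cT(G)}^3$, part~ii) of the Proposition gives that $\cP(G)$ is closed in $\mathring{\cT(G)}^3$; intersecting with the clopen-in-$\cT(G)$ decomposition is not quite immediate because the union $\cP(G)=\bigcup_u\cP(G)_u$ may be non-disjoint, but on a cell this is harmless. Indeed, any point $x\in C\cap\overline{\cP(G)_u}$ (closure in $C$) is a limit of classes $\bar c(\rho_j)$ with $\widetilde{\rho_j(\texttt{a})}\widetilde{\rho_j(\texttt{b})}\widetilde{\rho_j(\texttt{c})}=u\operatorname{Id}$; by Corollary~\ref{bestpau} (translation lengths of elliptics being zero, hence bounded) one extracts a subsequence with $(\rho_j(\texttt{a}),\rho_j(\texttt{b}),\rho_j(\texttt{c}))$ converging, after global conjugation, to an elliptic triple $(A,B,C)$ with $ABC=\operatorname{Id}$ and $\bar c$-image $x$; continuity of the standard lift on the regular part (Corollary~\ref{lift}) forces the limiting lifts still to satisfy the product relation with the same $u$, so $x\in\cP(G)_u$. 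Hence $C\cap\cP(G)_u$ is closed in $C$.

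Next, \textbf{openness}. Let $x\in C\cap\cP(G)_u$, realized by an elliptic triple with $ABC=\operatorname{Id}$ and standard-lift product $u\operatorname{Id}$. If this triple is irreducible, part~i) of the Proposition says the irreducible solutions are open in $\cP(G)$, and a small enough neighborhood of $x$ in $\cT(G)^3$ therefore consists of (irreducible) solutions; those that lie in $C$ automatically have lift-product $u\operatorname{Id}$ by continuity (Corollary~\ref{lift}) since $u$ is locally constant on $C$. If instead the triple realizing $x$ is reducible, then $x\in\cP(G)^\mathrm{red}_u$, contradicting $x\in C$; so inside a cell every solution is realized irreducibly, and the argument applies. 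Thus $C\cap\cP(G)_u$ is open in $C$, and connectedness of $C$ finishes the proof.

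The main obstacle I anticipate is the \textbf{openness step for reducible-looking solutions}, i.e.\ precisely the subtle point flagged in the Introduction: a solution class $x\in C$ can a priori be realized only by representations whose images accidentally sit on a reducible wall, yet by definition of $C$ we have $x\notin\cP(G)^\mathrm{red}_u$, so some realization must be irreducible --- one must argue this carefully, perhaps by noting that if every realization of $x$ were reducible then $x\in\cP(G)^\mathrm{red}_u$ outright. A secondary technical point is the bookkeeping of the root of unity $u$ under limits and perturbations: one must make sure the map $\rho\mapsto u(\rho)$ is continuous with values in the finite set $\mathbb{U}_{n+1}$, hence locally constant, which is exactly the content of Corollary~\ref{lift} together with the Remark following Definition~\ref{standardlift}. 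Once these are in place the connectedness argument is routine.
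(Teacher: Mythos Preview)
Your proof is correct and follows exactly the paper's approach: show that $\cP(G)_u\cap C$ is open and closed in the connected set $C$, invoking Propositions~\ref{open} and~\ref{closed}. The paper's proof is two sentences; you have essentially written out the details that the paper leaves implicit, including the $u$-bookkeeping via Corollary~\ref{lift} (which in the paper appears in the proof of Proposition~\ref{closed} rather than here).

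One remark: the ``obstacle'' you flag at the end is not actually an obstacle, and your own resolution of it is the right one-line observation. By definition $\cP(G)^\mathrm{red}_u=\bar c(\operatorname{Hom}_\mathrm{ell}^\mathrm{red}(\Gamma,G)_u)$, so if \emph{any} realization of $x$ in the $u$-component were reducible, then $x\in\cP(G)^\mathrm{red}_u$, contradicting $x\in C$. There is no subtle case to worry about here; the phenomenon the Introduction warns about (irreducible representations whose $\bar c$-image lies on a reducible wall) is relevant for filling adjacent cells via Proposition~\ref{emptyfullinterface}, not for this theorem.
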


\begin{Rema}
	This implies that one of the following two cases occurs:
	\begin{itemize}
		\item $\forall\tau\in C$, $\overline{c}^{-1}(\tau)\cap\operatorname{Hom}_\mathrm{ell}(\Gamma,G)_u\neq\varnothing$, i.e.\ there exists a representation $\rho$ whose standard lifts satisfy $\widetilde{\rho(\texttt{a})}\widetilde{\rho(\texttt{b})}\widetilde{\rho(\texttt{c})}=u\operatorname{Id},$ and such that the conjugacy classes of $\rho(\texttt{a}),\rho(\texttt{b}),\rho(\texttt{c})$ are given by $\tau$. In this case, we say that $C$ is \textit{full};
		\item $\forall\tau\in C$, $\overline{c}^{-1}(\tau)\cap\operatorname{Hom}_\mathrm{ell}(\Gamma,G)_u=\varnothing$, i.e.\ there exists no representation $\rho$ whose standard lifts satisfy $\widetilde{\rho(\texttt{a})}\widetilde{\rho(\texttt{b})}\widetilde{\rho(\texttt{c})}=u\operatorname{Id},$ and such that the conjugacy classes of $\rho(\texttt{a}),\rho(\texttt{b}),\rho(\texttt{c})$ are given by $\tau$. In this case, we say that $C$ is \textit{empty}.
	\end{itemize}
\end{Rema}

Theorem \ref{emptyfulltheorem} is a consequence of Propositions \ref{open} and \ref{closed} below.

\begin{Prop}\label{open}
    Let $G$ be a Lie group, and denote by $Z(G)$ the center of $G$. The map $$c_u:\operatorname{Hom}_\mathrm{ell}(\Gamma,G)_u\to\operatorname{Conj}_\mathrm{ell}(G)^3/Z(G)$$ is open at an irreducible representation.
\end{Prop}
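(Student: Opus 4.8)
The statement asserts that the map $c_u$ sending a representation to the triple of conjugacy classes of its values (modulo the center) is open at an irreducible representation $\rho_0$. The plan is to use the standard implicit-function-theorem-type argument from the theory of character varieties, adapted to the constraint $\widetilde{\rho(\texttt{a})}\widetilde{\rho(\texttt{b})}\widetilde{\rho(\texttt{c})}=u\operatorname{Id}$. First I would note that $\operatorname{Hom}_\mathrm{ell}(\Gamma,G)_u$ can be parametrized near $\rho_0$ by the pair $(A,B)$ with $A=\widetilde{\rho(\texttt{a})}$, $B=\widetilde{\rho(\texttt{b})}$ lying in fixed small neighborhoods of elliptic elements inside $\su(n,1)$, since $C$ is then determined by $C=(u\operatorname{Id})(AB)^{-1}$; ellipticity of all three is an open condition. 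So it suffices to show that the map $(A,B)\mapsto ([A],[B],[AB^{-1}u^{-1}\operatorname{Id}])$ (i.e., the triple of conjugacy classes of the three generators' images) is open at $(A_0,B_0)$, where $[\cdot]$ denotes the conjugacy class. Equivalently, working locally, it is enough to show that the derivative of this map at an irreducible point is surjective onto the tangent space of $\operatorname{Conj}_\mathrm{ell}(G)^3/Z(G)$.

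The key computation is the following: the differential of $c_u$ at $\rho_0$ has image equal to a complement of the image of the coadjoint-orbit directions, and the obstruction to surjectivity is exactly measured by $H^1$ of $\Gamma$ with coefficients in the adjoint representation, together with the centralizer of $\rho_0$. Concretely, the tangent space to $\operatorname{Hom}(\Gamma,G)$ at $\rho_0$ is the space of cocycles $Z^1(\Gamma,\mathfrak{g})$, the tangent directions to simultaneous conjugation are the coboundaries $B^1(\Gamma,\mathfrak{g})$, and the condition that $\rho_0$ be irreducible (Remark \ref{center}: $Z(A_0)\cap Z(B_0)=Z(G)$) forces $H^0(\Gamma,\mathfrak{g})$ to be the center of $\mathfrak{g}$, hence $B^1(\Gamma,\mathfrak{g})\cong \mathfrak{g}/\mathfrak{z}(\mathfrak{g})$ has the expected dimension. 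The map to conjugacy classes "forgets" precisely the coboundary directions on each factor, so openness follows once one checks that the restriction maps $Z^1(\Gamma,\mathfrak{g})\to \bigoplus_{g\in\{\texttt{a},\texttt{b},\texttt{c}\}}(\mathfrak{g}/\mathfrak{z}_g)$ (to the tangent spaces of the three conjugacy classes) is surjective; this is a consequence of the long exact sequence for the pair $(\Gamma, \text{boundary})$ for the thrice-punctured sphere, where the relevant $H^2$ vanishes. Since the relation $\texttt{a}\texttt{b}\texttt{c}=1$ in $\Gamma$ combined with the fixed value $u\operatorname{Id}$ of the product imposes exactly one matrix-valued equation (which at an irreducible representation cuts out a submanifold of the expected codimension, as the derivative $\mathfrak{g}\to\mathfrak{g}$, $X\mapsto \operatorname{Ad}(A_0)X - X + \cdots$ has cokernel only the center), the argument goes through.

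More precisely, the cleanest way to organize the proof: (i) reduce, as above, to openness of $(A,B)\mapsto([A],[B],[C])$ with $C=u^{-1}B^{-1}A^{-1}$ as a map from a neighborhood in $\cC_{1,0}\times \cC_{2,0}$ (the two ambient conjugacy classes) — or simply from a neighborhood in $G\times G$ intersected with the elliptic locus — to $\operatorname{Conj}_\mathrm{ell}(G)^3/Z(G)$; (ii) compute its differential at $(A_0,B_0)$ and identify its kernel with the tangent space to the orbit of $\rho_0$ under $G$-conjugation plus the "internal" conjugations preserving each class, using that the trace form / Killing-type pairing is non-degenerate; (iii) invoke the irreducibility hypothesis to deduce that the centralizer is central, so the kernel has exactly the expected dimension, whence by rank-nullity the differential is surjective; (iv) conclude by the submersion theorem that $c_u$ is open at $\rho_0$. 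The main obstacle — and the point requiring genuine care rather than formal cohomology — is step (ii)–(iii): one must verify that no \emph{unexpected} drop of rank occurs, i.e., that the cocycle restriction map to the three boundary conjugacy classes is surjective for an irreducible $\rho_0$ even when some of the $\rho_0(\texttt{a}),\rho_0(\texttt{b}),\rho_0(\texttt{c})$ are \emph{special} elliptic (repeated eigenvalues), so that the conjugacy classes need not be of maximal dimension. This is handled by the observation that $\operatorname{Conj}_\mathrm{ell}(G)/Z(G)$ is, near any elliptic class, a smooth manifold whose tangent space is $\mathfrak{g}/\mathfrak{z}_g$, and the parabolic-induction / Mayer–Vietoris computation of $H^1$ for the thrice-punctured sphere is insensitive to which strata the boundary classes lie in, as long as $\rho_0$ is irreducible so that $H^0=\mathfrak{z}(\mathfrak{g})$ and (by Poincaré duality for the punctured surface) $H^2=0$.
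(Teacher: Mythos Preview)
The paper does not actually prove this proposition: immediately after the statement it says the proof is classical and refers the reader to Proposition~4.2 in Falbel--Wentworth and Proposition~2.5 in Paupert. Your sketch is precisely that classical argument --- identify $\operatorname{Hom}(\Gamma,G)$ with $G\times G$ via the free generators $\texttt{a},\texttt{b}$, compute the differential of $(A,B)\mapsto([A],[B],[(AB)^{-1}])$, and use irreducibility (so that the common centralizer reduces to $Z(G)$, i.e.\ $H^0(\Gamma,\mathfrak g_{\operatorname{Ad}\rho_0})=\mathfrak z(\mathfrak g)$) together with a dimension count to obtain surjectivity of the differential, whence openness by the submersion theorem. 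So your approach and the paper's (deferred) approach coincide.

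One small simplification: you invoke ``Poincar\'e duality for the punctured surface'' to get the vanishing of $H^2$, but since $\Gamma$ is free of rank~$2$ this is immediate from the fact that free groups have cohomological dimension~$1$; no duality is needed. Likewise, the heavier Mayer--Vietoris/long-exact-sequence packaging in your step~(ii)--(iii) can be replaced, as in the cited references, by a direct computation of the derivative of the product map $G\times G\to G$ and of the projections to the Cartan directions, which makes the rank check at an irreducible point entirely elementary. With those cosmetic adjustments your outline is correct.
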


\begin{Rema}\label{dimension}
	Note that if $Z(G)=\{\operatorname{Id}\}$, as in the case of $\pu(n,1)$, then we have $$\operatorname{Conj}_\mathrm{ell}(G)^3/Z(G)\cong\operatorname{Conj}_\mathrm{ell}(G)^3.$$ In the case of $\operatorname{U}(n)$, we have $Z(G)=\operatorname{U}(1)$, so $\operatorname{Conj}_\mathrm{ell}(G)^3/Z(G)$ is identified to a co-dimension 1 subspace of $\operatorname{Conj}_\mathrm{ell}(G)^3$. Therefore:
	\begin{itemize}
		\item If $G=\pu(n,1)$, $\dim(\cP(G)^{\mathrm{irred}})=\dim(\cT(G))=3n;$
		\item If $G=\U(n)$, $\dim(\cP(G)^{\mathrm{irred}})=\dim(\cT(G))-1=3n-1$.
	\end{itemize}
\end{Rema}

The proof of Proposition \ref{open} is classical: see for instance Proposition 4.2 in Falbel-Wentworth \cite{FalWen2}, or Proposition 2.5 in Paupert \cite{Pau}.

The second crucial result is the following (we give a proof in Section \ref{closedness} below):

\begin{Prop}\label{closed}
	Let $u\in\mathbb{U}_{n+1}$. Then, $\cP(G)_u$ is closed in the interior $\mathring{\cT(G)}^3$.
\end{Prop}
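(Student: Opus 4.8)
The plan is to prove that $\cP(G)_u$ is closed in $\mathring{\cT(G)}^3$ by a direct sequential argument, using Corollary \ref{bestpau} to handle the non-compactness of $\pu(n,1)$. Concretely, I would take a sequence $(\tau_j)_j$ in $\cP(G)_u$ converging to some $\tau_\infty \in \mathring{\cT(G)}^3$, and show $\tau_\infty \in \cP(G)_u$. For each $j$, by definition of $\cP(G)_u = \overline{c}(\operatorname{Hom}_\mathrm{ell}(\Gamma,G)_u)$, choose a representation $\rho_j \in \operatorname{Hom}_\mathrm{ell}(\Gamma,G)_u$ with $\overline{c}(\rho_j) = \tau_j$; set $A_j = \rho_j(\mathtt{a})$, $B_j = \rho_j(\mathtt{b})$, so that $A_jB_j = \rho_j(\mathtt{c})^{-1}$ is elliptic (it lies in the class $\cC_3^{(j)}$ determined by the third coordinate of $\tau_j$). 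Since $\tau_\infty$ lies in the interior, the limiting classes are regular elliptic, but more importantly each $A_jB_j$ is elliptic for every $j$, so Corollary \ref{bestpau} applies.

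The key steps, in order: (1) Apply Corollary \ref{bestpau} to the pair of sequences $(A_j)_j, (B_j)_j$: up to extracting a subsequence and replacing each $\rho_j$ by a global conjugate $f_j \rho_j f_j^{-1}$ (which does not change $\overline{c}(\rho_j) = \tau_j$, since $\overline{c}$ records conjugacy classes), we obtain $A_j \to A$ and $B_j \to B$ in $\pu(n,1)$. (2) Define $\rho_\infty$ by $\rho_\infty(\mathtt{a}) = A$, $\rho_\infty(\mathtt{b}) = B$, $\rho_\infty(\mathtt{c}) = (AB)^{-1}$; this is a genuine element of $\operatorname{Hom}(\Gamma,G)$ by the presentation of $\Gamma$. (3) Check $\rho_\infty$ is boundary elliptic: $A = \lim A_j$ and $B = \lim B_j$ each lie in a fixed elliptic conjugacy class (the first two coordinates of $\tau_j$ are constant along... or rather converge to regular elliptic classes — here I need the elliptic locus of a fixed angle pair to be closed, which holds since conjugacy classes of elliptic elements with a fixed angle pair are compact, being images of the compact stabilizer $\mathbb{P}(\U(n)\times\U(1))$ — actually each $E(\alpha_1,\dots,\alpha_n)$-class is $\cong G/\text{centralizer}$, and I should argue the limit of elliptics with bounded-away-from-degenerate angles stays elliptic; since $\tau_\infty$ is interior, the limit classes are regular elliptic, so the limit of $\rho_j(\mathtt{c})^{-1} = A_jB_j$, which converges to $AB$ with conjugacy class the third coordinate of $\tau_\infty$, is regular elliptic, hence so is $\rho_\infty(\mathtt{c})$). (4) Check the lift condition: by Corollary \ref{lift}, the map sending a regular elliptic representation to the triple of conjugacy classes of its standard lifts is continuous; since $\widetilde{\rho_j(\mathtt{a})}\widetilde{\rho_j(\mathtt{b})}\widetilde{\rho_j(\mathtt{c})} = u\operatorname{Id}$ for all $j$ and the root of unity $u$ is locally constant, it follows that $\widetilde{\rho_\infty(\mathtt{a})}\widetilde{\rho_\infty(\mathtt{b})}\widetilde{\rho_\infty(\mathtt{c})} = u\operatorname{Id}$ as well, so $\rho_\infty \in \operatorname{Hom}_\mathrm{ell}(\Gamma,G)_u$. (5) Conclude $\tau_\infty = \overline{c}(\rho_\infty) \in \cP(G)_u$.

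The main obstacle, and the reason the interior hypothesis $\mathring{\cT(G)}^3$ is essential, is step (3): controlling that the limit of the elliptic elements $A_j, B_j, A_jB_j$ is still \emph{elliptic} (a priori the limit of elliptics could be parabolic or even fail to have the right class). This is exactly where regularity of the limit classes is used: a sequence of elliptic elements all lying in a single fixed regular elliptic conjugacy class converges within that class because such a class is closed (the angle pair is preserved under conjugation and regular elliptic classes are separated from the parabolic and special-elliptic boundary strata — cf. the non-separability discussion in Section \ref{separabilitysection}, which is precisely why one must stay away from the boundary $\partial\cT(G)^3$). For the first two coordinates this is automatic since $A_j, B_j$ lie in classes converging to interior (hence regular) points; for $A_jB_j = \rho_j(\mathtt{c})^{-1}$ we use that its class is the third coordinate of $\tau_j$, which converges to an interior point of $\cT(G)$, so the limit $AB$ is regular elliptic and $\rho_\infty(\mathtt{c}) = (AB)^{-1}$ is too. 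A secondary technical point is verifying that the global conjugations $f_j$ in Corollary \ref{bestpau} genuinely leave $\tau_j$ unchanged and that $\rho_\infty$ inherits the standard-lift condition (step 4), which follows cleanly from the continuity statement of Corollary \ref{lift} once regularity is in hand.
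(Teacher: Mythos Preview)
Your proposal is correct and follows essentially the same approach as the paper: pick $\rho_j$ realizing $\tau_j$, apply Corollary~\ref{bestpau} to $(A_j,B_j)$ to extract a convergent subsequence after global conjugation, define the limit representation $\rho_\infty$, and use Corollary~\ref{lift} to preserve the root-of-unity label $u$. Your discussion of step~(3) is more explicit than the paper's (which simply asserts $(A,B)\in\cC_1\times\cC_2$ without comment), but this is precisely the place where the interior hypothesis is used, so the extra care is warranted.
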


We now prove Theorem \ref{emptyfulltheorem}.

\begin{proof}
	Let $C$ be a $u$-cell. Then by Propositions \ref{open} and \ref{closed}, $\cP(G)_u\cap C$ is both open and closed in $C$. So by connectedness of $C$, $\cP(G)_u\cap C=C$ or $\cP(G)_u\cap C=\varnothing$.
\end{proof}

Therefore, $\cP(G)$ is equal to a finite union of cells. This, together with Remark \ref{halfspace}, proves Theorem \ref{generalsolution}. The following sections are dedicated to the proofs of Propositions \ref{Pred} and \ref{closed}.

\subsubsection{Hyperplane structure of $\cP(G)^\mathrm{red}_u$}\label{Predproof}

We first prove Proposition \ref{Pred}.

\begin{proof}
	Let $\rho\in\operatorname{Hom}_\mathrm{ell}^\mathrm{red}(\Gamma,G)_u$ and $A,B,C\in\U(n,1)$ be lifts of $\rho(\texttt{a}),\rho(\texttt{b}),\rho(\texttt{c})$ respectively conjugate to $E(\alpha_1,\ldots,\alpha_n)$, $E(\beta_1,\ldots,\beta_n)$, $E(\gamma_1,\ldots,\gamma_n)$ (see Example \ref{groupexamples}). We have $ABC=\lambda I_{n+1}$ for some $\lambda\in\CC$. By Definition \ref{reducibledef}, there exist two integers $d_1,d_2\in\NN$ such that $d_1+d_2=n+1$, three elements $A_1,B_1,C_1\in\U(d_1)$ and three elements $A_2,B_2,C_2\in\pu(d_2-1,1)$ such that up to global conjugation,
	$$A=\begin{pmatrix}
	A_1 & 0 \\
	0 & A_2
	\end{pmatrix}, \ B=\begin{pmatrix}
	B_1 & 0 \\
	0 & B_2
	\end{pmatrix}, \ C=\begin{pmatrix}
	C_1 & 0 \\
	0 & C_2
	\end{pmatrix}$$ and
	$$\begin{pmatrix}
	A_1 & 0 \\
	0 & A_2
	\end{pmatrix}\begin{pmatrix}
	B_1 & 0 \\
	0 & B_2
	\end{pmatrix}\begin{pmatrix}
	C_1 & 0 \\
	0 & C_2
	\end{pmatrix}=\begin{pmatrix}
	\lambda I_{d_1} & 0 \\
	0 & \lambda I_{d_2}
	\end{pmatrix}.$$
	Computing the determinants in each block, we obtain
	\begin{equation}\label{hypeq}
	d_2\arg(\det(A_1B_1C_1))-d_1\arg(\det(A_2B_2C_2))\equiv0[2\pi],
	\end{equation}
	which is a set of hyperplane equations in $\cT(G)^3$. Indeed, $$\arg(\det(A_1B_1C_1))=\sum_{i\in\cI_{d_1}}\alpha_i+\sum_{j\in\cJ_{d_1}}\beta_j+\sum_{k\in\cK_{d_1}}\gamma_k$$ where $\cI_{d_1},\cJ_{d_1},\cK_{d_1}\subset\{1,\ldots, n\}$ are three subsets of $d_1$ indices, and $$\arg(\det(A_2B_2C_2))=\sum_{\bar{i}\in\overline{\cI_{d_1}}}\alpha_{\bar{i}}+\sum_{\bar{j}\in\overline{\cJ_{d_1}}}\beta_{\bar{j}}+\sum_{\bar{k}\in\overline{\cK_{d_1}}}\gamma_{\bar{k}}$$ where $\overline{\cI_{d_1}},\overline{\cJ_{d_1}},\overline{\cK_{d_1}}\subset\{1,\ldots, n\}$ are the three complementary subsets of $d_2-1$ indices. This implies that $\bar{c}(\rho)$ belongs to one of the hyperplanes given in \eqref{hypeq} by choosing a multiple of $2\pi$.  
	
	Conversely, assume that a triple $\tau\in\cT(G)^3$ belongs to one of the hyperplanes defined in \eqref{hypeq} for some integers $d_1,d_2$ satisfying $d_1+d_2=n+1$. Let
	\begin{equation*}
	\begin{aligned}
	\tau_{d_1}&=((\alpha_i)_{i\in\cI_{d_1}},(\beta_j)_{j\in\cJ_{d_1}},(\gamma_k)_{k\in\cK_{d_1}})\in\cT(\U(d_1))^3, \\
	\tau_{d_2}&=((\alpha_{\bar{i}})_{\bar{i}\in\overline{\cI_{d_1}}},(\beta_{\bar{j}})_{\bar{j}\in\overline{\cJ_{d_1}}},(\gamma_{\bar{k}})_{\bar{k}\in\overline{\cK_{d_1}}})\in\cT(\pu(d_2-1,1))^3
	\end{aligned}
	\end{equation*}
	be the corresponding sub-triples. Assume that $\tau_{d_1}$, respectively $\tau_{d_2}$, is a solution of Horn's problem in $\U(d_1)$, respectively in $\pu(d_2-1,1)$. Then by the definition of reducibility, $\tau\in\cP(G)^\mathrm{red}_u$. By Remark \ref{dimension}, $\dim(\cP(\U(d_1)))=3d_1-1$ and $\dim(\cP(\pu(d_2-1,1)))=3(d_2-1)$ so
	$$\dim(\cP(G)^\mathrm{red}_u)=(3d_1-1)+3(d_2-1)=3(n+1)-4=3n-1.$$
	This implies that the reducibility conditions define open subsets of the hyperplanes defined in \eqref{hypeq}. This concludes the proof.
\end{proof}

\subsubsection{Closedness of the space of solutions: proof of Proposition \ref{closed}}\label{closedness}

Once again, this fact is already known (see Falbel-Wentworth \cite{FalWen}, Paupert-Will \cite{PauWil}). When $G$ is a compact group, this result is straightforward. In the case of $\pu(n,1)$, we use the geometry of $\mathbb{H}_\CC^n$ through Corollary \ref{bestpau}. Note that a slight subtlety arises because of the $u$-cells, which justifies the discussion about the lifting properties of a representation that took place in Section \ref{liftingsection}.

\begin{proof}
	Let $(\tau_k)_k\subset\cP(G)_u$ be a sequence. Then,
	$$\forall k\in\NN, \ \exists \rho_k\in\operatorname{Hom}_\mathrm{ell}(\Gamma,G)_u, \ \bar{c}(\rho_k)=\tau_k.$$ 
	Assume that this sequence converges to $\tau=i^3(\cC_1,\cC_2,\cC_3)\in\mathring{\cT(G)}^3$. For every $k$, denote $(A_k,B_k,C_k)=(\rho_k(\texttt{a}),\rho_k(\texttt{b}),\rho_k(\texttt{c})).$ Now:
	\begin{itemize}
		\item if $G=\U(n)$ or $\su(n)$ is compact, then up to extracting a sub-sequence, there exists $(A,B)\in\cC_1\times\cC_2$ such that $(A_k,B_k)$ converges to $(A,B)$;
		\item if $G=\pu(n,1)$, $A_kB_k=C_k^{-1}$ is elliptic for every $k$, so by Corollary \ref{bestpau}, up to extracting a sub-sequence and global conjugation, there exists $(A,B)\in\cC_1\times\cC_2$ such that $(A_k,B_k)$ converges to $(A,B)$.
	\end{itemize}
	Consequently, $(C_k)$ converges to $(AB)^{-1}=C$, so we have shown the existence of $(A,B,C)\in\cC_1\times\cC_2\times\cC_3$ such that $ABC=\operatorname{Id}$: there exists $\rho\in\operatorname{Hom}_\mathrm{ell}(\Gamma,G)$ such that $c(\rho)\in\cC_{1}\times\cC_{2}\times\cC_{3}$. Finally, Corollary \ref{lift} ensures that $\rho\in\operatorname{Hom}_\mathrm{ell}(\Gamma,G)_u$, which implies that $\tau=\bar{c}(\rho)\in\cP(G)_u$.
\end{proof}

\subsubsection{Tools for detecting full cells}

In practice, giving the inequations defining the cells and determining which ones are full and which ones are empty reveals to be quite technical for $n\geq2$. See Sections \ref{horn} to \ref{proof}, where we prove the explicit solution for $\pu(2,1)$. Among other tools, we will use the following result.

\begin{Prop}\label{emptyfullinterface}
	Let $C_1$ and $C_2$ be two distinct $u$-cells such that $\overline{C_1}\cap\overline{C_2}\subset\cP(G)_u^\mathrm{red}$ is non-empty and of co-dimension $1$ in $\cT(G)^3$. Then:
	\begin{itemize}
		\item At least one cell among $C_1$ and $C_2$ is full.
		\item If there exists $\rho\in\operatorname{Hom}_\mathrm{ell}^\mathrm{irred}(\Gamma,G)_u$ such that $\bar{c}(\rho)\in \overline{C_1}\cap\overline{C_2}$, then both $C_1$ and $C_2$ are full.
	\end{itemize}
\end{Prop}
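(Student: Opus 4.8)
The plan is to exploit the dichotomy of Theorem~\ref{emptyfulltheorem} together with a "boundary matching" argument. Since $\overline{C_1}\cap\overline{C_2}$ is a non-empty co-dimension~$1$ piece of a reducible wall, pick an interior point $\tau_0$ of this common face (interior relative to the $(3n-1)$-dimensional wall), so that a small ball $B$ around $\tau_0$ in $\cT(G)^3$ meets only the hyperplane carrying this particular wall and no other reducible wall; thus $B\setminus(\cP(G)^\mathrm{red}_u)$ has exactly two connected components, one in $C_1$ and one in $C_2$. Choose a reducible representation $\rho_0\in\operatorname{Hom}_\mathrm{ell}^\mathrm{red}(\Gamma,G)_u$ with $\bar c(\rho_0)=\tau_0$; its block decomposition $\rho_0=\rho_0^{(1)}\oplus\rho_0^{(2)}$ is a solution of Horn's problem in $\U(d_1)\times\pu(d_2-1,1)$ whose $u$-class lies on the wall.

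First I would show that at least one of $C_1,C_2$ is full. The idea is that $\tau_0$ lies on the wall which, by Proposition~\ref{Pred}, is the image of solutions that persist under small deformations inside the wall, but more to the point: deform the block data $(\rho_0^{(1)},\rho_0^{(2)})$ so that the determinant constraint~\eqref{hypeq} is pushed off to one side. Concretely, moving the angles of one block slightly changes $d_2\arg\det(A_1B_1C_1)-d_1\arg\det(A_2B_2C_2)$ monotonically, and since the reducible solution sets in the smaller groups $\U(d_1)$ and $\pu(d_2-1,1)$ are themselves unions of polytopes of full dimension, one can realize nearby triples $\tau\in B$ on one chosen side of the wall by genuine (possibly still reducible) solutions. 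This shows the corresponding cell, say $C_1$, is non-empty, hence full by Theorem~\ref{emptyfulltheorem}. (If the deformation can be pushed to either side, both cells are immediately full and we are done.)

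The second, sharper statement uses openness. Suppose $\rho\in\operatorname{Hom}_\mathrm{ell}^\mathrm{irred}(\Gamma,G)_u$ with $\bar c(\rho)=\tau_1\in\overline{C_1}\cap\overline{C_2}$. Since $Z(\pu(n,1))=\{\operatorname{Id}\}$, Proposition~\ref{open} says $c_u$ is open at $\rho$: the image of any neighborhood of $\rho$ in $\operatorname{Hom}_\mathrm{ell}(\Gamma,G)_u$ contains a full neighborhood $V$ of $\tau_1$ in $\operatorname{Conj}_\mathrm{ell}(G)^3\cong\cT(G)^3$. Shrinking $V$ so that it meets no reducible wall other than the one through $\tau_1$, $V\setminus\cP(G)^\mathrm{red}_u$ has one piece inside $C_1$ and one inside $C_2$, and both are contained in $\cP(G)_u$ because they lie in $\bar c(\text{neighborhood of }\rho)\subset\cP(G)_u$. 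Hence $\cP(G)_u$ meets both $C_1$ and $C_2$, so by Theorem~\ref{emptyfulltheorem} both cells are full.

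The main obstacle I expect is the first bullet: making rigorous that the determinant functional~\eqref{hypeq} can be perturbed to \emph{at least one} prescribed side while keeping the perturbed sub-triples inside the solution sets of the smaller groups $\U(d_1)$ and $\pu(d_2-1,1)$. This requires knowing that those solution sets have non-empty interior near the relevant point and that the gradient of the linear form in~\eqref{hypeq} is not tangent to their boundary walls there — a transversality statement that one must check using the explicit (inductively known) descriptions of $\cP(\U(d_1))$ and $\cP(\pu(d_2-1,1))$, or alternatively argue abstractly from the dimension count $\dim\cP(G)^\mathrm{red}_u=3n-1$ in the proof of Proposition~\ref{Pred}: the wall is genuinely $(3n-1)$-dimensional and locally separates $\cT(G)^3$, so solutions accumulate on it from at least one side.
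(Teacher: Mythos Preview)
Your argument for the second bullet is correct and is exactly the paper's: openness of $c_u$ at an irreducible representation (Proposition~\ref{open}) gives a full neighbourhood of $\tau_1$ inside $\cP(G)_u$, which meets both $C_1$ and $C_2$, and Theorem~\ref{emptyfulltheorem} finishes.

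Your argument for the first bullet, however, has a real gap. You propose to perturb the block data $(\rho_0^{(1)},\rho_0^{(2)})$ so that the determinant constraint~\eqref{hypeq} is broken, and claim this yields a genuine solution at a nearby $\tau$ off the wall. But this cannot work as stated: if you keep the block-diagonal form and perturb the $\pu(d_2-1,1)$ block to a nearby solution $A_2'B_2'C_2'=\mu I_{d_2}$ with $\mu\neq\lambda$, then the full product becomes $\operatorname{diag}(\lambda I_{d_1},\mu I_{d_2})$, which is \emph{not} a scalar matrix, so its projectivisation is not the identity in $\pu(n,1)$. In other words, a block-diagonal triple is a solution in $\pu(n,1)$ precisely when~\eqref{hypeq} holds; breaking~\eqref{hypeq} while staying block-diagonal destroys the relation $ABC=\operatorname{Id}$ rather than producing an off-wall solution. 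Your claim that the smaller solution sets are ``polytopes of full dimension'' is also inaccurate for the $\U(d_1)$ factor (Remark~\ref{dimension}: it has codimension~$1$), and the fallback dimension-count sentence at the end is not an argument either: knowing the wall has codimension~$1$ does not by itself force solutions to accumulate from a side.

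The paper's route for the first bullet avoids this entirely. Starting from the reducible triple $(A,B,C)\in\cC_1\times\cC_2\times\cC_3$, it perturbs the \emph{pair} $(A,B)$ inside $\cC_1\times\cC_2$ to a nearby irreducible pair $(A,B')$, using that reducible pairs form a strict submanifold of $\cC_1\times\cC_2$. Setting $C'=(AB')^{-1}$, continuity of the product and openness of the set of regular elliptic classes give $C'\in\cC_3'$ with $\cC_3'$ close to $\cC_3$. Thus $(A,B',C')$ is an irreducible solution whose class triple $i^3(\cC_1,\cC_2,\cC_3')$ lies in $C_1\cup C_2$ (or on the common wall, in which case the second bullet applies). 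Either way at least one cell meets $\cP(G)_u$, hence is full by Theorem~\ref{emptyfulltheorem}. The key difference is that the paper perturbs \emph{within fixed classes} $\cC_1,\cC_2$ to escape reducibility, rather than trying to move the classes while maintaining a block structure.
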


\begin{proof}
	The second point is a direct consequence of Proposition \ref{open}. The first point is Corollary 4 in \cite{PauWil}, but we give here a proof for the sake of completeness. Let $(\cC_1,\cC_2,\cC_3)\in\operatorname{Conj}_\mathrm{ell}^\mathrm{reg}(G)^3$ be such that $i^3(\cC_1,\cC_2,\cC_3)\in\overline{C_1}\cap\overline{C_2}\subset\cP(G)_u^\mathrm{red}$ and $i^3(\cC_1,\cC_2,\cC_3)$ belongs to the interior of $C_1\cup C_2$. Then, there exists a reducible triple $(A,B,C)\in\cC_1\times\cC_2\times\cC_3$ such that $ABC=1$. 
	\begin{itemize}
		\item First of all, $\operatorname{Conj}_\mathrm{ell}^\mathrm{reg}(G)$ is open in $\operatorname{Conj}(G)$: indeed, $\operatorname{Conj}_\mathrm{ell}^\mathrm{reg}(G)$ is a three-fold covering of the negative locus of a continuous map (see Theorem 6.2.4 in \cite{Gol}).
		\item Furthermore, since the set of reducible pairs in $\cC_1\times\cC_2$ forms a strict sub-manifold of $\cC_1\times\cC_2$, there exists an irreducible pair $(A,B')\in\cC_1\times\cC_2$ in a neighborhood of $(A,B)$.
	\end{itemize}
	Therefore, by continuity of the product map, there exists $\cC_3'\in\operatorname{Conj}_\mathrm{ell}^\mathrm{reg}(G)$ in a neighborhood of $\cC_3$ and an irreducible triple $(A,B',(AB')^{-1})\in\cC_1\times\cC_2\times\cC_3'$. Since $i^3(\cC_1,\cC_2,\cC_3')\in C_1\cup C_2$, the result follows from Theorem \ref{emptyfulltheorem}.
\end{proof}

\section{Horn's problem in $\operatorname{U}(2)$ and $\pu(1,1)$}\label{horn_u}

\subsection{Horn's problem in $\operatorname{U}(2)$}

We now state the explicit solution in $\operatorname{U}(2)$, because we will use it to prove our result for $\pu(2,1)$ in Section \ref{horn}. This is a particular case of Theorem 2.5 in Biswas' paper \cite{Bis1}; see also Ressayre \cite{Res2}. 

We fix $G=\operatorname{U}(2)$. Note that $\operatorname{Conj}_\mathrm{ell}(G)=\operatorname{Conj}(G)$. Recall from Example \ref{groupexamples} that we have $$\cT(G)=\{(\alpha_1,\alpha_2)\in[0,2\pi[^2 \ | \ \alpha_1\geq \alpha_2\}.$$

\begin{Defi}\label{linearforms}
	We define the linear forms $S:\RR^6\to\RR$ and $\sigma_{ijk}:\RR^6\to\RR$ for $(i,j,k)\in\{1,2\}^3$ by: $\forall((\alpha_1,\alpha_2),(\beta_1,\beta_2),(\gamma_1,\gamma_2))\in \RR^6,$
	\begin{itemize}
		\item $S((\alpha_1,\alpha_2),(\beta_1,\beta_2),(\gamma_1,\gamma_2))=\alpha_1+\alpha_2+\beta_1+\beta_2+\gamma_1+\gamma_2$;
		\item $\sigma_{ijk}((\alpha_1,\alpha_2),(\beta_1,\beta_2),(\gamma_1,\gamma_2))=\alpha_i+\beta_j+\gamma_k.$
	\end{itemize}
	In practice, we will only consider their restrictions to $\cT(G)^3$. To lighten the notations, we will mostly write $S$ instead of $S((\alpha_1,\alpha_2),(\beta_1,\beta_2),(\gamma_1,\gamma_2))$, respectively $\sigma_{ijk}$ instead of $\sigma_{ijk}((\alpha_1,\alpha_2),(\beta_1,\beta_2),(\gamma_1,\gamma_2))$.
\end{Defi}

\begin{Defi}\label{ibar}
	Let $i\in\{1,2\}$ be an index. We denote by $\bar{i}\in\{1,2\}$ the complementary choice.
\end{Defi}

\begin{Rema}\label{sum}
	For $(i,j,k)\in\{1,2\}^3$, we have $\sigma_{ijk}+\sigma_{\bar{i}\bar{j}\bar{k}}=S$.
\end{Rema}

\begin{Prop}\label{U2}
	Consider the following subsets of $\cT(G)^3$:
	\begin{equation*}
	\begin{aligned}
	C_{0}&=\{S=0\}, \\
	C_{2\pi}&=\{S=2\pi, \ \sigma_{222}\leq0,\ \sigma_{112}\leq2\pi,\ \sigma_{121}\leq2\pi, \ \sigma_{211}\leq2\pi\}, \\
	C_{4\pi}&=\{S=4\pi, \ \sigma_{111}\leq4\pi, \ \sigma_{122}\leq2\pi, \ \sigma_{212}\leq2\pi, \ \sigma_{221}\leq2\pi\}, \\
	C_{6\pi}&=\{S=6\pi, \ \sigma_{222}\leq2\pi, \ \sigma_{112}\leq4\pi, \ \sigma_{121}\leq4\pi, \ \sigma_{211}\leq4\pi\},\\
	C_{8\pi}&=\{S=8\pi, \ \sigma_{111}\leq6\pi, \ \sigma_{122}\leq4\pi, \ \sigma_{212}\leq4\pi, \ \sigma_{221}\leq4\pi\}.\\
	\end{aligned}
	\end{equation*}
	The set of solutions of Horn's problem in $\U(2)$ is given by
	$$\cP(G)=C_0\cup C_{2\pi}\cup C_{4\pi}\cup C_{6\pi}\cup C_{8\pi}\subset\cT(G)^3.$$
\end{Prop}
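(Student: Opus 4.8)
The plan is to reduce the problem to the classical additive/multiplicative Horn problem for $\U(2)$, which is small enough to solve by hand. The key observation is that an element $ABC=\operatorname{Id}$ in $\U(2)$ with $A,B,C$ having prescribed eigenvalues $e^{i\alpha_j},e^{i\beta_j},e^{i\gamma_j}$ forces a congruence on determinants: $\det(A)\det(B)\det(C)=1$, i.e.\ $S=\alpha_1+\alpha_2+\beta_1+\beta_2+\gamma_1+\gamma_2 \in 2\pi\ZZ$. Since each of $\alpha_j,\beta_j,\gamma_j$ lies in $[0,2\pi)$, the quantity $S$ lies in $[0,12\pi)$, so $S\in\{0,2\pi,4\pi,6\pi,8\pi,10\pi\}$ a priori. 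One then checks that the extreme values $S=10\pi$ is impossible (it would force all angles near $2\pi$, hence all of $A,B,C$ near $\operatorname{Id}$, giving $S$ near $0$), so in fact $S\in\{0,2\pi,4\pi,6\pi,8\pi\}$; this explains the five slices $C_0,\dots,C_{8\pi}$.

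On each slice $\{S=2k\pi\}$ the problem becomes the multiplicative Horn problem for $\su(2)$-type data: fixing the determinant congruence, solving $ABC=\operatorname{Id}$ in $\U(2)$ is equivalent, after multiplying each matrix by an appropriate scalar, to solving it in $\operatorname{SU}(2)$ for the rescaled angle differences $\alpha_1-\alpha_2$, $\beta_1-\beta_2$, $\gamma_1-\gamma_2 \in [0,2\pi)$. The multiplicative Horn problem in $\operatorname{SU}(2)$ is completely elementary: three rotations by angles $2a,2b,2c$ (half-angles $a,b,c\in[0,\pi)$) compose to the identity iff the triangle inequalities $a\le b+c$, $b\le c+a$, $c\le a+b$ and the ``reverse'' inequality $a+b+c\le 2\pi$ hold (this is the spherical-triangle / quaternion computation). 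I would state this $\operatorname{SU}(2)$ fact, cite Biswas \cite{Bis1} or Ressayre \cite{Res2}, or give the one-line quaternion proof, and then translate these four inequalities on half-angle-differences back into inequalities on $(\alpha_i,\beta_j,\gamma_k)$. The translation is mechanical: on the slice $S=2k\pi$, using $\alpha_1+\alpha_2 = (\sigma_{ijk}+\sigma_{\bar i\bar j\bar k})$-type identities (Remark \ref{sum}) together with $S=2k\pi$, each $\operatorname{SU}(2)$ triangle inequality becomes exactly one of the listed conditions $\sigma_{ijk}\le 2m\pi$ or $\sigma_{222}\le 0$. One must be careful to track, for each value of $k$, which permutation $(i,j,k)$ of indices and which multiple $2m\pi$ appears, and to verify that the inequalities coming from the $\cT(G)$-constraints $\alpha_1\ge\alpha_2$ etc.\ are either implied or redundant.

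Concretely the step-by-step plan is: (1) establish the determinant congruence and rule out $S\in\{10\pi,12\pi\}$, splitting $\cP(G)$ into the five slices; (2) recall/prove the $\operatorname{SU}(2)$ multiplicative Horn solution (four linear inequalities on half-angles); (3) on each slice perform the scalar-rescaling $A\mapsto e^{-i(\alpha_1+\alpha_2)/2}A$ etc.\ to land in $\operatorname{SU}(2)$, being mindful that the two square roots of the determinant correspond to the two orderings of eigenvalues, so the half-angle difference is the well-defined quantity; (4) rewrite the four $\operatorname{SU}(2)$ inequalities in terms of the $\sigma_{ijk}$ using $S=2k\pi$ and Remark \ref{sum}, and check the result matches the stated $C_{2k\pi}$; (5) check the ordering constraints on $\cT(G)^3$ add nothing new. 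The main obstacle I anticipate is the bookkeeping in step (4): verifying that for $S=2\pi$ exactly the inequalities $\sigma_{222}\le 0,\ \sigma_{112},\sigma_{121},\sigma_{211}\le 2\pi$ appear (and symmetrically for $4\pi,6\pi,8\pi$), including checking that at $S=0$ the single equation $S=0$ already forces all angles to be $0$ so no further inequalities are needed, and confirming that the lists are not just necessary but sufficient — i.e.\ that no inequality has been dropped. This is routine but error-prone, and the cleanest way to organize it is to observe the symmetry $(\alpha,\beta,\gamma)\mapsto(2\pi\mathbf{1}-\alpha,\dots)$ which sends the slice $S=2k\pi$ to $S=(6-k)2\pi$ and swaps $C_{2\pi}\leftrightarrow C_{8\pi}$, $C_{4\pi}\leftrightarrow C_{6\pi}$, so that only the cases $S=0,2\pi,4\pi$ need to be checked directly.
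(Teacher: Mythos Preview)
The paper does not prove this proposition: it states it as a known result, citing Biswas \cite{Bis1} and Ressayre \cite{Res2}, and then uses it as input for the $\pu(2,1)$ analysis. So there is no paper proof to compare against, and your plan to supply a self-contained elementary argument via reduction to $\operatorname{SU}(2)$ is a genuine addition rather than a reproduction.

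Your strategy is sound and will work, but two points need tightening. First, your argument for excluding $S=10\pi$ is a non sequitur: saying that all angles near $2\pi$ forces $A,B,C$ near $\operatorname{Id}$ and hence ``$S$ near $0$'' does not follow ($S$ is just the sum of the angles, which would be near $12\pi$). The clean exclusion is that $\sigma_{111}<6\pi$ strictly (each angle is $<2\pi$), so on the slice $S=10\pi$ one has $\sigma_{222}=S-\sigma_{111}>4\pi$; but the translated $\operatorname{SU}(2)$ inequality on that slice demands $\sigma_{222}\le 4\pi$, a contradiction. Second, the rescaling in your step (3) gives $\tilde A\tilde B\tilde C=e^{-iS/2}\operatorname{Id}=(-1)^k\operatorname{Id}$ on the slice $S=2k\pi$, so for odd $k$ you are solving $\tilde A\tilde B\tilde C=-\operatorname{Id}$ in $\operatorname{SU}(2)$, not $+\operatorname{Id}$. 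This is handled by replacing one factor by its negative (half-angle $c\mapsto\pi-c$), and one checks the resulting four inequalities do not depend on which factor is negated. Your remark about ``two square roots corresponding to two orderings of eigenvalues'' gestures at this but does not capture the $\pm\operatorname{Id}$ parity; make it explicit. With these two fixes, steps (1)--(5) go through, and the involution you note at the end halves the bookkeeping.
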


\subsection{Horn's problem in $\operatorname{PU}(1,1)$}\label{horn_pu}

In this section, we fix $G=\pu(1,1)$. Note that in this case, $\operatorname{Conj}_\mathrm{ell}(G)\neq\operatorname{Conj}(G)$. The results of this section are already known (see for instance Paupert \cite{Pau}); they come from classical manipulations of triangles in the Poincaré disk, and the Gauss-Bonnet formula. We will also use the explicit solution for $\pu(1,1)$ to prove our result for $\pu(2,1)$ in Section \ref{horn}.

Recall from Example \ref{groupexamples} that we have $\cT(G)=[0,2\pi[$.

\begin{Prop}\label{PU11}
	For $(\alpha,\beta,\gamma)\in\cT(G)^3$, denote $\sigma(\alpha,\beta,\gamma)=\alpha+\beta+\gamma$. We have
	$$\cP(G)=\{\sigma\leq2\pi\}\cup\{\sigma\geq4\pi\},$$
	where $\cP(G)_{-1}=\{\sigma\leq2\pi\}$ and $\cP(G)_1=\{\sigma\geq4\pi\}$ (see Figure \ref{PU(1,1)}).
\end{Prop}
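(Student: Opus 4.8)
The plan is to translate the condition $ABC=\operatorname{Id}$ in $\pu(1,1)$ directly into the existence of a hyperbolic triangle with prescribed angles, exploiting the isomorphism $\pu(1,1)\cong\operatorname{Isom}^+(\mathbb{H}_\RR^2)$ and the fact that an elliptic element with angle pair reduced to $E(\alpha,1)$ is a rotation of $\mathbb{H}_\RR^2$. Concretely, the first step is to record that an elliptic class $\cC\in\operatorname{Conj}_\mathrm{ell}(G)$ with parameter $\alpha\in[0,2\pi[$ consists of the rotations of angle $\pm\alpha$ about a point of the Poincaré disk (the sign ambiguity being exactly the passage from $\U(1,1)$ to $\pu(1,1)$; in $\pu(1,1)$ the class is determined by $\alpha$, and replacing $\alpha$ by $2\pi-\alpha$ gives the same class because conjugation by an orientation-reversing-looking element of $\pu(1,1)$ — realized inside $\pu(1,1)$ itself — reverses the rotation). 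I would then set up the equivalence: $(A,B,C)\in\cC_1\times\cC_2\times\cC_3$ with $ABC=\operatorname{Id}$ exists if and only if one can find three points $p_1,p_2,p_3$ and rotations $R_i$ about $p_i$ of appropriate angles whose product is the identity, which by the standard decomposition of a rotation into a product of two reflections is equivalent to the existence of a (possibly degenerate) hyperbolic triangle $p_1p_2p_3$ whose interior angles $\theta_i$ satisfy $2\theta_i\equiv\pm\alpha_i\ [2\pi]$.

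The second step is the angle bookkeeping. A genuine non-degenerate hyperbolic triangle exists with prescribed interior angles $\theta_1,\theta_2,\theta_3\in(0,\pi)$ if and only if $\theta_1+\theta_2+\theta_3<\pi$ (Gauss--Bonnet: the area equals $\pi-\sum\theta_i>0$), and the degenerate cases (triangle collapsing to a point or an ideal triangle) give the boundary $\sum\theta_i=\pi$ and furnish the reducible — here, totally reducible or fixed-point-sharing — solutions. Now I must chase through which values of $\theta_i$ with $2\theta_i\equiv\pm\alpha_i\ [2\pi]$ are admissible. For $\alpha_i\in[0,2\pi[$ one may take $\theta_i=\alpha_i/2\in[0,\pi[$ or $\theta_i=\pi-\alpha_i/2\in]0,\pi]$; running over the $2^3$ sign choices and imposing $\sum\theta_i\leq\pi$, the constraint $\sum(\alpha_i/2)\leq\pi$ gives $\sigma\leq2\pi$, while the constraint $\sum(\pi-\alpha_i/2)\leq\pi$ gives $\sigma\geq4\pi$; the mixed choices produce (after using that replacing $\alpha_i$ by $2\pi-\alpha_i$ does not change the class) no new region. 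This yields $\cP(G)=\{\sigma\leq2\pi\}\cup\{\sigma\geq4\pi\}$.

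The third step identifies the partition by the root of unity $u\in\mathbb{U}_2=\{1,-1\}$ coming from the standard lift (Definition \ref{standardlift}): for $n=1$ the standard lift of the class $\alpha$ is $\operatorname{diag}(e^{i\alpha/2},e^{-i\alpha/2})\in\su(1,1)$, so $\widetilde{A}\widetilde{B}\widetilde{C}$ has determinant $1$ automatically and equals $u\operatorname{Id}$ with $u=e^{i(\alpha+\beta+\gamma)/2}$ evaluated modulo the relation; tracking this through the triangle construction, the solutions with $\sigma\leq2\pi$ are exactly those realizable with $\theta_i=\alpha_i/2$ and give $u=-1$ on the relevant component (this is where the sign of the half-angle rotation enters), and those with $\sigma\geq4\pi$ give $u=1$. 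I expect the main obstacle to be precisely this last piece of sign-accounting: making fully rigorous the claim that the product of the three rotations is $\pm\operatorname{Id}$ with the sign pinned down by $u$, rather than merely that it fixes a point, and checking that the two regions are genuinely the fibers $\cP(G)_{-1}$ and $\cP(G)_1$ and not some coarser union. The inequality/area part is routine once the dictionary with triangles is set up; the delicate point is keeping careful track of lifts to $\su(1,1)$ versus $\U(1,1)$ and the resulting identification of $\alpha$ with $2\pi-\alpha$ in $\pu(1,1)$.
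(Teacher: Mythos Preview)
Your approach is essentially the paper's: reduce to the existence of a hyperbolic triangle via the decomposition of each rotation as a product of two reflections in lines through its fixed point, then apply Gauss--Bonnet. The paper's proof is very brief and proceeds exactly this way.

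There is, however, a genuine gap in your bookkeeping. The claim that ``replacing $\alpha$ by $2\pi-\alpha$ gives the same class'' in $\pu(1,1)$ is false: $\pu(1,1)$ consists only of orientation-preserving isometries of the disk, so a rotation by $\alpha$ and a rotation by $-\alpha$ are \emph{not} conjugate there (indeed the paper parametrizes $\operatorname{Conj}_\mathrm{ell}(\pu(1,1))$ by the whole interval $[0,2\pi[$, not $[0,\pi]$). You use this false claim to dispose of the mixed sign choices among your $2^3$ possibilities, and the disposal is needed: for instance the choice $\theta_1=\alpha/2,\ \theta_2=\pi-\beta/2,\ \theta_3=\gamma/2$ with $\sum\theta_i<\pi$ would yield the region $\alpha+\gamma<\beta$, which contains points like $(\pi/2,3\pi/2,\pi/2)$ with $\sigma=5\pi/2\in(2\pi,4\pi)$ that are \emph{not} solutions.

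The correct reason there are only two cases, not $2^3$, is geometric: once you decide that $A=s_1s_2$, $B=s_2s_3$, $C=s_3s_1$ with $s_1,s_2,s_3$ the reflections in the sides of the triangle of fixed points, the orientation of the traversal fixes all three half-angles simultaneously. Either the interior angles are $\alpha/2,\beta/2,\gamma/2$ (yielding $\sigma<2\pi$), or one decomposes the inverses $A^{-1},B^{-1},C^{-1}$ instead and obtains interior angles $\pi-\alpha/2,\pi-\beta/2,\pi-\gamma/2$ (yielding $\sigma>4\pi$). There is no independent sign choice at each vertex. Once this is fixed, your identification of $\cP(G)_{-1}$ and $\cP(G)_1$ via the standard lift $\operatorname{diag}(e^{i\alpha/2},e^{-i\alpha/2})$ and continuity of the lift is fine: on the reducible boundaries $\sigma=2\pi$ and $\sigma=4\pi$ the product of standard lifts is $-I$ and $I$ respectively, which pins down $u$ on each component.
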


\begin{figure}
	\centering
	\begin{tikzpicture}[scale=1.5]
	\draw [->] (0.5,2.5) -- (0.5,3);
	\draw (0.5,2.7) node[right] {\scriptsize{$2\pi$}};
	\draw [->] (2.5,0.5) -- (3,0.5);
	\draw (2.6,0.5) node[below] {\scriptsize{$2\pi$}};
	\draw [->] (0,0) -- (-0.4,-0.4);
	\draw (0,0) node[left] {\scriptsize{$2\pi$}};
	\draw (0.5,3) node[above] {$\gamma$};
	\draw (3,0.5) node[right] {$\beta$};
	\draw (-0.55,-0.55) node {$\alpha$};
	
	\draw (2,0) -- (0,0) -- (0,2) -- (2,2) -- (2,0);
	\draw (0,2) -- (0.5,2.5) -- (2.5,2.5) -- (2.5,0.5) -- (2,0);
	\draw (2,2) -- (2.5,2.5);
	\draw [blue, very thick, dashed] (0.5,2.5) -- (0.5,0.5) -- (2.5,0.5);
	\draw [blue, very thick, dashed] (0,0) -- (0.5,0.5);
	\draw [blue, very thick, dashed, fill=blue, opacity=0.3] (0.5,2.5) -- (0,0) -- (2.5,0.5) -- (0.5,2.5);
	\draw [blue, very thick, dashed] (0.5,2.5) -- (0,0) -- (2.5,0.5) -- (0.5,2.5);
	\draw [red, very thick, fill=red, opacity=0.3] (2,0) -- (0,2) -- (2.5,2.5) -- (2,0);
	\draw [red, very thick] (2,0) -- (0,2) -- (2.5,2.5) -- (2,0);
	\draw [red, very thick] (0,2) -- (2,2) -- (2,0);
	\draw [red, very thick] (2,2) -- (2.5,2.5);
	\end{tikzpicture}
	\caption{The two components of solutions in $\operatorname{PU}(1,1)$. \label{PU(1,1)}}
\end{figure}

\begin{Rema}
	We have $\cP(G)^\mathrm{red}_ {-1}=\{\sigma=2\pi\}$ and $\cP(G)^\mathrm{red}_ {1}=\{\sigma=4\pi\}$. The $-1$-cells are $\{\sigma<2\pi\}$ and $\{\sigma>2\pi\}$, and the $1$-cells are $\{\sigma<4\pi\}$ and $\{\sigma>4\pi\}$.
\end{Rema}

\begin{proof}
	To prove this result, we use the action of $\operatorname{PU}(1,1)$ on the complex hyperbolic line $\HH^1_\CC$. Consider an irreducible triple
	$$A\sim\begin{pmatrix}
	e^{i\alpha} & 0 \\
	0 & 1
	\end{pmatrix}, \ B\sim\begin{pmatrix}
	e^{i\beta} & 0 \\
	0 & 1
	\end{pmatrix}, \ C\sim\begin{pmatrix}
	e^{i\gamma} & 0 \\
	0 & 1
	\end{pmatrix}.$$
	These isometries are rotations of angles $\alpha,\beta,\gamma$ around three distinct points. Now, the product of three such rotations is equal to the identity if and only if there exists a real hyperbolic triangle of angles $\alpha/2,\beta/2,\gamma/2$ or $\pi-\beta/2,\pi-\alpha/2,\pi-\gamma/2$. Indeed, when such a triangle exists, we also have reflections of order $2$ in its sides $s_1,s_2,s_3$ such that $A=s_1s_2,B=s_2s_3,C=s_3s_1$ or $B^{-1}=s_1s_2,A^{-1}=s_2s_3,C^{-1}=s_3s_1$, yielding $ABC=1$. Consequently, the representation exists if and only if we have $\alpha/2+\beta/2+\gamma/2<\pi$ or $\pi-\alpha/2+\pi-\beta/2+\pi-\gamma/2<\pi$, or equivalently, $\alpha+\beta+\gamma<2\pi$ or $\alpha+\beta+\gamma>4\pi$.
\end{proof}

\section{Explicit solution to the elliptic multiplicative Horn problem in PU(2,1): reducible subspaces and cells}\label{horn}

In this section and the following, we fix $G=\pu(2,1)$, and we solve the multiplicative Horn problem in the elliptic case (note that in $\pu(2,1)$, $\operatorname{Conj}_\mathrm{ell}(G)\neq\operatorname{Conj}(G)$). We use the definitions and notations introduced in Section \ref{horn_qual}. Our goal is to prove this explicit version of Theorem \ref{generalsolution}:

\begin{Theo}\label{solution}
	The solution set of the elliptic multiplicative Horn problem in $\pu(2,1)$ is a non-disjoint union of five explicit polytopes in $\cT(G)^3\subset(\RR^2)^3$.
\end{Theo}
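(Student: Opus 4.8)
The plan is to follow the three-step strategy announced in the Introduction, specialized to $n=2$, exploiting the partition $\cP(G)=\bigcup_{u\in\mathbb{U}_3}\cP(G)_u$ coming from the standard lift to $\su(2,1)$. First I would make Proposition \ref{Pred} completely explicit for $G=\pu(2,1)$: the reducibility data is a splitting $d_1+d_2=3$, so either $(d_1,d_2)=(1,2)$ (a copy of $\U(1)\times\pu(1,1)$, i.e.\ a hyperbolic-reducible or spherical-reducible pair) or $(d_1,d_2)=(2,1)$ (a copy of $\U(2)\times\pu(0,1)=\U(2)$, i.e.\ a totally reducible pair). Plugging the explicit solutions of Horn's problem in $\U(2)$ (Proposition \ref{U2}) and in $\pu(1,1)$ (Proposition \ref{PU11}) into equation \eqref{hypeq}, together with the admissible index subsets $\cI_{d_1},\cJ_{d_1},\cK_{d_1}\subset\{1,2\}$, yields a finite list of affine hyperplanes in $\cT(G)^3\subset(\RR^2)^3$ — the reducible walls. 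I would organize this list by the value of $u$, using the constraint that $\widetilde{\rho(\texttt{a})}\widetilde{\rho(\texttt{b})}\widetilde{\rho(\texttt{c})}=u\operatorname{Id}$ forces a congruence on $S=\alpha_1+\alpha_2+\beta_1+\beta_2+\gamma_1+\gamma_2$ modulo $2\pi$ times a multiple of $3$; this is exactly what produces the announced count of twenty-seven walls and twenty-eight cells, and the partition into $u=1$, $u=e^{2i\pi/3}$, $u=e^{-2i\pi/3}$.

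Next I would carry out the combinatorial step: enumerate the connected components (cells) of the complement of the reducible walls inside each $\cT(G)_u^3$-slice. Since by Remark \ref{halfspace} each cell is an intersection of affine half-spaces, this is a finite (if tedious) polyhedral computation; I would present the answer as an explicit system of inequalities for each cell, keeping track of which walls bound which cells and in particular which pairs of cells share a co-dimension-$1$ face (this adjacency data feeds Proposition \ref{emptyfullinterface}). Then comes step 3: determine for each cell whether it is full or empty. By Theorem \ref{emptyfulltheorem} it suffices to exhibit one triple of classes in each cell together with a verdict (solution / non-solution). For emptiness I would use boundary/degenerate limits and the necessary inequalities inherited from $\U(2)$ and $\pu(1,1)$ on the walls; for fullness I would use two mechanisms — (i) the explicit reducible representations whose classes lie on a wall, combined with Proposition \ref{emptyfullinterface} to fill the two cells adjacent to that wall whenever an \emph{irreducible} representation (e.g.\ the decomposable representations constructed in \cite{Mar}) has its classes on that wall, and (ii) direct geometric constructions of irreducible triples $(A,B,C)$ with $ABC=\operatorname{Id}$ in $\pu(2,1)$ for representative interior points, e.g.\ via products of complex reflections as in Section \ref{compref}. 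Assembling the full cells and invoking Remark \ref{halfspace} then gives $\cP(G)$ as a finite union of polytopes; a final bookkeeping check collapses adjacent full cells sharing a wall into a single convex polytope, and I would verify that after this merging exactly five polytopes remain (two for $u=e^{2i\pi/3}$, two for $u=e^{-2i\pi/3}$, one for $u=1$), consistent with Figure \ref{tranche_sym_intro}.

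I expect the main obstacle to be step 3 for the cells that are not adjacent to a wall carrying an irreducible representation: there is no general recipe for producing an irreducible solution at an interior point, so each such cell requires an ad hoc construction or an ad hoc obstruction. The secondary difficulty is purely organizational — managing the twenty-eight cells and twenty-seven walls without error, and correctly handling the non-continuity of the identification $i$ on the boundary of $\cT(G)$ (the Möbius-band gluing $(\alpha,0)\sim(2\pi,\alpha)$), which is why Proposition \ref{closed} is only asserted on $\mathring{\cT(G)}^3$ and why care is needed when a cell's closure touches $\partial\cT(G)^3$. The lifting trick of Section \ref{liftingsection} is what keeps this manageable: working one value of $u$ at a time decouples the combinatorics into three much smaller arrangements, each of which is tractable by hand.
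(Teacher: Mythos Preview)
Your overall strategy matches the paper's: describe the reducible walls via $\U(2)$ and $\pu(1,1)$, partition by $u\in\mathbb{U}_3$, enumerate the cells, then decide full/empty for each. The final count (two polytopes for each of $u=\omega,\omega^2$ and one for $u=1$) is also correct.

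However, there are two concrete gaps and one mislabeling.

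\textbf{Mislabeling of reducibility types.} The splitting $(d_1,d_2)=(2,1)$ gives the stabilizer $\mathbb{P}(\U(2)\times\U(1))\cong\U(2)$ of a \emph{point} in $\mathbb{H}^2_\CC$: this is the \emph{spherical} reducible case, not the totally reducible one. The splitting $(d_1,d_2)=(1,2)$ gives $\U(1)\times\pu(1,1)$, which is only the \emph{hyperbolic} reducible case. Totally reducible (simultaneously diagonal) is the intersection of the two. With your labeling you would miss the three spherical walls $\Sigma_{4\pi},\Sigma_{6\pi},\Sigma_{8\pi}$ coming from Proposition~\ref{U2}, and you would not have the totally-reducible-facet structure (Proposition~\ref{totredintersection}) that the paper uses heavily.

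\textbf{Emptiness argument.} Your proposed method (``boundary/degenerate limits and the necessary inequalities inherited from $\U(2)$ and $\pu(1,1)$'') does not produce an obstruction at an interior point of a cell, which is what is needed. The paper's actual mechanism is specific: on the diagonal $\cD=\{\alpha_1=\alpha_2,\ \beta_1=\beta_2,\ \gamma_1=\gamma_2\}\subset\partial\cT(G)^3$ all elements are complex reflections in points, hence any solution there is automatically reducible (Lemma~\ref{diago}); moreover classes of reflections in points are \emph{separate} from the ellipto-parabolic classes (Lemma~\ref{separability}), so closedness of $\cP(G)$ extends across $\mathring{\cD}$ (Lemma~\ref{reflpoint}). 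Together these give Proposition~\ref{diagonalinboundary}: any cell whose closure contains an open piece of $\cD$ is empty. All five complementary cells are then emptied in one stroke (Theorem~\ref{emptycells}). Without this argument you have no way to certify a cell as empty.

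\textbf{Fullness for $P_{6\pi}$.} Your two mechanisms (irreducibles on a wall via Proposition~\ref{emptyfullinterface}, and ad hoc interior constructions) are exactly what the paper uses for $P_{4\pi}$ and $P_{8\pi}$ (the explicit irreducible of Proposition~\ref{decompfamily2} sits on the triple intersection of interior walls and fills eight cells at once; the remaining cell $C_{4\pi}^-$ is filled by adjacency to an empty cell). But for $P_{6\pi}$ the paper uses a third tool you do not mention: each of the five cells in $P_{6\pi}$ has a totally reducible facet of \emph{type~4} in its boundary (Proposition~\ref{totallyreduciblefacets}), and Paupert's local convexity result (Proposition~\ref{totredfacet}, Corollary~\ref{totallyreduciblefilling}) then fills all three cells meeting such a facet. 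Without this you would need five separate ad hoc constructions for $P_{6\pi}$.
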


We will give the explicit inequations for the five polytopes later on; but to do so, we first need to solve the reducible problem.

\subsection{Reducible subspaces}

The resolution of Horn's problem in $\operatorname{U}(2)$ and $\operatorname{PU}(1,1)$ (see Section \ref{horn_u}) gives us access to the solution for elliptic reducible triples in $\operatorname{PU}(2,1)$. We recall (see Example \ref{groupexamples}) that for $G=\pu(2,1)$, we have $$\cT(G)=\{(\alpha_1,\alpha_2)\in[0,2\pi[^2 \ | \ \alpha_1\geq \alpha_2\}.$$

In the following sections, we give the explicit equations of the pieces of hyperplanes that form $\cP(G)^\mathrm{red}$ (see Proposition \ref{Pred}).

\begin{Rema}\label{symmetry}
	We now exhibit a fundamental symmetry of the problem, which will divide by two the number of computations. Note that $ABC=\operatorname{Id}$ if and only if $C^{-1}B^{-1}A^{-1}=\operatorname{Id}$. Therefore, the involution
	\begin{equation*}
	\begin{aligned}
	\psi:\mathring{\cT(G)}^3&\to\mathring{\cT(G)}^3\\
	\Bigg(\begin{pmatrix}
	\alpha_1\\
	\alpha_2
	\end{pmatrix},\begin{pmatrix}
	\beta_1\\
	\beta_2
	\end{pmatrix},\begin{pmatrix}
	\gamma_1\\
	\gamma_2
	\end{pmatrix}\Bigg)&\mapsto\Bigg(\begin{pmatrix}
	2\pi-\gamma_2\\
	2\pi-\gamma_1
	\end{pmatrix},\begin{pmatrix}
	2\pi-\beta_2\\
	2\pi-\beta_1
	\end{pmatrix},\begin{pmatrix}
	2\pi-\alpha_2\\
	2\pi-\alpha_1
	\end{pmatrix}\Bigg)
	\end{aligned}
	\end{equation*}
	globally preserves the solution set $\cP(G)$. 
\end{Rema}

\subsubsection{Totally reducible facets}

\begin{Defi}
	Let $\rho\in\operatorname{Hom}_\mathrm{ell}(\Gamma,G)$. We say that this representation is \textit{totally reducible} if $\rho(\texttt{a}),\rho(\texttt{b}),\rho(\texttt{c})$ admit lifts to $\U(2,1)$ that are diagonalizable in the same basis (equivalently, the group $\rho(\Gamma)\subset\pu(2,1)$ is abelian). We denote by:
	\begin{itemize}
		\item $\operatorname{Hom}_\mathrm{ell}^\mathrm{TR}(\Gamma,G)$ the subset of totally reducible representations;
		\item $\cP(G)^\mathrm{TR}$ the set of triples of angle pairs that come from totally reducible representations. More precisely, $\cP(G)^\mathrm{TR}=\overline{c}\big(\operatorname{Hom}_\mathrm{ell}^\mathrm{TR}(\Gamma,G)\big)\subset \cP(G)^\mathrm{red}$ (see diagram (\ref{cbar}) for the definition of $\overline{c}$).
	\end{itemize}
\end{Defi}

\begin{Defi}\label{Tijk}
	For $m,n\in\{0,1,2,3\},$ we denote by $T_{ijk}(m,n)$, and we call \textit{totally reducible facet}, the following four-dimensional subspace of $\cT(G)^3$: 
	$$T_{ijk}(m,n)=\{\tau\in\cT(G)^3 \ | \ \sigma_{ijk}=2m\pi,\sigma_{\bar{i}\bar{j}\bar{k}}=2n\pi \}$$
	where $\sigma_{ijk}=\alpha_i+\beta_j+\gamma_k$ (see Definition \ref{linearforms}, and Definition \ref{ibar} for the complementary indices $\bar{i},\bar{j},\bar{k}$).
\end{Defi}

\begin{Rema}\label{symtotred}
	Recall the symmetry $\psi$ from Remark \ref{symmetry}. For $\tau\in\cT(G)^3$, we have
	$$\sigma_{ijk}(\psi(\tau))=6\pi-\sigma_{\bar{k}\bar{j}\bar{i}}(\tau).$$
	This implies that $\psi$ acts on the set of totally reducible facets by
	$$\psi(T_{ijk}(m,n))=T_{\bar{k}\bar{j}\bar{i}}(3-m,3-n).$$
\end{Rema}

\begin{Prop}
	The subspace $\cP(G)^\mathrm{TR}$ is the reunion of the totally reducible facets $T_{ijk}(m,n)$ for $(i,j,k)\in\{1,2\}^3$ and $m,n\in\{0,1,2,3\}$.
\end{Prop}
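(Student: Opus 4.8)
The plan is to characterize $\cP(G)^{\mathrm{TR}}$ as the image under $\overline{c}$ of the totally reducible representations, and to compute this image directly by exploiting the fact that a totally reducible representation is, after simultaneous diagonalization, essentially a triple of commuting diagonal matrices in $\U(2,1)$. Concretely, if $\rho\in\operatorname{Hom}_\mathrm{ell}^{\mathrm{TR}}(\Gamma,G)$, choose lifts $A,B,C\in\U(2,1)$ that are simultaneously diagonal, say with diagonal entries $(e^{i\alpha_1},e^{i\alpha_2},1)$, $(e^{i\beta_1},e^{i\beta_2},1)$, $(e^{i\gamma_1},e^{i\gamma_2},1)$ up to reordering, in such a way that the two "positive-type" slots are matched together. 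The condition $ABC=\lambda I_3$ becomes three scalar equations, one per diagonal slot: $e^{i(\alpha_{i}+\beta_{j}+\gamma_{k})}=\lambda$ on the two positive slots (for the matching $(i,j,k)$ and its complement $(\bar\imath,\bar\jmath,\bar k)$), and $e^{i\cdot 0}=1=\lambda$ on the negative slot. Hence $\lambda=1$, and modulo $2\pi$ we get $\sigma_{ijk}\equiv 0$ and $\sigma_{\bar\imath\bar\jmath\bar k}\equiv 0$. Using that $0\le\alpha_i,\beta_j,\gamma_k<2\pi$, each of $\sigma_{ijk}$ and $\sigma_{\bar\imath\bar\jmath\bar k}$ lies in $[0,6\pi)$, so it must be an even multiple of $\pi$ in $\{0,2\pi,4\pi\}$ — actually in $\{0,2\pi,4\pi\}$, but allowing the boundary value $6\pi$ is excluded by strictness; I should be careful here and allow $m,n\in\{0,1,2,3\}$ precisely because closures of facets can reach the boundary of $\cT(G)^3$, so I would phrase the inclusion in terms of the closed subspace $\overline{\cT(G)}^3$ or note that the facets are taken with their natural closure. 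This gives the inclusion $\cP(G)^{\mathrm{TR}}\subseteq\bigcup_{i,j,k,m,n}T_{ijk}(m,n)$.

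For the reverse inclusion, I would go backwards: given any $\tau\in T_{ijk}(m,n)$, i.e.\ a triple of angle pairs with $\sigma_{ijk}=2m\pi$ and $\sigma_{\bar\imath\bar\jmath\bar k}=2n\pi$, I explicitly build the commuting diagonal triple. Define $A=E(\alpha_1,\alpha_2)$, $B$ to be the diagonal matrix whose positive-type entries are $e^{i\beta_j}$ in the slot matching $\alpha_i$ and $e^{i\beta_{\bar\jmath}}$ in the other slot (and $1$ in the negative slot), and similarly for $C$ with $\gamma_k,\gamma_{\bar k}$. Then $ABC$ is diagonal with entries $e^{i\sigma_{ijk}}=e^{2im\pi}=1$, $e^{i\sigma_{\bar\imath\bar\jmath\bar k}}=e^{2in\pi}=1$, and $1$, so $ABC=I_3$ in $\U(2,1)$, hence the projectivized triple defines a representation $\rho\in\operatorname{Hom}_\mathrm{ell}^{\mathrm{TR}}(\Gamma,G)$ with $\overline{c}(\rho)=\tau$ (one must re-order each pair of angles so it lies in $\cT(G)$, but this is exactly the ambiguity the indices $(i,j,k)$ encode, so no information is lost). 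The one genuinely delicate point is bookkeeping: the element $E(\alpha_1,\alpha_2)$ is normalized so that its negative-type eigenvalue is $1$, so when I diagonalize $B$ and $C$ I am free to permute only the two positive-type eigenvalues, which is exactly why the data is a choice $(i,j,k)\in\{1,2\}^3$ of which positive eigenvalue of each of $A,B,C$ gets stacked together — I'd make this permutation analysis explicit, since it is the heart of why we get $8$ families $T_{ijk}$ rather than one.

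The main obstacle, such as it is, is not a deep difficulty but the careful handling of the boundary and of the ordering convention $\alpha_1\ge\alpha_2$: strictly inside $\mathring{\cT(G)}^3$ the sums $\sigma_{ijk}$ lie in the open interval $(0,6\pi)$ so only $m,n\in\{1,2\}$ occur, whereas on the boundary of $\cT(G)^3$ (where some angle equals $0$, identified with $2\pi$) one picks up the extreme values $m,n\in\{0,3\}$; stating the proposition with $m,n\in\{0,1,2,3\}$ is the correct uniform formulation once one takes closures, and I would simply remark that $T_{ijk}(m,n)$ is empty for those $(i,j,k,m,n)$ for which the two defining equations are incompatible with $\tau\in\cT(G)^3$, so that the union is really a finite union of (possibly empty) $4$-dimensional facets. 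With the explicit diagonal construction in hand, both inclusions are immediate, and the symmetry $\psi$ of Remark~\ref{symtotred} is consistent with this description since $\psi$ permutes the facets $T_{ijk}(m,n)\mapsto T_{\bar k\bar\jmath\bar\imath}(3-m,3-n)$, providing a useful sanity check.
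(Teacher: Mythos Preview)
The paper states this proposition without proof, treating it as an immediate consequence of the definitions. Your argument is correct and supplies precisely the details one would write down: the forward inclusion via simultaneous diagonalization (with the negative-type slot forcing $\lambda=1$ for the chosen lifts, and the two positive-type slots yielding the congruences $\sigma_{ijk},\sigma_{\bar\imath\bar\jmath\bar k}\in 2\pi\ZZ$), and the reverse inclusion via the explicit construction of a commuting diagonal triple.

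Your discussion of the indexing $(i,j,k)$---that it records which positive-type eigenvalue of each of $A,B,C$ lands in which common positive slot of the simultaneous eigenbasis---is exactly the right explanation for the appearance of the eight families $T_{ijk}$, and your remark that the extreme values $m,n\in\{0,3\}$ only occur on the boundary of $\cT(G)^3$ (so that several of the $T_{ijk}(m,n)$ are empty in the interior) is a useful clarification the paper leaves implicit. One small addition you could make: the union is redundant, since $T_{ijk}(m,n)=T_{\bar\imath\bar\jmath\bar k}(n,m)$ directly from Definition~\ref{Tijk}, but this does not affect the statement.
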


\subsubsection{Spherical reducible walls}

\begin{Defi}
	Let $\rho\in\operatorname{Hom}_\mathrm{ell}(\Gamma,G)$ be a representation. We say that $\rho$ is \textit{spherical reducible} if $\rho(\texttt{a}),\rho(\texttt{b}),\rho(\texttt{c})$ fix the same point inside $\HH^2_\CC$. We denote:
	\begin{itemize}
		\item $\operatorname{Hom}_\mathrm{ell}^\mathrm{SR}(\Gamma,G)$ the subset of spherical reducible representations;
		\item $\cP(G)^\mathrm{SR}$ the set of triples of angle pairs that come from spherical reducible representations. More precisely, $\cP(G)^\mathrm{SR}=\overline{c}\big(\operatorname{Hom}_\mathrm{ell}^\mathrm{SR}(\Gamma,G)\big)\subset \cP(G)^\mathrm{red}$.
	\end{itemize}
\end{Defi}

\begin{Lemm}\label{U(2)}
	Let $\rho\in\operatorname{Hom}_\mathrm{ell}^\mathrm{SR}(\Gamma,G)$ be a spherical reducible representation. Then up to global conjugation, \emph{$\rho(\texttt{a}),\rho(\texttt{b}),\rho(\texttt{c})$} admit lifts to $\U(2,1)$ of the form
	$$\begin{pmatrix}
	A' & 0 \\
	0 & 1
	\end{pmatrix}, \ \begin{pmatrix}
	B' & 0 \\
	0 & 1
	\end{pmatrix}, \ \begin{pmatrix}
	C' & 0 \\
	0 & 1
	\end{pmatrix}$$
	with $A',B',C'\in\U(2)$.
\end{Lemm}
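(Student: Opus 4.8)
The plan is to unpack the geometric meaning of "spherical reducible": by Definition \ref{reducibility}, $\rho$ being spherical reducible means $\rho(\texttt{a}),\rho(\texttt{b}),\rho(\texttt{c})$ share a common fixed point $p$ inside $\mathbb{H}^2_\CC$. First I would choose a negative-type lift $\tilde{p}\in V^-$ of this common fixed point, and pick lifts $\tilde{A},\tilde{B},\tilde{C}\in\U(2,1)$ of $\rho(\texttt{a}),\rho(\texttt{b}),\rho(\texttt{c})$. Since the projective action of each of these elements fixes $p=\mathbb{P}(\tilde{p})$, the vector $\tilde{p}$ is an eigenvector of each lift: $\tilde{A}\tilde{p}=\lambda_a\tilde{p}$, and similarly for $\tilde{B},\tilde{C}$, with $\lambda_a,\lambda_b,\lambda_c\in\mathbf{S}^1$ (eigenvalues of elements of $\U(2,1)$ have modulus one, as recalled in Remark \ref{ellip}). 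Replacing each lift by $\lambda_a^{-1}\tilde{A}$, etc.\ — which is still a valid lift since we are only rescaling — we may assume $\tilde{A}\tilde{p}=\tilde{B}\tilde{p}=\tilde{C}\tilde{p}=\tilde{p}$.

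Next I would globally conjugate in $\U(2,1)$ so that $\tilde{p}$ becomes the last standard basis vector $e_3=(0,0,1)$. This is possible because $\U(2,1)$ acts transitively on the set of negative-type vectors of a fixed norm (one can scale $\tilde p$ so that $\langle \tilde p,\tilde p\rangle=-1$, then apply Witt's theorem / the transitivity statement underlying Theorem \ref{isomclassif}). After this conjugation, each of $\tilde{A},\tilde{B},\tilde{C}$ fixes $e_3$. The key observation is then that an element of $\U(2,1)$ (preserving the form $J$ of \eqref{J}) that fixes $e_3$ must also preserve the orthogonal complement $e_3^\perp=\operatorname{Vect}(e_1,e_2)=\CC^2\times\{0\}$, on which the Hermitian form restricts to the standard positive-definite form; hence it is block-diagonal of the shape $\begin{pmatrix}A' & 0\\ 0 & 1\end{pmatrix}$ with $A'\in\U(2)$. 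Applying this to each of $\tilde A,\tilde B,\tilde C$ gives exactly the claimed form, with $A',B',C'\in\U(2)$.

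The only genuinely delicate point is the normalization step where we rescale the lifts to make $\tilde p$ a genuine fixed vector rather than merely an eigenvector — one must check that this is compatible with having lifts in $\U(2,1)$ (it is, since multiplying a unitary matrix by a unit scalar keeps it unitary) and that it does not interfere with the statement, which only asks for \emph{some} lifts of $\rho(\texttt{a}),\rho(\texttt{b}),\rho(\texttt{c})$. Everything else is a routine linear-algebra argument about block structure forced by preservation of an orthogonal decomposition. I would also remark, for later use, that the product of the three blocks $A'B'C'\in\U(2)$ then records the obstruction measured in Section \ref{horn_u}, but that is not needed for the lemma itself.
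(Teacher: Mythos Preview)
Your proof is correct and is essentially a detailed unpacking of the paper's one-line argument, which simply invokes Theorem~\ref{isomclassif} (the stabilizer of a point in $\mathbb{H}^2_\CC$ is $\mathbb{P}(\U(2)\times\U(1))\cong\U(2)$). One small imprecision: the parenthetical ``eigenvalues of elements of $\U(2,1)$ have modulus one'' is false in general (loxodromics have eigenvalues off the unit circle), but your conclusion $|\lambda_a|=1$ is still correct here, either because the elements are elliptic or, more directly, because $\langle\tilde A\tilde p,\tilde A\tilde p\rangle=\langle\tilde p,\tilde p\rangle\neq 0$ forces $|\lambda_a|^2=1$.
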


\begin{proof}
	This is a direct consequence of the fact that the stabilizer of a point in $\HH^2_\CC$ is $\mathbb{P}(\U(2)\times\U(1))\cong\U(2)$ (see Theorem \ref{isomclassif}).
\end{proof}

With the notations of Lemma \ref{U(2)}, $\rho$ is a spherical reducible representation if and only if the triple $(A',B',C')$ is a solution to Horn's problem in $\U(2)$. Consequently, the solution for $\U(2)$ exposed in Proposition \ref{U2} can be directly transposed in this setting.

\begin{Theo}\label{sphred}
	Denote
	\begin{equation*}
	\begin{aligned}
	\Sigma_{4\pi}&=\{S=4\pi, \ \sigma_{111}\leq4\pi, \ \sigma_{122}\leq2\pi, \ \sigma_{212}\leq2\pi, \ \sigma_{221}\leq2\pi\}, \\
	\Sigma_{6\pi}&=\{S=6\pi, \ \sigma_{222}\leq2\pi, \ \sigma_{112}\leq4\pi, \ \sigma_{121}\leq4\pi, \ \sigma_{211}\leq4\pi\}, \\
	\Sigma_{8\pi}&=\{S=8\pi, \ \sigma_{111}\leq6\pi, \ \sigma_{122}\leq4\pi, \ \sigma_{212}\leq4\pi, \ \sigma_{221}\leq4\pi\}.
	\end{aligned}
	\end{equation*}
	where $S=\alpha_1+\alpha_2+\beta_1+\beta_2+\gamma_1+\gamma_2$ (see Definition \ref{linearforms}). We have
	$$\cP(G)^\mathrm{SR}=\Sigma_{4\pi}\cup \Sigma_{6\pi}\cup \Sigma_{8\pi}\subset\cT(G)^3.$$
\end{Theo}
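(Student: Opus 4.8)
The plan is to combine Lemma \ref{U(2)} with the explicit solution to Horn's problem in $\operatorname{U}(2)$ (Proposition \ref{U2}), and then identify which of the five $\operatorname{U}(2)$-pieces actually survive once we impose the elliptic normal form on $\pu(2,1)$. By Lemma \ref{U(2)}, a spherical reducible $\rho$ has lifts of the form $\operatorname{diag}(A',1),\operatorname{diag}(B',1),\operatorname{diag}(C',1)$ with $A',B',C'\in\operatorname{U}(2)$, and $\rho\in\operatorname{Hom}_\mathrm{ell}^\mathrm{SR}(\Gamma,G)$ iff $A'B'C'$ is a scalar matrix, i.e.\ iff the triple of $\operatorname{U}(2)$-conjugacy classes of $(A',B',C')$ lies in $\cP(\operatorname{U}(2))$. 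So $\cP(G)^\mathrm{SR}$ is the image in $\cT(G)^3$ of $\cP(\operatorname{U}(2))\subset\cT(\operatorname{U}(2))^3$ under the map induced by sending $(A',B',C')$ to the $\pu(2,1)$-conjugacy classes of the three block matrices.

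First I would make this identification precise. An elliptic element of $\pu(2,1)$ with lift $\operatorname{diag}(e^{i\theta_1},e^{i\theta_2},1)$ has angle pair obtained by sorting $\theta_1,\theta_2$ in decreasing order in $[0,2\pi[$; thus the $\operatorname{U}(2)$ class with angle pair $(\alpha_1,\alpha_2)$, $\alpha_1\geq\alpha_2$, maps to exactly the same angle pair in $\cT(\pu(2,1))$ — the identification is literally the identity on angle pairs, because the eigenvalue $1$ of the block form coincides with the normalizing eigenvalue in Remark \ref{ellip}. Hence $\cP(G)^\mathrm{SR}$ is the image of $\cP(\operatorname{U}(2))=C_0\cup C_{2\pi}\cup C_{4\pi}\cup C_{6\pi}\cup C_{8\pi}$ under this identity map — \emph{but} with a subtlety: the normalization $\operatorname{diag}(A',1)$ forces one eigenvalue to be exactly $1$ only \emph{after} scaling $A'$ by a unit scalar, since in $\pu(2,1)$ we may multiply the lift by any $\lambda\in\operatorname{U}(1)$. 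Concretely, the $\pu(2,1)$-class of $\operatorname{diag}(A',1)$ depends only on the class of $A'$ in $\operatorname{U}(2)/Z(\operatorname{U}(2))=\operatorname{PU}(2)$, i.e.\ only on the \emph{difference} $\alpha_1-\alpha_2$ of the angle pair. This is the key point that collapses the $\operatorname{U}(2)$ answer onto a lower-dimensional picture and explains why only three of the five $\operatorname{U}(2)$-pieces appear.

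Next I would run this reduction through explicitly. Given $(A',B',C')$ with angle pairs $(\alpha_1,\alpha_2),(\beta_1,\beta_2),(\gamma_1,\gamma_2)$ and $A'B'C'$ scalar, write $A'B'C'=e^{i\phi}I_2$; taking determinants gives $S=2\phi+2\pi m$ for the appropriate integer, so the relevant $\operatorname{U}(2)$-pieces are indexed by which multiple of $2\pi$ the sum $S$ equals. Now I rescale each block: replace $A'$ by $e^{-i\alpha_2}A'$ so that its lifted $\pu(2,1)$-representative becomes $\operatorname{diag}(e^{i(\alpha_1-\alpha_2)},1,1)$, and similarly for $B',C'$. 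Under this rescaling the product $A'B'C'$ gets multiplied by $e^{-i(\alpha_2+\beta_2+\gamma_2)}$, staying scalar; the \emph{new} sum of angle pairs is $S'=(\alpha_1-\alpha_2)+(\beta_1-\beta_2)+(\gamma_1-\gamma_2)$, and the product-is-scalar condition becomes a condition on $S'$. The point is that the angle pairs in $\cT(\pu(2,1))$ of the three blocks are $(\alpha_1,\alpha_2)$ etc.\ (the identity identification from the previous paragraph), so $\cP(G)^\mathrm{SR}$ is cut out by: the original $\operatorname{U}(2)$ membership of $(\alpha_i)_i,(\beta_j)_j,(\gamma_k)_k$, together with the fact that after the unit-scalar ambiguity we may land in $C_0,C_{2\pi},\dots,C_{8\pi}$. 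Carefully tracking the inequalities $\sigma_{ijk}\leq 2\ell\pi$ through this, I would check that $C_0$ and $C_{2\pi}$ get absorbed (they produce only boundary/degenerate strata of $\cT(G)^3$, or strata already contained in one of the other three walls via the scalar ambiguity), leaving precisely $\Sigma_{4\pi}=C_{4\pi}$, $\Sigma_{6\pi}=C_{6\pi}$, $\Sigma_{8\pi}=C_{8\pi}$ as stated. The $\psi$-symmetry of Remark \ref{symmetry} is a useful sanity check here: $\psi$ sends $S\mapsto 12\pi-S$, hence should permute $\{\Sigma_{4\pi},\Sigma_{6\pi},\Sigma_{8\pi}\}$ by $\Sigma_{4\pi}\leftrightarrow\Sigma_{8\pi}$, $\Sigma_{6\pi}\leftrightarrow\Sigma_{6\pi}$, and one verifies the listed inequalities are indeed exchanged accordingly.

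The main obstacle is the bookkeeping around the scalar ambiguity $A'\mapsto\lambda A'$: the map $\operatorname{U}(2)$-classes $\to\pu(2,1)$-classes is not injective on conjugacy classes of triples — a single spherical reducible $\pu(2,1)$ triple has a whole $\operatorname{U}(1)$-family of $\operatorname{U}(2)$-lifts — so one must be careful that $\cP(G)^\mathrm{SR}$ is the image of $\bigcup_\ell C_{2\ell\pi}$ \emph{after} this identification, and that the images of $C_0, C_{2\pi}$ do not contribute anything new beyond $\Sigma_{4\pi}\cup\Sigma_{6\pi}\cup\Sigma_{8\pi}$. Equivalently, one shows that modulo the $\operatorname{U}(1)$-rescaling the five-piece $\operatorname{U}(2)$ answer has only three distinct images, and that these are exactly the three $\Sigma$'s. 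Everything else — transposing the explicit inequalities of Proposition \ref{U2}, verifying convexity, matching against Definition \ref{linearforms} — is routine.
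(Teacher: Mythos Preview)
Your initial observation is correct: the identification $\cT(\U(2))\to\cT(\pu(2,1))$ induced by $A'\mapsto[\operatorname{diag}(A',1)]$ is literally the identity on angle pairs, because the lift $\operatorname{diag}(A',1)$ already has its negative-type eigenvalue equal to $1$, matching the normalization of Remark~\ref{ellip}. But your subsequent ``subtlety'' is a genuine error. You claim that the $\pu(2,1)$-class of $\operatorname{diag}(A',1)$ depends only on $A'$ modulo scalars; this is false. Multiplying the lift by $\lambda\in\U(1)$ gives $\operatorname{diag}(\lambda A',\lambda)$, whose negative-type eigenvalue is now $\lambda$, not $1$; renormalizing brings you right back to $\operatorname{diag}(A',1)$. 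So $A'$ is determined (up to $\U(2)$-conjugacy) by the $\pu(2,1)$-class, and there is no $\operatorname{PU}(2)$ quotient. Relatedly, $\rho(\texttt{a})\rho(\texttt{b})\rho(\texttt{c})=\operatorname{Id}$ in $\pu(2,1)$ lifts to $\operatorname{diag}(A'B'C',1)=\lambda I_3$, and the third diagonal entry forces $\lambda=1$: the condition is $A'B'C'=I_2$ exactly, not merely scalar. Your whole rescaling discussion (the quantity $S'$, the $\U(1)$-family of lifts) is therefore built on a false premise.

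The correct---and much shorter---argument, which is what the paper does, is simply: $\cP(G)^\mathrm{SR}$ equals $\cP(\U(2))=C_0\cup C_{2\pi}\cup C_{4\pi}\cup C_{6\pi}\cup C_{8\pi}$ under the identity identification of angle pairs. The pieces $C_0$ and $C_{2\pi}$ drop out not through any scalar-ambiguity mechanism, but for the trivial reason that $C_0=\{S=0\}$ is the origin and $C_{2\pi}$ carries the inequality $\sigma_{222}\leq 0$, which forces $\alpha_2=\beta_2=\gamma_2=0$; both are contained in $\partial\cT(G)^3$. What remains is exactly $\Sigma_{4\pi}\cup\Sigma_{6\pi}\cup\Sigma_{8\pi}$.
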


\begin{Defi}
	The three truncated hyperplanes $\Sigma_{4\pi},\Sigma_{6\pi},\Sigma_{8\pi}$ are called the \textit{spherical reducible walls}.
\end{Defi}

The cases of equality in the inequalities above (when $\sigma_{ijk}=2m\pi$) correspond to totally reducible (i.e.\ simultaneously diagonalizable) configurations. We call the corresponding subspaces the \textit{boundary components} of $\Sigma_{2l\pi}$. Therefore, a boundary component is a totally reducible facet $T_{ijk}(m,l-m)$ (see Definition \ref{Tijk}). Note that these components may be included in the boundary of $\cT(G)^3$.

\begin{Rema}
	The cells $C_{0}$ and $C_{2\pi}$ of Proposition \ref{U2} have disappeared in Theorem \ref{sphred} because the inequality $\sigma_{222}\leq0$ implies $\alpha_2=\beta_2=\gamma_2=0$. Therefore, these two cells are entirely contained in the boundary of $\cT(G)^3$.
\end{Rema}

\begin{Rema}\label{symmetrysphwalls}
	Recall the symmetry $\psi$ from Remark \ref{symmetry}. We have
	$$\forall\tau\in\cT(G)^3, \ S(\psi(\tau))=12\pi-S(\tau).$$
	Therefore, using the action of $\psi$ on the $\sigma_{ijk}$ mentioned in Remark \ref{symtotred}, one can easily check from the formulas of Theorem \ref{sphred}  that 
	$$\psi(\Sigma_{4\pi})=\Sigma_{8\pi} \ \mbox{and} \ \psi(\Sigma_{6\pi})=\Sigma_{6\pi}.$$
\end{Rema}

\subsubsection{Hyperbolic reducible walls}

\begin{Defi}
	Let $\rho\in\operatorname{Hom}_\mathrm{ell}(\Gamma,G)$. We say that this representation is \textit{hyperbolic reducible} if $\rho(\texttt{a}),\rho(\texttt{b}),\rho(\texttt{c})$ stabilize the same complex line of $\HH^2_\CC$. We denote:
	\begin{itemize}
		\item $\operatorname{Hom}_\mathrm{ell}^\mathrm{HR}(\Gamma,G)$ the subset of hyperbolic reducible representations;
		\item $\cP(G)^\mathrm{HR}$ the set of triples of angle pairs that come from hyperbolic reducible representations. More precisely, $\cP(G)^\mathrm{HR}=\overline{c}\big(\operatorname{Hom}_\mathrm{ell}^\mathrm{HR}(\Gamma,G)\big)\subset \cP(G)^\mathrm{red}$.
	\end{itemize}
\end{Defi}

\begin{Lemm}\label{hypred}
	Let $\rho\in\operatorname{Hom}_\mathrm{ell}^\mathrm{HR}(\Gamma,G)$ be a hyperbolic reducible representation. Then up to global conjugation, \emph{$\rho(\texttt{a}),\rho(\texttt{b}),\rho(\texttt{c})$} admit lifts to $\U(2,1)$ of the form
	\begin{equation}\label{hypwalls}
	\begin{pmatrix}
	e^{i\alpha_i} & 0 \\
	0 & A'
	\end{pmatrix}, \ \begin{pmatrix}
	e^{i\beta_j} & 0 \\
	0 & B'
	\end{pmatrix}, \ \begin{pmatrix}
	e^{i\gamma_k} & 0 \\
	0 & C'
	\end{pmatrix}
	\end{equation}
	with $A',B',C'\in\U(1,1)$ respectively conjugate to
	\begin{equation}\label{hypwalls2}
	\begin{pmatrix}
	e^{i\alpha_{\bar{i}}} & 0 \\
	0 & 1
	\end{pmatrix}, \ \begin{pmatrix}
	e^{i\beta_{\bar{j}}} & 0 \\
	0 & 1
	\end{pmatrix}, \ \begin{pmatrix}
	e^{i\gamma_{\bar{k}}} & 0 \\
	0 & 1
	\end{pmatrix}.
	\end{equation}
\end{Lemm}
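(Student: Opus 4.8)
The plan is to reduce everything to Proposition \ref{CP1stab}, which identifies the stabilizer of a complex line. By definition of a hyperbolic reducible representation, the three isometries $\rho(\texttt{a}),\rho(\texttt{b}),\rho(\texttt{c})$ preserve a common complex line $L=\mathbb{P}(W)\cap\HH^2_\CC$, where $W\subset\CC^3$ is a complex plane meeting $V^-$ — hence of signature $(1,1)$ for $\langle\cdot,\cdot\rangle$ — and $W^\perp$ is spanned by a normalized polar vector $c$ of positive type (Definition \ref{comprefl}). Since $\pu(2,1)$ acts transitively on complex lines (Proposition \ref{CP1stab}), I would first conjugate the representation by a suitable element of $\pu(2,1)$, equivalently conjugate chosen lifts by an element of $\U(2,1)$, so that $c=e_1$, the first vector of the canonical basis; then $W=e_1^{\perp}=\CC e_2\oplus\CC e_3$, which indeed has signature $(1,1)$ for the form $J$ of \eqref{J}.

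Next I would pick, for each generator, its distinguished lift. By Remark \ref{ellip}, $\rho(\texttt{a})$ has a unique lift $\widetilde{\rho(\texttt{a})}\in\U(2,1)$ conjugate to $E(\alpha_1,\alpha_2)=\operatorname{diag}(e^{i\alpha_1},e^{i\alpha_2},1)$, namely the one whose negative-type eigenvalue is $1$. Since $\widetilde{\rho(\texttt{a})}$ covers an isometry fixing the point $[e_1]$, it sends $\CC e_1$ to itself, and being unitary it also preserves $e_1^{\perp}=W$; hence it is block-diagonal, $\widetilde{\rho(\texttt{a})}=\operatorname{diag}(\lambda,A')$ with $\lambda\in\U(1)$ and $A'\in\U(1,1)$ (the restriction of $\langle\cdot,\cdot\rangle$ to $W$ having matrix $\operatorname{diag}(1,-1)$). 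The same reasoning applies to $\rho(\texttt{b})$ and $\rho(\texttt{c})$, yielding blocks $\operatorname{diag}(\mu,B')$ and $\operatorname{diag}(\nu,C')$.

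It then remains to match eigenvalues with their types. The eigenvalues of $\operatorname{diag}(\lambda,A')$ are $\lambda$, carried by the positive-type line $\CC e_1$, together with the two eigenvalues of $A'$; comparing with $E(\alpha_1,\alpha_2)$, whose positive-type eigenvalues are $e^{i\alpha_1},e^{i\alpha_2}$ and whose negative-type eigenvalue is $1$, we get $\lambda=e^{i\alpha_i}$ for some $i\in\{1,2\}$, while $A'$ has eigenvalues $e^{i\alpha_{\bar i}}$ (positive type) and $1$ (negative type). As $A'\in\U(1,1)$ acts on $W=\CC e_2\oplus\CC e_3$ with $e_2$ of positive type and $e_3$ of negative type, it is conjugate in $\U(1,1)$ to $\operatorname{diag}(e^{i\alpha_{\bar i}},1)$. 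Arguing identically for $\rho(\texttt{b})$ and $\rho(\texttt{c})$ produces indices $j,k\in\{1,2\}$ and matrices $B',C'$ of the form \eqref{hypwalls2}, which is precisely the decomposition \eqref{hypwalls}. The only delicate point is this last matching of eigenvalues with eigenvector types: it is what forces $\lambda$ to equal one of the two eigenvalues $e^{i\alpha_i}$ carried by positive-type eigenvectors and pins $A'$ down up to $\U(1,1)$-conjugacy; everything else is routine, and the argument runs parallel to that of Lemma \ref{U(2)} for the spherical reducible case.
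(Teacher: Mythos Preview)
Your proof is correct and follows the same approach as the paper: both reduce to Proposition~\ref{CP1stab}, which identifies the stabilizer of a complex line as $\mathbb{P}(\U(1)\times\U(1,1))$. The paper states this in one line, while you have carefully unpacked the conjugation to standard position and the matching of eigenvalues with eigenvector types; this extra detail is sound and makes explicit exactly what the paper leaves to the reader.
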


\begin{proof}
	This is a direct consequence of the fact that the stabilizer of a complex line in $\HH^2_\CC$ is $\mathbb{P}(\U(1)\times\U(1,1))\cong\U(1)\times\pu(1,1)$ (see Proposition \ref{CP1stab}).
\end{proof}

\begin{Rema}
	The isolated eigenvalues in (\ref{hypwalls}) do not necessarily correspond to the greatest of the angles. This is why we leave the indices $i,j,k$ free in Lemma \ref{hypred}.
\end{Rema}

Consequently, the explicit solution of the $\pu(1,1)$-Horn problem described in Proposition \ref{PU11} can be transposed in this setting. However, we will obtain more walls than in the spherical reducible case, due to the various combinations of eigenvalues that can arise.

We now make Proposition \ref{Pred} more explicit in the $n=2$ case.

\begin{Lemm}\label{Hijk}
	Let $((\alpha_1,\alpha_2),(\beta_1,\beta_2),(\gamma_1,\gamma_2))\in \cT(G)^3$. Let $\sigma_{ijk}$ be the linear form of Definition \ref{linearforms}. If this triple of angle pairs belongs to $\cP(G)^\mathrm{HR}$, then there exist $i,j,k\in\{1,2\}$ such that
	$$2\sigma_{ijk}-\sigma_{\bar{i}\bar{j}\bar{k}}\equiv0 \ [2\pi].$$ 
\end{Lemm}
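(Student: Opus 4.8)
\textbf{Proof plan for Lemma \ref{Hijk}.}

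The plan is to start from the block decomposition provided by Lemma \ref{hypred} and simply compute the determinant constraint in each block, exactly as in the proof of Proposition \ref{Pred}, but now keeping track of the specific index choices allowed in $n=2$. Concretely, suppose the triple of angle pairs lies in $\cP(G)^{\mathrm{HR}}$, so there is a hyperbolic reducible representation $\rho$ realizing it. By Lemma \ref{hypred}, up to global conjugation, lifts of $\rho(\texttt{a}),\rho(\texttt{b}),\rho(\texttt{c})$ to $\U(2,1)$ have the block form \eqref{hypwalls}, with a $1\times1$ block carrying the eigenvalues $e^{i\alpha_i},e^{i\beta_j},e^{i\gamma_k}$ for some $i,j,k\in\{1,2\}$, and a $2\times2$ block in $\U(1,1)$ carrying the complementary data. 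Since $ABC=\operatorname{Id}$ in $\pu(2,1)$, the chosen lifts satisfy $\tilde A\tilde B\tilde C=\lambda I_3$ for some $\lambda\in\mathbf{S}^1$; because the decomposition is block-diagonal, this gives $\lambda I_1$ on the first block and $\lambda I_2$ on the second.

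The first block gives $e^{i(\alpha_i+\beta_j+\gamma_k)}=\lambda$, i.e.\ $\lambda=e^{i\sigma_{ijk}}$. For the second block, the three $\U(1,1)$ matrices $A',B',C'$ are conjugate to the diagonal matrices in \eqref{hypwalls2}, so $\det(A')=e^{i\alpha_{\bar\imath}}$, $\det(B')=e^{i\beta_{\bar\jmath}}$, $\det(C')=e^{i\gamma_{\bar k}}$; taking the determinant of $A'B'C'=\lambda I_2$ yields $e^{i(\alpha_{\bar\imath}+\beta_{\bar\jmath}+\gamma_{\bar k})}=\lambda^2$, i.e.\ $e^{i\sigma_{\bar\imath\bar\jmath\bar k}}=\lambda^2$. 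Eliminating $\lambda$ between $\lambda=e^{i\sigma_{ijk}}$ and $\lambda^2=e^{i\sigma_{\bar\imath\bar\jmath\bar k}}$ gives $e^{i(2\sigma_{ijk}-\sigma_{\bar\imath\bar\jmath\bar k})}=1$, that is $2\sigma_{ijk}-\sigma_{\bar\imath\bar\jmath\bar k}\equiv0\ [2\pi]$, which is exactly the asserted congruence for the index triple $(i,j,k)$ coming from the position of the isolated eigenvalues.

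There is no serious obstacle here; the only points that require a word of care are bookkeeping ones. First, one must check that the $1\times1$ block really carries an index pattern of the form $(\alpha_i,\beta_j,\gamma_k)$ with \emph{independent} choices $i,j,k\in\{1,2\}$ — this is the content of the Remark following Lemma \ref{hypred}, since the isolated eigenvalue need not be the largest angle, and it is why the statement quantifies existentially over all of $\{1,2\}^3$ rather than fixing $i=j=k=1$. Second, one should note that the congruence is symmetric under swapping the roles of the two blocks only up to the factor $2$: swapping would replace $2\sigma_{ijk}-\sigma_{\bar\imath\bar\jmath\bar k}$ by $2\sigma_{\bar\imath\bar\jmath\bar k}-\sigma_{ijk}$, and modulo $2\pi$ these two are negatives of each other (their sum is $\sigma_{ijk}+\sigma_{\bar\imath\bar\jmath\bar k}=S$, but one checks $3(\sigma_{ijk}+\sigma_{\bar\imath\bar\jmath\bar k})$... ) — in any case the vanishing of one modulo $2\pi$ is not automatically the vanishing of the other, so one must make sure the block with the \emph{one-dimensional} factor is the one labelled by $(i,j,k)$, consistently with the convention in \eqref{hypwalls}. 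With that convention fixed, the computation above is immediate and closes the proof.
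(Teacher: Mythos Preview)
Your proof is correct and follows essentially the same route as the paper: invoke the block form from Lemma \ref{hypred}, read off $\lambda=e^{i\sigma_{ijk}}$ from the $1\times1$ block and $\lambda^2=e^{i\sigma_{\bar\imath\bar\jmath\bar k}}$ from the determinant of the $2\times2$ block, and eliminate $\lambda$. The additional bookkeeping remarks you include (independent index choices, the asymmetry under swapping blocks) are reasonable but not needed for the argument; the paper's proof dispatches the lemma in three lines.
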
	

\begin{proof}
	According to Lemma \ref{hypred}, we have
	$$\begin{pmatrix}
	e^{i\alpha_i} & 0 \\
	0 & A'
	\end{pmatrix}\begin{pmatrix}
	e^{i\beta_j} & 0 \\
	0 & B'
	\end{pmatrix}\begin{pmatrix}
	e^{i\gamma_k} & 0 \\
	0 & C'
	\end{pmatrix}=\begin{pmatrix}
	\lambda & 0 \\
	0 & \lambda\operatorname{I}_2
	\end{pmatrix}$$
	for some $\lambda\in\CC$, with $A',B',C'\in\U(1,1)$ respectively conjugate to
	$$\begin{pmatrix}
	e^{i\alpha_{\bar{i}}} & 0 \\
	0 & 1
	\end{pmatrix}, \ \begin{pmatrix}
	e^{i\beta_{\bar{j}}} & 0 \\
	0 & 1
	\end{pmatrix}, \ \begin{pmatrix}
	e^{i\gamma_{\bar{k}}} & 0 \\
	0 & 1
	\end{pmatrix}.$$
	Computing the determinants in each block (compare with \eqref{hypeq}), we obtain $e^{i\sigma_{ijk}}=\lambda$ and $e^{i\sigma_{\bar{i}\bar{j}\bar{k}}}=\lambda^2$. Consequently, $2\sigma_{ijk}-\sigma_{\bar{i}\bar{j}\bar{k}}\equiv0 \ [2\pi].$
\end{proof}

\begin{Defi}
	Let $(i,j,k)\in\{1,2\}^3$. We define the linear form $H_{ijk}:(\RR^2)^3\to\RR$ by: $\forall((\alpha_1,\alpha_2),(\beta_1,\beta_2),(\gamma_1,\gamma_2))\in (\RR^2)^3$,
	\begin{equation*}
	\begin{aligned}
	H_{ijk}((\alpha_1,\alpha_2),(\beta_1,\beta_2),(\gamma_1,\gamma_2))&=2(\alpha_i+\beta_j+\gamma_k)-(\alpha_{\bar{i}}+\beta_{\bar{j}}+\gamma_{\bar{k}})\\
	&=2\sigma_{ijk}-\sigma_{\bar{i}\bar{j}\bar{k}}.
	\end{aligned}
	\end{equation*}
	In practice, we will only consider its restriction to $\cT(G)^3$. Most of the time, we will write $H_{ijk}$ instead of $H_{ijk}((\alpha_1,\alpha_2),(\beta_1,\beta_2),(\gamma_1,\gamma_2))$.
\end{Defi}

We have the following relations between the various linear forms that we defined. The proof is straightforward using the definitions of $S$ and $H_{ijk}$, together with the ordering of the angles ($\alpha_2<\alpha_1$ etc.)

\begin{Lemm}\label{Hijkproperties}
	For every $(i,j,k)\in\{1,2\}^3$, the linear forms $H_{ijk}$, $H_{\bar{i}\bar{j}\bar{k}}$ and $S$ satisfy the following properties:
	\begin{enumerate}[i)]
		\item $H_{ijk}+H_{\bar{i}\bar{j}\bar{k}}=S$;
		\item $H_{2jk}<H_{1jk},$ $H_{i2k}<H_{i1k},$ $H_{ij2}<H_{ij1}$.
	\end{enumerate}
\end{Lemm}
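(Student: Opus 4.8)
The plan is to derive both identities by unwinding the definitions $S=\alpha_1+\alpha_2+\beta_1+\beta_2+\gamma_1+\gamma_2$, $\sigma_{ijk}=\alpha_i+\beta_j+\gamma_k$ and $H_{ijk}=2\sigma_{ijk}-\sigma_{\bar i\bar j\bar k}$, invoking Remark \ref{sum} for part i) and the ordering of the angles $\alpha_2<\alpha_1$, $\beta_2<\beta_1$, $\gamma_2<\gamma_1$ for part ii).

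For i), I would simply add the two forms: $H_{ijk}+H_{\bar i\bar j\bar k}=(2\sigma_{ijk}-\sigma_{\bar i\bar j\bar k})+(2\sigma_{\bar i\bar j\bar k}-\sigma_{ijk})=\sigma_{ijk}+\sigma_{\bar i\bar j\bar k}$, and then quote Remark \ref{sum}, which states precisely that $\sigma_{ijk}+\sigma_{\bar i\bar j\bar k}=S$. This closes i) immediately.

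For ii), the one thing to watch is that flipping an index changes one summand of $\sigma_{ijk}$ and the complementary summand of $\sigma_{\bar i\bar j\bar k}$ in opposite directions: since $\overline{(1,j,k)}=(2,\bar j,\bar k)$ and $\overline{(2,j,k)}=(1,\bar j,\bar k)$, one gets $\sigma_{1jk}-\sigma_{2jk}=\alpha_1-\alpha_2$ while $\sigma_{2\bar j\bar k}-\sigma_{1\bar j\bar k}=\alpha_2-\alpha_1$. Subtracting, $H_{1jk}-H_{2jk}=2(\alpha_1-\alpha_2)-(\alpha_2-\alpha_1)=3(\alpha_1-\alpha_2)>0$; the identical computation in the second and third slots gives $H_{i1k}-H_{i2k}=3(\beta_1-\beta_2)>0$ and $H_{ij1}-H_{ij2}=3(\gamma_1-\gamma_2)>0$.

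I do not anticipate any genuine difficulty here: the only delicate point is the bookkeeping just described — the complement map reverses the roles of $1$ and $2$ in each coordinate, so the $-\sigma_{\bar i\bar j\bar k}$ term contributes with the \emph{same} sign as the $2\sigma_{ijk}$ term to the relevant difference, which is what produces the coefficient $3$ (hence the strict inequality in the regular range) rather than $1$.
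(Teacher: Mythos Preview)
Your proof is correct and matches exactly what the paper indicates: it states that the result ``is straightforward using the definitions of $S$ and $H_{ijk}$, together with the ordering of the angles ($\alpha_2<\alpha_1$ etc.)'' without spelling out the details. Your computation is precisely the intended one, including the observation that flipping one index contributes $3(\alpha_1-\alpha_2)$ (respectively $3(\beta_1-\beta_2)$, $3(\gamma_1-\gamma_2)$) to the difference.
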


We now give the explicit description of the set of hyperbolic reducible walls $\cP(G)^\mathrm{HR}$. We see here that the number of reducible walls greatly increases between $\pu(1,1)$ and $\pu(2,1)$. As a consequence, we will see later on that the complexity of the arrangements between the walls increases as well. This seems to be one of the major obstacles to explicit descriptions for $\pu(n,1)$ with $n\geq2$.

\begin{Theo}\label{hyperbolicreduciblewalls}
	The set of hyperbolic reducible solutions $\cP(G)^\mathrm{HR}$ is the reunion of the following twenty-four truncated hyperplanes of $\cT(G)^3$:
	\begin{enumerate}
		\item $\{H_{ijk}=-4\pi\}$ for $(i,j,k)=(2,2,2)$;
		\item $\{H_{ijk}=0,\sigma_{\bar{i}\bar{j}\bar{k}}\geq4\pi\}$ for $(i,j,k)\in\{(2,2,2),(2,2,1),(2,1,2),(1,2,2)\}$;
		\item $\{H_{ijk}=2\pi,\sigma_{\bar{i}\bar{j}\bar{k}}\leq2\pi\}$ for \\  $(i,j,k)\in\{(2,2,1),(2,1,2),(1,2,2),(1,1,2),(1,2,1),(2,1,1),(1,1,1)\}$;
		\item $\{H_{ijk}=4\pi,\sigma_{\bar{i}\bar{j}\bar{k}}\geq4\pi\}$ for \\  $(i,j,k)\in\{(2,2,2),(2,2,1),(2,1,2),(1,2,2),(1,1,2),(1,2,1),(2,1,1)\}$;
		\item $\{H_{ijk}=6\pi,\sigma_{\bar{i}\bar{j}\bar{k}}\leq2\pi\}$ for $(i,j,k)\in\{(1,1,1),(1,1,2),(1,2,1),(2,1,1)\}$;
		\item $\{H_{ijk}=10\pi\}$ for $(i,j,k)=(1,1,1)$.
	\end{enumerate}
\end{Theo}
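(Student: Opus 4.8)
The strategy is to transpose the $\pu(1,1)$-Horn solution of Proposition \ref{PU11} through the block decomposition of Lemma \ref{hypred}, exactly as the spherical case used Proposition \ref{U2} via Lemma \ref{U(2)}. First I would fix a hyperbolic reducible representation $\rho\in\operatorname{Hom}_\mathrm{ell}^\mathrm{HR}(\Gamma,G)$. By Lemma \ref{hypred}, up to global conjugation there are indices $(i,j,k)\in\{1,2\}^3$ and lifts of the form \eqref{hypwalls}--\eqref{hypwalls2}, so that $\rho$ is hyperbolic reducible with isolated eigenvalues $e^{i\alpha_i},e^{i\beta_j},e^{i\gamma_k}$ precisely when the triple $(A',B',C')\in\U(1,1)^3$, each conjugate to the diagonal matrix with entries $(e^{i\alpha_{\bar i}},1)$, etc., solves the multiplicative equation $A'B'C'=\lambda\operatorname{I}_2$ for some $\lambda\in\CC$ in $\U(1,1)$. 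Passing to $\pu(1,1)$ and applying Proposition \ref{PU11}, this happens iff the projected classes satisfy $\sigma_{\bar i\bar j\bar k}\leq 2\pi$ or $\sigma_{\bar i\bar j\bar k}\geq 4\pi$ (after subtracting off the relevant multiple of $2\pi$ coming from the scalar $\lambda$). Combining this with the constraint $e^{i\sigma_{ijk}}=\lambda$, $e^{i\sigma_{\bar i\bar j\bar k}}=\lambda^2$ from the determinant computation in the proof of Lemma \ref{Hijk} gives the congruence $H_{ijk}\equiv 0\,[2\pi]$, hence $H_{ijk}=2m\pi$ for some integer $m$, and pins down the truncation inequality in terms of $\sigma_{\bar i\bar j\bar k}$.

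Next I would enumerate which values of $m$ and which index triples $(i,j,k)$ actually occur. For each of the eight choices of $(i,j,k)$, the ranges of $\sigma_{ijk}$ and $\sigma_{\bar i\bar j\bar k}$ inside $\cT(G)^3$ are bounded (each $\alpha,\beta,\gamma\in[0,2\pi)$, with the ordering $\alpha_2\leq\alpha_1$ etc.), so $H_{ijk}=2\sigma_{ijk}-\sigma_{\bar i\bar j\bar k}$ ranges over a bounded interval; only finitely many even multiples of $\pi$ lie inside it, and for each the $\pu(1,1)$-condition ($\sigma_{\bar i\bar j\bar k}\leq 2\pi$ in the ``$-1$'' regime, $\geq 4\pi$ in the ``$1$'' regime, with the appropriate shift) must be compatible with that value of $H_{ijk}$ and with the orderings. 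Carrying this out case by case — using Lemma \ref{Hijkproperties}\,ii) ($H_{2jk}<H_{1jk}$ etc.) to see that larger index triples force larger $H$-values, and the symmetry $\psi$ of Remark \ref{symmetry} to halve the work (under $\psi$, $H_{ijk}\mapsto S-H_{\bar i\bar j\bar k}$ with $S\mapsto 12\pi-S$, so the list must be invariant under $H_{ijk}=2m\pi\leftrightarrow H_{\bar k\bar j\bar i}=(12-2m)\pi$ roughly) — produces exactly the six families listed: the two extreme ones $\{H_{222}=-4\pi\}$ and $\{H_{111}=10\pi\}$ with no truncation (the endpoints of the global range, where the inequality is automatic), and the four intermediate families at $H=0,2\pi,4\pi,6\pi$ with truncations $\sigma_{\bar i\bar j\bar k}\geq 4\pi$ or $\leq 2\pi$, over the index sets indicated. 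Finally I would check, using Proposition \ref{Pred}, that each piece is genuinely a truncated hyperplane of the expected codimension (dimension $3n-1=5$), and verify the total count is twenty-four.

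\textbf{Main obstacle.} The conceptual input is light — it is just Proposition \ref{PU11} pulled back through a $1\oplus(1,1)$ splitting — so the real work is the bookkeeping: correctly tracking the scalar $\lambda$ (equivalently, the $2\pi$-ambiguity in $\arg\det$) through both blocks simultaneously, so that the ``shift'' subtracted from $\sigma_{\bar i\bar j\bar k}$ in the $\pu(1,1)$-inequality is consistent with the value of $H_{ijk}$, and then doing the finite-but-sizable case analysis over the eight index triples and the several admissible multiples of $2\pi$ without missing or double-counting a wall. The symmetry $\psi$ and the inequalities of Lemma \ref{Hijkproperties} are the tools that keep this manageable, but making sure the truncation directions ($\geq 4\pi$ versus $\leq 2\pi$) come out right in every case is where an error would most easily creep in; I would cross-check the final list against $\psi$-invariance as described above.
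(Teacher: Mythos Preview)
Your plan is essentially the paper's: pull back Proposition \ref{PU11} through the block decomposition of Lemma \ref{hypred}, invoke Lemma \ref{Hijk} to get $H_{ijk}\equiv 0\,[2\pi]$, and then sort out the finitely many cases using the symmetry $\psi$. That is exactly the structure of the paper's proof, so you are on the right track.

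The one place where your plan is vaguer than the paper is the selection of which index triples $(i,j,k)$ survive for each value of $H_{ijk}$. You propose to do this by computing the range of $H_{ijk}$ over $\cT(G)^3$ and using the orderings of Lemma \ref{Hijkproperties}\,ii). That is not quite enough: for instance $\{H_{222}=2\pi\}$ meets the interior of $\cT(G)^3$, but $\{H_{222}=2\pi,\ \sigma_{111}\leq 2\pi\}$ collapses to the diagonal $\alpha_1=\alpha_2,\ \beta_1=\beta_2,\ \gamma_1=\gamma_2$, so it is not a genuine five-dimensional wall. The paper's mechanism is cleaner: it observes that the boundary of each candidate wall (where $\sigma_{\bar i\bar j\bar k}$ equals $2\pi$ or $4\pi$) must be a totally reducible facet, and then checks against Theorem \ref{sphred} which totally reducible facets actually occur as boundary components of the spherical walls $\Sigma_{2l\pi}$. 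This simultaneously handles the parity obstruction (the cases $H_{ijk}=-2\pi$ and $H_{ijk}=8\pi$ are excluded because the putative boundary would have $\sigma_{ijk}$ an odd multiple of $\pi$, hence not a totally reducible facet) and the index selection. Your ``shift coming from $\lambda$'' is exactly this parity: tracking it carefully gives $H_{ijk}\equiv 0\,[4\pi]$ for the $\sigma_{\bar i\bar j\bar k}\geq 4\pi$ regime and $H_{ijk}\equiv 2\pi\,[4\pi]$ for the $\sigma_{\bar i\bar j\bar k}\leq 2\pi$ regime, which immediately explains why Items 1,2,4 carry one truncation and Items 3,5,6 the other. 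If you make that parity explicit and then use Theorem \ref{sphred} rather than raw range bounds to pick out the index sets, your argument becomes the paper's.
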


\begin{Defi}
	We call the twenty-four truncated hyperplanes defined in Theorem \ref{hyperbolicreduciblewalls} the \textit{hyperbolic reducible walls}.
\end{Defi}

\begin{Rema}\label{symmetryhypwalls}
	Recall the symmetry $\psi$ defined in Remark \ref{symmetry}. We have
	$$\forall\tau\in\cT(G)^3, \ H_{ijk}(\psi(\tau))=6\pi-H_{\bar{k}\bar{j}\bar{i}}(\tau).$$
	Therefore, using the action of $\psi$ on the $\sigma_{ijk}$ mentioned in Remark \ref{symtotred}, one can easily check from the formulas of Theorem \ref{hyperbolicreduciblewalls} that:
	\begin{itemize}
		\item $\psi$ exchanges the walls of Items 1 and 6;
		\item $\psi$ exchanges the walls of Items 2 and 5;
		\item $\psi$ exchanges the walls of Items 3 and 4.
	\end{itemize}
\end{Rema}

We now prove Theorem \ref{hyperbolicreduciblewalls}.

\begin{proof}
	Let $\tau\in\cT(G)^3$ be the triple of angle pairs of a hyperbolic reducible representation. According to Lemma \ref{Hijk}, we must have $H_{ijk}(\tau)\equiv0 \ [2\pi]$. Considering that the angles all belong to $[0,2\pi[$, we obtain $$H_{ijk}(\tau)\in\{-6\pi,-4\pi,-2\pi,0,2\pi,4\pi,6\pi,8\pi,10\pi \}.$$ 
	However, if $H_{ijk}(\tau)=-6\pi$, the corresponding conjugacy classes are trivial. Therefore, we only need to study the cases $$H_{ijk}(\tau)\in\{-4\pi,-2\pi,0,2\pi,4\pi,6\pi,8\pi,10\pi \}.$$
	The two key arguments are the following. They rely on the explicit solution of Horn's problem in $\pu(1,1)$ described in Proposition \ref{PU11}.
	\begin{itemize}
		\item Considering the matrices given in Lemma \ref{hypred} together with Proposition \ref{PU11}, we must have 
		$$\sigma_{\bar{i}\bar{j}\bar{k}}(\tau)\leq2\pi \ \mbox{or} \ \sigma_{\bar{i}\bar{j}\bar{k}}(\tau)\geq4\pi.$$
		\item If $\sigma_{\bar{i}\bar{j}\bar{k}}(\tau)\in\{2\pi,4\pi\}$, then by Proposition \ref{PU11} the representation is totally reducible. Consequently, $\tau$ belongs to a totally reducible facet, or equivalently, to the boundary of one of the spherical reducible walls $\Sigma_{2l\pi}$ (see Theorem \ref{sphred}). 
	\end{itemize}
	We now show that each piece of hyperplane defined in Theorem \ref{hyperbolicreduciblewalls} is an embedding in $\cT(G)^3$ of one of the two connected components $\cP(\pu(1,1))_1$ (the unique $1$-cell that is full) and $\cP(\pu(1,1))_{-1}$ (the unique $-1$-cell that is full) of $\cP(\pu(1,1))$ (see Proposition \ref{PU11}), and that there are no other such embeddings.
	
	\begin{itemize}
		\item Let us prove Item 1. Assume that $H_{ijk}(\tau)=2\sigma_{ijk}(\tau)-\sigma_{\bar{i}\bar{j}\bar{k}}(\tau)=-4\pi$. Then, the quantity $\sigma_{\bar{i}\bar{j}\bar{k}}(\tau)$ belongs to $[4\pi,6\pi]$. Indeed, by definition we have $0\leq\sigma_{\bar{i}\bar{j}\bar{k}}(\tau)\leq6\pi$; and in this case, $\sigma_{\bar{i}\bar{j}\bar{k}}(\tau)=2\sigma_{ijk}(\tau)-H_{ijk}(\tau)\geq4\pi$.
		
		Assume that $\sigma_{\bar{i}\bar{j}\bar{k}}(\tau)=4\pi$. We have $$\sigma_{ijk}(\tau)=\frac{1}{2}(\sigma_{\bar{i}\bar{j}\bar{k}}(\tau)-4\pi)=0.$$ 
		Thus, $\tau$ belongs to the totally reducible facet $T_{ijk}(0,2)$. Note that in this case, we have $S(\tau)=4\pi$. According to Theorem \ref{sphred}, $\{\sigma_{\bar{i}\bar{j}\bar{k}}=4\pi\}$ is a boundary component of $\Sigma_{4\pi}$ only for $(i,j,k)=(2,2,2)$.  
		
		Therefore, the hyperplane $\{H_{222}=-4\pi\}$ is an embedding in $\cT(G)^3$ of the connected component $\cP(\pu(1,1))_1\subset\cP(\pu(1,1))$. This yields Item 1.
		
		\item If $H_{ijk}(\tau)=-2\pi$, then by similar arguments the quantity $\sigma_{\bar{i}\bar{j}\bar{k}}(\tau)$ belongs to $[2\pi,6\pi]$. If $\sigma_{\bar{i}\bar{j}\bar{k}}(\tau)=4\pi$, we have $\sigma_{ijk}(\tau)=\pi$. Since $\sigma_{ijk}(\tau)$ is an odd multiple of $\pi$, $\tau$ does not belong to a totally reducible facet (see Definition \ref{Tijk}). Therefore the hyperplane $\{H_{ijk}=-2\pi\}$ does not contain any embedding of a connected component of $\cP(\pu(1,1))$.
		
		\item We now prove Item 2. Assume that $H_{ijk}(\tau)=0$. This implies that $\sigma_{\bar{i}\bar{j}\bar{k}}(\tau)$ belongs to $[0,6\pi]$. 
		
		Assume first that $\sigma_{\bar{i}\bar{j}\bar{k}}(\tau)=4\pi$; this implies $\sigma_{ijk}(\tau)=2\pi$. Thus, $\tau$ belongs to the totally reducible facet $T_{ijk}(1,2)$. Note that in this case, we have  $S(\tau)=6\pi$; according to Theorem \ref{sphred}, $\{\sigma_{\bar{i}\bar{j}\bar{k}}=4\pi\}$ is a boundary component of $\Sigma_{6\pi}$ only for $$(i,j,k)\in\{(2,2,2),(2,2,1),(2,1,2),(1,2,2)\}.$$  
		Therefore, each of the four truncated hyperplanes $\{H_{ijk}=0,\sigma_{\bar{i}\bar{j}\bar{k}}\geq4\pi\}$ for the above values of $(i,j,k)$ is an embedding of the connected component $\cP(\pu(1,1))_1\subset\cP(\pu(1,1))$. This yields Item 2.
		
		Assume now that $\sigma_{\bar{i}\bar{j}\bar{k}}(\tau)=2\pi$. This implies $\sigma_{ijk}(\tau)=\pi$; thus, $\tau$ does not belong to a totally reducible facet, and therefore $\tau$ cannot be the triple of angle pairs of a hyperbolic reducible representation. 
		
		\item Let us prove Item 3. Assume that $H_{ijk}(\tau)=2\pi$. This implies that $\sigma_{\bar{i}\bar{j}\bar{k}}(\tau)$ belongs to $[0,6\pi]$. 
		
		First assume that $\sigma_{\bar{i}\bar{j}\bar{k}}(\tau)=2\pi$. This implies $\sigma_{ijk}(\tau)=2\pi$; thus, $\tau$ belongs to the totally reducible facet $T_{ijk}(1,1)$. Note that in this case, we have  $S(\tau)=4\pi$; according to Theorem \ref{sphred}, $\{\sigma_{\bar{i}\bar{j}\bar{k}}=2\pi\}$ is a boundary component of $\Sigma_{4\pi}$ for $$(i,j,k)\in\{(2,2,1),(2,1,2),(1,2,2),(1,1,2),(1,2,1),(2,1,1),(1,1,1)\}.$$
		Therefore, each of the seven truncated hyperplanes $\{H_{ijk}=0,\sigma_{\bar{i}\bar{j}\bar{k}}\leq2\pi\}$ for the above values of $(i,j,k)$ is an embedding of the connected component $\cP(\pu(1,1))_{-1}\subset\cP(\pu(1,1))$. This yields Item 3.
		
		Now, assume that $\sigma_{\bar{i}\bar{j}\bar{k}}(\tau)=4\pi$. This implies $\sigma_{ijk}(\tau)=3\pi$; thus, $\tau$ does not belong to a totally reducible facet, and therefore $\tau$ cannot be the triple of angle pairs of a hyperbolic reducible representation. 
	\end{itemize}
	We obtain Items 4, 5, 6 by symmetry of Items 3, 2, 1 (see Remark \ref{symmetryhypwalls}).
\end{proof}

As a byproduct of the proof of Theorem \ref{hyperbolicreduciblewalls}, we obtain a complete description of the totally reducible facets as intersection of one spherical reducible wall and two hyperbolic reducible walls (compare with Paupert \cite{Pau}):

\begin{Prop}\label{totredintersection}
	Let $(i,j,k)\in\{1,2\}^3$, $m,n\in\{0,1,2,3\}$. We have
	$$T_{ijk}(m,n)=\Sigma_{2(m+n)\pi}\cap\{H_{ijk}=2(2m-n)\pi \}\cap\{H_{\bar{i}\bar{j}\bar{k}}=2(2n-m)\pi \}.$$
	Moreover, $T_{ijk}(m,n)$ is contained in no other reducible wall.
\end{Prop}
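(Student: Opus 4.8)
The plan is to prove the two claimed facts separately: first the explicit equality $T_{ijk}(m,n)=\Sigma_{2(m+n)\pi}\cap\{H_{ijk}=2(2m-n)\pi\}\cap\{H_{\bar i\bar j\bar k}=2(2n-m)\pi\}$, and then the statement that no other reducible wall contains $T_{ijk}(m,n)$.

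For the equality, I would work entirely at the level of the defining linear equations. Recall $T_{ijk}(m,n)=\{\tau\in\cT(G)^3 : \sigma_{ijk}=2m\pi,\ \sigma_{\bar i\bar j\bar k}=2n\pi\}$, and that $\sigma_{ijk}+\sigma_{\bar i\bar j\bar k}=S$ (Remark \ref{sum}) and $H_{ijk}=2\sigma_{ijk}-\sigma_{\bar i\bar j\bar k}$. First I would check the inclusion $\subseteq$: if $\sigma_{ijk}=2m\pi$ and $\sigma_{\bar i\bar j\bar k}=2n\pi$, then immediately $S=2(m+n)\pi$, $H_{ijk}=4m\pi-2n\pi=2(2m-n)\pi$, and $H_{\bar i\bar j\bar k}=S-H_{ijk}=2(m+n)\pi-2(2m-n)\pi=2(2n-m)\pi$ (using Lemma \ref{Hijkproperties} i)); one must also verify that $T_{ijk}(m,n)$ lies inside the \emph{truncated} hyperplane $\Sigma_{2(m+n)\pi}$, i.e.\ that the relevant inequalities $\sigma_{abc}\leq 2l\pi$ from Theorem \ref{sphred} are satisfied. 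This last point is exactly the content already established in the proof of Theorem \ref{hyperbolicreduciblewalls}: a triple with $\sigma_{\bar i\bar j\bar k}\in\{2\pi,4\pi\}$ lying on a hyperbolic wall is totally reducible and sits on the boundary of the corresponding $\Sigma_{2l\pi}$, and the inequalities are inherited from the $\U(2)$ solution in Proposition \ref{U2}. For the reverse inclusion $\supseteq$: given $\tau$ satisfying the three equations, solve the linear system — from $H_{ijk}=2\sigma_{ijk}-\sigma_{\bar i\bar j\bar k}$ and $\sigma_{ijk}+\sigma_{\bar i\bar j\bar k}=S=2(m+n)\pi$ one recovers $\sigma_{ijk}=\tfrac13(H_{ijk}+S)=\tfrac13(2(2m-n)\pi+2(m+n)\pi)=2m\pi$ and $\sigma_{\bar i\bar j\bar k}=2n\pi$, so $\tau\in T_{ijk}(m,n)$. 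This part is routine linear algebra and should go through with no real obstacle, modulo bookkeeping of which indices $(i,j,k)$ and which $l=m+n$ give nonempty facets.

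For the second assertion, that $T_{ijk}(m,n)$ is contained in no other reducible wall, I would argue by the dimension/genericity count already used in the paper. A reducible wall is a piece of a hyperplane in $\cT(G)^3$, which is $6$-dimensional, so a wall has dimension $5$; by Definition \ref{Tijk} the facet $T_{ijk}(m,n)$ is $4$-dimensional. If $T_{ijk}(m,n)$ were contained in a reducible wall $W$ other than $\Sigma_{2(m+n)\pi}$, $\{H_{ijk}=2(2m-n)\pi\}$, or $\{H_{\bar i\bar j\bar k}=2(2n-m)\pi\}$, then the affine hull of $T_{ijk}(m,n)$ — which is the codimension-$2$ plane cut out by $\sigma_{ijk}=2m\pi$ and $\sigma_{\bar i\bar j\bar k}=2n\pi$ — would lie in the hyperplane supporting $W$. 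I would then enumerate: the spherical walls are supported by $\{S=2l\pi\}$, and the only one of these vanishing on the facet is $l=m+n$ (since $S=2(m+n)\pi$ identically there and $S$ is non-constant, no other value of $l$ works); the hyperbolic walls are supported by $\{H_{abc}=c_{abc}\}$ for the eight index triples $(a,b,c)\in\{1,2\}^3$ and prescribed constants listed in Theorem \ref{hyperbolicreduciblewalls}, and the linear form $H_{abc}$ is constant on the affine hull of $T_{ijk}(m,n)$ only when $H_{abc}$ is a linear combination of $\sigma_{ijk}$ and $\sigma_{\bar i\bar j\bar k}$. A short computation, using that $H_{ijk}$ and $H_{\bar i\bar j\bar k}$ together with $S$ span the relevant space and using the strict inequalities of Lemma \ref{Hijkproperties} ii) to rule out coincidences among the $H_{abc}$, shows that the only index triples for which $H_{abc}$ is constant there are $(a,b,c)=(i,j,k)$ and $(a,b,c)=(\bar i,\bar j,\bar k)$, with the constants forced to be $2(2m-n)\pi$ and $2(2n-m)\pi$ respectively. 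Hence $W$ must be one of the three listed walls, a contradiction.

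The main obstacle I anticipate is not the linear algebra but the \emph{truncation bookkeeping}: one must be careful that the facet $T_{ijk}(m,n)$ genuinely sits inside the truncated walls $\Sigma_{2(m+n)\pi}$ and $\{H_{ijk}=2(2m-n)\pi,\ \sigma_{\bar i\bar j\bar k}\lessgtr \cdot\}$ rather than merely inside their supporting hyperplanes, and conversely that the listed values of $(i,j,k)$ and $(m,n)$ for which $T_{ijk}(m,n)$ is nonempty are exactly those appearing as boundary components in the proof of Theorem \ref{hyperbolicreduciblewalls}. I would therefore organize the argument so that the nonemptiness cases are read off directly from that proof — where it was shown, case by case on the value of $H_{ijk}\in\{-4\pi,0,2\pi,4\pi,6\pi,10\pi\}$ and $\sigma_{\bar i\bar j\bar k}\in\{2\pi,4\pi\}$, which index triples yield boundary components of which $\Sigma_{2l\pi}$ — so that Proposition \ref{totredintersection} becomes essentially a repackaging of facts already established, together with the elementary dimension count ruling out extra walls.
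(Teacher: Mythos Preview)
Your proposal is correct and follows essentially the same approach as the paper, which presents this proposition without a separate proof, simply as ``a byproduct of the proof of Theorem \ref{hyperbolicreduciblewalls}.'' Your write-up is more explicit than the paper's: you spell out the linear algebra showing that the two equations $\sigma_{ijk}=2m\pi$, $\sigma_{\bar i\bar j\bar k}=2n\pi$ are equivalent to the three equations $S=2(m+n)\pi$, $H_{ijk}=2(2m-n)\pi$, $H_{\bar i\bar j\bar k}=2(2n-m)\pi$, and you give a clean genericity argument for the ``no other wall'' clause (showing that $H_{abc}$ restricted to the affine hull $\{\sigma_{ijk}=\mathrm{const},\ \sigma_{\bar i\bar j\bar k}=\mathrm{const}\}$ is constant only for $(a,b,c)\in\{(i,j,k),(\bar i,\bar j,\bar k)\}$), which the paper does not make explicit. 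You have also correctly identified the one genuine subtlety --- the truncation bookkeeping needed to pass from the supporting hyperplanes to the actual walls $\Sigma_{2(m+n)\pi}$ --- and your plan to read this off from the case analysis in the proof of Theorem \ref{hyperbolicreduciblewalls} is exactly what the paper intends.
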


\subsection{Cells}

\subsubsection{Statement of the result in $\pu(2,1)$}

We now define the explicit polytopes that were announced in Theorem \ref{solution}.

\begin{Defi}\label{polytope}
	Define the following three subsets of $\cT(G)^3$:
	\begin{equation*}
	\begin{aligned}
	P_{4\pi}=&\{H_{222}<-4\pi\}\cup\\
	&\Big(\{S<4\pi\}\cap\{H_{122}<2\pi\}\cap\{H_{212}<2\pi\}\cap\{H_{221}<2\pi\}\cap\{H_{111}>2\pi\}\Big),\\
	P_{6\pi}=&\{H_{222}<0\}\cap\{H_{111}>6\pi\},\\
	P_{8\pi}=&\{H_{111}>10\pi\}\cup\\
	&\Big(\{S>8\pi\}\cap\{H_{211}>4\pi\}\cap\{H_{121}>4\pi\}\cap\{H_{112}>4\pi\}\cap\{H_{222}<4\pi\}\Big).
	\end{aligned}
	\end{equation*}
	We denote by $\overline{P_{2k\pi}}$, respectively $\partial P_{2k\pi}$, the closure, respectively the boundary, of these subsets. 
\end{Defi}

In the next sections, we prove the following explicit version of Theorem \ref{solution}:

\begin{Theo}\label{solutionpu}
	We have $\cP(G)=\overline{P_{4\pi}}\cup \overline{P_{6\pi}}\cup \overline{P_{8\pi}}$.
\end{Theo}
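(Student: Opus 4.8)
The plan is to verify Theorem \ref{solutionpu} by carrying out the three-step strategy announced in the Introduction, now made fully explicit: (1) the reducible walls have already been listed in Theorem \ref{sphred} and Theorem \ref{hyperbolicreduciblewalls}; (2) one must enumerate the connected components of $\cT(G)^3$ minus these $27$ walls; (3) for each cell decide whether it is full or empty, using Theorem \ref{emptyfulltheorem}. Since $\cP(G)=\cP(G)_1\cup\cP(G)_{e^{2i\pi/3}}\cup\cP(G)_{e^{-2i\pi/3}}$, and the $u$-walls for a given $u$ are exactly those reducible walls along which the partial determinant conditions \eqref{hypeq} single out that cube root of unity $u$, I would first sort the $24+3$ walls according to the value of $u$ to which they belong (equivalently, according to the residue of $S$ modulo $6\pi$, since $u$ is determined by $\det(\tilde A\tilde B\tilde C)$ which is $e^{iS/3}$ up to the relevant normalization). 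The symmetry $\psi$ of Remark \ref{symmetry}, together with its action on $S$ and on the $H_{ijk}$ (Remarks \ref{symmetrysphwalls}, \ref{symmetryhypwalls}), interchanges the $u=e^{2i\pi/3}$ and $u=e^{-2i\pi/3}$ strata, so it suffices to treat $u=1$ and $u=e^{2i\pi/3}$ and then apply $\psi$; this is what produces the pairing $P_{4\pi}\leftrightarrow P_{8\pi}$ with $P_{6\pi}$ self-dual under $\psi$ (note $\psi(\overline{P_{4\pi}})=\overline{P_{8\pi}}$ follows from $S\mapsto 12\pi-S$ and $H_{ijk}\mapsto 6\pi-H_{\bar k\bar j\bar i}$).

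Next I would identify, for each $u$-stratum, which cells are forced to be full. The key inputs are Proposition \ref{emptyfullinterface} and the existence of irreducible representations sitting on reducible walls. Concretely: the sharp inequalities cutting out $\Sigma_{4\pi},\Sigma_{6\pi},\Sigma_{8\pi}$ in Theorem \ref{sphred} are precisely the $\U(2)$-Horn inequalities, and the spherical reducible walls are realized by genuine $\U(2)$-triples, hence lie in $\cP(G)^{\mathrm{SR}}\subset\cP(G)$; by Theorem \ref{emptyfulltheorem} every cell adjacent to an interior point of such a wall that itself lies in $\cP(G)$ must be full on at least one side, and using the decomposable irreducible representations of \cite{Mar} lying on these walls one upgrades ``one side'' to ``both sides''. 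I would run this adjacency argument systematically: start from the spherical walls $\Sigma_{2l\pi}$ (which are the ``central'' walls of each $u$-stratum), propagate fullness outward across the hyperbolic walls of Theorem \ref{hyperbolicreduciblewalls} using Proposition \ref{emptyfullinterface}, and check that the full region so obtained is exactly the convex polytope $P_{2k\pi}$ of Definition \ref{polytope}. The two-piece description of $P_{4\pi}$ (a half-space $\{H_{222}<-4\pi\}$ glued along the wall $\{H_{222}=-4\pi\}$ of Item 1 to the polytope cut out by four $H$-inequalities and $S<4\pi$) reflects exactly the fact that the wall $\{H_{222}=-4\pi\}$ is an embedded copy of the full $-1$-cell of $\cP(\pu(1,1))$ (shown in the proof of Theorem \ref{hyperbolicreduciblewalls}), so fullness crosses it.

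Then I would prove that everything outside $\overline{P_{4\pi}}\cup\overline{P_{6\pi}}\cup\overline{P_{8\pi}}$ is empty. For this it is enough to exhibit, in each remaining cell, a single triple of classes that is not a solution — equivalently a triple for which the relative character variety is empty. Here I would use necessary conditions coming from the spherical reducible (i.e.\ $\U(2)$) side: restricting to the slice of a full cell, the complement of the $\U(2)$-solution polytope $C_0\cup C_{2\pi}\cup\cdots\cup C_{8\pi}$ of Proposition \ref{U2} gives non-solutions, and combined with the hyperbolic walls one checks that the ``empty'' cells are exactly those on the far side of each outermost hyperbolic wall. Concretely the inequalities $H_{111}>2\pi$ in $P_{4\pi}$, $H_{111}>6\pi$ in $P_{6\pi}$, etc., are the cut-off conditions that separate the full region from an empty neighbor, and one verifies emptiness of the neighbor by producing a reducibility obstruction there. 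Finally I would reconcile the three polytopes: since $\cP(G)=\bigcup_u\cP(G)_u$ and this union is non-disjoint, I must check that $\cP(G)_1=\overline{P_{6\pi}}$, $\cP(G)_{e^{2i\pi/3}}=\overline{P_{4\pi}}$, $\cP(G)_{e^{-2i\pi/3}}=\overline{P_{8\pi}}$, and that no solution is missed — i.e.\ that the union of the three closed polytopes is all of $\cP(G)$.

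The main obstacle I anticipate is step (2)–(3): the combinatorics of the arrangement of $27$ hyperplanes in the $6$-dimensional cube $\cT(G)^3$, identifying all cells, and checking fullness/emptiness for each one. The wall list is large and the relations in Lemma \ref{Hijkproperties} (the orderings $H_{2jk}<H_{1jk}$ etc.) plus the global symmetry $\psi$ cut the work down substantially, but verifying that the explicitly-written $P_{2k\pi}$ in Definition \ref{polytope} is \emph{exactly} the union of the full cells — neither too big nor too small — requires careful bookkeeping of which inequalities are active on which cell and of which hyperbolic walls actually separate a full cell from an empty one. I expect the delicate points to be: (a) the gluing across the ``degenerate'' walls $\{H_{222}=-4\pi\}$ and $\{H_{111}=10\pi\}$, where a half-space and a bounded polytope merge because the wall carries a full $\pu(1,1)$-cell; (b) ensuring each empty cell genuinely contains a non-solution, which is where one needs an actual obstruction rather than just ``no construction known'', so one must invoke a necessary inequality (from the $\U(2)$ or $\pu(1,1)$ sub-problems, transported via reducibility, or from the geometry of $\mathbb{H}_\CC^2$); and (c) confirming that the three $P_{2k\pi}$, coming from three different cube roots of unity, overlap in exactly the way forced by the non-disjointness of $\bigcup_u\cP(G)_u$ and together exhaust $\cP(G)$.
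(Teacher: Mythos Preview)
Your overall strategy is correct and matches the paper's, but there are two genuine gaps in the execution.

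\textbf{Emptiness.} Your proposed obstruction --- using the $\U(2)$ or $\pu(1,1)$ inequalities transported via reducibility --- does not work. Those inequalities only tell you when a \emph{reducible} solution fails to exist; they say nothing about irreducible solutions, so they cannot certify that a cell is empty. The paper's argument is different and specifically geometric: it considers the diagonal $\cD=\{\alpha_1=\alpha_2,\ \beta_1=\beta_2,\ \gamma_1=\gamma_2\}\subset\partial\cT(G)^3$, the locus where all three elements are complex reflections in points. Since two projective lines in $\CC\mathbb{P}^2$ always meet, any triple on $\cD$ is automatically reducible (Lemma \ref{diago}); combined with a closedness statement extended to $\mathring{\cD}$ (Lemma \ref{reflpoint}, which requires the separability Lemma \ref{separability}), this yields Proposition \ref{diagonalinboundary}: any cell whose boundary contains an open piece of $\cD$ is empty. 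One then checks that each of the five complement cells does contain such a piece. You did not identify this mechanism, and your proposal gives no working substitute.

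\textbf{Fullness of $P_{6\pi}$.} The propagation argument you sketch (start from spherical walls, push across hyperbolic walls via Proposition \ref{emptyfullinterface} and the irreducibles of \cite{Mar}) is what the paper uses for $P_{4\pi}$, but not for $P_{6\pi}$. There the paper instead classifies the totally reducible facets by \emph{type} according to the local convex configuration of the three walls meeting along them (Proposition \ref{totallyreduciblefacets}), and invokes Paupert's local convexity result (Proposition \ref{totredfacet}, Corollary \ref{totallyreduciblefilling}): a type-4 facet forces all three adjacent cells to be full. Every cell of $P_{6\pi}$ has a type-4 facet in its boundary, and that is how fullness is established there. Your proposal does not mention this convexity argument.

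A smaller point: you describe the wall $\{H_{222}=-4\pi\}$ as a place where ``fullness crosses'' between the two pieces of $P_{4\pi}$. In fact (Lemma \ref{connectedcomponents}) those two pieces are disconnected; the wall $\{H_{222}=-4\pi\}$ separates the full cell $C_{4\pi}^{-}=\{H_{222}<-4\pi\}$ from the \emph{empty} cell $C_{4\pi}^{c,1}$, and it is precisely the emptiness of the latter, via the first item of Proposition \ref{emptyfullinterface}, that forces $C_{4\pi}^{-}$ to be full.
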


\begin{Rema}\label{symmetrypolytopes}
	Recall the symmetry of the problem $\psi$ mentioned in Remark \ref{symmetry}. From its action on the spherical and hyperbolic reducible walls described in Remarks \ref{symmetrysphwalls} and \ref{symmetryhypwalls}, one can easily check from the formulas of Definition \ref{polytope} that $\psi$ exchanges $P_{4\pi}$ and $P_{8\pi}$, and sends $P_{6\pi}$ to itself.
\end{Rema}

Let us relate Theorem \ref{solutionpu} to Paupert's work  in \cite{Pau} before proceeding with the proof.

\begin{Rema}\label{paupert}
	Theorem 1.2 in \cite{Pau} states that the momentum map $\mu_{\cC_1,\cC_2}$ is surjective if and only if the corresponding angle pairs $(\alpha_1,\alpha_2),(\beta_1,\beta_2)$ satisfy 
	\begin{equation}\label{musurj}
	\left \{
	\begin{array}{ccc}
	2(\alpha_1+\beta_1)-(\alpha_2+\beta_2) & \geq & 6\pi \\
	2(\alpha_2+\beta_2)-(\alpha_1+\beta_1) & \leq & -2\pi
	\end{array}
	\right.
	\end{equation}
	(we have recalled the definition of the momentum map in the Introduction). Comparing with his notations, we have set $\theta_1=2\pi-\beta_2,\theta_2=2\pi-\beta_1,\theta_3=2\pi-\alpha_2,\theta_4=2\pi-\alpha_1$ (because he considers the product $AB$ whereas we consider $C=(AB)^{-1}$). 
	
	In our setting, this result translates as follows. Fix $(\alpha_1^*,\alpha_2^*),(\beta_1^*,\beta_2^*)\in\cT(G)$, and for $(\gamma_1,\gamma_2)\in\cT(G)$ define $$\tau^*(\gamma_1,\gamma_2)=((\alpha_1^*,\alpha_2^*),(\beta_1^*,\beta_2^*),(\gamma_1,\gamma_2))\in\cT(G)^3.$$ Then, we have $\tau^*(\gamma_1,\gamma_2)\in\cP(G)$ for any $(\gamma_1,\gamma_2)$ if and only if $(\alpha_1^*,\alpha_2^*),(\beta_1^*,\beta_2^*)$ satisfy \eqref{musurj}. We now give a quick proof of this result using Theorem \ref{solutionpu}.
	
	Let $(\alpha_1^*,\alpha_2^*),(\beta_1^*,\beta_2^*)\in\cT(G)$ be fixed angle pairs that satisfy \eqref{musurj}. Note that for any $(\gamma_1,\gamma_2)\in\cT(G)$, we have $2\gamma_1-\gamma_2\geq0$ and $2\gamma_2-\gamma_1\leq\gamma_1\leq2\pi$. Therefore, if \eqref{musurj} is satisfied, then $H_{111}(\tau^*(\gamma_1,\gamma_2))\geq6\pi$ and $H_{222}(\tau^*(\gamma_1,\gamma_2))\leq0$, which implies that $\tau^*(\gamma_1,\gamma_2)\in P_{6\pi}$ for any $(\gamma_1,\gamma_2)\in\cT(G)$. 
	
	Conversely, let $(\alpha_1^*,\alpha_2^*),(\beta_1^*,\beta_2^*)\in\cT(G)$ be fixed and assume that we have $\tau^*(\gamma_1,\gamma_2)\in\cP(G)$ for any $(\gamma_1,\gamma_2)$.  If $\tau^*(\gamma_1,\gamma_2)\in P_{6\pi}$, then we have $H_{111}(\tau^*(\gamma_1,\gamma_2))\geq6\pi$ and $H_{222}(\tau^*(\gamma_1,\gamma_2))\leq0$; taking $\gamma_1=\gamma_2=0$ and $\gamma_1=\gamma_2=2\pi$ implies \eqref{musurj}. If $\tau^*(\gamma_1,\gamma_2)\in P_{4\pi}$, then $S(\tau^*(\gamma_1,\gamma_2))\leq4\pi$; taking $\gamma_1=\gamma_2=2\pi$ implies $\alpha_1^*=\alpha_2^*=\beta_1^*=\beta_2^*=0$, which is the trivial solution. Using the symmetry (Remark \ref{symmetrypolytopes}), we obtain the same result if $\tau^*(\gamma_1,\gamma_2)\in P_{8\pi}$. This concludes the proof of the converse implication. 
\end{Rema}

\subsubsection{Cells around a totally reducible facet}

As a first tool for filling cells, we will use the following result of convexity, identified by Paupert in \cite{Pau}.

\begin{Prop}\label{totredfacet}
	\begin{itemize}
		\item Let $T$ be a totally reducible facet. Then, there exist exactly three cells $C_1,C_2,C_3$ such that $T=\partial C_1\cap\partial C_2\cap \partial C_3$.
		\item Consider a totally reducible facet $T$ and let $S,H,\bar{H}$ be the three reducible walls such that $T=S\cap H\cap \bar{H}$. Call $C_1,C_2,C_3$ the three cells that contain $T$ in their boundary. Let $C\in \{C_1,C_2,C_3\}$. If $C$ meets the convex hull of $S\cup H\cup \bar{H}$, then $C$ is full.
	\end{itemize}
\end{Prop}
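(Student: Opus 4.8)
The plan is to work in a slice transverse to $T$ and reduce both statements to elementary convex geometry of three concurrent rays together with the full/empty dichotomy of Theorem \ref{emptyfulltheorem}.

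\textbf{First bullet.} Let $T = T_{ijk}(m,n)$ and work near an interior point $\tau_0$ of $T$. By Proposition \ref{totredintersection} we have $T = \Sigma \cap H \cap \bar H$ with $\Sigma = \Sigma_{2(m+n)\pi}$, $H = \{H_{ijk} = 2(2m-n)\pi\}$ and $\bar H = \{H_{\bar i\bar j\bar k} = 2(2n-m)\pi\}$, and $\tau_0$ lies on no other reducible wall; moreover Lemma \ref{Hijkproperties} gives the relation $S = H_{ijk} + H_{\bar i\bar j\bar k}$, so these three hyperplanes cut out exactly the codimension-$2$ subspace $T$. Pass to a $2$-dimensional affine slice $\Pi$ transverse to $T$ at $\tau_0$, with coordinates $(s,h) = (S - 2(m+n)\pi,\, H_{ijk} - 2(2m-n)\pi)$: there the three walls lie on the lines $\{s=0\}$, $\{h=0\}$ and $\{s=h\}$ (for $\bar H$ substitute $H_{\bar i\bar j\bar k} = S - H_{ijk}$), which are pairwise non-collinear. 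Since each wall is truncated exactly along $T$ — the truncating inequalities ($\sigma_{ijk} \le 2m\pi$ for $\Sigma$, and the ones of Theorem \ref{hyperbolicreduciblewalls} for $H$ and $\bar H$) degenerate to equalities precisely on $T$ — each wall meets $\Pi$ in a ray issuing from $\tau_0$, and three rays with pairwise distinct directions split a punctured neighborhood of $\tau_0$ in $\Pi$ into exactly three open sectors. Hence a small neighborhood of $\tau_0$ in $\cT(G)^3$ meets exactly three cells, and these are distinct (moving from one sector to another forces crossing a wall); this yields $C_1, C_2, C_3$ with $T = \partial C_1 \cap \partial C_2 \cap \partial C_3$.

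\textbf{Second bullet.} By Theorem \ref{emptyfulltheorem} it suffices to exhibit a single solution lying in $C$. In the slice $\Pi$, the convex hull $K$ of $\Sigma \cup H \cup \bar H$ traces the convex hull of the three rays; since the three sector angles sum to $2\pi$, at most one sector exceeds $\pi$, and a sector meets this convex hull precisely when it is not that ``reflex'' sector. So the hypothesis that $C$ meets $K$ forces $\overline C \subseteq K$ and says $C$ is one of the (at least two) non-reflex cells — consistently with the first part of Proposition \ref{emptyfullinterface}, which already forces at most one of $C_1, C_2, C_3$ to be empty. Using the symmetry $\psi$ of Remark \ref{symmetry} together with the explicit wall list, one reduces to a bounded number of normal forms for the pair $W_1, W_2$ of walls bounding $C$ (one spherical and one hyperbolic, or two hyperbolic). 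In each case I would start from the totally reducible configuration at $\tau_0$ — all three generators simultaneously diagonal in $\U(2,1)$ by Remark \ref{ellip} — and deform it simultaneously in the stabilizer directions of $W_1$ and of $W_2$: a $\U(2)$-deformation fixing the common negative-type eigenvector (handled by Proposition \ref{U2}) together with a $\U(1,1)$-deformation fixing a common positive-type eigenvector (handled by Proposition \ref{PU11}), or two such $\U(1,1)$-deformations. Convexity of the solution sets for $\U(2)$ and $\pu(1,1)$ lets one choose the angle data of this family inside $C$, and a generic choice makes the triple irreducible while keeping its conjugacy classes on $W_1$; such triples are exactly the decomposable representations of \cite{Mar}, so the second part of Proposition \ref{emptyfullinterface} then shows that $C$ (and the cell across $W_1$) is full.

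The main obstacle is precisely this interpolation: because $\cP(G)$ is not convex, combining a $W_1$-reducible and a $W_2$-reducible configuration into a genuine solution over a prescribed angle triple is not automatic. One must write down an explicit one-parameter family, using the normal forms \eqref{RS} and the block matrices of Lemmas \ref{U(2)} and \ref{hypred}, for which the product of the three elements is the identity for every parameter value, and then check that the associated path of angle triples genuinely enters the open cell $C$ rather than remaining on the walls; this is where the constructions of \cite{Mar} do the work. The remaining steps — the planar geometry of three concurrent rays and the linear bookkeeping with $S$ and the $H_{ijk}$ via Lemma \ref{Hijkproperties} — are routine.
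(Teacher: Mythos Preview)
Your treatment of the first bullet is essentially the paper's, only spelled out in more detail: the paper simply says ``direct consequence of Proposition \ref{totredintersection}'', and your transverse-slice picture with three concurrent rays is exactly what makes that consequence visible.

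For the second bullet, however, you take a genuinely different route and do not complete it. The paper's proof is a one-line citation: it invokes Proposition 2.7 of Paupert \cite{Pau}, which asserts that the image of the momentum map $\mu_{\cC_1,\cC_2}$ is \emph{locally convex} near a totally reducible vertex --- i.e.\ it is a convex cone bounded by the reducible walls through that vertex. From this, any cell meeting the convex hull of $S\cup H\cup\bar H$ automatically lies in the image, hence is full by Theorem \ref{emptyfulltheorem}. The hard analytic content (local convexity of the momentum image) is imported wholesale from \cite{Pau}.

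Your proposal instead tries to manufacture this convexity by hand: deform the totally reducible configuration simultaneously along the $\U(2)$- and $\U(1,1)$-stabilizer directions, land an irreducible solution on a bounding wall, and finish with Proposition \ref{emptyfullinterface}. You correctly identify the obstacle --- combining two reducible deformations into a single honest solution with prescribed conjugacy classes is not automatic, since $\cP(G)$ is not convex --- and then defer it to ``the constructions of \cite{Mar}''. That deferral is the gap. The decomposable families of \cite{Mar} are built for specific purposes and are not claimed there to realize an irreducible solution on every wall adjacent to every totally reducible facet; using them here would require a case analysis at least as long as Proposition \ref{totallyreduciblefacets}, and you have not carried it out. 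By contrast, Paupert's local convexity gives all cases at once. Your reduction to ``at most one of $C_1,C_2,C_3$ is empty'' via Proposition \ref{emptyfullinterface} is correct but does not single out \emph{which} cell could be empty, so it does not by itself prove the bullet.
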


\begin{proof}
	The first point is a direct consequence of Proposition \ref{totredintersection}. We obtain the second point as a corollary of Proposition 2.7 in \cite{Pau}. This result states that, when $\cC_1,\cC_2$ are fixed, the image of the momentum map $\mu_{\cC_1,\cC_2}$ (see the Introduction) contains the local convex hull of the reducible walls meeting at a totally reducible vertex. This comes from the fact that the image of $\mu_{\cC_1,\cC_2}$ around a totally reducible vertex is locally a convex cone bounded by the reducible walls. In our setting, if the cell $C$ meets the convex hull of $S\cup H\cup \bar{H}$, then by choosing any classes $\cC_1,\cC_2,\cC_3$ such that $i^3(\cC_1,\cC_2,\cC_3)\in C$ and using Paupert's result, we obtain that $i(\cC_3)$ belongs to the image of $\mu_{\cC_1,\cC_2}$. This implies that $i^3(\cC_1,\cC_2,\cC_3)\in\cP(G)$, and therefore $C$ is full by Theorem \ref{emptyfulltheorem}.
\end{proof}

In order to use this argument to fill the cells, we need a more precise description of the totally reducible intersections. We define now four situations that will turn out to be the only ones that can appear around a totally reducible facet.

\begin{Defi}
	Let $T$ be a totally reducible facet and let $\Sigma_{2n_s\pi}$, $\{H=2n_h\pi\}$, and $\{\bar{H}=2n_{\bar{h}}\pi\}$ be the three reducible walls whose intersection is $T$. We will say that:
	\begin{itemize}
		\item $T$ is of \textbf{type 1} if one of the hyperbolic reducible walls is contained in the convex hull of the other two reducible walls, and the two hyperbolic reducible walls are contained in the negative half-space of $S-2n_s\pi$. This means that $\forall \tau_h\in\{H=2n_h\pi\}\cap P_{2n_s\pi}$, $\forall \tau_{\bar{h}}\in\{\bar{H}=2n_{\bar{h}}\pi\}\cap P_{2n_s\pi}$, 
		$$S(\tau_h),S(\tau_{\bar{h}})\leq2n_s\pi$$ (see Figure \ref{type1});
		\item $T$ is of \textbf{type 2} if one of the hyperbolic reducible walls is contained in the convex hull of the other two reducible walls, and the two hyperbolic reducible walls are contained in the positive half-space of $S-2n_s\pi$. This means that $\forall \tau_h\in\{H=2n_h\pi\}\cap P_{2n_s\pi}$, $\forall \tau_{\bar{h}}\in\{\bar{H}=2n_{\bar{h}}\pi\}\cap P_{2n_s\pi}$, 
		$$S(\tau_h),S(\tau_{\bar{h}})\geq2n_s\pi$$ (see Figure \ref{type2});
		\item $T$ is of \textbf{type 3} if the spherical reducible wall is contained in the convex hull of the two hyperbolic reducible walls. This leaves two possible situations: $\forall \tau_s\in\Sigma_{2n_s\pi}$, $\forall \tau_h\in\{H=2n_h\pi\}\cap P_{2n_s\pi}$, $\forall \tau_{\bar{h}}\in\{\bar{H}=2n_{\bar{h}}\pi\}\cap P_{2n_s\pi}$, either $$H(\tau_s),H(\tau_{\bar{h}})\leq2n_h\pi \ \mbox{and} \ \bar{H}(\tau_s),\bar{H}(\tau_h)\geq2n_{\bar{h}}\pi$$ or 
		$$H(\tau_s),H(\tau_{\bar{h}})\geq2n_h\pi \ \mbox{and} \  \bar{H}(\tau_s),\bar{H}(\tau_h)\leq2n_{\bar{h}}\pi$$ (see Figure \ref{type3});
		\item $T$ is of \textbf{type 4} if none of the reducible walls is contained in the convex hull of the other two. This leaves two possible situations: $\forall \tau_s\in\Sigma_{2n_s\pi}$, $\forall \tau_h\in\{H=2n_h\pi\}\cap P_{2n_s\pi}$, $\forall \tau_{\bar{h}}\in\{\bar{H}=2n_{\bar{h}}\pi\}\cap P_{2n_s\pi}$, either $$H(\tau_s)\leq2n_h\pi,H(\tau_{\bar{h}})\geq2n_h\pi \ \mbox{and} \  \bar{H}(\tau_s)\geq2n_{\bar{h}}\pi,\bar{H}(\tau_h)\leq2n_{\bar{h}}\pi$$ or $$H(\tau_s)\geq2n_h\pi,H(\tau_{\bar{h}})\leq2n_h\pi \ \mbox{and} \  \bar{H}(\tau_s)\leq2n_{\bar{h}}\pi,\bar{H}(\tau_h)\geq2n_{\bar{h}}\pi$$ (see Figure \ref{type4}).
	\end{itemize}
\end{Defi}

\begin{figure}
	\centering
	\begin{adjustbox}{minipage=1.1\linewidth,scale=0.8}
		\begin{subfigure}{0.5\textwidth}
			\centering
			\begin{tikzpicture}
			\useasboundingbox (-3,-1) rectangle (2,3);
			\draw [red](2,0) node[below]{\scriptsize$\{S=2n_s\pi\}$} ;
			\draw [blue](-1/2,0) node[below]{\scriptsize$\{\bar{H}=2n_{\bar{h}}\pi\}$} ;
			\draw [blue](-2,1) node[below]{\scriptsize$\{H=2n_h\pi\}$} ;
			\draw [red](2,0) -- (0,2);
			\draw [blue](-1/2,0) -- (0,2);
			\draw [blue](-2,1) -- (0,2);
			\end{tikzpicture}
			\caption{Totally reducible intersection of \textbf{type 1} \label{type1}}
		\end{subfigure}
		\hfill
		\begin{subfigure}{0.5\textwidth}
			\centering
			\begin{tikzpicture}
			\useasboundingbox (-1.5,0) rectangle (2,4);
			\draw [red](-1/2,1/2) node[right]{\scriptsize$\{S=2n_s\pi\}$} ;
			\draw [blue](-1/2,2.75) node[right]{\scriptsize$\{\bar{H}=2n_{\bar{h}}\pi\}$} ;
			\draw [blue](-1,4) node[right]{\scriptsize$\{H=2n_h\pi\}$} ;
			\draw [red](-1/2,1/2) -- (-2,2);
			\draw [blue](-1,4) -- (-2,2);
			\draw [blue](-1/2,2.75) -- (-2,2);
			\end{tikzpicture}
			\caption{Totally reducible intersection of \textbf{type 2} \label{type2}}
		\end{subfigure}
		\vfill
		\begin{subfigure}{0.5\textwidth}
			\centering
			\begin{tikzpicture}
			\useasboundingbox (-1.5,-1) rectangle (2,4);
			\draw [red](2,0) node[below]{\scriptsize$\{S=2n_s\pi\}$} ;
			\draw [blue](-1/2,1/4) node[below]{\scriptsize$\{\bar{H}=2n_{\bar{h}}\pi\}$} ;
			\draw [blue](1,2.25) node[right]{\scriptsize$\{H=2n_h\pi\}$} ;
			\draw [red](2,0) -- (0,2);
			\draw [blue](-1/2,1/4) -- (0,2);
			\draw [blue](3/2,11/4) -- (0,2);
			\end{tikzpicture}
			\caption{Totally reducible intersection of \textbf{type 3} \label{type3}}
		\end{subfigure}	
		\hfill
		\begin{subfigure}{0.5\textwidth}
			\centering
			\begin{tikzpicture}
			\useasboundingbox (-1.5,-1) rectangle (2,4);
			\draw [red](3/2,1/2) node[below]{\scriptsize$\{S=2n_s\pi\}$} ;
			\draw [blue](1/2+0.1,3) node[right]{\scriptsize$\{\bar{H}=2n_{\bar{h}}\pi\}$} ;
			\draw [blue](-3/2,5/4) node[below]{\scriptsize$\{H=2n_h\pi\}$} ;
			\draw [red](3/2,1/2) -- (0,2);
			\draw [blue](3/4,7/2) -- (0,2);
			\draw [blue](-3/2,5/4) -- (0,2);
			\end{tikzpicture}
			\caption{Totally reducible intersection of \textbf{type 4} \label{type4}}
		\end{subfigure}
	\end{adjustbox}
	\caption{The four combinatorial possibilities for a totally reducible intersection. \label{totred}}
\end{figure}
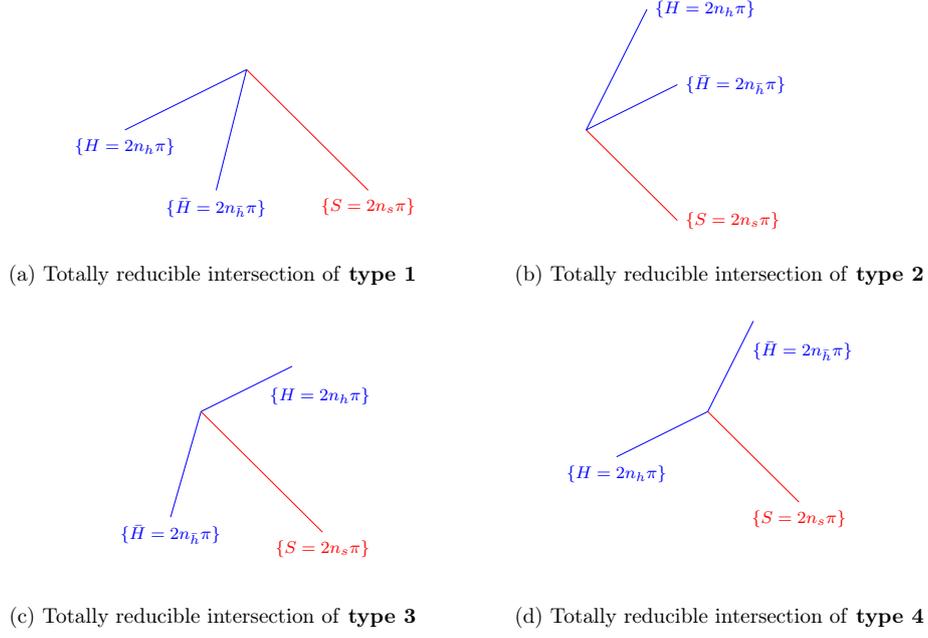

From Proposition \ref{totredfacet} we obtain directly:

\begin{Coro}\label{totallyreduciblefilling}
	Let $T$ be a totally reducible facet and let $C_1,C_2,C_3$ be the three cells such that $T=\partial C_1\cap\partial C_2\cap \partial C_3$. Then:
	\begin{itemize}
		\item if $T$ is of type 1, 2 or 3, two cells among $C_1,C_2,C_3$ are full;
		\item if $T$ is of type 4, the three cells $C_1,C_2,C_3$ are full.
	\end{itemize}
\end{Coro}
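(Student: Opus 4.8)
The plan is to read the statement off Proposition \ref{totredfacet} by a direct inspection of the local picture around $T$. First I would work in the $2$-dimensional affine slice $\Pi$ passing through a point of $T$ and transverse to $T$. Since $T$ has codimension $2$ and, by Proposition \ref{totredintersection}, is cut out exactly by the three reducible walls $\Sigma_{2n_s\pi}$, $\{H=2n_h\pi\}$, $\{\bar{H}=2n_{\bar{h}}\pi\}$ — each of them truncated precisely along $T$, by the inequalities $\sigma_{\bar{i}\bar{j}\bar{k}}\leq 2n\pi$ (or $\geq 2n\pi$) appearing in Theorems \ref{sphred} and \ref{hyperbolicreduciblewalls} — these three walls meet $\Pi$ in three rays $r_S,r_H,r_{\bar{H}}$ issued from the single point $p=T\cap\Pi$, and the three sectors they delimit are the traces of the three cells $C_1,C_2,C_3$ supplied by Proposition \ref{totredfacet}. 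Because each ray is a subset of the corresponding global wall, a cell meeting $\mathrm{conv}(r_S\cup r_H\cup r_{\bar{H}})$ also meets $\mathrm{conv}\big(\Sigma_{2n_s\pi}\cup\{H=2n_h\pi\}\cup\{\bar{H}=2n_{\bar{h}}\pi\}\big)$, hence is full by the second part of Proposition \ref{totredfacet}. So it suffices to count, type by type, how many of the three sectors meet $\mathrm{conv}(r_S\cup r_H\cup r_{\bar{H}})$.

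Next I would invoke the elementary planar fact that the convex hull of three rays issued from $p$ is either (i) the closed convex cone spanned by two of them — which happens exactly when the third ray lies inside that cone, i.e.\ exactly the defining condition of types $1$, $2$, $3$ (one reducible wall contained in the convex hull of the other two) — or (ii) the whole plane $\Pi$, which is the configuration realised by the two sign situations listed in the definition of type $4$. In case (i), two of the three sectors are contained in the spanning cone, hence are full, while the interior of the third sector is disjoint from it; in case (ii), all three open sectors meet $\Pi=\mathrm{conv}(r_S\cup r_H\cup r_{\bar{H}})$, hence all three cells are full. Matching this with the four types via Figures \ref{type1}--\ref{type4} gives exactly two full cells for types $1$, $2$, $3$ and three full cells for type $4$, as claimed.

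I do not expect a genuine obstacle here: the conclusion is forced once Proposition \ref{totredfacet} is in hand, and the only points requiring a line of care are that \emph{all three} walls (not merely the hyperbolic ones) are truncated along $T$, so that a neighbourhood of $T$ splits into \emph{exactly} three sectors — consistently with the first part of Proposition \ref{totredfacet} — and the harmless open/closed bookkeeping in the phrase ``a cell meets the convex hull'' (in case (i) the two inner sectors are in fact contained in the hull, and in case (ii) every open sector trivially meets $\Pi$).
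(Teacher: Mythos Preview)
Your argument is correct and is exactly the reasoning the paper has in mind: the paper states the corollary as obtained ``directly'' from Proposition \ref{totredfacet}, and the figures \ref{type1}--\ref{type4} are precisely the two-dimensional transverse pictures you describe. Your spelling-out of the planar dichotomy (convex hull of three rays is either a proper cone or the whole plane) is the intended content of ``directly''.
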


We now classify the totally reducible facets according to their type. We attract the reader's attention to the fact that we slightly change the notation for $T_{ijk}$ (compare with Proposition \ref{totredintersection}).

\begin{Prop}\label{totallyreduciblefacets}
	Let 
	$$T_{ijk}(n_h,n_{\bar{h}},n_s)=\{H_{ijk}=2n_h\pi\}\cap\{H_{\bar{i}\bar{j}\bar{k}}=2n_{\bar{h}}\pi\}\cap\Sigma_{2n_s\pi}$$ be a totally reducible facet. Then:
	\begin{itemize}
		\item $T_{ijk}(1,1,2)$ is of \textbf{type 1} for $(i,j,k)\in\{(1,2,2),(2,1,2),(2,2,1)\}$;
		\item $T_{ijk}(2,2,4)$ is of \textbf{type 2} for $(i,j,k)\in\{(2,1,1),(1,2,1),(1,1,2)\}$;
		\item $T_{ijk}(3,0,3)$ is of \textbf{type 3} for $(i,j,k)=(1,1,1)$;
		\item $T_{ijk}(3,0,3)$ is of \textbf{type 4} for $(i,j,k)\in\{(2,1,1),(1,2,1),(1,1,2)\}$.
	\end{itemize}
\end{Prop}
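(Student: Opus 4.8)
The plan is to verify the claimed type of each totally reducible facet $T_{ijk}(n_h,n_{\bar h},n_s)$ by a direct but careful inspection of the incident reducible walls and their relative positions near the facet. By Proposition \ref{totredintersection} (in its new notation), each such facet is the transverse intersection of exactly one spherical reducible wall $\Sigma_{2n_s\pi}$ and the two hyperbolic reducible walls $\{H_{ijk}=2n_h\pi\}$, $\{H_{\bar i\bar j\bar k}=2n_{\bar h}\pi\}$, and by Lemma \ref{Hijkproperties} we have the identity $H_{ijk}+H_{\bar i\bar j\bar k}=S$, so the three defining linear forms are linearly dependent: $S = H_{ijk}+H_{\bar i\bar j\bar k}$. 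This is the crucial structural input. Restricted to a two-dimensional transversal plane to $T_{ijk}$, the three walls become three lines through a common point $O$ (the image of $T_{ijk}$), and the type of the facet is entirely determined by the cyclic arrangement of the three lines and, for types 1 and 2, by which side of the line $\{S=2n_s\pi\}$ the two half-walls $\{H=2n_h\pi\}\cap P_{2n_s\pi}$ and $\{\bar H=2n_{\bar h}\pi\}\cap P_{2n_s\pi}$ lie on.

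First I would fix one representative facet in each of the four listed families and compute, on the transversal plane, the three linear functionals $S$, $H_{ijk}$, $H_{\bar i\bar j\bar k}$ together with their values $2n_s\pi$, $2n_h\pi$, $2n_{\bar h}\pi$ at $O$. The relation $S=H_{ijk}+H_{\bar i\bar j\bar k}$, evaluated at $O$, already forces the consistency condition $n_s=n_h+n_{\bar h}$, which is satisfied in all four families ($2=1+1$, $4=2+2$, $3=3+0$); and it means that, writing $h=H_{ijk}-2n_h\pi$ and $\bar h = H_{\bar i\bar j\bar k}-2n_{\bar h}\pi$ for the linearized coordinates vanishing on the respective walls, we have $S-2n_s\pi = h + \bar h$. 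Thus in the $(h,\bar h)$-plane the spherical wall is the anti-diagonal $\{h+\bar h=0\}$, the two hyperbolic walls are the coordinate axes, and the combinatorics reduce to: which of the four open quadrants is cut off by the anti-diagonal to form the portion $P_{2n_s\pi}$, i.e.\ on which sides of the axes the segments $\Sigma_{2n_s\pi}\cap\{h=0\}$ etc.\ actually lie. This is governed by the truncation inequalities recorded in Theorems \ref{sphred} and \ref{hyperbolicreduciblewalls} and, concretely, by the ordering of the angles ($\alpha_2\le\alpha_1$, etc.) which controls the relative values of $\sigma_{ijk}$ for different $(i,j,k)$ and hence which wall lies ``between'' the other two.

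Concretely, for $T_{ijk}(1,1,2)$ with $(i,j,k)\in\{(1,2,2),(2,1,2),(2,2,1)\}$: both hyperbolic walls are pieces of $\{H=0,\sigma_{\bar i\bar j\bar k}\ge 4\pi\}$ (Item 2 of Theorem \ref{hyperbolicreduciblewalls}), and $\sigma_{\bar i\bar j\bar k}\ge 4\pi$ together with $\sigma_{ijk}=2\pi$ forces $S\ge 6\pi=2n_s\pi$\,---\,wait, here $2n_s\pi=4\pi$, so I must recheck the indexing: the facet $T_{ijk}(1,1,2)$ sits in $\Sigma_{4\pi}$, and one verifies via the inequalities of Theorem \ref{sphred} that the incident half-walls lie on the $\{S\le 4\pi\}$ side, giving \textbf{type 1}. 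The facet $T_{ijk}(2,2,4)$ is the image of $T_{ijk}(1,1,2)$ under the symmetry $\psi$ of Remark \ref{symmetry}: by Remarks \ref{symmetrysphwalls} and \ref{symmetryhypwalls}, $\psi$ exchanges $\Sigma_{4\pi}\leftrightarrow\Sigma_{8\pi}$ and reverses all the ``$\le$''/``$\ge$'' truncations, hence carries type 1 to \textbf{type 2}; this disposes of the second family without further computation. For $T_{111}(3,0,3)$ one checks that $\{H_{222}=-4\pi\}$ (Item 1, the full wall $\cP(\pu(1,1))_1$, untruncated) passes ``between'' $\Sigma_{6\pi}$ and $\{H_{111}=10\pi\}$, giving \textbf{type 3}; and for the three permutation-type facets $T_{ijk}(3,0,3)$ with $(i,j,k)\in\{(2,1,1),(1,2,1),(1,1,2)\}$ one checks, using $H_{2jk}<H_{1jk}$ etc.\ (Lemma \ref{Hijkproperties} ii)), that no one of the three walls separates the other two, giving \textbf{type 4}. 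Finally I would confirm that the stated families exhaust, up to $\psi$, all totally reducible facets not contained in $\partial\cT(G)^3$: enumerate $T_{ijk}(n_h,n_{\bar h},n_s)$ over $(i,j,k)\in\{1,2\}^3$ and the finitely many admissible $(n_h,n_{\bar h})$ coming from Theorem \ref{hyperbolicreduciblewalls}, discard those forcing some angle to $0$ or $2\pi$, and match each survivor to its $\psi$-orbit representative. The main obstacle is bookkeeping: there are $2^3$ index triples times several wall-level pairs, and for each one must correctly read off the truncation half-spaces and the angle-ordering inequalities to pin down the quadrant; the symmetry $\psi$ cuts this roughly in half, but the case $(i,j,k)=(1,1,1)$ versus its ``non-symmetric'' companions still has to be treated by hand, and it is exactly there that type 3 splits off from type 4.
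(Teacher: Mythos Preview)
Your overall strategy is correct and is essentially the same as the paper's: use the identity $H_{ijk}+H_{\bar i\bar j\bar k}=S$ (Lemma \ref{Hijkproperties}) to reduce the type determination to checking, for each facet, on which side of each wall the truncated half-walls lie, then verify these signs using the truncation inequalities from Theorems \ref{sphred} and \ref{hyperbolicreduciblewalls} together with the relations $H_{ijk}=2\sigma_{ijk}-\sigma_{\bar i\bar j\bar k}$ and $S=\sigma_{ijk}+\sigma_{\bar i\bar j\bar k}$. The paper carries this out concretely: for each case it takes a generic point $\tau_h$ on one hyperbolic wall (satisfying the truncation inequality), computes $S(\tau_h)$ or $H_{\bar i\bar j\bar k}(\tau_h)$ directly from the constraint, and reads off the sign. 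Your linearized $(h,\bar h)$-coordinate picture is a clean way to phrase this, and the use of the symmetry $\psi$ to deduce the type-2 case from the type-1 case is exactly what the paper does as well.

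However, your specific verifications contain indexing errors that make them wrong as written. For $T_{ijk}(1,1,2)$ with $(i,j,k)\in\{(1,2,2),(2,1,2),(2,2,1)\}$, since $n_h=n_{\bar h}=1$ the two hyperbolic walls are $\{H_{ijk}=2\pi\}$ and $\{H_{\bar i\bar j\bar k}=2\pi\}$, which appear in Item 3 of Theorem \ref{hyperbolicreduciblewalls} with truncation $\sigma_{\bar i\bar j\bar k}\le 2\pi$ --- not $\{H=0,\sigma_{\bar i\bar j\bar k}\ge 4\pi\}$ from Item 2 as you wrote (you seem to have noticed something was off with your ``wait, here $2n_s\pi=4\pi$''). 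The correct computation is: on $\{H_{ijk}=2\pi,\ \sigma_{\bar i\bar j\bar k}\le 2\pi\}$ one has $\sigma_{ijk}=\tfrac12(2\pi+\sigma_{\bar i\bar j\bar k})\le 2\pi$, hence $S\le 4\pi$, giving type 1. Likewise, for $T_{111}(3,0,3)$ the three walls are $\{H_{111}=6\pi\}$, $\{H_{222}=0\}$, and $\Sigma_{6\pi}$ --- not $\{H_{222}=-4\pi\}$ and $\{H_{111}=10\pi\}$ as you wrote. With the correct walls one checks (as the paper does) that on $\Sigma_{6\pi}$ both $H_{111}\ge 6\pi$ and $H_{222}\le 0$ hold, so the spherical wall lies between the two hyperbolic ones: type 3. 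Once these wall labels are corrected, your argument goes through and coincides with the paper's. Finally, the exhaustiveness check you propose at the end is not part of the statement of the Proposition and can be omitted.
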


\begin{proof}
	We prove this result by a precise case-by-case analysis. Recall that $H_{ijk}=2\sigma_{ijk}-\sigma_{\bar{i}\bar{j}\bar{k}}$, so $H_{ijk}=2n_h\pi$ implies
	\begin{equation}\label{ineg}
	\sigma_{ijk}=\frac{1}{2}(2n_h\pi+\sigma_{\bar{i}\bar{j}\bar{k}}).
	\end{equation}
	Similarly, $S=\sigma_{ijk}+\sigma_{\bar{i}\bar{j}\bar{k}}$, and thus $S=2n_s\pi$ implies
	\begin{equation}\label{ineg2}
	\sigma_{ijk}=2n_s\pi-\sigma_{\bar{i}\bar{j}\bar{k}}.
	\end{equation}
	\begin{itemize}
		\item Assume first that $(i,j,k)\in\{(1,2,2),(2,1,2),(2,2,1)\}$. We are going to show that $$T_{ijk}(1,1,2)=\{H_{ijk}=2\pi\}\cap\{H_{\bar{i}\bar{j}\bar{k}}=2\pi\}\cap\Sigma_{4\pi}$$ is of type 1. Let $\tau_h\in\{H_{ijk}=2\pi,\sigma_{\bar{i}\bar{j}\bar{k}}\leq2\pi\}$. By (\ref{ineg}) we have $$\sigma_{ijk}(\tau_h)\leq\frac{1}{2}(2\pi+2\pi)=2\pi,$$
		so $S(\tau_h)\leq4\pi$. Let $\tau_{\bar{h}}\in\{H_{\bar{i}\bar{j}\bar{k}}=2\pi,\sigma_{ijk}\leq2\pi\}$. By (\ref{ineg}) we have $$\sigma_{\bar{i}\bar{j}\bar{k}}(\tau_{\bar{h}})\leq\frac{1}{2}(2\pi+2\pi)=2\pi,$$
		so $S(\tau_{\bar{h}})\leq4\pi$.
		
		\item Consider the case where $(i,j,k)\in\{(2,1,1),(1,2,1),(1,1,2)\}$. The fact that $$T_{ijk}(2,2,4)=\{H_{ijk}=4\pi\}\cap\{H_{\bar{i}\bar{j}\bar{k}}=4\pi\}\cap\Sigma_{8\pi}$$ is of type 2 is obtained by applying the symmetry $\psi$ to the totally reducible facets of type 1 (see Remarks \ref{symtotred}, \ref{symmetrysphwalls}, \ref{symmetryhypwalls}).
		
		\item We show that $$T_{111}(3,0,3)=\{H_{111}=6\pi\}\cap\{H_{222}=0\}\cap\Sigma_{6\pi}$$ is of type 3. Let $$\tau_s\in\Sigma_{6\pi}=\{S=6\pi,\sigma_{222}\leq2\pi,\sigma_{112}\leq4\pi, \sigma_{121}\leq4\pi,\sigma_{211}\leq4\pi\},$$ $$\tau_h\in\{H_{111}=6\pi,\sigma_{222}\leq2\pi\}, \  \tau_{\bar{h}}\in\{H_{222}=0,\sigma_{111}\geq4\pi\}.$$ By (\ref{ineg}) and (\ref{ineg2}) we have
		\begin{equation*}
		\begin{aligned}
		H_{111}(\tau_s)&=2(6\pi-\sigma_{222}(\tau_s))-\sigma_{222}(\tau_s)=12\pi-3\sigma_{222}(\tau_s)\geq 12\pi-3\cdot2\pi=6\pi, \\
		H_{111}(\tau_{\bar{h}})&=2\sigma_{111}(\tau_{\bar{h}})-\frac{1}{2}\sigma_{111}(\tau_{\bar{h}})=\frac{3}{2}\sigma_{111}(\tau_{\bar{h}})\geq\frac{3}{2}\cdot4\pi=6\pi,\\
		H_{222}(\tau_s)&=2\sigma_{222}(\tau_s)-(6\pi-\sigma_{222}(\tau_s))=-6\pi+3\sigma_{222}(\tau_s)\leq-6\pi+3\cdot2\pi=0,\\
		H_{222}(\tau_h)&=2\sigma_{222}(\tau_h)-(3\pi+\frac{1}{2}\sigma_{222}(\tau_h))=\frac{3}{2}\sigma_{222}(\tau_h)-3\pi\leq\frac{3}{2}\cdot2\pi-3\pi=0.
		\end{aligned}
		\end{equation*}
		Note that the symmetry $\psi$ globally preserves this totally reducible facet.
		
		\item Let $(i,j,k)\in\{(2,1,1),(1,2,1),(1,1,2)\}$. We show that $$T_{ijk}(3,0,3)=\{H_{ijk}=6\pi\}\cap\{H_{\bar{i}\bar{j}\bar{k}}=0\}\cap\Sigma_{6\pi}$$ is of type 4. We can already notice that the symmetry $\psi$ leaves invariant this set of totally reducible facets. Let $$\tau_s\in\Sigma_{6\pi}=\{S=6\pi,\sigma_{222}\leq2\pi,\sigma_{112}\leq4\pi, \sigma_{121}\leq4\pi,\sigma_{211}\leq4\pi\},$$ $$\tau_h\in\{H_{ijk}=6\pi,\sigma_{\bar{i}\bar{j}\bar{k}}\leq2\pi\}, \ \tau_{\bar{h}}\in\{H_{\bar{i}\bar{j}\bar{k}}=0,\sigma_{ijk}\geq4\pi\}.$$ By (\ref{ineg}) and (\ref{ineg2}) we have
		\begin{equation*}
		\begin{aligned}
		H_{ijk}(\tau_s)&=2\sigma_{ijk}(\tau_s)-(6\pi-\sigma_{ijk}(\tau_s))=-6\pi+3\sigma_{ijk}(\tau_s)\leq-6\pi+3\cdot4\pi=6\pi,\\
		H_{ijk}(\tau_{\bar{h}})&=2\sigma_{ijk}(\tau_{\bar{h}})-\frac{1}{2}\sigma_{ijk}(\tau_{\bar{h}})=\frac{3}{2}\sigma_{ijk}(\tau_{\bar{h}})\geq\frac{3}{2}\cdot4\pi=6\pi,\\
		H_{\bar{i}\bar{j}\bar{k}}(\tau_s)&=2(6\pi-\sigma_{ijk}(\tau_s))-\sigma_{ijk}(\tau_s)=12\pi-3\sigma_{ijk}(\tau_s)\geq 12\pi-3\cdot4\pi=0,\\
		H_{\bar{i}\bar{j}\bar{k}}(\tau_h)&=2\sigma_{\bar{i}\bar{j}\bar{k}}(\tau_h)-(3\pi+\frac{1}{2}\sigma_{\bar{i}\bar{j}\bar{k}}(\tau_h))=\frac{3}{2}\sigma_{\bar{i}\bar{j}\bar{k}}(\tau_h)-3\pi\leq\frac{3}{2}\cdot2\pi-3\pi=0.
		\end{aligned}
		\end{equation*}
	\end{itemize}
	The proof is now complete.
\end{proof}

\subsubsection{Adjacent cells}

We now describe the five polytopes that compose $\cP(G)$ (see Theorem \ref{solution}).

\begin{Lemm}\label{connectedcomponents}
	The subsets $P_{2k\pi}$ (see Definition \ref{polytope}) enjoy the following properties of connectedness:
	\begin{enumerate}[i)]
		\item $P_{4\pi}$ has two connected components: one of them is $\{H_{222}<-4\pi\}$, and the other is $\{S<4\pi\}\cap\{H_{122}<2\pi\}\cap\{H_{212}<2\pi\}\cap\{H_{221}<2\pi\}\cap\{H_{111}>2\pi\}$;
		\item $P_{6\pi}$ is connected;
		\item $P_{8\pi}$ has two connected components: one of them is $\{H_{111}>10\pi\}$, and the other is 
		$\{S>8\pi\}\cap\{H_{211}>4\pi\}\cap\{H_{121}>4\pi\}\cap\{H_{112}>4\pi\}\cap\{H_{222}<4\pi\}$.
	\end{enumerate}
\end{Lemm}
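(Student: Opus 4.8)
The plan is to use the fact that every set occurring in Definition \ref{polytope} is obtained by intersecting the convex set $\cT(G)^3$ (recall $\cT(G)=\{(\alpha_1,\alpha_2)\mid 0\le\alpha_2\le\alpha_1<2\pi\}$ is convex, so $\cT(G)^3$ is convex in $(\RR^2)^3$) with finitely many affine half-spaces. Any such set is therefore convex, hence path-connected. Granting this, the whole content of the lemma reduces to: (a) each of the listed constituent blocks is nonempty; and (b) for $P_{4\pi}$ and $P_{8\pi}$, the two displayed blocks are disjoint and each is relatively \emph{open} in $\cT(G)^3$, so that they are forced to be exactly the connected components of the union (two disjoint clopen connected pieces). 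Item ii) is then immediate: $P_{6\pi}=\cT(G)^3\cap\{H_{222}<0\}\cap\{H_{111}>6\pi\}$ is convex, hence connected, and nonempty.

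First I would record two linear identities. From Remark \ref{sum} we have $\sigma_{ijk}+\sigma_{\bar i\bar j\bar k}=S$, and combining this with $H_{ijk}=2\sigma_{ijk}-\sigma_{\bar i\bar j\bar k}$ gives
$$H_{111}=3\sigma_{111}-S,\qquad H_{222}=3\sigma_{222}-S.$$
Since every angle lies in $[0,2\pi)$ we have $\sigma_{222}\ge0$ and $\sigma_{111}<6\pi$ everywhere on $\cT(G)^3$, hence the a priori bounds $H_{222}\ge -S$ and $H_{111}<18\pi-S$. Nonemptiness of the five blocks is then checked by producing explicit triples (for instance, for the first block of $P_{4\pi}$ take all of $\alpha_2,\beta_2,\gamma_2$ equal to $0$ and all of $\alpha_1,\beta_1,\gamma_1$ close to $2\pi$; for the second block take all $\alpha_1,\beta_1,\gamma_1$ near $\pi$ and all $\alpha_2,\beta_2,\gamma_2$ small; the $P_{8\pi}$ cases follow by the symmetry $\psi$ of Remark \ref{symmetrypolytopes}); alternatively nonemptiness follows from the later identification of these blocks as full cells.

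For items i) and iii) it then suffices to verify disjointness of the two displayed blocks, the "relatively open" statement being clear since each block is an intersection of $\cT(G)^3$ with open half-spaces. For $P_{4\pi}$: a point of the first block satisfies $H_{222}<-4\pi$, so $S=3\sigma_{222}-H_{222}\ge -H_{222}>4\pi$, which contradicts the inequality $S<4\pi$ built into the second block. For $P_{8\pi}$: a point of the first block satisfies $H_{111}>10\pi$, so $S=3\sigma_{111}-H_{111}<18\pi-10\pi=8\pi$, which contradicts the inequality $S>8\pi$ built into the second block. In both cases the two blocks are disjoint; being disjoint, convex, and relatively open, they are the connected components of the respective unions.

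I do not expect a genuine obstacle here: everything reduces to manipulations of the linear forms $S,\sigma_{ijk},H_{ijk}$ already introduced. The only points demanding a little care are bookkeeping ones — checking that each of the five blocks is relatively open in $\cT(G)^3$ so that a partition into disjoint convex pieces really is the partition into connected components, and confirming nonemptiness of each piece so that the count "exactly two" (resp.\ "connected") is literally accurate.
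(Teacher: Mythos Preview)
Your proposal is correct and follows essentially the same approach as the paper: each displayed block is an intersection of half-spaces with $\cT(G)^3$ and hence convex (so connected), and disjointness of the two blocks in $P_{4\pi}$ (resp.\ $P_{8\pi}$) is obtained from exactly the same linear manipulation showing $H_{222}<-4\pi\Rightarrow S>4\pi$ (resp.\ $H_{111}>10\pi\Rightarrow S<8\pi$). The paper invokes the symmetry $\psi$ to reduce iii) to i) rather than arguing iii) directly, and is terser about nonemptiness and the clopen reasoning, but the substance is the same.
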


\begin{proof}
	First notice that the symmetry $\psi$ exchanges the objects of Items i) and iii) (see Remarks \ref{symmetrysphwalls}, \ref{symmetryhypwalls}, \ref{symmetrypolytopes}), which is why we only prove one Item among the two.
	\begin{enumerate}[i)]
		\item If $H_{222}<-4\pi$, then $\sigma_{111}=2\sigma_{222}-H_{222}>2\sigma_{222}+4\pi>4\pi$, and therefore $S>\sigma_{111}>4\pi$. This implies that the two subsets of $P_{4\pi}$ described in Item i) are disconnected from each other. Besides, the second subspace is an intersection of half-spaces, so is connected.
		\item $P_{6\pi}$ is an intersection of two half-spaces, so is connected.
	\end{enumerate}
	This concludes the proof.
\end{proof}

\begin{Defi}
	We define:
	\begin{equation*}
	\begin{aligned}
	P_{4\pi}^{\mathrm{red}} \ &= \ \Sigma_{4\pi}\cup\{H_{222}=-4\pi\}
	\cup\Bigg(\bigcup_{(i,j,k)\in\{1,2\}^3\setminus\{(2,2,2)\}}\{H_{ijk}=2\pi,\sigma_{\bar{i}\bar{j}\bar{k}}\leq 2\pi\}\Bigg)\\
	P_{6\pi}^{\mathrm{red}} \ &= \ \Sigma_{6\pi}\cup\Bigg(\bigcup_{\substack{(i,j,k)\in\{(2,2,2),(2,2,1), \\ (2,1,2),(1,2,2)\}}}\{H_{ijk}=0,\sigma_{\bar{i}\bar{j}\bar{k}}\geq 4\pi\}\Bigg)\\
	&\cup\Bigg(\bigcup_{\substack{(i,j,k)\in\{(1,1,1),(1,1,2),\\ (1,2,1),(2,1,1)\}}}\{H_{ijk}=6\pi,\sigma_{\bar{i}\bar{j}\bar{k}}\leq 2\pi\}\Bigg)\\
	P_{8\pi}^{\mathrm{red}} \ &= \ \Sigma_{8\pi}\cup \{H_{111}=10\pi\}\cup\Bigg(\bigcup_{(i,j,k)\in\{1,2\}^3\setminus\{(1,1,1)\}}\{H_{ijk}=4\pi,\sigma_{\bar{i}\bar{j}\bar{k}}\geq 4\pi\}\Bigg)
	\end{aligned}
	\end{equation*}
	We say that $W\subset P_{2k\pi}^{\mathrm{red}}$ is an \textit{exterior wall}, respectively \textit{interior wall} of $P_{2k\pi}$, if $W\subset\partial P_{2k\pi}$, respectively if $W\subset P_{2k\pi}$ (see Definition \ref{polytope}).
\end{Defi}

\begin{Rema}
	Notice again that the symmetry $\psi$ exchanges $P_{4\pi}^{\mathrm{red}}$ and $P_{8\pi}^{\mathrm{red}}$, and globally preserves $P_{6\pi}^{\mathrm{red}}$.
\end{Rema}

We can now relate $P_{2k\pi}^{\mathrm{red}}$ to the lifting properties of a representation; see Definition \ref{P(G)u}.

\begin{Theo}\label{polytopelayers}
	Let $\omega=e^{2i\pi/3}$. We have:
	$$P_{4\pi}^{\mathrm{red}}=\cP(G)_\omega^{\mathrm{red}}, \ P_{6\pi}^{\mathrm{red}}=\cP(G)_1^{\mathrm{red}}, \ P_{8\pi}^{\mathrm{red}}=\cP(G)_{\omega^2}^{\mathrm{red}}.$$
\end{Theo}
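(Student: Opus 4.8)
The plan is to compute, for a reducible representation $\rho\in\operatorname{Hom}_\mathrm{ell}^\mathrm{red}(\Gamma,G)$, the value of the root of unity $u\in\mathbb{U}_3$ defined by $\widetilde{\rho(\texttt{a})}\widetilde{\rho(\texttt{b})}\widetilde{\rho(\texttt{c})}=u\operatorname{Id}$ (Definition \ref{standardlift} and the subsequent Remark), and to match this value against the congruence of $S$ (or equivalently of the $H_{ijk}$) modulo $2\pi\NN$ that characterizes each of the three walls $\Sigma_{4\pi},\Sigma_{6\pi},\Sigma_{8\pi}$ and of the hyperbolic walls of Theorem \ref{hyperbolicreduciblewalls}. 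The key computation is the following: if $\rho(g)$ lies in the class with angle pair $(\alpha_1,\alpha_2)$, its standard lift to $\su(2,1)$ is the diagonal matrix with entries $e^{i(2\alpha_1-\alpha_2)/3},e^{i(2\alpha_2-\alpha_1)/3},e^{-i(\alpha_1+\alpha_2)/3}$, whose determinant is $1$. Now for a reducible triple, $ABC=\operatorname{Id}$ in $\pu(2,1)$ means that a fixed-basis block decomposition $A=\mathrm{diag}(A_1,A_2)$, etc., satisfies $A_1B_1C_1=\lambda\operatorname{Id}$ and $A_2B_2C_2=\lambda\operatorname{Id}$ for a common scalar $\lambda$; taking determinants of the standard lifts gives $u^{d_1}=\det(\widetilde{A_1}\widetilde{B_1}\widetilde{C_1})$ and $u^{d_2}=\det(\widetilde{A_2}\widetilde{B_2}\widetilde{C_2})$ with $d_1+d_2=3$, and multiplying together yields $u^3=1$ automatically, while the individual determinants pin down $u$ in terms of the $\alpha_i,\beta_j,\gamma_k$.

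First I would treat the spherical reducible walls. Here $d_1=2$, $d_2=1$, and by Lemma \ref{U(2)} the three matrices embed via $\mathrm{diag}(A',1)$ with $A',B',C'\in\U(2)$ and $A'B'C'=\lambda\operatorname{Id}_2$. Computing $\det$ of the standard lifts, the bottom-right entry contributes $e^{-i(\alpha_1+\alpha_2+\beta_1+\beta_2+\gamma_1+\gamma_2)/3}=e^{-iS/3}$ from the product, so $u=e^{-iS/3}$; hence $S=4\pi\Rightarrow u=e^{-4i\pi/3}=\omega$, $S=6\pi\Rightarrow u=1$, $S=8\pi\Rightarrow u=e^{-8i\pi/3}=\omega^2$. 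This already assigns $\Sigma_{4\pi}\subset\cP(G)^\mathrm{red}_\omega$, $\Sigma_{6\pi}\subset\cP(G)^\mathrm{red}_1$, $\Sigma_{8\pi}\subset\cP(G)^\mathrm{red}_{\omega^2}$, matching the definition of $P_{2k\pi}^\mathrm{red}$. Next I would treat the hyperbolic reducible walls: here $d_1=1$, $d_2=2$, with the isolated eigenvalues $e^{i\alpha_i},e^{i\beta_j},e^{i\gamma_k}$ and the $\U(1,1)$-blocks conjugate to $\mathrm{diag}(e^{i\alpha_{\bar i}},1)$ etc. Writing out $u$ in terms of the standard-lift determinants on each block, one gets $u$ as an explicit exponential of a linear combination of the angles; a short manipulation (using $H_{ijk}=2\sigma_{ijk}-\sigma_{\bar i\bar j\bar k}$ and $S=\sigma_{ijk}+\sigma_{\bar i\bar j\bar k}$) shows that $u$ depends only on $H_{ijk}\bmod 6\pi$, with $u=\omega$ on the walls $\{H_{ijk}=-4\pi\}$ and $\{H_{ijk}=2\pi\}$ of Items 1 and 3, $u=1$ on $\{H_{ijk}=0\}$ and $\{H_{ijk}=6\pi\}$ of Items 2 and 5, and $u=\omega^2$ on $\{H_{ijk}=4\pi\}$ and $\{H_{ijk}=10\pi\}$ of Items 4 and 6. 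Comparing with the definition of $P_{4\pi}^\mathrm{red}, P_{6\pi}^\mathrm{red}, P_{8\pi}^\mathrm{red}$, these are exactly the walls listed there.

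Finally, I would assemble the two halves: $\cP(G)^\mathrm{red}=\cP(G)^\mathrm{SR}\cup\cP(G)^\mathrm{HR}$ (the totally reducible facets being contained in both by Proposition \ref{totredintersection}), and the partition $\operatorname{Hom}^\mathrm{red}_\mathrm{ell}(\Gamma,G)=\bigsqcup_u\operatorname{Hom}^\mathrm{red}_\mathrm{ell}(\Gamma,G)_u$ then restricts to the decomposition $\cP(G)^\mathrm{red}=\cP(G)^\mathrm{red}_\omega\sqcup\cP(G)^\mathrm{red}_1\sqcup\cP(G)^\mathrm{red}_{\omega^2}$ on the level of the walls (the union being disjoint at the level of $u$-labels even where the walls geometrically meet, since any $\tau$ in an intersection is a totally reducible facet and one must check the relevant $u$ is well-defined there — which the congruences handle automatically). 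The main obstacle I expect is purely bookkeeping: verifying that the truncation conditions $\sigma_{\bar i\bar j\bar k}\geq4\pi$ versus $\leq2\pi$ in Theorem \ref{hyperbolicreduciblewalls} line up consistently with the inequalities cutting out $P_{2k\pi}^\mathrm{red}$, and checking that no wall is double-counted or omitted — i.e. that the twenty-four hyperbolic walls plus three spherical walls distribute as exactly the walls named in $P_{4\pi}^\mathrm{red},P_{6\pi}^\mathrm{red},P_{8\pi}^\mathrm{red}$. The congruence computation itself is short; the care is in the combinatorial matching, and in confirming (via $S=\sigma_{ijk}+\sigma_{\bar i\bar j\bar k}$ on totally reducible facets) that the $u$-value obtained from the spherical-wall computation agrees with the one from the hyperbolic-wall computation wherever a facet lies on both.
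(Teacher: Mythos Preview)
Your approach is essentially the same as the paper's: compute $u$ as $e^{-iS/3}$ on spherical walls and $e^{iH_{ijk}/3}$ on hyperbolic walls (via the negative-type, respectively isolated positive-type, eigenvalue of the standard lift), then read off $u\in\{\omega,1,\omega^2\}$ according to the value of $S$ or $H_{ijk}$ modulo $6\pi$ and match against the wall lists defining $P_{2k\pi}^\mathrm{red}$. The paper's proof is more compressed and skips your consistency checks on totally reducible facets (these are automatic, as you note, since $-S\equiv H_{ijk}\equiv H_{\bar i\bar j\bar k}\pmod{3}$ on any $T_{ijk}(m,n)$), but the substance is identical.
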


\begin{proof}
	\begin{itemize}
		\item We have $\tau\in P_{4\pi}^{\mathrm{red}}$ if and only if $S(\tau)=4\pi$ or $H_{ijk}(\tau)=2\pi$ or $H_{ijk}(\tau)=-4\pi$ for some $(i,j,k)\in\{1,2\}^3$. This is equivalent to $-S(\tau)/3=-4\pi/3$ or $H_{ijk}(\tau)/3=2\pi/3$ or $H_{ijk}(\tau)/3=-4\pi/3$, i.e.\ the standard lifts of Definition \ref{standardlift} satisfy the relation $\widetilde{\rho(\mathtt{a})}\widetilde{\rho(\mathtt{b})}\widetilde{\rho(\mathtt{c})}=\omega \operatorname{Id}$. Therefore, we have shown $P_{4\pi}^{\mathrm{red}}=\cP(G)_\omega^{\mathrm{red}}$.
		\item We have $\tau\in P_{6\pi}^{\mathrm{red}}$ if and only if $S(\tau)=6\pi$ or $H_{ijk}(\tau)=0$ or $H_{ijk}(\tau)=6\pi$ for some $(i,j,k)\in\{1,2\}^3$. This is equivalent to $-S(\tau)/3=-2\pi/3$ or $H_{ijk}(\tau)/3=0$ or $H_{ijk}/3=2\pi$, i.e.\ the standard lifts of Definition \ref{standardlift} satisfy $\widetilde{\rho(\mathtt{a})}\widetilde{\rho(\mathtt{b})}\widetilde{\rho(\mathtt{c})}= \operatorname{Id}$. Therefore, we have shown $P_{6\pi}^{\mathrm{red}}=\cP(G)_1^{\mathrm{red}}$.
		\item We have $\tau\in P_{8\pi}^{\mathrm{red}}$ if and only if $S(\tau)=8\pi$ or $H_{ijk}(\tau)=4\pi$ or $H_{ijk}(\tau)=10\pi$ for some $(i,j,k)\in\{1,2\}^3$. This is equivalent to $-S(\tau)/3=-8\pi/3$ or $H_{ijk}(\tau)/3=4\pi/3$ or $H_{ijk}(\tau)/3=10\pi/3$, i.e.\ the standard lifts of Definition \ref{standardlift} satisfy $\widetilde{\rho(\mathtt{a})}\widetilde{\rho(\mathtt{b})}\widetilde{\rho(\mathtt{c})}=\omega^2 \operatorname{Id}$. Therefore, we have shown $P_{8\pi}^{\mathrm{red}}=\cP(G)_{\omega^2}^{\mathrm{red}}$.
	\end{itemize}
\end{proof}

The goal of the upcoming result is to identify which walls are interior.

\begin{Prop}\label{internalwalls}
	The following reducible walls are interior:
	\begin{enumerate}[i)]
		\item The hyperbolic reducible walls $\{H_{ijk}=2\pi,\sigma_{\bar{i}\bar{j}\bar{k}}<2\pi\}$ are interior walls of the connected component $P_{4\pi}\backslash\{H_{222}<-4\pi\}$ for $(i,j,k)\in\{(2,1,1),(1,2,1),(1,1,2)\}$;
		\item The spherical reducible wall $\Sigma_{6\pi}$ is an interior wall of $P_{6\pi}$;
		\item The hyperbolic reducible walls $\{H_{ijk}=0,\sigma_{\bar{i}\bar{j}\bar{k}}>4\pi\}$ are interior walls of the half-polytope $P_{6\pi}\cap\{S>6\pi\}$ for $(i,j,k)\in\{(2,2,1),(2,1,2),(1,2,2)\}$;
		\item The hyperbolic reducible walls $\{H_{ijk}=6\pi,\sigma_{\bar{i}\bar{j}\bar{k}}<2\pi\}$ are interior walls of the half-polytope $P_{6\pi}\cap\{S<6\pi\}$ for $(i,j,k)\in\{(1,1,2),(1,2,1),(2,1,1)\}$;
		\item The hyperbolic reducible walls $\{H_{ijk}=4\pi,\sigma_{\bar{i}\bar{j}\bar{k}}>4\pi\}$ are interior walls of the connected component $P_{8\pi}\backslash\{H_{111}>10\pi\}$ for $(i,j,k)\in\{(1,2,2),(2,1,2),(2,2,1)\}$.
	\end{enumerate}
\end{Prop}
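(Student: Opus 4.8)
The plan is to check, for each wall $W$ on the list and each half-space appearing in the relevant formula of Definition~\ref{polytope} (or Lemma~\ref{connectedcomponents}), that the corresponding linear inequality holds on $W$; since those subsets are finite intersections of half-spaces, this places $W$ inside the stated (component of the) polytope. Two symmetries shrink the work. The involution $\psi$ exchanges $P_{4\pi}\leftrightarrow P_{8\pi}$ and fixes $P_{6\pi}$, and by Remarks~\ref{symmetrysphwalls}, \ref{symmetryhypwalls}, \ref{symmetrypolytopes} it carries the walls of item~i) to those of item~v) and the walls of item~iii) to those of item~iv), with item~ii) being $\psi$-self-dual; so it suffices to prove i), ii), iii). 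Moreover the cyclic permutation $((\alpha),(\beta),(\gamma))\mapsto((\gamma),(\alpha),(\beta))$ — a symmetry since $ABC=\operatorname{Id}\iff CAB=\operatorname{Id}$ — acts by $\sigma_{ijk}\mapsto\sigma_{jki}$, $H_{ijk}\mapsto H_{jki}$, fixes $S$, and preserves each $P_{2k\pi}$; it permutes the three index triples in each of i), iii), iv), so a single representative $(i,j,k)$ suffices in each.

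The verification runs on the linear identities $H_{ijk}=2\sigma_{ijk}-\sigma_{\bar i\bar j\bar k}$, $\sigma_{ijk}+\sigma_{\bar i\bar j\bar k}=S$, $H_{ijk}+H_{\bar i\bar j\bar k}=S$, on the monotonicities of Lemma~\ref{Hijkproperties} (coming from $\alpha_1\ge\alpha_2$, etc.), and on $0\le\alpha_i,\beta_j,\gamma_k<2\pi$. On a wall $\{H_{ijk}=2n\pi\}$ one has $\sigma_{ijk}=n\pi+\tfrac12\sigma_{\bar i\bar j\bar k}$ and $S=n\pi+\tfrac32\sigma_{\bar i\bar j\bar k}$, so the truncation inequality bounding $\sigma_{\bar i\bar j\bar k}$ gives the polytope's $S$-inequality for free, and most of the remaining inequalities are then immediate from the $H$-monotonicities. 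For instance, in item~i) with $(i,j,k)=(2,1,1)$, $\sigma_{122}<2\pi$ gives $S<4\pi$ and $H_{122}=\tfrac32\sigma_{122}-\pi<2\pi$, while $H_{212}<H_{211}=2\pi$, $H_{221}<H_{211}=2\pi$ and $H_{111}>2\pi$ hold by Lemma~\ref{Hijkproperties}ii); since $\{S<4\pi\}$ is disjoint from the component $\{H_{222}<-4\pi\}$ (which lies in $\{S>4\pi\}$, as in the proof of Lemma~\ref{connectedcomponents}), this puts $W$ in the second component of $P_{4\pi}$. Item~ii) is the same: on $\Sigma_{6\pi}$ one has $H_{222}=3\sigma_{222}-6\pi$ and $H_{111}=12\pi-3\sigma_{222}$, so $\sigma_{222}<2\pi$ yields $H_{222}<0$ and $H_{111}>6\pi$.

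The step requiring more than wall-monotonicity — the only real obstacle — is the tight inequality $H_{111}>6\pi$ on the walls of item~iii) (and, dually, $H_{222}<0$ on those of item~iv)), since $H_{111}$ is not comparable to the wall's own form $H_{ijk}$. It follows by combining the wall equation, the truncation inequality, and one $\sigma$-monotonicity step: for item~iii) with $(i,j,k)=(2,2,1)$, $H_{221}=0$ gives $S=\tfrac32\sigma_{112}>6\pi$, whence $H_{111}=3\sigma_{111}-S\ge 3\sigma_{112}-\tfrac32\sigma_{112}=\tfrac32\sigma_{112}>6\pi$ using $\sigma_{111}\ge\sigma_{112}$, while $H_{222}<H_{221}=0$ by Lemma~\ref{Hijkproperties}ii). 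The remaining index triples and items~iv), v) then follow by the $\psi$ and cyclic symmetries. Finally, the finitely many lower-dimensional loci where one of these inequalities degenerates to an equality (a repeated angle, or $\tau$ meeting a totally reducible facet) lie in the relative boundary of $W$ or in $\partial\cT(G)^3$, in accordance with the interior/exterior terminology.
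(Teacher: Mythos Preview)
Your proof is correct and follows essentially the same approach as the paper: reduce by the involution $\psi$ to items i)--iii), then verify the defining inequalities of the polytope on each wall using the identities $H_{ijk}=2\sigma_{ijk}-\sigma_{\bar i\bar j\bar k}$, $H_{ijk}+H_{\bar i\bar j\bar k}=S$ and the monotonicities of Lemma~\ref{Hijkproperties}. Your explicit use of the cyclic symmetry to cut each item down to one index triple is a nice touch that the paper leaves implicit (``the other cases are similar'').

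One small remark: you describe the inequality $H_{111}>6\pi$ in item~iii) as ``the only real obstacle'' and establish it via $H_{111}=3\sigma_{111}-S\ge\tfrac32\sigma_{112}>6\pi$ using $\sigma_{111}\ge\sigma_{112}$. The paper's route is shorter: once you have $H_{222}<H_{221}=0$ and $S>6\pi$, the identity $H_{111}=S-H_{222}$ gives $H_{111}>6\pi$ immediately, with no extra $\sigma$-comparison needed. So this step is in fact no harder than the others.
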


\begin{proof}
	First of all, notice that the symmetry $\psi$ exchanges Items i) and v), and Items iii) and iv), which is why we only give a proof for the first three Items.
	\begin{enumerate}[i)]
		\item We show that $\{H_{211}=2\pi,\sigma_{122}<2\pi\}\subset P_{4\pi}$; the other cases are similar. Let $\tau$ be a triple of angle pairs in this reducible wall. First, we have $$\sigma_{211}=\frac{1}{2}(2\pi+\sigma_{122})<\frac{1}{2}(2\pi+2\pi)=2\pi,$$
		so $S<4\pi$. Then,
		\begin{equation*}
		\begin{aligned}
		H_{122}=2\sigma_{122}-\sigma_{211}=2\sigma_{122}-\frac{1}{2}(2\pi+\sigma_{122})=\frac{3}{2}\sigma_{122}-\pi<\frac{3}{2}\cdot 2\pi-\pi=2\pi.
		\end{aligned}
		\end{equation*}
		Next, by Lemma \ref{Hijkproperties} we have $H_{212},H_{221}<H_{211}=2\pi$, and finally $H_{111}>H_{211}=2\pi$. So $\tau\in P_{4\pi}$.
		
		\item Let $\tau\in\Sigma_{6\pi}$. We have $\sigma_{111}=S-\sigma_{222}>6\pi-2\pi=4\pi$, so
		$H_{111}>2\cdot 4\pi-2\pi=6\pi$ and $H_{222}<2\cdot 2\pi-4\pi=0$. Therefore $\tau\in P_{6\pi}$.
		
		\item We show that $\{H_{221}=0,\sigma_{112}>4\pi\}\subset P_{6\pi}\cap\{S>6\pi\}$; the other cases are similar. Let $\tau$ be a triple of angle pairs in this reducible wall. First, we have $$\sigma_{221}=\frac{1}{2}\sigma_{112}>\frac{1}{2}\cdot4\pi=2\pi,$$
		so $S>6\pi$.
		Then, by Lemma \ref{Hijkproperties}, we have $H_{222}<H_{221}=0$ and $H_{111}=S-H_{222}>6\pi$. So $\tau\in P_{6\pi}\cap\{S>6\pi\}$.
	\end{enumerate}
	The proof is now complete.
\end{proof}

From Theorem \ref{polytopelayers} and Proposition \ref{internalwalls} we deduce:

\begin{Coro}
	Let $C$ be a $\omega$-cell, respectively $1$-cell, respectively $\omega^2$-cell (see Definition
	\ref{cell}). Then, $C$ is entirely contained in one of the two subspaces $\overline{P_{4\pi}}$ or $\overline{P_{4\pi}^{c}},$ respectively $\overline{P_{6\pi}}$ or $\overline{P_{6\pi}^{c}},$ respectively $\overline{P_{8\pi}}$ or $\overline{P_{8\pi}^{c}}$.
\end{Coro}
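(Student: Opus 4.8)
The plan is to read the corollary off from Theorem \ref{polytopelayers} and the explicit description of the $P_{2k\pi}$ in Definition \ref{polytope}, using nothing more than the standard fact that a connected set which avoids the frontier of an open region lies on one side of it. I will carry out the argument for $\omega$-cells; the cases of $1$-cells and $\omega^2$-cells are word for word the same with $P_{4\pi}$ replaced by $P_{6\pi}$ and $P_{8\pi}$, and one can in fact obtain the $\omega^2$-case from the $\omega$-case by applying the symmetry $\psi$ of Remark \ref{symmetrypolytopes}.

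First I would invoke Theorem \ref{polytopelayers} to rewrite $\cP(G)_\omega^{\mathrm{red}}=P_{4\pi}^{\mathrm{red}}$, so that by Definition \ref{cell} a $\omega$-cell $C$ is precisely a connected component of the complement of $P_{4\pi}^{\mathrm{red}}$ in $\cT(G)^3$; in particular $C\cap P_{4\pi}^{\mathrm{red}}=\varnothing$. The next step, which is the substantive one, is to check the inclusion $\partial P_{4\pi}\subseteq P_{4\pi}^{\mathrm{red}}$, the boundary being taken inside $\cT(G)^3$. The set $P_{4\pi}$ of Definition \ref{polytope} is a finite union of open chambers cut out by strict inequalities in the linear forms $S$ and $H_{ijk}$, so every face of $\partial P_{4\pi}$ lies on one of the hyperplanes $\{S=4\pi\}$, $\{H_{222}=-4\pi\}$ or $\{H_{ijk}=2\pi\}$ with $(i,j,k)\neq(2,2,2)$. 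Comparing with the list defining $P_{4\pi}^{\mathrm{red}}$ and with the explicit (truncated) walls of Theorems \ref{sphred} and \ref{hyperbolicreduciblewalls}, the portion of each such hyperplane that actually borders $P_{4\pi}$ inside $\cT(G)^3$ is one of the listed exterior walls; Proposition \ref{internalwalls} is what guarantees that the remaining reducible walls contained in $P_{4\pi}^{\mathrm{red}}$ are interior walls, i.e.\ sit inside $P_{4\pi}$ rather than on its boundary, so that the exterior walls genuinely exhaust $\partial P_{4\pi}$.

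Granting $\partial P_{4\pi}\subseteq P_{4\pi}^{\mathrm{red}}$, the conclusion is immediate: we have the decomposition $\cT(G)^3=P_{4\pi}\sqcup\partial P_{4\pi}\sqcup\big(\overline{P_{4\pi}}\big)^c$ into the open region $P_{4\pi}$, its frontier, and its open exterior. Since $C$ is connected and $C\cap\partial P_{4\pi}\subseteq C\cap P_{4\pi}^{\mathrm{red}}=\varnothing$, the set $C$ lies entirely in $P_{4\pi}$ or entirely in $\big(\overline{P_{4\pi}}\big)^c$. In the first case $C\subseteq P_{4\pi}\subseteq\overline{P_{4\pi}}$; in the second case $C\subseteq\big(\overline{P_{4\pi}}\big)^c\subseteq P_{4\pi}^c\subseteq\overline{P_{4\pi}^c}$. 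Running the identical argument with $P_{6\pi}$ and $\cP(G)_1^{\mathrm{red}}$, resp.\ $P_{8\pi}$ and $\cP(G)_{\omega^2}^{\mathrm{red}}$, settles the statement for $1$-cells, resp.\ $\omega^2$-cells.

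The only genuinely non-formal ingredient is the inclusion $\partial P_{4\pi}\subseteq P_{4\pi}^{\mathrm{red}}$ and its two analogues: this is a finite but slightly delicate bookkeeping check that the truncations appearing in the definitions of the spherical and hyperbolic reducible walls are wide enough to cover the entire two-sided frontier of $P_{4\pi}$ inside $\cT(G)^3$, with no face of $P_{4\pi}$ left off the list — precisely the place where the explicit inequalities of Theorems \ref{sphred} and \ref{hyperbolicreduciblewalls}, together with the interior/exterior dichotomy of Proposition \ref{internalwalls}, are needed. Everything else is elementary point-set topology.
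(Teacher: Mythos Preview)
Your argument is correct and matches the paper's approach: the paper simply records the corollary as an immediate consequence of Theorem \ref{polytopelayers} and Proposition \ref{internalwalls}, and your write-up makes explicit the underlying topological step (a connected set disjoint from $\partial P_{2k\pi}$ lies on one side of it). One small comment: the inclusion $\partial P_{2k\pi}\subseteq P_{2k\pi}^{\mathrm{red}}$ is really a direct computation from Definition \ref{polytope} and the truncation inequalities in Theorems \ref{sphred} and \ref{hyperbolicreduciblewalls} (for instance, on the face $\{S=4\pi\}$ of $P_{4\pi}$ one checks $\sigma_{ijk}=(H_{ijk}+S)/3\leq 2\pi$ from $H_{ijk}\leq 2\pi$), and Proposition \ref{internalwalls} is not strictly needed for this direction---its role is rather to show that the \emph{remaining} walls of $P_{2k\pi}^{\mathrm{red}}$ lie inside $P_{2k\pi}$, which is used later when enumerating the cells. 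But this is a minor point of attribution; the logic of your proof is sound.
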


Note that two hyperbolic reducible walls of $P_{2k\pi}^\mathrm{red}$ may intersect in a subspace which is not a totally reducible facet. This is what we describe in the following result, which will be useful to identify the cells:

\begin{Prop}\label{internalintersections}
	We have the following non-totally reducible intersections between the interior reducible walls:
	\begin{enumerate}[i)]
		\item The hyperbolic reducible walls $\{H_{ijk}=2\pi,\sigma_{\bar{i}\bar{j}\bar{k}}<2\pi\}$ intersect pairwise inside of $P_{4\pi}\backslash\{H_{222}<-4\pi\}$ for $(i,j,k)\in\{(2,1,1),(1,2,1),(1,1,2)\}$;
		\item The hyperbolic reducible walls $\{H_{ijk}=0,\sigma_{\bar{i}\bar{j}\bar{k}}>4\pi\}$ do not intersect inside of $P_{6\pi}$ for $(i,j,k)\in\{(2,2,1),(2,1,2),(1,2,2)\}$;
		\item The hyperbolic reducible walls $\{H_{ijk}=6\pi,\sigma_{\bar{i}\bar{j}\bar{k}}<2\pi\}$ do not intersect inside of $P_{6\pi}$ for $(i,j,k)\in\{(1,1,2),(1,2,1),(2,1,1)\}$;
		\item The hyperbolic reducible walls $\{H_{ijk}=4\pi,\sigma_{\bar{i}\bar{j}\bar{k}}>4\pi\}$ intersect pairwise inside of $P_{8\pi}\backslash\{H_{111}>10\pi\}$ for $(i,j,k)\in\{(1,2,2),(2,1,2),(2,2,1)\}$.
	\end{enumerate}
\end{Prop}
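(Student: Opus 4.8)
The plan is to use the symmetry $\psi$ of Remark \ref{symmetry} to reduce the four assertions to items i) and ii). By the formulas recalled in Remarks \ref{symtotred}, \ref{symmetrysphwalls} and \ref{symmetryhypwalls}, $\psi$ sends a hyperbolic wall $\{H_{ijk}=2\pi,\ \sigma_{\bar i\bar j\bar k}<2\pi\}$ to $\{H_{\bar k\bar j\bar i}=4\pi,\ \sigma_{kji}>4\pi\}$ and a wall $\{H_{ijk}=0,\ \sigma_{\bar i\bar j\bar k}>4\pi\}$ to $\{H_{\bar k\bar j\bar i}=6\pi,\ \sigma_{kji}<2\pi\}$; together with Remark \ref{symmetrypolytopes} (which says that $\psi$ exchanges $P_{4\pi}$ and $P_{8\pi}$ and preserves $P_{6\pi}$), this shows that $\psi$ carries the walls and polytope of item i) onto those of item iv), and the walls and polytope of item ii) onto those of item iii). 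So it suffices to prove i) and ii).

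For i), I would exhibit the explicit family of points $\tau_y=\big((\tfrac{2\pi}{3},y),(\tfrac{2\pi}{3},y),(\tfrac{2\pi}{3},y)\big)$ for $0<y<\tfrac{2\pi}{3}$, which all lie in $\mathring{\cT(G)}^3$. A direct evaluation shows that on this segment $H_{211}=H_{121}=H_{112}=2\pi$ and $\sigma_{122}=\sigma_{212}=\sigma_{221}=2y+\tfrac{2\pi}{3}<2\pi$, so each $\tau_y$ lies on all three walls $\{H_{ijk}=2\pi,\ \sigma_{\bar i\bar j\bar k}<2\pi\}$ with $(i,j,k)\in\{(2,1,1),(1,2,1),(1,1,2)\}$; moreover $S=2\pi+3y<4\pi$, $H_{122}=H_{212}=H_{221}=3y<2\pi$, $H_{111}=4\pi-3y>2\pi$ and $H_{222}=6y-2\pi>-4\pi$, so $\tau_y\in P_{4\pi}\setminus\{H_{222}<-4\pi\}$ (alternatively one may invoke Proposition \ref{internalwalls} directly). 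Finally, since $S(\tau_y)=2\pi+3y\notin2\pi\ZZ$ while every point of a totally reducible facet satisfies $S=\sigma_{ijk}+\sigma_{\bar i\bar j\bar k}\in2\pi\ZZ$, no $\tau_y$ lies on a totally reducible facet. Consequently each of the three pairwise intersections of these walls is non-empty, is contained in $P_{4\pi}\setminus\{H_{222}<-4\pi\}$, and is not contained in any totally reducible facet; this is item i).

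For ii), pick two of the three walls, say $V=\{H_{221}=0,\ \sigma_{112}>4\pi\}$ and $V'=\{H_{212}=0,\ \sigma_{121}>4\pi\}$. Adding the equations $H_{221}=0$ and $H_{212}=0$ gives $4\alpha_2-2\alpha_1+\beta_1+\beta_2+\gamma_1+\gamma_2=0$, so that on $V\cap V'$ one has $\sigma_{112}+\sigma_{121}=2\alpha_1+(\beta_1+\beta_2+\gamma_1+\gamma_2)=4(\alpha_1-\alpha_2)$; but the two defining inequalities force $\sigma_{112}+\sigma_{121}>8\pi$, hence $\alpha_1-\alpha_2>2\pi$, which contradicts $0\le\alpha_2\le\alpha_1<2\pi$. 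Therefore $V\cap V'=\varnothing$, a fortiori $V\cap V'\cap P_{6\pi}=\varnothing$; the two other pairs are obtained by applying the cyclic permutation of the three factors of $\cT(G)^3$, which permutes the three walls together with their constraints. Items iii) and iv) then follow from i) and ii) by the $\psi$-reduction above. The computations are all short; the part that demands some care is the bookkeeping in item i) — checking that the exhibited family lies simultaneously on all three walls, inside the open polytope $P_{4\pi}$, inside $\mathring{\cT(G)}^3$, and off every totally reducible facet — together with keeping track of which walls and polytope components $\psi$ identifies in the reductions between i) and iv) and between ii) and iii).
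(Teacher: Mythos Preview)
Your proof is correct and follows the same overall strategy as the paper: reduce to items i) and ii) via $\psi$, exhibit an explicit point for i), and derive the contradiction $\alpha_1-\alpha_2>2\pi$ for ii). The computations match, and your use of the cyclic permutation of the three factors of $\cT(G)^3$ to pass between the pairs in ii) is exactly the paper's ``permuting the indices'' step.

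The one genuine difference is in item i). You produce a symmetric one-parameter family $\tau_y=\big((\tfrac{2\pi}{3},y),(\tfrac{2\pi}{3},y),(\tfrac{2\pi}{3},y)\big)$ lying on \emph{all three} walls simultaneously, which certainly shows that each pair of walls meets inside $P_{4\pi}\setminus\{H_{222}<-4\pi\}$. The paper instead picks a single asymmetric point $\tau=((3\pi/4,\pi/2),(2\pi/3,\pi/3),(2\pi/3,\pi/3))$ lying on exactly two of the walls and then permutes; because this $\tau$ does \emph{not} lie on the third wall, the paper obtains as a bonus that the three pairwise intersections are distinct $4$-dimensional affine subspaces. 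That distinctness is not asserted in the proposition itself, so your argument fully proves what is stated; but it is implicitly used later in Proposition~\ref{fullcells} to justify that the three interior walls cut $P_{4\pi}\setminus\{H_{222}<-4\pi\}$ into $2^3=8$ cells, so you may want to add the extra observation if you plan to rely on it downstream. Your additional check that $S(\tau_y)=2\pi+3y\notin 2\pi\ZZ$ (hence $\tau_y$ lies on no totally reducible facet) is a nice clarification that the paper leaves implicit.
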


\begin{proof}
	First note that applying the symmetry $\psi$ exchanges Items i) and iv), and Items ii) and iii). For this reason, we only give a proof for the first two Items.
	\begin{enumerate}[i)]
		\item The two hyperplanes $\{H_{112}=2\pi\}$ and $\{H_{121}=2\pi\}$ are distinct and not parallel, so they intersect along a 4-dimensional subspace of $\RR^6$. We have seen in Proposition \ref{internalwalls} i) that the two walls $\{H_{112}=2\pi,\sigma_{221}<2\pi\}$ and $\{H_{121}=2\pi,\sigma_{212}<2\pi\}$ are contained in $P_{4\pi}$. So to prove the result, it suffices to find an element of $\cT(G)^3$ in the intersection. Consider the triple $$\tau=((3\pi/4,\pi/2),(2\pi/3,\pi/3),(2\pi/3,\pi/3))\in\cT(G)^3.$$ A straightforward computation implies that $$\tau\in\{H_{112}=2\pi,\sigma_{221}<2\pi\}\cap\{H_{121}=2\pi,\sigma_{212}<2\pi\}.$$ Therefore the two walls intersect inside of $\cT(G)^3$, so by the first Item of Proposition \ref{internalwalls}, they intersect inside of $P_{4\pi}\backslash\{H_{222}<-4\pi\}$. Besides, we have $\tau\notin\{H_{211}=2\pi\}$, so $$\{H_{112}=2\pi\}\cap\{H_{121}=2\pi\}\neq\{H_{112}=2\pi\}\cap\{H_{211}=2\pi\}.$$ We obtain an analogous result for each value of $(i,j,k)\in\{(2,1,1),(1,2,1),(1,1,2)\}$ by permuting $\alpha_i,\beta_i,\gamma_i$.
		
		\item Assume that there exists a triple $$\tau\in\{H_{221}=0,\sigma_{112}>4\pi\}\cap\{H_{212}=0,\sigma_{121}>4\pi\}.$$ Then,
		\begin{equation*}
		\begin{aligned}
		0&=H_{221}+H_{212}=4\alpha_2-2\alpha_1+\beta_1+\gamma_2+\beta_2+\gamma_1\\
		&>4\alpha_2-2\alpha_1+8\pi-2\alpha_1=8\pi+4(\alpha_2-\alpha_1).
		\end{aligned}
		\end{equation*}
		This implies $\alpha_1-\alpha_2>2\pi$, which is impossible in $\cT(G)^3$. So these two walls do not intersect in $\cT(G)^3$. Permuting the indices, we obtain the same result for the other two intersections.
	\end{enumerate}
	The proof is now complete.
\end{proof}

\section{Examples}\label{examples}

Before proving Theorem \ref{solutionpu}, we describe in this section a few examples.

\subsection{Slices of the polytopes}\label{figures}

We begin by considering certain two-dimensional slices of $\cT(G)^3$, to try giving the reader an intuition of the situations we have described in the previous sections. We use the following color code:

\begin{itemize}
	\item \textbf{Red bold segments}: exterior walls of $P_{4\pi}$;
	
	\textbf{Red dotted segments}: interior walls of $P_{4\pi}$.
	
	\item \textbf{Blue bold segments}: exterior walls of $P_{6\pi}$;
	
	\textbf{Blue dotted segments}: interior walls of $P_{6\pi}$.
	
	\item \textbf{Green bold segments}: exterior walls of $P_{8\pi}$;
	
	\textbf{Green dotted segments}: interior walls of $P_{8\pi}$.
\end{itemize}
We have colored $P_{4\pi}$ in red, $P_{6\pi}$ in blue, and $P_{8\pi}$ in green.

\begin{Exem}\label{symslice}
	We define the symmetric slice as the case where $\cC_1=\cC_2=\cC_3$. In $\cT(G)^3$ coordinates, we have
	\begin{equation*}
	\begin{aligned}
	\operatorname{Sym}&=\{(\alpha_1,\alpha_2),(\beta_1,\beta_2),(\gamma_1,\gamma_2)\in\cT(G)^3 \ | \ (\alpha_1,\alpha_2)=(\beta_1,\beta_2)=(\gamma_1,\gamma_2)\} \\
	& \cong \cT(G).
	\end{aligned}
	\end{equation*}
	This is a two-dimensional subspace of $\cT(G)^3$. We have depicted in Figure \ref{tranche_sym} the intersection of the slice Sym with the various reducible walls of $\cP(G)^\mathrm{red}$.
	\begin{figure}
		\centering
		\scalebox{0.48}{\includegraphics{tranche_sym.pdf}}
		\caption{The symmetric slice $\operatorname{Sym}$. \label{tranche_sym}}
	\end{figure}
	Note that the action of the involution $\psi$, which exchanges $P_{4\pi}$ (red) and $P_{8\pi}$ (green) and leaves $P_{6\pi}$ (blue) invariant, is apparent on Figure \ref{tranche_sym}. Indeed, the restriction of $\psi$ to Sym is given by $\psi(\alpha_1,\alpha_2)=(2\pi-\alpha_2,2\pi-\alpha_1)$, which is the symmetry with respect to the anti-diagonal of the square.
	\begin{itemize}
		\item The red (respectively, green) bold and dotted segments are the reducible walls of $P_{4\pi}^{\mathrm{red}}$ (respectively, $P_{8\pi}^{\mathrm{red}}$) that appear in this slice. More precisely, the red (respectively, green) segment of slope $-1$ is the spherical reducible wall $\Sigma_{4\pi}$ (respectively, $\Sigma_{8\pi}$); the red (respectively, green) segment of slope $1/2$ is the hyperbolic reducible wall $\{H_{222}=-4\pi\}$ (respectively, $\{H_{222}=4\pi\}$); and the red (respectively, green) segment of slope $2$ is the hyperbolic reducible wall $\{H_{111}=2\pi\}$ (respectively, $\{H_{111}=10\pi\}$). Those are all exterior walls of $P_{4\pi}$ (respectively, $P_{8\pi}$). The red (respectively, green) dotted segment is the reunion of the hyperbolic reducible walls $\{H_{ijk}=2\pi,\sigma_{\bar{i}\bar{j}\bar{k}}\leq2\pi\}$ for $(i,j,k)\in\{(2,1,1),(1,2,1),(1,1,2)\}$ (respectively, $\{H_{ijk}=4\pi,\sigma_{\bar{i}\bar{j}\bar{k}}\geq4\pi\}$ for $(i,j,k)\in\{(1,2,2),(2,1,2),(2,2,1)\}$), which are equal to each other in this slice.  Those are all interior walls of $P_{4\pi}$ (respectively, $P_{8\pi}$). The (exterior) hyperbolic reducible walls $\{H_{ijk}=2\pi,\sigma_{\bar{i}\bar{j}\bar{k}}\leq2\pi\}$ for $(i,j,k)\in\{(1,2,2),(2,1,2),(2,2,1)\}$ (respectively, $\{H_{ijk}=4\pi,\sigma_{\bar{i}\bar{j}\bar{k}}\geq4\pi\}$ for $(i,j,k)\in\{(2,1,1),(1,2,1),(1,1,2)\}$) are equal to the point $(2\pi/3,2\pi/3)$ (respectively, $(4\pi/3,4\pi/3)$). Since $P_{4\pi}^{\mathrm{red}}=\cP(G)_\omega^{\mathrm{red}}$ (respectively, $P_{8\pi}^{\mathrm{red}}=\cP(G)_{\omega^2}^{\mathrm{red}}$) by Theorem \ref{polytopelayers}, we see five $\omega$-cells (respectively, $\omega^2$-cells) in this slice: the complement of the red (respectively, green) segments has five connected components.
		
		\item The blue bold and dotted segments are the reducible walls of $P_{6\pi}^{\mathrm{red}}$ that appear in this slice. More precisely, the blue segment of slope $2$ is the hyperbolic reducible wall $\{H_{111}=6\pi,\sigma_{222}\leq2\pi \}$, and the blue segment of slope $1/2$ is the hyperbolic reducible wall $\{H_{222}=0,\sigma_{111}\geq4\pi \}$.  Those are all exterior walls of $P_{6\pi}$. The blue dotted segment is the spherical reducible wall $\Sigma_{6\pi}$, which is an interior wall of $P_{6\pi}$. The (interior) hyperbolic reducible walls $\{H_{ijk}=0,\sigma_{\bar{i}\bar{j}\bar{k}}\geq4\pi\}$ for $(i,j,k)\in\{(1,2,2),(2,1,2),(2,2,1)\}$ and $\{H_{ijk}=6\pi,\sigma_{\bar{i}\bar{j}\bar{k}}\leq2\pi\}$ for $(i,j,k)\in\{(2,1,1),(1,2,1),(1,1,2)\}$ are all equal to the point $(2\pi,0)$. Since $P_{6\pi}^{\mathrm{red}}=\cP(G)_1^{\mathrm{red}}$ by Theorem \ref{polytopelayers}, we see three $1$-cells in this slice: the complement of the blue segments has three connected components.
	\end{itemize}
\end{Exem}

\begin{Exem}
	Fix $(\beta_1^*,\beta_2^*),(\gamma_1^*,\gamma_2^*)\in\cT(G)$, and define
	$$\operatorname{Slice}((\beta_1^*,\beta_2^*),(\gamma_1^*,\gamma_2^*))=\cT(G)\times\{(\beta_1^*,\beta_2^*)\}\times\{(\gamma_1^*,\gamma_2^*)\}.$$
	This is a two-dimensional subspace of $\cT(G)^3$. We have represented examples of such slices in Figures \ref{tranche_paupert_1}, \ref{tranche_paupert_2}, \ref{tranche_paupert_3}, \ref{tranche_paupert_4} with the color code mentioned at the beginning of this section.
	
	On these figures, we observe the four possible types of totally reducible intersections. We also observe non-totally reducible intersections between internal walls of $P_{8\pi}$ (Figures \ref{tranche_paupert_3} and \ref{tranche_paupert_4}).
	
	These slices correspond to Paupert's drawings of the image of the momentum map in \cite{Pau} (see the Introduction for the definition of the momentum map). Note that our explicit description of the reducible walls enables us to implement an algorithm whose output is such a drawing, giving as an input the sole data of the two angle pairs $(\beta_1^*,\beta_2^*),(\gamma_1^*,\gamma_2^*).$
\end{Exem}

\begin{figure}
	\centering
	\scalebox{0.48}{\includegraphics{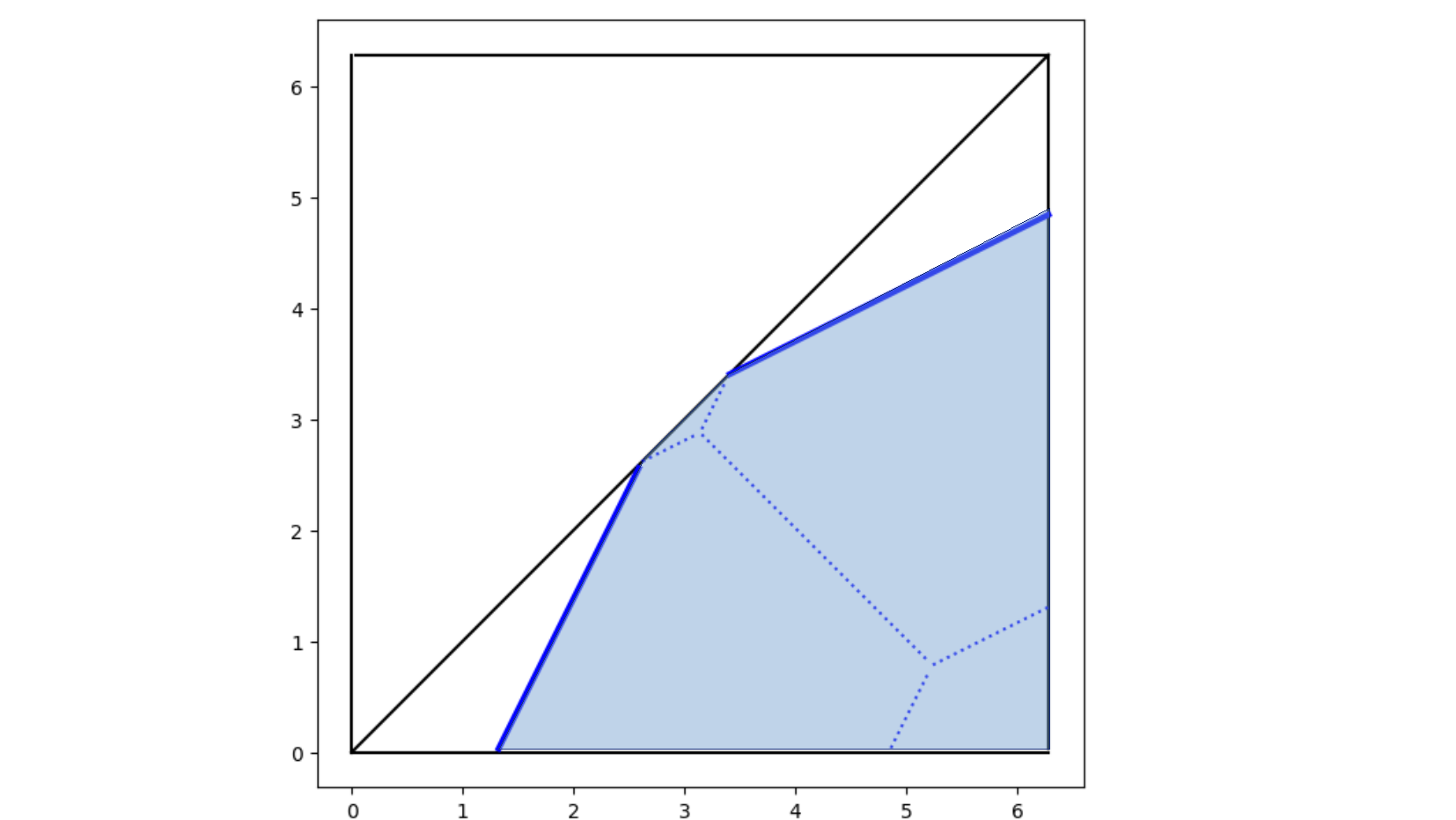}}
	\caption{The slice $\operatorname{Slice}((5\pi/4,\pi/2),(11\pi/6,\pi/2))$. The only polytope that appears is $P_{6\pi}.$ The two dotted blue triple intersections are totally reducible facets of type 4. We see six $1$-cells. \label{tranche_paupert_1}}
\end{figure}

\begin{figure}
	\centering
	\scalebox{0.48}{\includegraphics{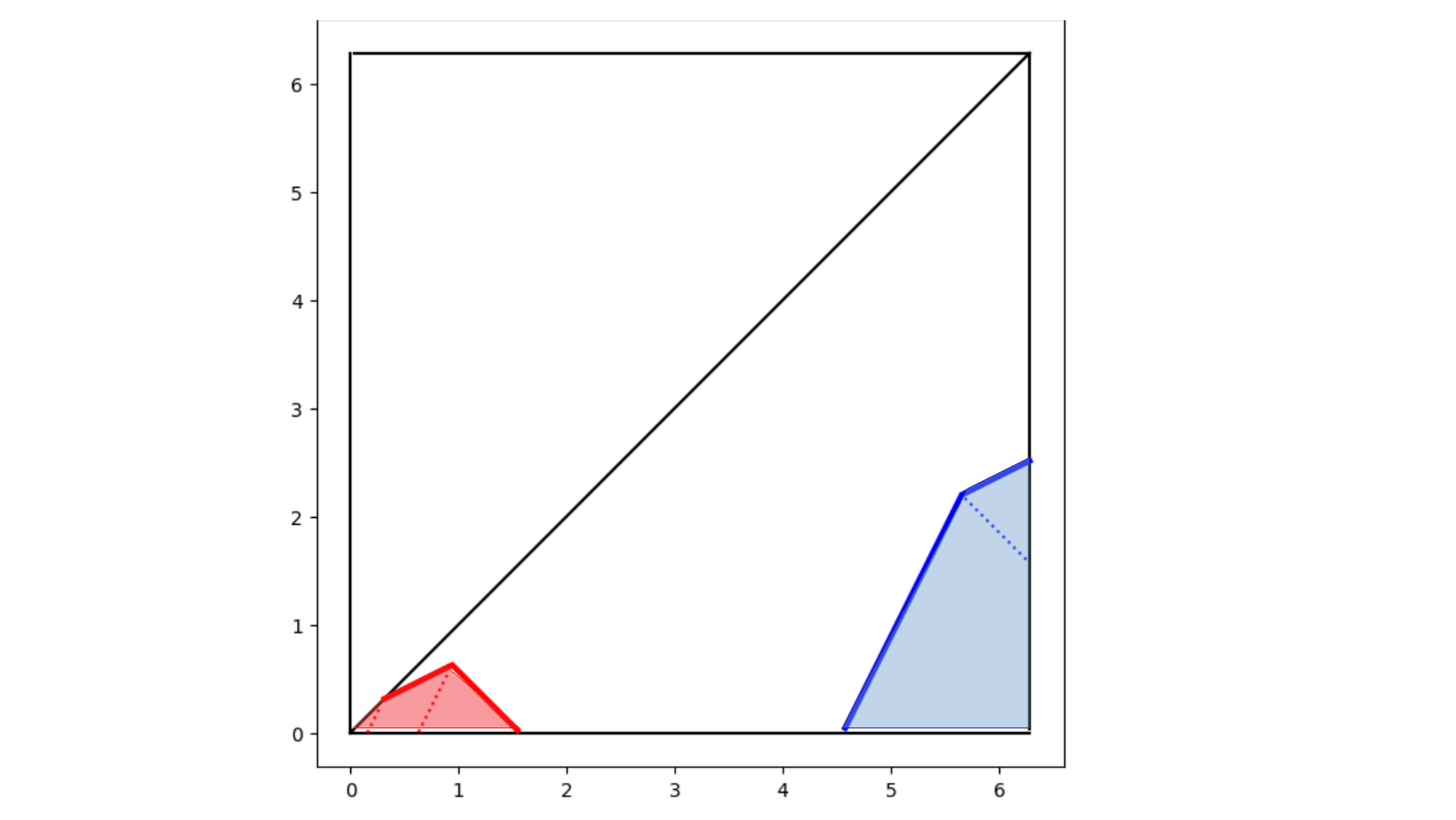}}
	\caption{The slice $\operatorname{Slice}((6\pi/5,4\pi/5),(\pi,\pi/2))$. The two polytopes that appear are $P_{4\pi}$ (red) and $P_{6\pi}$ (blue). The red triple intersection is a totally reducible facet of type 1, and the blue triple intersection is a totally reducible facet of type 3. We see four $\omega$-cells and three $1$-cells. \label{tranche_paupert_2}}
\end{figure}

\begin{figure}
	\centering
	\scalebox{0.48}{\includegraphics{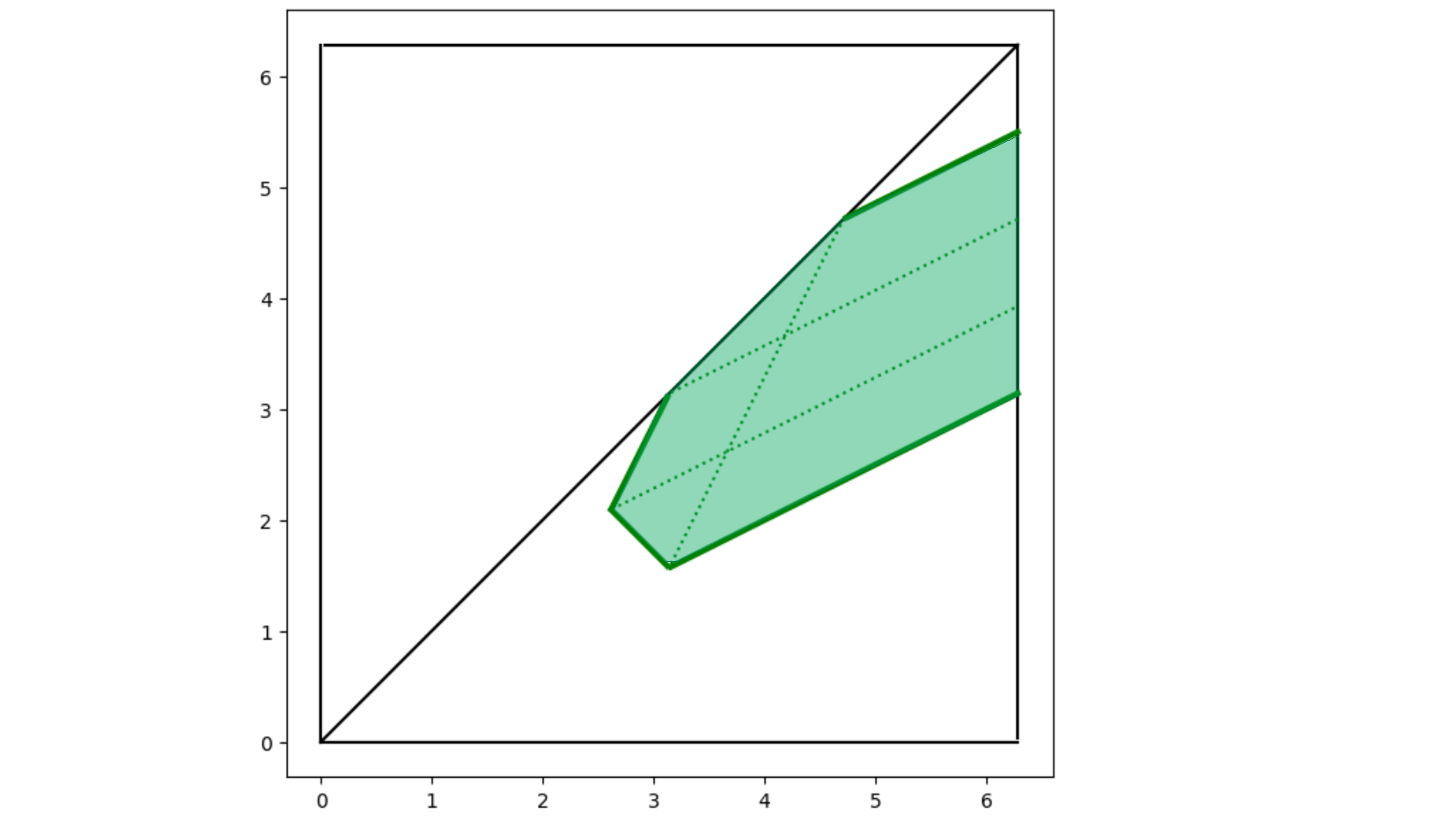}}
	\caption{The slice $\operatorname{Slice}((11\pi/6,5\pi/3),(5\pi/3,4\pi/3))$. The only polytope that appears is $P_{8\pi}.$ The two green bold and dotted triple intersections are totally reducible facets of type 2. The two dotted intersections are non-totally reducible intersections between internal hyperbolic reducible walls. We see eight $\omega^2$-cells. \label{tranche_paupert_3}}
\end{figure}

\begin{figure}
	\centering
	\scalebox{0.48}{\includegraphics{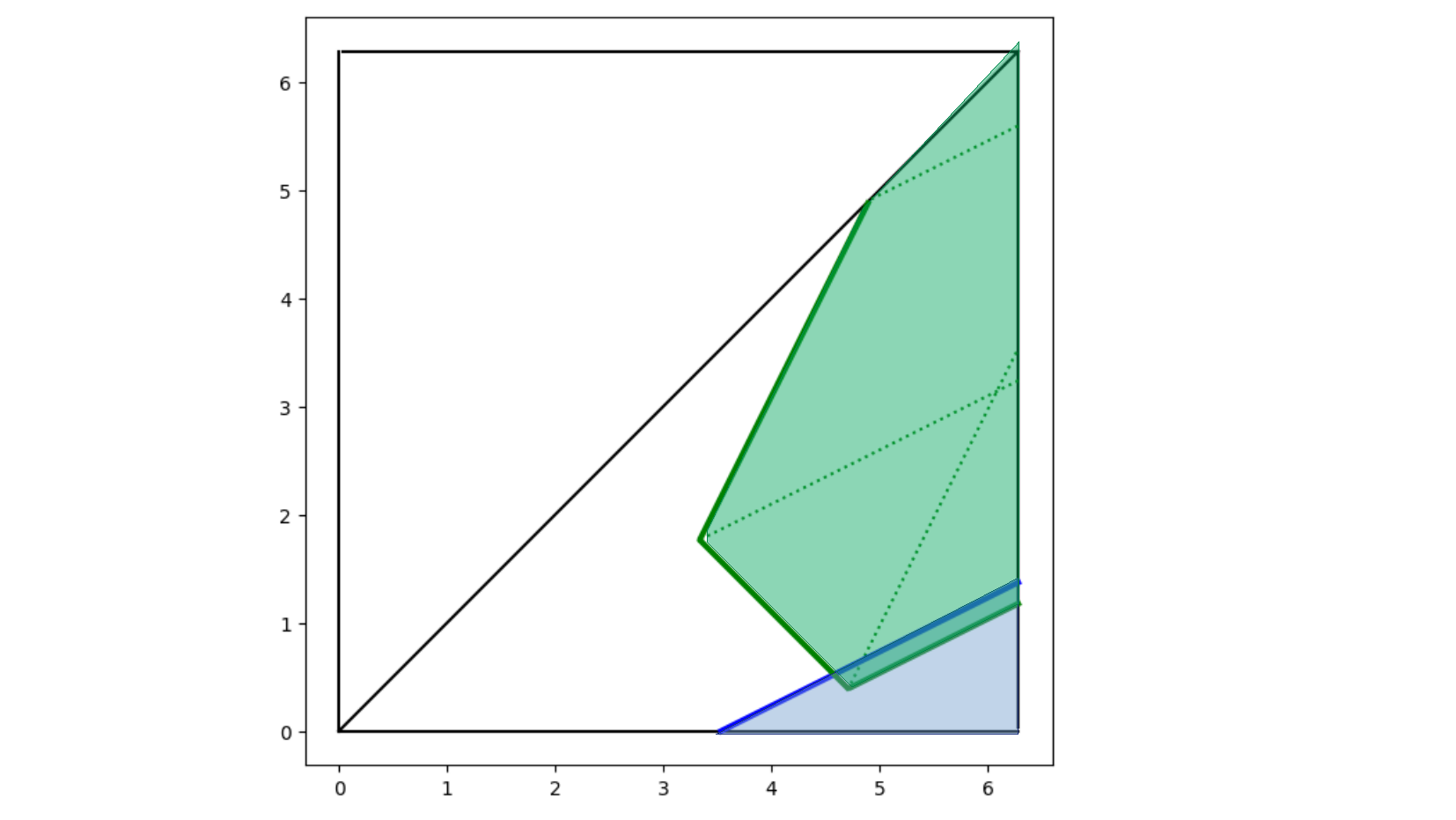}}
	\caption{The slice $\operatorname{Slice}((31\pi/16,3\pi/2),(31\pi/16,\pi))$. The two polytopes that appear are $P_{8\pi}$ (green) and $P_{6\pi}$ (blue). We see that they overlap. We see two $1$-cells and six $\omega^2$-cells. \label{tranche_paupert_4}}
\end{figure}

\subsection{Explicit constructions of representations}

Constructing explicitly a representation is hard, which is why we need arguments of various types to fill or empty the cells (recall Theorem \ref{emptyfulltheorem}). However, in some particular cases, we are able to say that a triple of angle pairs cannot belong to $\cP(G)$. Besides, \cite{Mar} yields an explicit method of construction, based on the so-called decomposability property.

\begin{Lemm}\label{diago}
	Define 
	$$\cD=\{\tau\in\overline{\cT(G)}^3 \ | \ \alpha_1=\alpha_2,\beta_1=\beta_2,\gamma_1=\gamma_2\}\subset\partial\cT(G)^3.$$
	Then, $\cD\cap\cP(G)\subset\cP(G)^\mathrm{red}$. 
\end{Lemm}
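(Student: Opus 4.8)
The plan is to exploit the fact that the defining condition of $\cD$ — namely $\alpha_1=\alpha_2$, $\beta_1=\beta_2$, $\gamma_1=\gamma_2$ — forces each of the three elements to be very degenerate. Indeed, if $\cC$ is an elliptic class with angle pair $(\theta,\theta)$, then by Remark~\ref{ellip} any element of $\cC$ has canonical lift $\operatorname{diag}(e^{i\theta},e^{i\theta},1)$, i.e.\ it is either trivial (when $\theta=0$) or a complex reflection in a point of $\mathbb{H}^2_\CC$ (the matrix $\tilde S(\eta)$ of \eqref{RS} with $\eta=e^{-i\theta}\neq 1$). Note in particular that complex reflections in lines have angle pair $(\theta,0)$, so they never occur in $\cD$ unless trivial. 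So I would start by taking $\tau\in\cD\cap\cP(G)$, choosing $\rho\in\operatorname{Hom}_\mathrm{ell}(\Gamma,G)$ with $\bar c(\rho)=\tau$, and writing $A=\rho(\texttt a),B=\rho(\texttt b),C=\rho(\texttt c)$, each of which is either $\operatorname{Id}$ or a complex reflection in a point $p_A,p_B,p_C\in\mathbb{H}^2_\CC$ respectively.

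Next I would dispose of the degenerate subcases. If one of the three, say $A$, is trivial, then $B$ and $C=B^{-1}$ commute and are simultaneously diagonalizable in an orthogonal basis, so the triple is totally reducible, hence reducible. If two of the fixed points coincide, say $p_A=p_B=:p$, then (choosing lifts diagonal in a common orthogonal basis adapted to $p$) the product $AB$ is again either trivial or a complex reflection in $p$, so $C=(AB)^{-1}$ also fixes $p$, and $(A,B,C)$ is spherical reducible. In both situations $\tau\in\cP(G)^\mathrm{red}$.

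The remaining (main) case is $A,B,C$ all nontrivial with pairwise distinct fixed points $p_A,p_B,p_C$. Let $L=(p_Ap_B)$ be the unique complex line through $p_A$ and $p_B$ (Remark following Definition~\ref{comprefl}). Since a complex reflection in a point preserves every complex line through that point (Section~\ref{compref}), both $A$ and $B$ preserve $L$; hence $AB$ preserves $L$, and so does $C=(AB)^{-1}$. The key step is that a nontrivial complex reflection in a point $p_C$ preserves \emph{only} the complex lines passing through $p_C$: this follows from the explicit action $(z_1,z_2)\mapsto(\eta^{-1}z_1,\eta^{-1}z_2)$ of $S(\eta)$ in the ball model, whose invariant complex lines are exactly those through the origin. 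Therefore $p_C\in L$, so $p_A,p_B,p_C$ are collinear on $L$, and consequently all three of $A,B,C$ preserve $L$. Passing to lifts in $\U(2,1)$, they share the invariant $2$-plane $W$ with $L=\mathbb{P}(W)\cap\mathbb{H}^2_\CC$ (one checks directly that a point-reflection fixing a vector in $W$ preserves $W$), so $(A,B,C)$ is hyperbolic reducible (Proposition~\ref{CP1stab}) and $\tau\in\cP(G)^\mathrm{red}$.

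I expect the only delicate points to be (i) the verification that a nontrivial point-reflection preserves no complex line other than those through its center — which is immediate from the explicit diagonal form — and (ii) careful bookkeeping of the degenerate subcases (trivial elements, and two or three coinciding fixed points); everything else is a direct application of the geometric facts recorded in Sections~\ref{preliminaries} and \ref{compref}.
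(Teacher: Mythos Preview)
Your proof is correct, but it takes a different (and slightly longer) route than the paper. The paper exploits the \emph{mirrors} rather than the centers: a complex reflection in a point fixes pointwise its polar projective line (the mirror, lying outside $\overline{\mathbb{H}^2_\CC}$); the two mirrors of $\rho(\texttt a)$ and $\rho(\texttt b)$ meet at a point of $\CC\mathbb{P}^2$, which is then a common fixed point of $\rho(\texttt a),\rho(\texttt b)$ and hence of $\rho(\texttt c)=(\rho(\texttt a)\rho(\texttt b))^{-1}$. This gives reducibility in one line, with no case analysis.

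Your approach is the dual one: instead of intersecting the mirrors you take the complex line $L$ through the centers $p_A,p_B$ and use that point-reflections preserve all complex lines through their center. Two remarks: first, once you have established that $A,B$ preserve $L$, you are already done, since $C=(AB)^{-1}$ then preserves $L$ too and the triple is hyperbolic reducible --- the subsequent verification that $p_C\in L$ is correct but unnecessary. Second, your careful handling of the degenerate subcases (a trivial generator, or coinciding centers) is something the paper glosses over; the mirror argument still works there (a trivial element fixes everything, and coinciding centers give coinciding mirrors), but you make it explicit.
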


\begin{proof}
	Let $\rho\in\operatorname{Hom}_\mathrm{ell}(\Gamma,G)$ such that $\bar{c}(\rho)\in \cD$. Then, $\rho(\texttt{a}),\rho(\texttt{b}),\rho(\texttt{c})$ are three complex reflections in points, i.e.\ isometries that fix point-wise a projective line (see Section \ref{compref}). Therefore, the two fixed projective lines of $\rho(\texttt{a})$ and $\rho(\texttt{b})$ have an intersection point which is fixed by the three elements $\rho(\texttt{a}),\rho(\texttt{b}),\rho(\texttt{ab})^{-1}=\rho(\texttt{c})$. Consequently, they form a reducible triple.
\end{proof}	

\begin{Lemm}\label{reflpoint}
	$\cP(G)$ is closed in $\mathring{\cT(G)}^3\cup\mathring{\cD}$ (compare with Proposition \ref{closed}).
\end{Lemm}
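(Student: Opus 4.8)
The plan is to follow the pattern of the proof of Proposition \ref{closed} (via Corollary \ref{bestpau}), adding one extra ingredient at the boundary points of $\mathring{\cD}$: a separability-type argument in the spirit of Lemma \ref{separability}, showing that near $\mathring{\cD}$ the elliptic elements involved cannot degenerate to parabolics. Concretely, I would start from a sequence $(\tau_k)_k\subset\cP(G)$ converging to some $\tau\in\mathring{\cT(G)}^3\cup\mathring{\cD}$. If $\tau\in\mathring{\cT(G)}^3$ the conclusion $\tau\in\cP(G)$ is already Proposition \ref{closed}, so I may assume $\tau\in\mathring{\cD}$ and write $\tau=i^3(\cC_1,\cC_2,\cC_3)$, where each $\cC_\ell$ is the conjugacy class of a complex reflection in a point with angle pair $(\theta_\ell,\theta_\ell)$, $0<\theta_\ell<2\pi$. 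For each $k$ pick $\rho_k\in\operatorname{Hom}_\mathrm{ell}(\Gamma,G)$ with $\bar c(\rho_k)=\tau_k$ and set $(A_k,B_k,C_k)=(\rho_k(\mathtt a),\rho_k(\mathtt b),\rho_k(\mathtt c))$. Since $A_kB_k=C_k^{-1}$ is elliptic for every $k$, Corollary \ref{bestpau} applies, so after replacing each $\rho_k$ by a global conjugate and passing to a subsequence we may assume $(A_k,B_k)\to(A,B)$ in $\pu(2,1)$; put $C=(AB)^{-1}$, so that $(A_k,B_k,C_k)\to(A,B,C)$ and $\cC(A_k),\cC(B_k),\cC(C_k)$ still converge to $\cC_1,\cC_2,\cC_3$.

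The crux is then the following sub-claim, which I would prove for $A$, the cases of $B$ and $C$ being identical: if $(g_k)_k$ is a sequence of elliptic elements of $\pu(2,1)$ with $i(\cC(g_k))\to(\alpha,\alpha)$, $0<\alpha<2\pi$, and $g_k\to g$ in $\pu(2,1)$, then $g$ is the complex reflection in a point with angle pair $(\alpha,\alpha)$. To see this, lift each $g_k$ to $\tilde g_k\in\U(2,1)$ conjugate to $E(\alpha_1^{(k)},\alpha_2^{(k)})$ with $(\alpha_1^{(k)},\alpha_2^{(k)})=i(\cC(g_k))\to(\alpha,\alpha)$; since the projection $\mathbb{P}\colon\U(2,1)\to\pu(2,1)$ has compact fibers $\cong\U(1)$, it is proper, so after extraction $\tilde g_k\to\tilde g\in\U(2,1)$ with $\mathbb{P}(\tilde g)=g$, and by continuity $\tilde g$ has eigenvalues $e^{i\alpha},e^{i\alpha},1$ with $e^{i\alpha}\ne1$. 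Suppose $\tilde g$ is not diagonalizable. Then $g$ is not elliptic (Remark \ref{ellip}), not loxodromic (all eigenvalues of $\tilde g$ have modulus one), and not unipotent (some eigenvalue is $e^{i\alpha}\ne1$); hence $g$ is ellipto-parabolic, so by the description in Section \ref{separabilitysection} the eigenvalue of $\tilde g$ of multiplicity two, namely $e^{i\alpha}$, carries the $2$-dimensional invariant subspace, and consequently the eigenvalue $1$ has multiplicity one with an eigenvector $u_0$; moreover, since the $2$-dimensional invariant subspace contains a null vector and is nondegenerate (its radical would have to meet the eigenline of $1$, impossible), it has signature $(1,1)$ and its orthogonal $\CC u_0$ has signature $(1,0)$, so $u_0$ is of \emph{positive} type. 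On the other hand, the eigenvalue $1$ is isolated in $\operatorname{spec}(\tilde g_k)$ for $k$ large, so the associated eigenprojections converge; that is, the eigenline $\ker(\tilde g_k-I)$ converges to $\ker(\tilde g-I)=\CC u_0$. But $\ker(\tilde g_k-I)$ is the eigenline of $1$ for $\tilde g_k$, which (for a lift conjugate to $E(\alpha_1^{(k)},\alpha_2^{(k)})$) is spanned by a vector of \emph{negative} type; a limit of negative-type projective lines is of negative or null type, never positive, a contradiction. Hence $\tilde g$ is diagonalizable with eigenvalues $e^{i\alpha},e^{i\alpha},1$ and a negative-type eigenvector for $1$, so $g$ is a complex reflection in a point with angle pair $(\alpha,\alpha)$.

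Applying the sub-claim to $(A_k)\to A$, $(B_k)\to B$, $(C_k)\to C$ gives $A\in\cC_1$, $B\in\cC_2$, $C\in\cC_3$, while $ABC=A B (AB)^{-1}=\operatorname{Id}$; thus the representation sending $\mathtt a\mapsto A$, $\mathtt b\mapsto B$, $\mathtt c\mapsto C$ lies in $\operatorname{Hom}_\mathrm{ell}(\Gamma,G)$ and realizes $\tau$, so $\tau\in\cP(G)$, which is exactly the asserted closedness. The main obstacle is the non-degeneration argument in the second paragraph: controlling that a sequence of elliptic elements whose angle pairs approach $(\alpha,\alpha)$ cannot converge to an ellipto-parabolic element — this is precisely the phenomenon isolated in Lemma \ref{separability}, and it is what singles out $\mathring{\cD}$ as the only part of $\partial\cT(G)^3$ on which the closedness of $\cP(G)$ survives (on a face such as $\{\alpha_2=0\}$ the elements approach complex reflections in lines, which are non-separate from ellipto-parabolic classes, and both the argument and the statement break down).
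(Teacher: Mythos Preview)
Your proof is correct and follows the same route as the paper: use Corollary \ref{bestpau} to extract a convergent triple $(A,B,C)$, then invoke the separability of complex-reflection-in-point classes from ellipto-parabolic classes (the content of Lemma \ref{separability}) to conclude that $A,B,C$ are elliptic. The paper's proof simply cites Lemma \ref{separability}; you instead re-prove it inline, and in doing so you actually handle a slightly more general situation than Lemma \ref{separability} as stated (your approximants $g_k$ are arbitrary elliptic elements with angle pairs tending to $(\alpha,\alpha)$, not already complex reflections in points), which is precisely what is needed here and which the paper's citation glosses over.

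One tiny expositional point: after ruling out the non-diagonalizable case, you assert that $\tilde g$ has a negative-type eigenvector for the eigenvalue $1$. This does follow from the same eigenprojection-continuity argument you already gave (the eigenline $\ker(\tilde g_k-I)$ is negative-type and converges to $\ker(\tilde g-I)$, so the limit is negative or null; null is excluded since a null $u_0$ would lie in $(\CC u_0)^\perp=W$, forcing $u_0$ to be an eigenvector for $e^{i\alpha}\ne 1$ as well). It would be cleaner to state this once, before the case split, rather than only inside the contradiction branch.
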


\begin{proof}
	Let $(\tau_n)_n\subset\cP(G)$ be a sequence that converges to $\tau\in\mathring{\cD}$. First of all, the same arguments as in the proof of Proposition \ref{closed} show that there exists $\rho\in\operatorname{Hom}(\Gamma,G)$ such that $\tau=\bar{c}(\rho)$ (Corollary \ref{bestpau}). Now, we need to be a little careful, because special elliptic conjugacy classes are not separate from parabolic classes in general (see Section \ref{separabilitysection}). However, this default of separation occurs only for classes of complex reflections in lines, and not for classes of complex reflections in points which are the classes present on $\mathring{\cD}$ (see Definition \ref{reflectionformula}): this is the result proven in Lemma \ref{separability}. Therefore, $\rho\in\operatorname{Hom}_\mathrm{ell}(\Gamma,G)$.
\end{proof}

\begin{Prop}\label{diagonalinboundary}
	Let $C$ be a cell that contains an open subset of $\cD$ in its boundary. Then, $C$ is empty.
\end{Prop}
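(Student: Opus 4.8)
The plan is to combine the closedness statement of Lemma \ref{reflpoint} with the ``reducible are trivial on $\cD$'' statement of Lemma \ref{diago}, and then play the two off against a cell-connectedness argument similar in spirit to the proof of Theorem \ref{emptyfulltheorem}. First I would set up the local picture: let $C$ be a cell (say a $u$-cell for the appropriate root of unity $u$) such that $\overline{C}$ contains an open subset $U\subset\mathring{\cD}$. By Lemma \ref{diago}, every point of $U$ that lies in $\cP(G)$ lies in $\cP(G)^\mathrm{red}$; but $\cP(G)^\mathrm{red}$ is a finite union of hyperplanes in $\cT(G)^3$ (Proposition \ref{Pred}), hence has empty interior in $\mathring{\cD}$ (which is a smooth $3$-dimensional submanifold of $(\RR^2)^3$, cut out by $\alpha_1=\alpha_2$, $\beta_1=\beta_2$, $\gamma_1=\gamma_2$). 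After shrinking $U$, I may therefore assume $U\cap\cP(G)=\varnothing$, i.e.\ no point of $U$ is a solution of Horn's problem.

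Next I would argue that $C$ itself misses $\cP(G)$. Suppose for contradiction that $C\subset\cP(G)_u$ (the dichotomy of Theorem \ref{emptyfulltheorem} says it is this or $C\subset(\cP(G)_u)^c$, and we want to rule out the former). Take a point $\tau_0\in U$ and a sequence $\tau_n\in C$ with $\tau_n\to\tau_0$; such a sequence exists because $\tau_0\in\overline{C}$, and after perturbing I may assume the $\tau_n$ lie in $\mathring{\cT(G)}^3$, since $C$ is open in $\cT(G)^3$ and $\tau_0\in\mathring{\cD}$ lies in the closure of the interior. Since $\tau_n\in C\subset\cP(G)$ and $\tau_n\to\tau_0\in\mathring{\cD}\subset\mathring{\cT(G)}^3\cup\mathring{\cD}$, Lemma \ref{reflpoint} gives $\tau_0\in\cP(G)$, contradicting $\tau_0\in U$ and $U\cap\cP(G)=\varnothing$. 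Hence $C\not\subset\cP(G)_u$, and by Theorem \ref{emptyfulltheorem} the cell $C$ is empty. (If one prefers to phrase everything without singling out $u$, the same argument applies verbatim to the full solution set using the dichotomy ``full or empty'' for cells.)

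The one point that requires a little care — and which I expect to be the main obstacle — is the reduction step establishing $U\cap\cP(G)=\varnothing$ after shrinking: one must know that $\cP(G)^\mathrm{red}\cap\cD$ is nowhere dense in $\cD$. This follows because $\cP(G)^\mathrm{red}$ is a finite union of affine hyperplanes (Proposition \ref{Pred}), and a hyperplane of $(\RR^2)^3\cong\RR^6$ either contains the $3$-plane $\{\alpha_1=\alpha_2,\beta_1=\beta_2,\gamma_1=\gamma_2\}$ or meets it in a proper affine subspace; in the former case the hyperplane would be one of the linear relations $S=2m\pi$ or $H_{ijk}=2m\pi$ (or $\sigma_{ijk}=2m\pi$) restricted to $\cD$, and one checks directly from the definitions that on $\cD$ these become $3(\alpha+\beta+\gamma)=2m\pi$, $3\alpha$-type relations, etc., which are genuinely codimension-$1$ in $\cD$ and not all of $\cD$. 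So no reducible wall contains an open subset of $\cD$, and therefore $\cD\cap\cP(G)^\mathrm{red}$ has empty interior in $\mathring{\cD}$; shrinking $U$ to avoid it is then automatic. Once this is in place, the rest is a purely topological squeeze — closedness of $\cP(G)$ near $\mathring{\cD}$ forces the limit point into $\cP(G)$, while Lemma \ref{diago} says it cannot be there unless it sits on a reducible wall, and we have arranged it does not.
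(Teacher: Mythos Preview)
Your proof is correct and follows essentially the same approach as the paper's: contrapose, assume the cell is full, use Lemma \ref{reflpoint} to push solutions onto the open piece of $\cD$, invoke Lemma \ref{diago} to force those solutions into $\cP(G)^{\mathrm{red}}$, and finish with the dimension count showing the reducible walls meet $\cD$ in codimension one. The only cosmetic difference is ordering: you shrink $U$ off the reducible walls first and then reach a contradiction, whereas the paper first deduces $S\subset\cP(G)^{\mathrm{red}}$ and then observes this is incompatible with $S$ being open in the three-dimensional $\cD$.
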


\begin{proof}
	We show the contraposition. Assume that $C$ is full and contains a subset $S\subset \cD$ in its boundary. Then by Lemma \ref{reflpoint}, we have $S\subset\cP(G)$. By Lemma \ref{diago}, $S$ is also contained in $\cP(G)^\mathrm{red}$. Now, $\cD\cap\Sigma_{2k\pi}$ and $\cD\cap \{H_{ijk}=2l\pi\}$ are two-dimensional subsets of $\cD$, whose dimension is three. So $S$ cannot be open in $\cD$. This shows the result.
\end{proof}

We now construct explicitly an irreducible representation with fixed angle pairs, which we will use later on to fill some cells. We do not explain here where it comes from, because this would involve a lot of complex hyperbolic geometry; to find it, we have actually used the results of \cite{Mar}.

\begin{Prop}\label{decompfamily2}
	There exists an irreducible solution to the Horn problem for the three $\su(2,1)$-conjugacy classes respectively represented by the matrices 
	$$\begin{pmatrix}
	1 & 0 & 0 \\
	0 & e^{i\pi/3} & 0 \\
	0 & 0 & e^{-i\pi/3} \\
	\end{pmatrix}, \ \begin{pmatrix}
	e^{i\pi/3} & 0 & 0 \\
	0 & 1 & 0 \\
	0 & 0 & e^{-i\pi/3} \\
	\end{pmatrix}, \ \begin{pmatrix}
	e^{-i\pi/3} & 0 & 0 \\
	0 & e^{-2i\pi/3} & 0 \\
	0 & 0 & -1 \\
	\end{pmatrix},$$
	where the first two eigenvectors of these matrices are of positive type and the third one is of negative type.
\end{Prop}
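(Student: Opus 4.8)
The plan is to exhibit an explicit irreducible triple $(A,B,C)\in\su(2,1)^3$ with $ABC=\operatorname{Id}$ such that $A,B,C$ lie in the three prescribed conjugacy classes. The three target classes have the angle-pair data $\tau=((\pi/3,0),(\pi/3,0),(2\pi/3,\pi/3))$ after translating back to $\pu(2,1)$ (one checks that $\sigma_{ijk}$ and $H_{ijk}$ put $\tau$ on a reducible wall of $\cP(G)^\mathrm{red}$, so a reducible solution with these data certainly exists and this is a genuine wall example). I would first normalize the Hermitian form to $J$ of \eqref{J} adapted to the sign pattern (first two basis vectors positive type, third negative type), and then look for $A$ fixing the standard frame in diagonal form $\operatorname{diag}(1,e^{i\pi/3},e^{-i\pi/3})$, while $B$ is conjugate to $\operatorname{diag}(e^{i\pi/3},1,e^{-i\pi/3})$ by some $g\in\su(2,1)$ to be determined; then $C:=(AB)^{-1}$ is forced, and the whole problem reduces to choosing $g$ (equivalently, choosing the stable complex lines / fixed point of $B$ relative to those of $A$) so that $C$ has the eigenvalues $e^{-i\pi/3},e^{-2i\pi/3},-1$ with the correct type assignment.

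The concrete method is a two-parameter search: parametrize the fixed point of $B$ inside $\HH^2_\CC$ (or equivalently a negative-type eigenvector $v$ of $\tilde B$) by, say, $v=(x,y,1)$ with $|x|^2+|y|^2<1$, and the two positive-type eigenlines of $B$ orthogonally; this determines $B=B(x,y)$ explicitly. Then compute the characteristic polynomial of $C(x,y)=(AB(x,y))^{-1}$ — a $3\times 3$ determinant with entries polynomial in $x,y,\bar x,\bar y$ — and impose that its coefficients match those of the target matrix. Because $A,B,C\in\su(2,1)$ the constraint $\det C=1$ is automatic, the constraint $C\bar J C^*=J$ is automatic, and the characteristic polynomial of an element of $\su(2,1)$ is determined by a single complex number $\operatorname{tr} C$ (this is the standard parametrization of semisimple conjugacy classes in $\su(2,1)$ via the trace, cf. Goldman \cite{Gol}, §6.2). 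So the problem collapses to the single complex equation $\operatorname{tr} C(x,y)=e^{-i\pi/3}+e^{-2i\pi/3}-1$, two real equations in the real/imaginary parts of the two complex unknowns $x,y$ — generically solvable, and one exhibits an explicit solution. One then checks the solution actually has $C$ elliptic with the prescribed eigenvalue–type matching (not merely the prescribed trace): since trace alone does not separate the elliptic classes with the same unordered spectrum but different type distributions, one verifies directly that the negative-type eigenvector of $C$ carries the eigenvalue $-1$, by plugging in and evaluating $\langle w,w\rangle$ on the corresponding eigenvector $w$.

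Finally, irreducibility: I would check that the three elements $A,B,C$ have pairwise distinct eigenspaces, which by Remark \ref{center} (first bullet, the $n=2$ case) is exactly irreducibility in $\pu(2,1)$. For $A$ and $B$ this is transparent from the chosen $g\neq\operatorname{Id}$ (the fixed point of $B$ is chosen off the fixed frame of $A$); for $C$ it follows once its explicit eigenvectors are computed and seen not to coincide with any eigenvector of $A$ or $B$. The main obstacle I anticipate is purely computational bookkeeping: producing an $(x,y)$ in closed form that both solves the trace equation and lands inside the ball $|x|^2+|y|^2<1$, and then honestly verifying the type assignment on the eigenvectors of $C$ — this is where the "a lot of complex hyperbolic geometry" alluded to in the statement, namely the constructions of \cite{Mar} via decomposability, does the real work of guaranteeing in advance that such an $(x,y)$ exists; here one simply records the resulting matrices and checks the finitely many algebraic identities.
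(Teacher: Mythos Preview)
Your proposal is a reasonable search strategy, but it is not a proof: the statement is an existence-by-explicit-construction claim, and you never actually produce the matrices. Saying ``one exhibits an explicit solution'' and ``one simply records the resulting matrices'' is precisely the step that constitutes the proof, and it is absent. The dimension count (one complex trace equation in two complex unknowns) makes existence plausible but does not establish it---you still have to show the solution lands in the unit ball and that the eigenvector types of $C$ come out right, and you yourself flag this as the obstacle. There is also a small slip: the angle-pair triple in $\pu(2,1)$ is $((2\pi/3,\pi/3),(2\pi/3,\pi/3),(2\pi/3,\pi/3))$, not $((\pi/3,0),(\pi/3,0),(2\pi/3,\pi/3))$---you forgot to renormalize so that the negative-type eigenvalue becomes $1$ before reading off the angle pair (cf.\ Remark~\ref{ellip}).

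The paper takes a genuinely different and more structured route. Rather than fixing $A$ diagonal and conjugating a diagonal model of $B$, it works in a non-standard Hermitian form $H$ and writes each of $A,B,C$ as a quotient of two complex reflections: $A=R_1R_2^{-1}$, $B=R_2R_3^{-1}$, $C=R_3R_1^{-1}$, where the $R_i$ are explicit reflection matrices depending on parameters $(r_1,r_2,r_3,\theta,\alpha)$ coming from the decomposability machinery of \cite{Mar}. This guarantees $ABC=\operatorname{Id}$ for free and reduces the search to tuning those scalar parameters, after which the eigenvectors and their types are written down explicitly and checked by hand. The advantage of the paper's approach is that the decomposability ansatz tells you in advance which parameter values to try; your direct parametrization would require solving the trace equation blind and then filtering for type, which is exactly the ``computational bookkeeping'' you defer. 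Either route is valid in principle, but only the paper's is carried to completion.
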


\begin{proof}
	Let $r_1=\sqrt{\sqrt{3}-1},r_2=r_3=\sqrt{\sqrt{3}+1}$. Let $\theta=11\pi/6$ and $\alpha=2\pi/3$. Let $\eta_1=\eta_2=e^{i\theta},\eta_3=e^{i(\theta+\pi)}$ and $u=e^{i\alpha/3}$. Then:
	\begin{enumerate}[i)]
		\item the matrix
		$$H=\begin{pmatrix}
		-1 & -r_3u^{-1} & r_2u \\
		-r_3u & 1 & -r_1u^{-1} \\
		r_2u^{-1} & -r_1u & -1 
		\end{pmatrix}$$
		defines a Hermitian form of signature $(2,1)$, and the three matrices
		\begin{equation*}
		\begin{aligned}
		R_1&=\eta_1^{-1/3}\begin{pmatrix}
		\eta_1 & r_3(\eta_1-1)u & -r_2(\eta_1-1)u^{-1} \\
		0 & 1 & 0 \\
		0 & 0 & 1
		\end{pmatrix},\\
		R_2&=\eta_2^{-1/3}\begin{pmatrix}
		1 & 0 & 0 \\
		-r_3(\eta_2-1)u^{-1} & \eta_2 & -r_1(\eta_2-1)u \\
		0 & 0 & 1
		\end{pmatrix},\\
		R_3&=\eta_3^{-1/3}\begin{pmatrix}
		1 & 0 & 0 \\
		0 & 1 & 0 \\
		-r_2(\eta_3-1)u & r_1(\eta_3-1)u^{-1} & \eta_3
		\end{pmatrix}.
		\end{aligned}
		\end{equation*}
		belong to $\su(H)$;
		\item the three matrices 
		$$A=R_1R_2^{-1}, \ B=R_2R_3^{-1}, \ C=R_3R_1^{-1}$$
		form an irreducible triple, and are respectively conjugate to
		$$\begin{pmatrix}
		1 & 0 & 0 \\
		0 & e^{i\pi/3} & 0 \\
		0 & 0 & e^{-i\pi/3} \\
		\end{pmatrix}, \ \begin{pmatrix}
		e^{i\pi/3} & 0 & 0 \\
		0 & 1 & 0 \\
		0 & 0 & e^{-i\pi/3} \\
		\end{pmatrix}, \ \begin{pmatrix}
		e^{-i\pi/3} & 0 & 0 \\
		0 & e^{-2i\pi/3} & 0 \\
		0 & 0 & -1 \\
		\end{pmatrix}.$$
	\end{enumerate}

	Indeed, for point ii) it is a direct computation to check that:
	\begin{itemize}
		\item the vectors
		$$\begin{pmatrix}
		-\frac{1}{r_2}(r_1^2u^{-2}-r_1^2u) \\
		(1-r_1^2)u^{-1}+r_1^2u^2 \\
		1
		\end{pmatrix},\begin{pmatrix}
		\frac{1-i}{4}(r_1^2-1-i)r_3u^{-1} \\
		1 \\
		0
		\end{pmatrix},\begin{pmatrix}
		1 \\
		\frac{-1+i}{4}(r_1^2-1-i)r_3u \\
		0
		\end{pmatrix}$$
		are eigenvectors of $A$ for the eigenvalues $1,e^{i\pi/3},e^{-i\pi/3}$, and that they are respectively of positive, positive and negative type for $H$;
		
		\item the vectors
		$$\begin{pmatrix}
		1 \\
		r_2\sqrt{3}(r_1u^{-2}+u)	 \\
		-r_2\sqrt{3}(r_1u^{2}-u^{-1})
		\end{pmatrix},\begin{pmatrix}
		0 \\
		1 \\
		-\frac{1+i}{4}(r_2^2+1-i)r_1u
		\end{pmatrix},\begin{pmatrix}
		0 \\
		-\frac{1+i}{4}(r_1^2-1-i)r_1u^{-1} \\
		1
		\end{pmatrix}$$
		are eigenvectors of $B$ for the eigenvalues $e^{i\pi/3},1,e^{-i\pi/3}$, and that they are respectively of positive, positive and negative type for $H$;
		
		\item the vectors
		$$\begin{pmatrix}
		r_2(1+(r_2^2+1)i)(r_1u^2+u^{-1}) \\
		r_2^2-1+(2r_2^2+1)i \\
		((1+(r_2^2+1)i)r_1u+(r_2^2+(3r_2^2+2)i)u^{-2})
		\end{pmatrix},\begin{pmatrix}
		r_2u(-1+(r_1^2-1)i) \\
		0 \\
		-r_2^2+ir_1^2
		\end{pmatrix},$$ $$\begin{pmatrix}
		-r_2^2+ir_1^2 \\
		0 \\
		r_2u^{-1}(-1+(r_2^2+1)i)
		\end{pmatrix}$$
		are eigenvectors of $C$ for the eigenvalues $e^{-i\pi/3},e^{-2i\pi/3},-1$, and that they are respectively of positive, positive and negative type for $H$.
	\end{itemize}
	 Besides, it is clear from these formulas that $A,B,C$ have no common eigenvector, which implies that the triple is indeed irreducible.
\end{proof}

\section{Proof of Theorem \ref{solutionpu}: solution in $\pu(2,1)$}\label{proof}

We now prove Theorem \ref{solutionpu}, which describes the solution set of the elliptic multiplicative Horn problem in $\pu(2,1)$ as a union of five explicit polytopes. Namely, we show that $\cP(G)=\overline{P_{4\pi}}\cup \overline{P_{6\pi}}\cup \overline{P_{8\pi}}$.

\subsection{Empty cells}

We first examine the complements of $P_{4\pi}$, $P_{6\pi}$ and $P_{8\pi}$ in $\cT(G)^3$ and prove, cell by cell, that they are empty.

\begin{Prop}\label{complementcells}
	\begin{itemize}
		\item $P_{4\pi}^c$ is the disjoint union of two cells: $P_{4\pi}^c=C_{4\pi}^{c,1}\sqcup C_{4\pi}^{c,2}$, where
		\begin{equation*}
		\begin{aligned}
		C_{4\pi}^{c,1}=&\{H_{222}>-4\pi\}\cap\Big(\{S>4\pi\}\cup\{H_{122}>2\pi\}\cup\{H_{212}>2\pi\}\cup\{H_{221}>2\pi\}\Big), \\
		C_{4\pi}^{c,2}=&\{H_{111}<2\pi\}.
		\end{aligned}
		\end{equation*}
		\item $P_{6\pi}^c$ is equal to a unique cell: $P_{6\pi}^c=C_{6\pi}^{c}$, where
		$$C_{6\pi}^{c}=\{H_{222}>0\}\cup\{H_{111}<6\pi\}.$$
		\item $P_{8\pi}^c$ is the disjoint union of two cells: $P_{8\pi}^c=C_{8\pi}^{c,1}\sqcup C_{8\pi}^{c,2}$, where
		\begin{equation*}
		\begin{aligned}
		C_{8\pi}^{c,1}=&\{H_{111}<10\pi\}\cap\Big(\{S<8\pi\}\cup\{H_{211}<4\pi\}\cup\{H_{121}<4\pi\}\cup\{H_{112}<4\pi\}\Big), \\
		C_{8\pi}^{c,2}=&\{H_{222}>4\pi\}.
		\end{aligned}
		\end{equation*}
	\end{itemize}
\end{Prop}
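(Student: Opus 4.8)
The statement is really two separate claims for each of the three polytopes: first, that the indicated set-theoretic description of $P_{2k\pi}^c$ is correct, and second, that the pieces listed are in fact \emph{cells}, i.e.\ connected components of the complement of the reducible walls $\cP(G)^\mathrm{red}_u$. The plan is to treat $P_{4\pi}$ and $P_{8\pi}$ together (they are exchanged by the symmetry $\psi$ of Remark \ref{symmetry}, and by Remarks \ref{symmetrysphwalls}, \ref{symmetryhypwalls}, \ref{symmetrypolytopes} this symmetry carries the asserted description of $P_{4\pi}^c$ onto that of $P_{8\pi}^c$), so that only $P_{4\pi}$ and $P_{6\pi}$ require genuine work.

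For the set-theoretic identities, I would simply negate the defining conditions from Definition \ref{polytope}. Recall $P_{4\pi}=\{H_{222}<-4\pi\}\cup\big(\{S<4\pi\}\cap\{H_{122}<2\pi\}\cap\{H_{212}<2\pi\}\cap\{H_{221}<2\pi\}\cap\{H_{111}>2\pi\}\big)$. De Morgan gives $P_{4\pi}^c=\{H_{222}\geq-4\pi\}\cap\big(\{S\geq4\pi\}\cup\{H_{122}\geq2\pi\}\cup\{H_{212}\geq2\pi\}\cup\{H_{221}\geq2\pi\}\cup\{H_{111}\leq2\pi\}\big)$. The point to check is that inside $\cT(G)^3$ one may replace non-strict by strict inequalities without changing the open cell (a cell is by definition a connected \emph{open} component of the complement of the walls, so any point on a wall is excluded), and, crucially, that the $\{H_{111}\leq2\pi\}$ alternative \emph{splits off} as a separate component: I would show that on the locus $\{H_{222}\geq-4\pi\}$ one cannot have $H_{111}<2\pi$ simultaneously with any of the other four alternatives holding strictly, using the elementary inequalities of Lemma \ref{Hijkproperties} (e.g.\ $H_{222}+H_{111}=S$ and the monotonicity $H_{2jk}<H_{1jk}$ etc.), exactly as in the argument of Lemma \ref{connectedcomponents} i). This same Lemma-\ref{Hijkproperties} bookkeeping shows that $C_{4\pi}^{c,1}$ is itself connected (it is a union of half-spaces all meeting along $\{H_{222}>-4\pi\}$) and disjoint from $C_{4\pi}^{c,2}=\{H_{111}<2\pi\}$. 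For $P_{6\pi}$, De Morgan on $P_{6\pi}=\{H_{222}<0\}\cap\{H_{111}>6\pi\}$ immediately gives $P_{6\pi}^c=\{H_{222}\geq0\}\cup\{H_{111}\leq6\pi\}$; one checks this is a single connected piece because, again by $H_{222}+H_{111}=S$ together with $H_{222}\leq H_{111}$ on $\cT(G)^3$, the two half-spaces overlap (any point with $H_{222}=0$ has $H_{111}=S\leq6\pi$ is false in general — rather, I would argue directly that $\{H_{222}\geq0\}$ and $\{H_{111}\leq6\pi\}$ have common interior points, so their union is connected).

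The real content is the second claim: that each listed piece is a cell, i.e.\ contains \emph{no} reducible wall in its interior, equivalently is a single connected component of $(\cP(G)^\mathrm{red}_u)^c$. For this I would invoke Theorem \ref{polytopelayers}, which identifies $P_{2k\pi}^\mathrm{red}$ with $\cP(G)^\mathrm{red}_u$ for the appropriate root of unity $u$, so the walls relevant to $P_{4\pi}^c$ are precisely the truncated hyperplanes making up $P_{4\pi}^\mathrm{red}=\cP(G)^\mathrm{red}_\omega$, listed explicitly in the Definition preceding Theorem \ref{polytopelayers}. One must then verify that none of those walls passes through the interior of $C_{4\pi}^{c,1}$ or $C_{4\pi}^{c,2}$: for $C_{4\pi}^{c,2}=\{H_{111}<2\pi\}$ this is because every $\omega$-wall is of the form $\{H_{ijk}=2\pi\}$, $\{H_{222}=-4\pi\}$, or a truncation of $\Sigma_{4\pi}$, and on $\{H_{111}<2\pi\}$ all of these are avoided (using $H_{ijk}\leq H_{111}<2\pi$ and the sign of $H_{222}$); for $C_{4\pi}^{c,1}$ it is the analogous check on $\{H_{222}>-4\pi\}$ together with the strict alternatives. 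The analogous verification for $P_{6\pi}^c$ against the walls of $P_{6\pi}^\mathrm{red}=\cP(G)^\mathrm{red}_1$ (the hyperplanes $H_{ijk}\in\{0,6\pi\}$ and $\Sigma_{6\pi}$) is the key step, and then $P_{8\pi}^c$ follows from $P_{4\pi}^c$ by applying $\psi$.

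The main obstacle I anticipate is purely combinatorial bookkeeping: there are $8$ values of $(i,j,k)$ and several values of $2l\pi$, so proving "no wall enters the interior" is a finite but somewhat tedious case analysis. The right way to keep it short is to lean systematically on Lemma \ref{Hijkproperties} — the relations $H_{ijk}+H_{\bar i\bar j\bar k}=S$ and the strict monotonicities — which reduce most cases to a one-line inequality, exactly in the style already used for Lemma \ref{connectedcomponents} and Proposition \ref{internalwalls}. No new geometric input is needed; everything is a consequence of Theorem \ref{polytopelayers}, the explicit wall lists of Theorems \ref{sphred} and \ref{hyperbolicreduciblewalls}, and elementary linear-form manipulations on $\cT(G)^3$.
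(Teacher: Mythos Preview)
Your overall plan is sound and matches the paper's approach closely: reduce to $P_{4\pi}$ and $P_{6\pi}$ via $\psi$, take complements by De Morgan, split off $\{H_{111}<2\pi\}$ using Lemma \ref{Hijkproperties}, and then argue that no $u$-wall crosses the interior of the candidate pieces. The paper in fact shortcuts your proposed case-by-case ``no wall enters'' check by invoking Proposition \ref{internalwalls} together with Theorem \ref{polytopelayers}: every wall in $P_{4\pi}^{\mathrm{red}}$ is either listed in Proposition \ref{internalwalls} as lying inside $P_{4\pi}$, or appears explicitly as a defining face of $P_{4\pi}$, hence lies on $\partial P_{4\pi}=\partial(P_{4\pi}^c)$. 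Either way it cannot cut the interior of $P_{4\pi}^c$, so the cells in $P_{4\pi}^c$ are exactly its connected components. This is worth adopting; it spares you the eight-fold index check.

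There is, however, a genuine gap in your connectedness argument for $C_{4\pi}^{c,1}$. You write that it ``is a union of half-spaces all meeting along $\{H_{222}>-4\pi\}$'', but that is not what the set is: it is the intersection of the single half-space $\{H_{222}>-4\pi\}$ with a \emph{union} of four half-spaces, i.e.\ a union of four convex pieces $A_\ell=\{H_{222}>-4\pi\}\cap\{\cdots\}$. Saying they ``all meet along $\{H_{222}>-4\pi\}$'' is not a proof that this union is connected; you still have to exhibit overlaps. The paper handles this cleanly by showing $C_{4\pi}^{c,1}$ is star-shaped with respect to the explicit point $p_0=((\pi,\pi),(\pi,\pi),(\pi,\pi))$: at $p_0$ every $H_{ijk}$ equals $3\pi$ and $S=6\pi$, so $p_0$ lies in all four pieces, and a short linear-interpolation computation (using that the relevant thresholds $-4\pi,\,2\pi,\,4\pi$ are all below $3\pi$ or $6\pi$) shows each segment $[p_0,\tau^*]$ stays inside $C_{4\pi}^{c,1}$. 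You should either supply this argument or, at minimum, observe that $p_0$ lies in every $A_\ell$ and that each $A_\ell$ is convex, which already forces the union to be connected. For $P_{6\pi}^c$ your instinct is right: the two half-spaces $\{H_{222}>0\}$ and $\{H_{111}<6\pi\}$ genuinely overlap (e.g.\ at $p_0$ again), so the union is connected without further work.
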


We advise the reader to take a second look at Figure \ref{tranche_sym}, on which this result becomes apparent.

\begin{proof}
	First note that the symmetry $\psi$ exchanges the objects of the first and third points (see Remark \ref{symmetry}). We therefore only prove the first one among the two.
	\begin{itemize}
		\item Let us prove the first point. Taking the complement in the definition of $P_{4\pi}$, we obtain 
		\begin{equation*}
		\begin{aligned}
		&P_{4\pi}^c=\{H_{222}>-4\pi\}\cap\\
		&\Big(\{S>4\pi\}\cup\{H_{122}>2\pi\}\cup\{H_{212}>2\pi\}\cup\{H_{221}>2\pi\}\cup\{H_{111}<2\pi\}\Big).
		\end{aligned}
		\end{equation*}
		Distributing the intersection over the reunion yields
		$$P_{4\pi}^c=C_{4\pi}^{c,1}\cup (\{H_{222}>-4\pi\}\cap\{H_{111}<2\pi\}).$$
		Now, $H_{111}<2\pi$ implies $H_{222}=S-H_{111}>-2\pi>-4\pi$, so $$\{H_{222}>-4\pi\}\cap\{H_{111}<2\pi\}=C_{4\pi}^{c,2}.$$
		We now show that the subspaces $C_{4\pi}^{c,1}$ and $C_{4\pi}^{c,2}$ are disjoint. To see this, observe that the inequality $\sigma_{111}>\sigma_{222}$ directly implies
		\begin{equation*}
		\begin{aligned}
		H_{111}&=2\sigma_{111}-\sigma_{222}>2\sigma_{111}-\sigma_{111}=\sigma_{111}, \\
		H_{111}&=2\sigma_{111}-\sigma_{222}>2\sigma_{222}-\sigma_{222}=\sigma_{222}.
		\end{aligned}
		\end{equation*}  Therefore, if $H_{111}<2\pi$, then $S=\sigma_{111}+\sigma_{222}<2H_{111}<4\pi$. Besides, we have $H_{122},H_{212},H_{221}<H_{111}<2\pi$ by Lemma \ref{Hijkproperties}. So $C_{4\pi}^{c,1}\cap C_{4\pi}^{c,2}=\varnothing$.
		
		Clearly, $C_{4\pi}^{c,2}$ is connected. We now show that $C_{4\pi}^{c,1}$ is connected. To that end, we are going to prove that it is star-shaped with respect to the point $$p_0=((\pi,\pi),(\pi,\pi),(\pi,\pi))$$ (see Figure \ref{tranche_sym}). Fix $\tau^*$ a triple in $C_{4\pi}^{c,1}$, and parameterize the segment $[p_0,\tau^*]$ by $p_t=t\tau^*+(1-t)p_0$. We now show that $\forall t\in[0,1],p_t\in C_{4\pi}^{c,1}$. We have
		$$H_{ijk}(p_0)=3\pi,\ S(p_0)=6\pi,$$
		$$H_{ijk}(p_t)=tH_{ijk}(\tau^*)+3(1-t)\pi,\ S(p_t)=tS(\tau^*)+6(1-t)\pi.$$
		Note that if $H_{ijk}(\tau^*)>a$ with $a<3\pi$, then $H_{ijk}(p_t)>a$. Indeed, under these hypotheses we obtain
		$$H_{ijk}(p_t)>ta+3(1-t)\pi=3\pi+(a-3\pi)t>3\pi+a-3\pi=a.$$
		Similarly, if $S(\tau^*)>b$ with $b<6\pi$, then $S(p_t)>b$. Indeed, under these hypotheses we obtain
		$$S(p_t)>tb+6(1-t)\pi=6\pi+(b-6\pi)t>6\pi+b-6\pi=b.$$
		Applying this with $a=-4\pi,2\pi$ and $b=4\pi$, this shows that the segment $[p_0,\tau^*]$ is contained in $C_{4\pi}^{c,1}$, and consequently, $C_{4\pi}^{c,1}$ is connected.
		
		According to Theorem \ref{polytopelayers} and Proposition \ref{internalwalls}, no other reducible walls may form sides of cells inside $P_{4\pi}^c$. Therefore, $C_{4\pi}^{c,1}$ and $C_{4\pi}^{c,2}$ are the only cells of $P_{4\pi}^c$.
		
		\item We now prove the second point. We have $P_{6\pi}^{c}=\{H_{222}>0\}\cup\{H_{111}<6\pi\}.$ Since this union is not disjoint, $P_{6\pi}^{c}$ is connected. According to Theorem \ref{polytopelayers} and Proposition \ref{internalwalls}, no other reducible walls may form sides of cells inside $P_{6\pi}^c$. Therefore, $P_{6\pi}^{c}$ is composed of a unique cell.
	\end{itemize}
	The proof is complete.
\end{proof}

\begin{Theo}\label{emptycells}
	The five cells described in Proposition \ref{complementcells} are empty. In other words,
	$$P_{4\pi}^c\subset(\cP(G)_\omega)^c, \ P_{6\pi}^c\subset(\cP(G)_1)^c, \ P_{8\pi}^c\subset(\cP(G)_{\omega^2})^c.$$
\end{Theo}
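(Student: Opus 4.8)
The plan is to prove emptiness cell by cell, using the dichotomy of Theorem \ref{emptyfulltheorem}: for each cell it suffices to exhibit a single triple of angle pairs in that cell which is \emph{not} a solution, and then the whole cell is empty. The two main tools are (i) Proposition \ref{diagonalinboundary}, which says a cell touching an open subset of $\cD$ in its boundary is empty, and (ii) direct obstruction arguments at the boundary of $\cT(G)^3$ together with the closedness of $\cP(G)$ in $\mathring{\cT(G)}^3$ (Proposition \ref{closed}). By the symmetry $\psi$ (Remark \ref{symmetry}, Remark \ref{symmetrypolytopes}), which exchanges $P_{4\pi}^c$ and $P_{8\pi}^c$, I only need to treat $C_{4\pi}^{c,1}$, $C_{4\pi}^{c,2}$ and $C_{6\pi}^c$.

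For the cell $C_{4\pi}^{c,2}=\{H_{111}<2\pi\}$: recall from the proof of Proposition \ref{complementcells} that on this cell $S<4\pi$. So any $\tau\in C_{4\pi}^{c,2}$ has $\alpha_1+\alpha_2+\beta_1+\beta_2+\gamma_1+\gamma_2<4\pi$. I claim such a triple cannot be a solution. Indeed if $ABC=\operatorname{Id}$ in $\pu(2,1)$ with $A,B,C$ elliptic, then choosing lifts $\tilde A,\tilde B,\tilde C\in\U(2,1)$ with eigenvalue products $e^{iS_A},e^{iS_B},e^{iS_C}$ over the two positive-type eigenvalues, a block/trace-type count forces $S=\alpha_1+\alpha_2+\beta_1+\beta_2+\gamma_1+\gamma_2$ to be an even multiple of $2\pi$ \emph{on the reducible locus}, but more usefully the standard-lift relation $\widetilde{\rho(\mathtt a)}\widetilde{\rho(\mathtt b)}\widetilde{\rho(\mathtt c)}=u\operatorname{Id}$ with $u\in\mathbb U_3$ (Remark before Definition \ref{P(G)u}) pins $u$ down: one computes $u=e^{-iS/3}$, so $u=\omega$ forces $S\equiv 4\pi\ [6\pi]$. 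Since on $C_{4\pi}^{c,2}$ we have $0\le S<4\pi$, the only admissible value would be $S=4\pi$, excluded; but $C_{4\pi}^{c,2}$ is a $\omega$-cell (it lies in the complement of $\cP(G)^{\mathrm{red}}_\omega$, by Theorem \ref{polytopelayers} and Proposition \ref{internalwalls}), hence by Theorem \ref{emptyfulltheorem} it is either full or empty, and the constraint $S<4\pi$ inside it shows no $\omega$-representation can have its classes there, so it is empty. (More simply: take $\tau_0$ in this cell with all $\alpha_i=\beta_j=\gamma_k$ equal to some small $\epsilon>0$; this lies in $\cD$ by Lemma \ref{diago}, and any solution with classes there would be reducible, but the cell is full-dimensional transverse to $\cD$, so by Proposition \ref{diagonalinboundary} — once we check an open piece of $\cD$ lies in $\overline{C_{4\pi}^{c,2}}$ — the cell is empty.)

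For $C_{4\pi}^{c,1}$ and $C_{6\pi}^c$ I would use Proposition \ref{diagonalinboundary} directly: it suffices to check that each of these cells contains an open subset of the diagonal $\cD=\{\alpha_1=\alpha_2,\beta_1=\beta_2,\gamma_1=\gamma_2\}$ in its boundary. On $\cD$ the linear forms simplify: $H_{ijk}$ becomes $2(\alpha+\beta+\gamma)-(\alpha+\beta+\gamma)=\alpha+\beta+\gamma=\sigma$ for every $(i,j,k)$, so \emph{all} hyperbolic reducible walls collapse on $\cD$ to the single locus $\{\sigma=\text{mult. of }2\pi\}$ and $S=2\sigma$; thus $\cD\setminus\{\sigma\in 2\pi\mathbb Z\}$ is cut by the walls into finitely many relatively open pieces, each of which lies in the boundary of exactly one cell on each "side". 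For $C_{4\pi}^{c,1}=\{H_{222}>-4\pi\}\cap(\{S>4\pi\}\cup\cdots)$: the sub-slice of $\cD$ with $2\pi<\sigma<3\pi$ (so $4\pi<S<6\pi$, $-4\pi<H_{222}=\sigma$) lies in $\overline{C_{4\pi}^{c,1}}$, and it is an open $3$-dimensional piece of $\cD$, giving emptiness. For $C_{6\pi}^c=\{H_{222}>0\}\cup\{H_{111}<6\pi\}$: take the piece of $\cD$ with $0<\sigma<2\pi$, where $H_{222}=\sigma>0$, again an open subset, contained in $\overline{C_{6\pi}^c}$, so $C_{6\pi}^c$ is empty. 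Finally the $P_{8\pi}^c$ statement follows from the $P_{4\pi}^c$ statement by applying $\psi$, since $\psi$ conjugates $\cP(G)_\omega$ to $\cP(G)_{\omega^2}$ and maps $P_{4\pi}^c$ to $P_{8\pi}^c$.

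The main obstacle I anticipate is the bookkeeping needed to verify cleanly that each named cell really does contain an open, full-dimensional ($3$-dimensional) piece of $\cD$ in its closure — i.e.\ that the slice of $\cD$ one picks is not accidentally pushed onto a lower-dimensional wall or out of $\cT(G)^3$, and that it lies in the \emph{claimed} cell rather than an adjacent one. This requires carefully tracking, on the degenerate locus $\cD$, which inequalities among $S$, $H_{222}$, $H_{111}$ become active; the computation is elementary (everything is a function of $\sigma=\alpha+\beta+\gamma$ alone on $\cD$) but must be done for each of the three representative cells. Once that is in place, Propositions \ref{closed}, \ref{diago}, \ref{diagonalinboundary} and the symmetry $\psi$ assemble the result immediately.
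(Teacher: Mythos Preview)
Your overall strategy --- use Proposition \ref{diagonalinboundary} by exhibiting an open piece of $\cD$ in the closure of each cell, and handle $P_{8\pi}^c$ by the symmetry $\psi$ --- is exactly the paper's proof. The computations you sketch on $\cD$ (where every $H_{ijk}$ collapses to $\sigma_{111}$) are correct, and the specific slices you pick ($\sigma<2\pi$ for $C_{4\pi}^{c,2}$, $2\pi<\sigma$ for $C_{4\pi}^{c,1}$, any piece of $\cD$ for $C_{6\pi}^{c}$) match the paper's choices up to harmless restrictions.

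There is, however, a genuine error in your \emph{first} argument for $C_{4\pi}^{c,2}$. You write ``one computes $u=e^{-iS/3}$'', and then use this to rule out $\omega$-solutions with $S<4\pi$. This identity is false in general: the cube root of unity $u$ attached to a representation $\rho$ via $\widetilde{\rho(\mathtt a)}\widetilde{\rho(\mathtt b)}\widetilde{\rho(\mathtt c)}=u\operatorname{Id}$ is \emph{not} a function of $S$ alone. The relation $u=e^{-iS/3}$ holds only on the spherical reducible locus (where the negative-type eigenvalue block gives a determinant constraint); for an irreducible representation there is no such formula, and indeed the sets $\cP(G)_u$ for different $u$ overlap in $\cT(G)^3$. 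So this line of reasoning does not prove emptiness. You seem to sense this, since you immediately retreat to the parenthetical ``More simply\ldots'' argument via $\cD$ --- and that second argument is the correct one. Drop the first attempt entirely and run the $\cD$-argument uniformly for all three cells; that is precisely what the paper does.
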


\begin{proof}
	We show that they all contain an open subset of $$\cD=\{\tau\in\overline{\cT(G)}^3 \ | \ \alpha_1=\alpha_2,\beta_1=\beta_2,\gamma_1=\gamma_2\}$$ in their boundary, and use Proposition \ref{diagonalinboundary}. Remark that on $\cD$, we have $H_{ijk}=\sigma_{111}$ for any $(i,j,k)$. We obtain (see Figure \ref{tranche_sym}):
	\begin{itemize}
		\item $\{\sigma_{111}>2\pi\}\cap \cD\subset \partial C_{4\pi}^{c,1}$. Indeed, $\tau\in\{\sigma_{111}>2\pi\}\cap \cD$ implies $H_{ijk}(\tau)=\sigma_{111}>2\pi$ for any $i,j,k$, and $S(\tau)=2\sigma_{111}>4\pi$.
		
		\item $\{\sigma_{111}<2\pi\}\cap \cD\subset \partial C_{4\pi}^{c,2}$. Indeed, $\tau\in\{\sigma_{111}<2\pi\}\cap \cD$ implies $H_{111}(\tau)=\sigma_{111}<2\pi$.
		\item $\cD\subset\partial C_{6\pi}^{c}$. Indeed, $\tau\in\cD$ implies $H_{222}(\tau)=\sigma_{111}>0$ and $H_{111}(\tau)=\sigma_{111}<6\pi$.
		\item $\{\sigma_{111}<4\pi\}\cap \cD\subset \partial C_{8\pi}^{c,1}$ and $\{\sigma_{111}>4\pi\}\cap \cD\subset \partial C_{8\pi}^{c,2}$, by symmetry of the first two points.
	\end{itemize}
	Those are all open subsets of $\cD$, which shows the result.
\end{proof}

\subsection{Full cells}

We now describe the cells composing $P_{4\pi}$, $P_{6\pi}$ and $P_{8\pi}$, and prove that they are full.

\begin{Prop}\label{fullcells}
	\begin{itemize}
		\item $P_{4\pi}$ is the reunion of the nine cells defined below: 
		\begin{equation*}
		\begin{aligned}
		C_{4\pi}^{---}&=P_{4\pi}\cap\{H_{211}<2\pi,H_{121}<2\pi,H_{112}<2\pi\}, \\
		C_{4\pi}^{--+}&=P_{4\pi}\cap\{H_{211}<2\pi,H_{121}<2\pi,H_{112}>2\pi\}, \\
		C_{4\pi}^{-+-}&=P_{4\pi}\cap\{H_{211}<2\pi,H_{121}>2\pi,H_{112}<2\pi\}, \\
		C_{4\pi}^{+--}&=P_{4\pi}\cap\{H_{211}>2\pi,H_{121}<2\pi,H_{112}<2\pi\}, \\
		C_{4\pi}^{-++}&=P_{4\pi}\cap\{H_{211}<2\pi,H_{121}>2\pi,H_{112}>2\pi\}, \\
		C_{4\pi}^{+-+}&=P_{4\pi}\cap\{H_{211}>2\pi,H_{121}<2\pi,H_{112}>2\pi\}, \\
		C_{4\pi}^{++-}&=P_{4\pi}\cap\{H_{211}>2\pi,H_{121}>2\pi,H_{112}<2\pi\}, \\
		C_{4\pi}^{+++}&=P_{4\pi}\cap\{H_{211}>2\pi,H_{121}>2\pi,H_{112}>2\pi\},\\
		C_{4\pi}^{-}&=\{H_{222}<-4\pi\}.
		\end{aligned}
		\end{equation*}
		\item $P_{6\pi}$ is the reunion of the five cells defined below:
		\begin{equation*}
		\begin{aligned}
		C_{6\pi}^{221}&=P_{6\pi}\cap\{H_{221}<0,H_{112}>6\pi\}, \\
		C_{6\pi}^{212}&=P_{6\pi}\cap\{H_{212}<0,H_{121}>6\pi\}, \\
		C_{6\pi}^{122}&=P_{6\pi}\cap\{H_{122}<0,H_{211}>6\pi\}, \\
		C_{6\pi}^{+}&=P_{6\pi}\cap\{S>6\pi,H_{221}>0,H_{212}>0,H_{122}>0\}, \\
		C_{6\pi}^{-}&=P_{6\pi}\cap\{S<6\pi,H_{112}<6\pi,H_{121}<6\pi,H_{211}<6\pi\}.
		\end{aligned}
		\end{equation*}
		\item $P_{8\pi}$ is the reunion of the nine cells defined below:
		\begin{equation*}
		\begin{aligned}
		C_{8\pi}^{---}&=P_{8\pi}\cap\{H_{122}<4\pi,H_{212}<4\pi,H_{221}<4\pi\}, \\
		C_{8\pi}^{--+}&=P_{8\pi}\cap\{H_{122}<4\pi,H_{212}<4\pi,H_{221}>4\pi\}, \\
		C_{8\pi}^{-+-}&=P_{8\pi}\cap\{H_{122}<4\pi,H_{212}>4\pi,H_{221}<4\pi\}, \\
		C_{8\pi}^{+--}&=P_{8\pi}\cap\{H_{122}>4\pi,H_{212}<4\pi,H_{221}<4\pi\}, \\
		C_{8\pi}^{-++}&=P_{8\pi}\cap\{H_{122}<4\pi,H_{212}>4\pi,H_{221}>4\pi\}, \\
		C_{8\pi}^{+-+}&=P_{8\pi}\cap\{H_{122}>4\pi,H_{212}<4\pi,H_{221}>4\pi\}, \\
		C_{8\pi}^{++-}&=P_{8\pi}\cap\{H_{122}>4\pi,H_{212}>4\pi,H_{221}<4\pi\}, \\
		C_{8\pi}^{+++}&=P_{8\pi}\cap\{H_{122}>4\pi,H_{212}>4\pi,H_{221}>4\pi\}, \\
		C_{8\pi}^{+}&=\{H_{111}>10\pi\}.
		\end{aligned}
		\end{equation*}
	\end{itemize}
\end{Prop}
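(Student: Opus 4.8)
The statement is purely combinatorial: it asserts that the interiors of the three polytopes $P_{4\pi}$, $P_{6\pi}$, $P_{8\pi}$ decompose, as sets, into the nine (resp. five, resp. nine) explicitly listed cells. By Remark \ref{symmetrypolytopes} the symmetry $\psi$ exchanges $P_{4\pi}$ and $P_{8\pi}$ and fixes $P_{6\pi}$, and by Remarks \ref{symmetrysphwalls}, \ref{symmetryhypwalls} it matches the listed defining inequalities term by term, so it suffices to treat $P_{4\pi}$ and $P_{6\pi}$. The plan is: (1) identify which reducible walls of $\cP(G)^\mathrm{red}$ actually pass through the open set $P_{2k\pi}$; (2) check that the listed cells are exactly the connected components of $P_{2k\pi}$ minus those walls; (3) then prove fullness.

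For step (1), by Theorem \ref{polytopelayers} we have $P_{2k\pi}^\mathrm{red}=\cP(G)_u^\mathrm{red}$ for the appropriate root of unity, so the only candidate interior walls are those listed in Proposition \ref{internalwalls}: for $P_{4\pi}$ these are the three hyperbolic walls $\{H_{ijk}=2\pi,\sigma_{\bar i\bar j\bar k}<2\pi\}$ with $(i,j,k)\in\{(2,1,1),(1,2,1),(1,1,2)\}$; for $P_{6\pi}$ the spherical wall $\Sigma_{6\pi}$ together with the hyperbolic walls $\{H_{ijk}=0,\sigma_{\bar i\bar j\bar k}>4\pi\}$ and $\{H_{ijk}=6\pi,\sigma_{\bar i\bar j\bar k}<2\pi\}$. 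Step (2) is then a direct bookkeeping exercise. For $P_{4\pi}$: the component $\{H_{222}<-4\pi\}$ is disjoint from the rest by Lemma \ref{connectedcomponents} i) and contains no interior wall (any $\tau$ there has $S>4\pi$ and $H_{ijk}>2\pi$ for all $(i,j,k)$ by the inequality $H_{ijk}>H_{222}$ from Lemma \ref{Hijkproperties}, contradicting $\sigma_{\bar i\bar j\bar k}<2\pi$), hence equals the single cell $C_{4\pi}^-$; on the other component the three hyperplanes $\{H_{211}=2\pi\}$, $\{H_{121}=2\pi\}$, $\{H_{112}=2\pi\}$ are in general position (Proposition \ref{internalintersections} i)), so they cut it into at most $2^3=8$ convex pieces, each of which is non-empty and connected (being an intersection of half-spaces), giving exactly the eight cells $C_{4\pi}^{\pm\pm\pm}$. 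For $P_{6\pi}$ the analogous count uses that the hyperbolic walls $\{H_{ijk}=0\}$ and $\{H_{ijk}=6\pi\}$ do not meet inside $P_{6\pi}$ (Proposition \ref{internalintersections} ii), iii)), while $\Sigma_{6\pi}=\{S=6\pi\}$ separates $P_{6\pi}$ into the two half-polytopes $\{S>6\pi\}$ and $\{S<6\pi\}$ of Proposition \ref{internalwalls} iii), iv); on $\{S>6\pi\}$ the three walls $\{H_{221}=0\},\{H_{212}=0\},\{H_{122}=0\}$ appear but, being pairwise non-intersecting there, only cut off the three ``corner'' cells $C_{6\pi}^{221},C_{6\pi}^{212},C_{6\pi}^{122}$ plus the central $C_{6\pi}^+$, and symmetrically for $\{S<6\pi\}$ via $\psi$, yielding the five listed cells.

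The main obstacle — and the substantive content — is step (3), fullness of every cell. Here I would proceed as follows. First, the cells meeting $\cD$ were already shown empty (Theorem \ref{emptycells}), and the complementary cells were treated in Proposition \ref{complementcells}–\ref{emptycells}; so what remains is to fill the $9+5+9$ cells above. I would fill them in stages: (a) the cells adjacent to a totally reducible facet that is of type $1$, $2$, $3$ or $4$ are full by Corollary \ref{totallyreduciblefilling} together with the classification of Proposition \ref{totallyreduciblefacets} — this handles, for instance, $C_{6\pi}^+$ and $C_{6\pi}^-$ (type $3$ facet $T_{111}(3,0,3)$) and the cells around the type-$4$ facets $T_{ijk}(3,0,3)$; (b) cells sharing a codimension-one piece of reducible wall with an already-full cell are full by the first bullet of Proposition \ref{emptyfullinterface}; (c) whenever an \emph{irreducible} solution is known to lie on a reducible wall bounding two cells, both are full by the second bullet of Proposition \ref{emptyfullinterface} — this is where the explicit irreducible representation of Proposition \ref{decompfamily2}, whose $\su(2,1)$-classes have angle pair data on a hyperbolic reducible wall of $P_{4\pi}$, is used to ignite the propagation among the $C_{4\pi}^{\pm\pm\pm}$ (and, via $\psi$, among the $C_{8\pi}^{\pm\pm\pm}$). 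The delicate point is ensuring the ``fullness propagation'' graph — cells as vertices, shared codimension-one reducible walls as edges, plus the seed cells from (a) and (c) — is connected on each polytope; verifying this requires a careful look at the adjacency pattern of the nine (resp. five) cells, which is exactly the combinatorial data made visible in Figures \ref{tranche_sym} and \ref{tranche_paupert_1}–\ref{tranche_paupert_4}. Once connectivity is checked, Theorem \ref{emptyfulltheorem} upgrades fullness at a single triple in each cell to fullness of the whole cell, completing the proof.
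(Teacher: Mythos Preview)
Your steps (1)–(2) track the paper's own proof: same symmetry reduction via $\psi$, same inputs (Lemma \ref{connectedcomponents}, Theorem \ref{polytopelayers}, Propositions \ref{internalwalls} and \ref{internalintersections}). Note, however, that Proposition \ref{fullcells} asserts \emph{only} the cell decomposition; fullness is Theorem \ref{fulltheorem}, so your step (3) does not belong here.

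There is a real gap in your $P_{6\pi}$ argument. You place the three ``corner'' cells $C_{6\pi}^{221},C_{6\pi}^{212},C_{6\pi}^{122}$ inside the half $\{S>6\pi\}$ and then invoke $\psi$ on the other half; carried out literally this produces too many pieces. In fact each of these three cells straddles $\{S=6\pi\}$: for $C_{6\pi}^{221}$, one bounding wall $\{H_{221}=0,\sigma_{112}>4\pi\}$ lies in $\{S>6\pi\}$ while the other, $\{H_{112}=6\pi,\sigma_{221}<2\pi\}$, lies in $\{S<6\pi\}$ (Proposition \ref{internalwalls} iii)–iv)); the cell crosses $\{S=6\pi\}$ exactly where $\sigma_{112}>4\pi$, which is outside the \emph{truncated} spherical wall $\Sigma_{6\pi}$. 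The paper's argument is instead that each region $\{H_{ijk}<0,H_{\bar i\bar j\bar k}>6\pi\}$ contains none of the other interior walls and is therefore a single cell, and that the two remaining cells $C_{6\pi}^\pm$ are precisely those having $\Sigma_{6\pi}$ among their sides.

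Two smaller points. Your reason why $C_{4\pi}^-=\{H_{222}<-4\pi\}$ meets no interior wall is garbled: ``$H_{ijk}>H_{222}$'' yields only $H_{ijk}>-4\pi$, not $H_{ijk}>2\pi$. Just cite Proposition \ref{internalwalls} i), or observe that on such a wall $S<4\pi$ whereas $H_{222}<-4\pi$ forces $S>4\pi$. And if you keep step (3) as a sketch of Theorem \ref{fulltheorem}, note that Corollary \ref{totallyreduciblefilling} fills all three adjacent cells only at type-4 facets, so your item (a) is overstated; the paper's fullness argument is in fact more direct than your propagation scheme (type-4 facets handle all of $P_{6\pi}$; for $P_{4\pi}$, the cell $C_{4\pi}^-$ borders an empty exterior cell, and the eight $C_{4\pi}^{\pm\pm\pm}$ all meet at the single irreducible triple of Proposition \ref{decompfamily2}).
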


Again, we suggest to the reader to look at the figures in Section \ref{figures}, in order to understand this description for some particular cases.

\begin{proof}
	As usual, we obtain the third point by applying the symmetry $\psi$ to the first point (see Remark \ref{symmetry}), which allows us to prove only the first point among the two.
	\begin{itemize}
		\item Let us show the first point. According to Lemma \ref{connectedcomponents}, $P_{4\pi}$ has two connected components. By Proposition \ref{internalwalls}, the hyperbolic reducible walls $\{H_{ijk}=2\pi,\sigma_{\bar{i}\bar{j}\bar{k}}<2\pi\}$ are contained in $P_{4\pi}\backslash\{H_{222}<-4\pi\}$ for $(i,j,k)\in\{(2,1,1),(1,2,1),(1,1,2)\}$. Proposition \ref{internalintersections} states that they intersect pairwise in this subspace. According to Theorem \ref{polytopelayers} and Proposition \ref{internalwalls}, no other reducible walls may form sides of cells inside $P_{4\pi}$. Therefore, the cells that are contained in $P_{4\pi}$ may only be bounded by:
		\begin{itemize}
			\item reducible walls contained in $\partial P_{4\pi}$;
			\item hyperbolic reducible walls among $\{H_{ijk}=2\pi,\sigma_{\bar{i}\bar{j}\bar{k}}<2\pi\}$ for $$(i,j,k)\in\{(2,1,1),(1,2,1),(1,1,2)\}.$$
		\end{itemize}
		This leaves the $2^3=8$ cells that are listed in the statement. They correspond to the possible signs of $H_{ijk}-2\pi$ for the above values of $(i,j,k)$.
		
		\item We now prove the second point. According to Lemma \ref{connectedcomponents}, $P_{6\pi}$ is connected. By Proposition \ref{internalwalls}, the walls $\{H_{ijk}=0,\sigma_{\bar{i}\bar{j}\bar{k}}>4\pi\}$ are contained in $P_{6\pi}\cap\{S>6\pi\}$ for $(i,j,k)\in\{(2,2,1),(2,1,2),(1,2,2)\}$; and the walls $\{H_{ijk}=6\pi,\sigma_{\bar{i}\bar{j}\bar{k}}<2\pi\}$ are contained in $P_{6\pi}\cap\{S<6\pi\}$ for $(i,j,k)\in\{(1,1,2),(1,2,1),(2,1,1)\}$. Besides, the spherical reducible wall $\Sigma_{6\pi}$ is also contained in $P_{6\pi}$.
		According to Proposition \ref{internalintersections}, the only intersections between these walls are totally reducible facets. By Theorem \ref{polytopelayers} and Proposition \ref{internalwalls}, no other reducible walls may form sides of cells inside $P_{6\pi}$. Therefore, each of these walls is a side of exactly two cells.
		
		Now, the subspaces $\{H_{ijk}<0,H_{\bar{i}\bar{j}\bar{k}}>6\pi\}$ do not contain any of the other reducible walls for $(i,j,k)\in\{(2,2,1),(2,1,2),(1,2,2)\}$. Therefore, they form inside $P_{6\pi}$ the cells $C_{6\pi}^{221},C_{6\pi}^{212},C_{6\pi}^{122}$.
		
		Since $\Sigma_{6\pi}$ intersects all the walls mentioned above, the other cells must contain $\Sigma_{6\pi}$ among their sides. So we obtain the cells $C_{6\pi}^{+}$ and $C_{6\pi}^{-}$.
	\end{itemize}
	This concludes the proof.
\end{proof}

\begin{Theo}\label{fulltheorem}
	The twenty-three cells described in Proposition \ref{fullcells} are full. In other words,
	$$P_{4\pi}\subset\cP(G)_\omega, \ P_{6\pi}\subset\cP(G)_1, \ P_{8\pi}\subset\cP(G)_{\omega^2}.$$
\end{Theo}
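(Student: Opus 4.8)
The plan is to prove that each of the twenty-three cells listed in Proposition~\ref{fullcells} is full, using the dichotomy of Theorem~\ref{emptyfulltheorem}: it suffices to exhibit, \emph{for each cell}, at least one triple of classes in that cell (or on one of its reducible walls, via Proposition~\ref{emptyfullinterface} or Proposition~\ref{totredfacet}) which is realized by a representation $\rho$ with the correct value of $u$. The key observation is that we do not need a separate construction for every cell: most cells will be filled \emph{by propagation} from a single explicit solution or from convexity arguments around totally reducible facets, exploiting the adjacency structure worked out in Propositions~\ref{totallyreduciblefacets}, \ref{internalwalls} and \ref{internalintersections}.

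The main steps I would carry out are the following. First, invoke the symmetry $\psi$ of Remark~\ref{symmetry}: since $\psi$ exchanges $P_{4\pi}$ and $P_{8\pi}$ and preserves $P_{6\pi}$ (Remark~\ref{symmetrypolytopes}), it suffices to fill the cells of $P_{4\pi}$ and $P_{6\pi}$. Second, handle the ``easy'' exterior cells: the cells $C_{4\pi}^{-}=\{H_{222}<-4\pi\}$ and $C_{6\pi}^{-},C_{6\pi}^{+}$ are the images of the full components of $\cP(\pu(1,1))$ or of $\cP(\U(2))$ under the reducible embeddings of Section~\ref{horn}, so they are full already because they contain reducible solutions in their interior or on their exterior walls; more precisely, apply Proposition~\ref{emptyfullinterface} across the exterior hyperbolic/spherical walls, since on the far side lies an empty cell (by Theorem~\ref{emptycells}) and hence the near side must be full. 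Third, use the four-type classification of totally reducible facets (Proposition~\ref{totallyreduciblefacets}) together with Corollary~\ref{totallyreduciblefilling}: each facet of type~4 forces all three adjacent cells to be full, and each facet of type~1,~2,~3 forces two of the three. Running through the list of facets $T_{ijk}(1,1,2)$, $T_{ijk}(2,2,4)$, $T_{111}(3,0,3)$, $T_{ijk}(3,0,3)$ and tracking which cell among $C_{4\pi}^{\pm\pm\pm}$, $C_{6\pi}^{ijk}$, etc.\ sits on which side of the three walls meeting at the facet, one checks that this already fills the ``generic'' cells touching these vertices. Fourth, for the cells that remain untouched by the above — in particular the central cell $C_{4\pi}^{+++}$ or $C_{4\pi}^{---}$ adjacent to all three interior walls $\{H_{ijk}=2\pi\}$ — use the explicit irreducible solution of Proposition~\ref{decompfamily2}, whose triple of classes (computing its image $\bar c(\rho)\in\cT(G)^3$ from the given eigenvalue data) lies on a reducible wall bounding such a cell; by the second bullet of Proposition~\ref{emptyfullinterface} the presence of an irreducible solution on the wall fills both adjacent cells. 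Finally, assemble: every cell in the list has been shown to be full, so $P_{4\pi}\subset\cP(G)_\omega$, $P_{6\pi}\subset\cP(G)_1$, $P_{8\pi}\subset\cP(G)_{\omega^2}$ by Theorem~\ref{emptyfulltheorem} and Theorem~\ref{polytopelayers}.

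I expect the main obstacle to be the bookkeeping in the third and fourth steps: one must verify, cell by cell, that the twenty-three cells are \emph{exactly} covered by the propagation arguments, i.e.\ that no cell in the list is left without a witness, and that the witnesses (totally reducible facets of the right type, or the explicit irreducible triple of Proposition~\ref{decompfamily2}) genuinely sit on the boundary of the claimed cell with the geometry — convex hull position, sign of $S-2n_s\pi$ — predicted by Proposition~\ref{totallyreduciblefacets}. The role of Proposition~\ref{decompfamily2} is delicate: one has to compute the angle pairs of the three classes (they are $(\pi/3 \text{ shifts of } 2\pi/3, \dots)$, giving a triple with $H_{ijk}$ near $2\pi$ for the appropriate index), confirm $u=\omega$ via the standard-lift computation of Definition~\ref{standardlift}, and check that this point lies in $\overline{C_{4\pi}^{\epsilon_1\epsilon_2\epsilon_3}}\cap\overline{C_{4\pi}^{\epsilon_1'\epsilon_2'\epsilon_3'}}$ for the pair of central cells that the facet/type arguments miss. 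Once those incidences are nailed down, the conclusion is immediate from the open/closed dichotomy. I would organize the proof as a table indexed by the facets and the explicit triple, with one line per cell recording which witness fills it and which proposition is invoked, and close with a remark that $\psi$ transports the $P_{4\pi}$ analysis to $P_{8\pi}$ verbatim.
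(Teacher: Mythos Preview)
Your proposal identifies the correct toolkit — the symmetry $\psi$, Proposition~\ref{emptyfullinterface} across walls shared with empty cells, Corollary~\ref{totallyreduciblefilling} at totally reducible facets, and the explicit irreducible triple of Proposition~\ref{decompfamily2} — but it organizes the case analysis in a more elaborate way than the paper does, and one of your proposed steps would require delicate bookkeeping that turns out to be unnecessary.

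The paper's argument is considerably shorter. For $P_{4\pi}$, only two moves are made. The cell $C_{4\pi}^{-}$ is filled exactly as you suggest, via Proposition~\ref{emptyfullinterface} across the wall $\{H_{222}=-4\pi\}$ shared with the empty cell $C_{4\pi}^{c,1}$. But then the remaining \emph{eight} cells $C_{4\pi}^{\pm\pm\pm}$ are filled \emph{simultaneously} by the single irreducible triple of Proposition~\ref{decompfamily2}. The point you underplay is that this triple has angle pairs $((2\pi/3,\pi/3),(2\pi/3,\pi/3),(2\pi/3,\pi/3))$, and hence by symmetry satisfies $H_{211}=H_{121}=H_{112}=2\pi$ exactly: it sits at the common intersection of all three interior walls, so openness of $c$ at an irreducible representation (Proposition~\ref{open}, which is what underlies the second bullet of Proposition~\ref{emptyfullinterface}) puts a neighbourhood of this point in $\cP(G)_\omega$, hitting every one of the eight cells at once. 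No type-1 facet argument is needed for $P_{4\pi}$ at all. For $P_{6\pi}$, the paper uses only the type-4 facets $T_{ijk}(3,0,3)$ with $(i,j,k)\in\{(2,1,1),(1,2,1),(1,1,2)\}$: each of the five cells of $P_{6\pi}$ has one of these in its boundary, so Corollary~\ref{totallyreduciblefilling} fills them all. Your separate treatment of $C_{6\pi}^{\pm}$ via exterior walls is valid but redundant. The $P_{8\pi}$ case follows from $P_{4\pi}$ by $\psi$, as you say.

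Your proposed route through facets of types 1, 2 and 3 for $P_{4\pi}$ and $P_{8\pi}$ is where the genuine difficulty in your plan would lie. Those types guarantee only two of the three adjacent cells are full, and for the type-1 facets $T_{ijk}(1,1,2)$ of $P_{4\pi}$ two of the three bounding walls are \emph{exterior} walls of $P_{4\pi}$; identifying which cells inside $P_{4\pi}$ are actually filled, and then checking that the six mixed-sign cells $C_{4\pi}^{\pm\pm\pm}$ are all covered this way, would require a geometric argument you have not supplied and which is not obviously straightforward. This is entirely avoidable: once you observe that the explicit point lies on all three interior walls simultaneously, the whole type-1/2/3 discussion becomes superfluous, and the bookkeeping you were worried about disappears.
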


\begin{proof}
	As usual, the proof of the third point is symmetric to the proof of the first point (see Remark \ref{symmetry}), which is why we only prove the first one among the two.
	\begin{itemize}
		\item Let us prove the first point. The cell $C_{4\pi}^{-}$ has a side contained in $\partial P_{4\pi}$: the wall $\{H_{222}=-4\pi\}$. According to Proposition \ref{complementcells} and Theorem \ref{emptycells}, this wall is a side of $C_{4\pi}^{c,1}$, which is an empty cell in $P_{4\pi}^c$. So by Proposition \ref{emptyfullinterface}, the cell $C_{4\pi}^{-}$ is full.
		
		To fill the eight other cells, recall Proposition \ref{decompfamily2}: we constructed an irreducible representation with angle pairs $(2\pi/3,\pi/3),$ $(2\pi/3,\pi/3),$ $(2\pi/3,\pi/3)$. Since this triple belongs to the hyperbolic reducible walls $\{H_{ijk}=2\pi,\sigma_{\bar{i}\bar{j}\bar{k}}<2\pi\}$ for $(i,j,k)\in\{(2,1,1),(1,2,1),(1,1,2)\},$ this means that the eight cells $C_{4\pi}^{---}$, $C_{4\pi}^{--+}$, $C_{4\pi}^{-+-}$, $C_{4\pi}^{+--}$, $C_{4\pi}^{-++}$, $C_{4\pi}^{+-+}$, $C_{4\pi}^{++-}$, $C_{4\pi}^{+++}$ have an irreducible representation lying on one of their sides. By Proposition \ref{emptyfullinterface}, they are all full.
		
		\item We now prove the second point. According to Proposition \ref{totallyreduciblefacets}, the cells inside $P_{6\pi}$ all contain in their boundary a totally reducible facet of type 4. So by Corollary \ref{totallyreduciblefilling}, they are full.
	\end{itemize}
	This concludes the proof.
\end{proof}

Putting together Theorems \ref{emptycells} and \ref{fulltheorem} yields Theorem \ref{solutionpu}.

\bibliography{biblio}
\bibliographystyle{plain}
	
\end{document}